\newtheorem{Theorem}{Theorem}[section]
\newtheorem{Definition}[Theorem]{Definition}
\newtheorem{Example}[Theorem]{Example}
\newtheorem{Lemma}[Theorem]{Lemma}
\newtheorem{Proposition}[Theorem]{Proposition}
\newtheorem{Conjecture}[Theorem]{Conjecture}
\newtheorem{Corollary}[Theorem]{Corollary}
\newtheorem{Remark}[Theorem]{Remark}
\newcommand{\dccomm}[1]{\begingroup\color{blue}DC:~#1\endgroup}
\newcommand{\del}{\partial}
\newcommand\C{\mathbb{C}}
\newcommand\R{\mathbb{R}}
\newcommand\Q{\mathbb{Q}}
\newcommand\Z{\mathbb{Z}}
\newcommand\Hom{\textup{Hom}}
\newcommand\lra{\longrightarrow}
\newcommand\xra{\xrightarrow}
\newcommand\ul{\underline}
\newcommand\ol{\overline}
\newcommand\wh{\widehat}
\newcommand{\wt}{\widetilde}
\newcommand\an[1]{\langle #1 \rangle}
\newcommand\cal[1]{\mathcal{#1}}
\newcommand\id{\textup{id}}
\newcommand\QHS{\Q-\textup{HS}}
\newcommand{\V}{\mathcal{V}}
\newcommand{\E}{\mathcal{E}}
\DeclareMathOperator{\PD}{PD}
\DeclareMathOperator{\coker}{coker}
\title{Positive Ricci curvature on highly connected manifolds}
\author{Diarmuid Crowley and David J. Wraith}
\date{\today}
\begin{document}
\maketitle

\begin{abstract}
For $k \ge 2,$ let $M^{4k-1}$ be a closed $(2k{-}2)$-connected manifold. If $k \equiv 1$~mod~$4$ assume further that $M$ is $(2k{-}1)$-parallelisable. Then there is a homotopy sphere $\Sigma^{4k-1}$ such that $M \sharp \Sigma$ admits a Ricci positive metric. This follows from a new description of these manifolds as the boundaries of explicit plumbings.
\end{abstract}
%%%%%%%%%%%%%%%%%%%%%%%%%%%%%%%%%%%%%%%%%%%%%%%%%%%%%%%%%%%%%%%%%%%%%%%%%%%%%%%%%%%%%%%%

% (fold)
\section{Introduction} 
%%%%%%%%%%%%%%%%%%%%%%%%%%%%%%%%%%%%%%%%%%%%%%%%
\bigskip
%We begin by presenting the main theorems in this paper. 
The results of this paper arise from a synthesis of ideas coming from Geometric Topology on the one hand and Riemannian Geometry on the other. We present our main results in Section \ref{subsec:main_results} below. In order to place these results in the correct context, in 
Section~\ref{subsec:geometric_background} we carefully outline the geometric background from which the questions we address in this paper have emerged. 

In this paper all manifolds will be smooth, oriented, connected and compact (unless stated otherwise), and all Riemannian metrics will be complete.

% (fold)
\subsection{Main results} \label{subsec:main_results}

Understanding the implications of curvature having a definite sign is a fundamental problem in Riemannian Geometry. 
%
%We use the word `curvature' here as a collective term to include the scalar curvature, Ricci curvature and sectional curvature. 
We can ask about the topological implications for a manifold if it admits a metric satisfying such a curvature assumption, or conversely ask whether it is possible to deduce from a set of topological data whether or not such a metric exists.  
%Or we might ask about the {\it geometric} implications which follow from the existence of such a metric: 
In some situations, including positive Ricci curvature, we suffer from a lack of examples. Thus a basic and difficult task is to look for methods which are capable of generating new metrics on new manifolds. 

%what geometric properties (besides its curvature) must such a metric display which are not shared by generic metrics? These are difficult questions in general, and our understanding of these issues is only partial. 

The primary aim of this paper is to greatly extend the collection of examples of closed manifolds $M$ known to admit metrics of positive Ricci curvature. We achieve this by topological means, with the geometric input coming from a result of the second author (see Theorem 1.1 below).
%In general finding metrics of positive Ricci curvature is a hard task.  Historically this task has been hampered by two problems: firstly, it is
%not at all clear whether the connected sum of two Ricci positive manifolds is again Ricci positive.  Secondly, in odd dimensions, it has sometimes
%been the case the that torsion linking form of $M$ makes contruction Ricci positive metrics more delicate.  As we shall we, this paper 
%overcomes both of these obstactles in certain settings.
%
The manifolds in question are so called `highly connected manifolds'. We shall call a manifold highly connected if it has dimension $2n$ or $(2n{+}1)$ and is $(n-1)$-connected. 
Such manifolds were studied extensively from a topological point-of-view in the 1950s, 1960s and 1970s by Smale, Kervaire and Milnor, Wall and others (see for example \cite{Sm}, \cite{Kervaire&Milnor}, \cite{Wa1}, \cite{Wa4}, \cite{Wi2}). Despite their topological simplicity, highly-connected manifolds constitute a rich class of manifolds. As an illustration of this fact note that in every dimension $n=4k-1 \geq 7$, every finitely generated abelian group can arise as the integral cohomology group $H^{2k}(M)$ of such a manifold.

The group of diffeomorphsim classes of homotopy $n$-spheres, $\Theta_n$, defines a very special class of highly connected manifolds.
Homotopy $n$-spheres are manifolds $\Sigma^n$ which have the homotopy type of the standard sphere $S^n.$ %It is known that every homotopy sphere $\Sigma^n$ is homeomorphic, but not necessarily diffeomorphic to $S^n$. 
For $n \geq 5$ Kervaire and Milnor showed that $\Theta_n$ is finite.
In fact it was the study of highly connected manifolds which led Milnor to the discovery of exotic spheres in the 1950s.  

%there are infinitely many diffeomorphism classes of highly connected manifolds 
%in each every dimension greater than $3$. 
%are sufficiently rich to have resisted a full classification. 

As a result of the techniques we use, we will be particularly interested in highly connected manifolds in dimensions $4k-1 \geq 7$, and in order to make the topology a little more tractable we further assume that these manifolds are $(2k{-}1)$-parallelisable: something which is automatic unless $k \equiv 1$~mod~$4$.
(A manifold is said to be $j$-parallelisable if the tangent bundle restricted to some $j$-skeleton is trivial.) 
Even though these manifolds have been studied intensively from the topological point of view, to the best of our knowledge much less is known about these manifolds from a geometric perspective.  Our first main theorem addresses this issue.\\

\noindent{\bf Theorem A.} {\it Let $k \ge 2,$ and let $M^{4k-1}$ be a $(2k{-}2)$-connected manifold. If $k \equiv 1$~mod~$4$ assume further that $M$ is $(2k{-}1)$-parallelisable. Then there is a homotopy sphere $\Sigma^{4k-1}$ such that $M \sharp \Sigma$ admits a metric of positive Ricci curvature.}\\

From the refined version of Theorem A, Theorem A$'$ below, we obtain the following special cases:\\

\noindent{\bf Theorem B.} {\it All 2-connected 7-manifolds and all 4-connected 11-manifolds admit Ricci positive metrics.}\\

We can give a more precise reformulation of Theorem A using the language of bordism. 
Let $BO\an{2k} \to BO$ be the $(2k{-}1)$-connected cover of the classifying space $BO$: 
for example $BO\an{2} = BSO$ and $BO\an{4} = BSpin$.  There are associated bordsim 
groups, denoted $\Omega_*^{O\an{2k-1}}$; see Section \ref{subsec:BO<n>-bordism}.  
If $M^{4k-1}$ as above is $(2k{-}1)$ parallelisable then
$M$ admits a lift of its stable normal bundle to a map $M \to BO\an{2k}$,
unique up to equivalence, and hence $M$ defines an bordism class
\[ [M] \in \Omega_{4k-1}^{O\an{2k-1}}. \]
%
%Recall that the set of diffeomorphism classes of homotopy spheres form a group $\Theta_n$ under connected sum, and this group was studied and shown to be finite in \cite{Kervaire&Milnor}. 
%Every homotopy sphere $\Sigma \in \Theta_{4k-1}$ admits a unique $BO\an{2k}$
%structure, and there is an induced homomorphism
%
%\[ \Theta_{4k-1} \to \Omega_{4k-1}^{O\an{2k-1}} \]
In particular, every homotopy $(4k{-}1)$-sphere $\Sigma$ defines such a bordism class $[\Sigma]$, and there is an induced homomorphism
\[ \Theta_{4k-1} \to \Omega_{4k-1}^{O\an{2k-1}}, \quad \Sigma \mapsto [\Sigma], \]
which is onto; see Theorem \ref{thm:delTP}. 
We also recall that in dimensions $4k{-}1 \ge 7$, Brumfiel \cite{Brumfiel} defined for each homotopy sphere $\Sigma$, an invariant $f(\Sigma) \in bP_{4k}$, where $bP_{4k}$ is the group of homotopy $(4k{-}1)$-spheres bounding parallelisable manifolds; see Theorem \ref{thm:homotopy_spheres}.
We can now reformulate and refine Theorem A: \\ %\ref{thm:main1}

\noindent{\bf Theorem A$'$.} %\label{thm:main1a}
%%%%%%%%%%%%%%%%%%
{\it Let $k \geq 2$ and let $M^{4k-1}$ be a closed $(2k{-}2)$-connected manifold which is $(2k{-}1)$-parallelisable.
%Suppose that $M$ admits a $BO\an{2k}$-structure, $\bar \nu \colon M \to BO\an{2k}$
%such that
If
$ [M] = 0 \in \Omega_{4k-1}^{O\an{2k-1}}, $
then $M$ admits a metric of positive Ricci curvature. Moreover, the homotopy sphere $\Sigma$ appearing in the statement of Theorem A can be chosen so that its Brumfiel invariant $f(\Sigma)$ vanishes.}\\

%There are two main 
So far, two important technical difficulties have arisen in trying to construct positive Ricci curvature metrics on highly connected manifolds. Topologically these objects decompose in a natural way into a connected sum of indecomposable pieces. The first problem arises from the connected sum construction: in general, positive Ricci curvature cannot be extended across such sums. The second problem is the more subtle issue of dealing with the torsion linking form on middle dimensional cohomology for each of the indecomposable pieces. For the manifolds under consideration we manage to resolve both of these problems.

The key to proving the above theorems is a new topological description of highly connected manifolds, which involves a construction technique known as `plumbing'. This is a classic technique which goes back (at least) to Milnor in 1958 \cite{Mi1}. It involves the gluing together of collections of disc bundles. (We give a full description of this technique in Section \ref{sec:plumbing}.) The classic plumbing scenario involves plumbing disc bundles with fibre $D^{j}$ over spheres $S^{j}.$ The relevance of plumbing to questions of positive Ricci curvature is given by the following theorem:\\

\begin{Theorem} (\cite{Wr1}) The boundary of any simply-connected plumbing of $n$-disc bundles ($n \ge 3$) over $S^n$ admits a metric of positive Ricci curvature.
\end{Theorem}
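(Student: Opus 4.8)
The plan is to realise the boundary $\partial P$ of a simply-connected plumbing $P$ of $D^n$-bundles over $S^n$ as the result of surgery on the round sphere $S^{2n-1}$ along an explicit framed link of $(n{-}1)$-spheres with pairwise disjoint tubular neighbourhoods, and then to transport a Ricci positive metric across this surgery using the surgery theorem for Ricci positive metrics. First I would recall that, for a connected plumbing, the total space $P$ is simply connected precisely when the plumbing graph is a tree; in that case $P$ has a handle decomposition consisting of a single $0$-handle $D^{2n}$ and one $n$-handle per vertex of the tree, all attached directly to $\partial D^{2n}=S^{2n-1}$ along a framed link $\{S_v\}$ whose components have disjoint tubular neighbourhoods, the framing of $S_v$ being the clutching class $\alpha_v\in\pi_{n-1}(SO(n))$ of the corresponding bundle and $S_v$, $S_w$ being Hopf-linked exactly when $vw$ is an edge of the tree. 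Passing to boundaries, $\partial P$ is obtained from $S^{2n-1}$ by framed surgery along this link; since $n\geq 3$, each $S_v$ has codimension $n\geq 3$ in $S^{2n-1}$, and $\partial P$ is $(n{-}2)$-connected, hence itself simply connected. (The hypothesis $n\geq 3$ is genuinely needed: for $n=2$ the trivial $D^2$-bundle over $S^2$ is a simply-connected plumbing with boundary $S^1\times S^2$, which admits no Ricci positive metric by Bonnet--Myers.)

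Geometrically, I would start from $(S^{2n-1},g_{\mathrm{round}})$ and, after isotoping the link $\{S_v\}$ into a convenient position with the components of its tubular neighbourhood disjoint, deform the round metric within those neighbourhoods — keeping $\mathrm{Ric}>0$ — so that near each $S_v$ it takes the normal form demanded by the surgery theorem: a suitably ``fat'' product of a round $S^{n-1}$ of small radius with a rotationally symmetric metric on the normal $D^n$. This is possible because the codimension is at least $3$, leaving ample room. I would then invoke the surgery theorem for Ricci positive metrics — surgery on an $(n{-}1)$-sphere with trivial normal bundle, arbitrary normal framing, and codimension $n\geq 3$, beginning from a metric in this normal form — and apply it independently on each $S_v$. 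Since the supports are disjoint the surgeries do not interfere, and each preserves $\mathrm{Ric}>0$, so the result is a Ricci positive metric on $\partial P$.

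The main obstacle is the Ricci positive surgery theorem itself, and it dictates why the strategy is built around the round sphere. In contrast with Gromov--Lawson surgery for positive scalar curvature, one cannot deform an arbitrary Ricci positive metric into a shape from which codimension-$\geq 3$ surgery is possible; the whole argument relies on (i) beginning with the highly symmetric round metric, which \emph{can} be put into the required normal form simultaneously near all the attaching spheres, and (ii) a surgery theorem that both accepts such normal forms and returns metrics in a compatible shape, so that in our disjoint-support situation nothing further has to be reconciled. Proving such a theorem — choosing the profile functions on the glued-in handle $D^n\times S^{n-1}$ so that $\mathrm{Ric}>0$ survives the cut-and-paste and the metric extends smoothly across $S^{n-1}\times S^{n-1}$ for an arbitrary framing, which is precisely what forces the normal $D^n$ factor of the incoming metric to be sufficiently fat — is where the substantial analytic work lies; by comparison, the handle/surgery presentation of $\partial P$ and the disjointness of the surgery supports are routine topology.
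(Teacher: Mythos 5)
Your strategy is genuinely different from Wraith's, and the gap lies in the geometric step you dispatch in one sentence. You start from the round metric on $S^{2n-1}$ and claim you can deform it, keeping $\mathrm{Ric}>0$, to the normal form required by the Ricci-positive surgery theorem near each attaching sphere $S_v$: a small round $S^{n-1}$ times a fat normal $D^n$. But near a great $(n-1)$-sphere in the round $S^{2n-1}$ the metric is a round $S^{n-1}$ of radius comparable to~$1$ times a \emph{small} normal disk --- precisely the opposite of what the surgery theorem demands, which is why the paper stresses that this configuration ``is the opposite situation to that usually encountered.'' Producing the needed normal form by a Ricci-positive deformation from the round metric, simultaneously near a whole family of $(n-1)$-spheres that are Hopf-linked according to the tree, would be a substantial geometric theorem in its own right; ``this is possible because the codimension is at least~$3$, leaving ample room'' does not provide it, and the codimension condition alone is nowhere near sufficient in the Ricci-positive setting.

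Wraith's proof, as sketched in Section~\ref{subsec:plumbing_arrangements}, avoids this deformation by not starting from $S^{2n-1}$ at all. One fixes a leaf-first ordering of the plumbing tree and realises $\partial W$ by a \emph{sequence} of surgeries beginning from the sphere bundle $\partial E_1 \to S^n$. For a linear $S^{n-1}$-bundle over a Ricci-positive base one can choose a connection metric in which the fibre spheres are arbitrarily small relative to the base, while the normal disk to a fibre sphere is an open set in the base and hence automatically large; so the normal form the surgery theorem needs is built in, with no deformation required. The surgery theorem is then formulated so that its output again admits such a surgery on the next fibre sphere, which makes the induction up the tree run. Your account of \emph{why} the surgery theorem needs the incoming $S^{n-1}$ to be small relative to the normal disk is accurate, but the round $S^{2n-1}$ is exactly where that configuration is hard to manufacture, whereas sphere bundles over $S^n$ are where it comes for free.
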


In the light of the above theorem, it is reasonable to ask which manifolds can arise as the boundary of a simply-connected manifold constructed by plumbing collections of $D^{j}$-bundles over $S^{j}$. For example, it was shown in \cite{Wr1} that all exotic spheres which bound a parallelisable manifold arise in this way. Our next main result provides a complete answer to this question when $j = 2k \geq 4$:\\

\noindent{\bf Theorem C.} {\it For $k \geq 2$, the set of manifolds which arise as boundaries of simply-connected manifolds constructed by plumbing collections of $D^{2k}$-bundles over $S^{2k}$ coincides with the set of boundaries of handlebodies constructed by adding $2k$-handles to $D^{4k}.$ Moreover, for every $(2k{-}2)$-connected manifold $M^{4k-1}$ which is $(2k{-}1)$-parallelisable, there is a homotopy sphere $\Sigma^{4k-1}$ with $f(\Sigma) = 0$ such that $M \sharp \Sigma$ belongs to this class.}\\

It is clear that Theorem A follows immediately by combining Theorem C with Theorem 1.1. The reformulation Theorem A$'$ is proven by combining Theorem \ref{thm:delTP} and
Theorem 1.1. Theorem B follows from Theorem 1.1 by showing that in dimensions 7 and 11, all highly connected manifolds are boundaries of simply-connected plumbings. (This is Corollary \ref{cor:7-11}.)

In order to prove Theorem C, we employ the following strategy. We first show that for any $(2k{-}2)$-connected $(2k{-}1)$-parallelisable $(4k{-}1)$-manifold $M,$ there is a homotopy sphere $\Sigma$ for which $M \sharp \Sigma$ is the boundary of a handlebody which comprises some number of $2k$-handles added to $D^{4k}.$ (This is Theorem \ref{thm:delTP}~\eqref{thm:delTP:homotopy_sphere}.) We then show that the set of boundaries of such handlebodies coincides with the set of boundaries of simply-connected plumbings involving $D^{2k}$-bundles over $S^{2k}$ (Theorem \ref{thm:delTP}~\eqref{thm:delTP:main}). Establishing this last step turns out to be the major task in this paper. The problem easily reduces to the special case of handlebody boundaries which are rational homotopy spheres. These in turn are classified up to almost diffeomorphism (that is, up to connected sum with a homotopy sphere) by their `extended quadratic linking form' in Theorem \ref{thm:classification_of_QHS}. (Extended quadratic linking forms are introduced and discussed from an algebraic perspective in Section \ref{subsec:eqlfs}, and the way in which they arise in a topological context is explained in Section \ref{subsec:rational_homotopy_spheres}.) It then suffices to show that all extended quadratic linking forms can be realised by boundaries of simply-connected plumbings of $D^{2k}$-bundles over $S^{2k}$. In order to do this we observe that any such linking form can be expressed as a sum of indecomposables. In Section \ref{sec:realising_extended_quadratic_linking_forms} we show that each
of the three families of indecomposables can be realised by the boundary of a simply-connected plumbing of the required type. As a sum of indecomposables corresponds topologically to a connected sum of the plumbing boundaries, it remains to show that such a connected sum can be realised as the boundary of a single plumbing (see Theorems \ref{thm:connected_sum_for_delTP-G} or \ref{thm:TP_and_connected_sum}).

This last point is perhaps of independent interest.  In dimension $3$, it was already known that the connected sum
of the boundaries of plumbings was again the boudary of a plumbing: this is implicit in early work of Waldhausen \cite{Waldhausen},
and appears explicitly in work of Neumann \cite[Proposition 2.1]{Neumann2}.
Theorem \ref{thm:connected_sum_for_delTP-G} extends this result to higher dimensions,
providing general conditions under which a family of plumbing boundaries is closed under the connected sum operation.
From a topological point of view the connected sum operation is basic for constructing new manifolds from old. Moreover from a geometric perspective, there is the hope that Theorem \ref{thm:connected_sum_for_delTP-G}, when combined with suitable adaptations of Theorem 1.1 could be used to establish even wider families of Ricci positive examples. For further discussion, see Section \ref{subsec:geometric_background} below. Note that the proof of Theorem \ref{thm:connected_sum_for_delTP-G} is by explicit construction.

Although we are primarily concerned with manifolds of dimension $4k{-}1$ in this paper, we can also use our approach to say something about dimensions $4k{+}1$ (see Section \ref{subsec:4k+1}):\\

\noindent{\bf Theorem D.} {\it When $k \ge 1,$ for every $(2k{-}1)$-connected manifold $M^{4k+1}$ which is $2k$-parallelisable and has torsion-free (integer coefficient) cohomology, there is a homotopy sphere $\Sigma^{4k+1}$ such that $M \sharp \Sigma$ admits a metric of positive Ricci curvature.}\\

This paper is laid out as follows. After sketching the relevant geometric background to our results in \ref{subsec:geometric_background}, we turn our attention to the plumbing construction in Section \ref{sec:plumbing}, paying special attention to creating connected sums via this technique. Section \ref{sec:eqfs_and_eqlfs} is purely algebraic, in which we study extended quadratic forms and extended quadratic linking forms. These algebraic objects arise as topological invariants of the $(4k{-}1)$-dimensional manifolds we consider (in the case of extended quadratic linking forms) and of $4k$-dimensional bounding manifolds (extended quadratic forms). Topologically there is a close relationship between the linking form on the boundary and the quadratic form on the bounding manifold. Mirroring this, there is a purely algebraic `boundary' construction which we describe in Section \ref{subsec:boundaries_of_eqfs}. We also introduce `treelike' forms (in Section \ref{subsec:treelike_forms}) which are the extended quadratic forms that arise from simply-connected plumbings. In Section \ref{sec:handlebodies} we discuss handlebodies, especially those which consist of some number of $2k$-handles added to $D^{4k}.$  We describe Wall's classification of this family of handlebodies, and point out the connection with plumbing manifolds. 

In Section \ref{sec:(4k-1)_manifolds} we consider $(2k{-}1)$-parallelisable $(2k{-}2)$-connected $(4k{-}1)$-manifolds $M$ and state our main topological result,
Theorem \ref{thm:delTP}, which is a more detailed version of Theorem C.  The proof of Theorem \ref{thm:delTP} is based on the 
the classification of $(2k{-}1)$-parallelisable $(2k{-}2)$-connected $(4k{-}1)$-manifolds which are rational homotopy spheres, Theorem \ref{thm:classification_of_QHS}, which
is due to Wall and the first author.  The results in Section \ref{sec:(4k-1)_manifolds} reduce the proof of Theorem C to showing that every quadratic linking form on a finite abelian group can be presented as the `algebraic boundary' of an even symmetric bilinear treelike form. 
The novelty here is the word `treelike', since Wall \cite{Wa2} has proven this fact for general even symmetric bilinear forms.
We prove this algebraic result in Section \ref{sec:realising_extended_quadratic_linking_forms}.
This means that after the connected sum with a homotopy sphere, every $(2k{-}1)$-parallelisable $(2k{-}2)$-connected $(4k{-}1)$-manifold can be presented as 
the boundary of an explicit simply-connected plumbing manifold.

In Section \ref{sec:other_dimensions} we consider treelike plumbings of dimensions $4k+2$ and prove Theorem D.  We also briefly discuss treelike plumbings
in dimension $4$.  Finally, Section \ref{sec:symmetric_forms_are_stably_treelike} is a purely algebraic section, where we discuss an algebraic consequence
of our work, Theorem \ref{thm:treelike-forms}, which shows that treelike forms are in the appropriate sense, generic amongst even symmetric bilinear forms.

\vskip 0.1cm
\noindent
{\bf Acknowledgements:} We would like to thank Jim Davis, Anand Dessai, Karsten Grove, Matthias Kreck, Gabriele Nebe, Walter Neumann, Andrew Ranicki, Andr\'{a}s Stipsicz 
and Stephan Stolz for various helpful comments.  
The first author would like to thank the National University of Ireland Maynooth for its
hospitality during the 2013 Irish Geometry Conference, which directly supported research for this paper.
The first author acknowledges the support of the Leibniz Prize of Wolfgang L\"{u}ck, granted by the Deutsche Forschungsgemeinschaft.
% subsection main results (end)

\subsection{Geometric background} \label{subsec:geometric_background}

We begin this section with a few words about curvature for the benefit of readers whose background is not in Riemannian Geometry. For a more detailed summary of this issue, see \cite{JW} and the references therein. 

Any Riemannian manifold admits a wide range of Riemannian metrics - inner products on each of the tangent spaces which vary smoothly across the manifold. Each such metric endows the manifold with geometry, and in particular with curvature. As we will explain in more detail later, we are primarily interested in forms of positive curvature: this is special insofar as it is somewhat rare. There are three main measures of curvature: the sectional curvature, the Ricci curvature and the scalar curvature. The sectional curvature, which is a smooth real-valued function on the set of tangent 2-planes, is the strongest of the three measures of curvature. However the main geometric focus of this paper is on the Ricci curvature. Loosely speaking, this can be viewed as `curvature at a given point in a given tangent direction': indeed the Ricci curvature (precisely, the Ricci curvature of unit tangent vectors) is an average of sectional curvatures over tangent planes containing the given direction. In contrast, the scalar curvature is a smooth function on the manifold - in fact an average of Ricci curvatures - whose value at any point represents an overall measure of the curvature in an arbitrarily small neighbourhood of that point. 

Despite the weakness of the scalar curvature, positive scalar curvature has profound topological implications. In fact our understanding of these topological implications is increasingly complete: see for example \cite{Ro} for a survey of recent results. An early landmark in this area was a theorem of Gromov and Lawson \cite{GL} (proved independently by Schoen and Yau \cite{ScY}). This asserts that if a manifold $M$ admits a positive scalar curvature metric, any manifold constructed from $M$ via a surgery of codimension at least 3 will also admit a positive scalar curvature metric. (Note that the codimension 3 condition here is sharp.) This result led to a rapid development of our understanding of the topology of positive scalar curvature manifolds, and perhaps most famously to the classification of simply-connected positive scalar curvature manifolds in dimensions at least 5: see \cite{Stz}.

Given the success of surgery results for positive scalar curvature, it is reasonable to ask if similar results hold for other curvature conditions. Unfortunately there is no hope of proving such results for positive sectional curvature, as this curvature condition is simply too rigid to be amenable to this kind of toplogical modification. To date, all existence results for positive sectional curvature have arisen through the exploitation of symmetry. Roughly speaking, given a manifold which admits an effective Lie group action which in some sense is `large', we might hope to use this symmetry group to reduce the problem of finding positive sectional curvature metrics to a problem which could be tractable. In practice this is not so easy, and progress has been slow. As a stark illustration, the only simply-connected manifolds in dimensions greater than 24 known to admit positive sectional curvature metrics are rank-one symmetric spaces, see for example \cite{Z}.

Sitting between positive sectional and positive scalar curvature, one might hope to exploit both symmetry-based and surgery-based techniques to investigate the existence of metrics with positive Ricci curvature. Indeed both approaches have proved effective. For instance, taking a symmetry-based approach it has been possible to prove that all compact homogeneous spaces (\cite{Be}) and all compact cohomogeneity one spaces (\cite{GZ}) admit Ricci positive metrics if and only if the fundamental group is finite. More recently, families of Ricci positive manifolds have been constructed in higher cohomogeneities (\cite{BW} etc.) See also \cite{SW} for a new approach to finding Ricci positive manifolds with symmetry. On the topological side, one of the earliest major results about the existence of Ricci positive metrics was due to Nash \cite{Na}, who showed that a compact fibre-bundle will admit a Ricci positive metric if both its base and fibre admit such metrics, and the structural group is a Lie group which acts by isometries on the fibre. As an immediate corollary, we see that many exotic spheres in dimensions 7 and 15 admit Ricci positive metrics.

There are surgery results for positive Ricci curvature, which, while more limited than their counterparts in positive scalar curvature, are nontheless strong enough to have proved useful in various situations. (The basic difference between the two kinds of surgery results is that for positive Ricci curvature we need to make some assumption about the form of the metric in a neighbourhood of the surgery, whereas for positive scalar curvature a codimension condition suffices.) The first major result proved using Ricci positive surgery was due to Sha and Yang \cite{SY}: they prove that for $n,m \ge 2,$ any connected sum $\sharp_{i=1}^k S^n \times S^m$ admits a Ricci positive metric. This was significant since it showed that an upper bound on Betti numbers proved by Gromov for manifolds with a lower sectional curvature bound fails to hold in a Ricci positive context. (See also \cite{BG3} where some of the Sha-Yang examples are shown to admit Ricci positivity using Sasakian geometry.) This result was extended in \cite{Wr3}, where connected sums between products of pairs of spheres with possibly differing factor dimensions were shown to admit Ricci positive metrics. It is interesting to note that the problem of whether general connected sums between products of spheres involving more than two factors is completely open, and similarly for connected sums between products of other Ricci positive manifolds. The connected sum operation is basic in topology, and as a result it would be good to understand the extent to which Ricci positive (or other kinds of metric) can be extended across connected sums. Unfortunately, the connected sum is not so natural for positive Ricci curvature. It follows from Myers' Theorem \cite{My} that a connected sum between two non-simply connected manifolds cannot admit a Ricci positive metric, even if the individual manifolds are Ricci positive, since the resulting fundamental group must be infinite by the Seifert-Van Kampen theorem. On the other hand, it is an open question whether a connected sum between two {\it simply-connected} Ricci positive manifolds admits such a metric.

A further major application of Ricci positive surgery is in the study of exotic spheres. In \cite{Wr2}, a Ricci positive surgery theorem was proved which is applicable to surgeries using any trivialization of the normal bundle. (This was not the case for the Sha-Yang surgery result.) In \cite{Wr1} this enhanced surgery result was crucial in establishing that all exotic spheres which bound parallelisable manifolds admit Ricci positive metrics. (This result was subsequently re-proved using Sasakian geometry in \cite{BGN}.) This is a large family of exotic spheres, with the number of examples growing more than exponentially with dimension. The same surgery techniques also show that a number of exotic spheres which do not bound a parallelisable manifold also admit such metrics. However, to put this in context, it was shown by Hitchin \cite{Hi} that there are exotic spheres in dimensions 1 and 2 modulo 8 (beginning in dimension 9) which admit no metric of positive {\it scalar} curvature. Such manifolds clearly cannot support positive Ricci curvature! These exotic spheres do not bound any parallelisable manifold, however it is an open question which exotic spheres in this category admit or do not admit Ricci positive metrics. For a survey on the curvature of exotic spheres, see \cite{JW}.

It is philosophically reasonable to wonder - especially based on the initial examples one learns - if topological simplicity in some sense might facilitate the exisence of Ricci positive metrics. However, the above examples of Hitchin's `bad' exotic spheres show that one should be cautious. These are clearly simple from the point of view of continuous topology, even if they remain somewhat mysterious from the point of view of smooth topology. Further caution still is required in dimensions $n = 4k$: there is for example, a highly connected 8-manifold $M^8$ with the property that neither $M$ nor $M \sharp \Sigma$ admit positive scalar curvature, where $\Sigma$ is the unique exotic 8-sphere. (The contruction of $M$ and its failure to admit positive scalar curavture is discussed in Section 4 of \cite{Ca}; $\Sigma$ in contrast admits positive Ricci curvature, see \cite{Wr1} page 645.)  

Even though they form a rich class of examples, highly connected manifolds are by definition topologically simple.  So it is tempting in the light of Theorems A and D to conjecture that every highly connected manifold which admits a positive scalar curvature metric also admits a Ricci positive metric. 
Indeed, while there is no known obstruction to positive scalar curvature manifolds with finite fundamental group admitting positive Ricci curvature, see \cite[Section 6]{Wei}, 
there is a conjecture of Stolz \cite[Conjecture 1.1]{Stz0}, 
and independently H\"{o}hn, a special case of which we now state:\\
{\em If $M$ is a highly connected manifold of dimension $n > 8$ which admits a metric with positive Ricci curvature, then the Witten genus of $M$ vanishes.}\\ 
Here the Witten genus of $M$ is a certain power series whose coefficients are characteristic numbers of $M$: see \cite[\S 2]{Stz0}.  Extrapolating from the above conjecture, 
Stolz suggested that there may even be exotic spheres (necessarily with vanishing Witten genus, since they are stably parallelisable) which admit metrics of positive scalar curvature, but which do not admit metrics of positive Ricci curvature \cite[6.9]{Stz0}. Hence we are lead to formulate the following

\begin{Conjecture} \label{conj:pos_ricci}
Let $M$ be a highly connected manifold which admits a positive scalar curvature metric
and whose Witten genus vanishes.  Then there is a homotopy sphere 
$\Sigma$ such that $M \sharp \Sigma$ admits a Ricci positive metric.
\end{Conjecture}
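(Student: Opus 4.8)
\medskip
\noindent\textbf{Proof proposal for Conjecture \ref{conj:pos_ricci}.}
The conjecture is open; what follows is a program. The plan is to reduce it, one family of dimensions at a time, to two ingredients: a \emph{topological} statement --- that a highly connected manifold with vanishing Witten genus becomes, after connected sum with a suitable homotopy sphere $\Sigma$, the boundary of an explicit simply-connected plumbing --- and a \emph{geometric} Ricci-positive surgery theorem certifying that such plumbing boundaries carry Ricci positive metrics. In dimensions $4k-1$ both ingredients are supplied here: Theorem C (via Theorem \ref{thm:delTP}) for the topology, and Theorem 1.1 of \cite{Wr1} for the geometry; Theorem D handles $4k+1$ under the extra hypotheses of $2k$-parallelisability and torsion-free cohomology. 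So the task is to extend both halves to $4k+1$ in full generality and to the even dimensions $2n=4k$ and $2n=4k+2$.

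In the odd-dimensional cases the route is the one already travelled. In dimension $4k+1$ the Witten genus vanishes automatically (it is a bordism invariant concentrated in dimensions divisible by $4$), so the working hypothesis is positive scalar curvature, which through Gromov--Lawson \cite{GL}, Schoen--Yau \cite{ScY} and Stolz \cite{Stz} controls the only genuine obstruction, the $\alpha$-invariant, hence the relevant $O\an{2k-1}$-bordism class. First I would generalise the algebraic core of the paper --- the result of Section \ref{sec:realising_extended_quadratic_linking_forms} that every quadratic linking form on a finite abelian group is the algebraic boundary of an even symmetric \emph{treelike} form --- to the enriched linking forms (carrying the parallelisability/Arf-type data) that classify $(2k-1)$-connected $(4k+1)$-manifolds with torsion, paralleling Sections \ref{sec:eqfs_and_eqlfs}--\ref{sec:handlebodies}. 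Then, exactly as in the proof of Theorem C, such a manifold is the boundary of a simply-connected $D^{2k}$-plumbing after connected sum with $\Sigma$ --- using Theorem \ref{thm:connected_sum_for_delTP-G} to absorb a connected sum of indecomposable plumbing boundaries into a single plumbing --- and Theorem 1.1 (which applies to $D^n$-plumbings over $S^n$ for all $n\ge 3$) furnishes the metric. The genuinely new work here is algebraic and handle-theoretic rather than geometric.

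The even-dimensional cases are where I expect the real obstacle. There is no analogue of Theorem 1.1 for even-dimensional plumbings: the construction of \cite{Wr1} is adapted to odd-dimensional boundaries, and the only even-dimensional results of this flavour --- the Sha--Yang metrics on $\sharp_i S^n\times S^m$ \cite{SY} and their extension in \cite{Wr3} --- stop at connected sums of \emph{two-factor} sphere products. So the first, and I think hardest, step would be a new Ricci-positive surgery theorem valid for handlebodies built by attaching $n$-handles to $D^{2n}$, equivalently for general plumbings of $D^n$-bundles over $S^n$ with $2n$-dimensional total space, robust enough to feed into the connected-sum machinery of Theorem \ref{thm:connected_sum_for_delTP-G}. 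Granting such a theorem, the topology should again go via the Wall-type classification of $(n-1)$-connected $2n$-manifolds by their $(-1)^n$-symmetric form together with characteristic data: one must show that once the Witten genus vanishes --- and in dimension $4k$ once the index-theoretic obstructions to positive scalar curvature (Lichnerowicz, Hitchin, Stolz) are accounted for --- the manifold is, up to connected sum with a homotopy sphere, realised by a plumbing of admissible shape, with vanishing Witten genus playing the role that $[M]=0\in\Omega_{4k-1}^{O\an{2k-1}}$ plays in Theorem A$'$. As anticipated in \ref{subsec:geometric_background}, it is precisely this combination --- a suitable adaptation of Theorem 1.1 together with Theorem \ref{thm:connected_sum_for_delTP-G} --- on which the conjecture in general seems to hinge.
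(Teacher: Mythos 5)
The conjecture is stated but not proved in the paper; Remark~\ref{rem:pos_ricci} only records that Theorems A and D establish certain cases. You recognise this, so the question is whether your program is sound, and there is one concrete place where it is not.

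Your plan for dimension $4k+1$ with torsion runs directly into an obstruction that the paper itself establishes. You propose to ``generalise the algebraic core of the paper \ldots\ that every quadratic linking form on a finite abelian group is the algebraic boundary of an even symmetric treelike form'' to the skew-symmetric setting arising for $(2k-1)$-connected $(4k+1)$-manifolds. That algebraic fact is \emph{false} in the skew-symmetric case: Proposition~\ref{prop:skew-symmetric_treelike_forms} shows that every skew-symmetric treelike form is isomorphic to $H_-(\Z^s)\oplus(\Z^t,0)$, which has trivial boundary torsion. This is precisely why Theorem~\ref{thm:delTP-4k+2}~\eqref{thm:delTP-4k+2:main} asserts the opposite of what your program requires: for $M\in\del\cal{H}^{4k+2}$, one has $M\in\del\cal{TP}^{4k+2}$ \emph{if and only if} $H^*(M)$ is torsion free. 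Any torsion in $H^{2k+1}(M)$ means $M$ cannot be the boundary of a treelike plumbing of $D^{2k+1}$-bundles over $S^{2k+1}$, no matter what enrichment you put on the linking form. Extending Theorem D to torsion in dimension $4k+1$ therefore needs an entirely different geometric source of metrics --- perhaps plumbings of $D^p$-bundles over $S^q$ with $p\neq q$ followed by further surgeries, or something else altogether --- not a generalisation of Section~\ref{sec:realising_extended_quadratic_linking_forms}.

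Two smaller points. Dimensions $4k-1$ are not fully covered either: when $k\equiv 1\bmod 4$, Theorem A needs $(2k-1)$-parallelisability, and you do not address whether positive scalar curvature together with a vanishing Witten genus supplies or replaces that hypothesis. Also, for dimension $4k+1$ the relevant bordism group is $\Omega^{O\an{2k}}_{4k+1}$, not $\Omega^{O\an{2k-1}}_{4k+1}$. On the even-dimensional cases you are candid that new geometry is needed, but the sketch is too vague to assess --- plumbing boundaries are odd-dimensional, so for closed $(n-1)$-connected $2n$-manifolds a quite different construction is required, and you do not say what it would be.
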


\begin{Remark} \label{rem:pos_ricci}
Theorems A and D prove Conjecture \ref{conj:pos_ricci} for highly connected manifolds $M$ in all dimensions $4k-1 \geq 7$, (provided $M$ is also $(2k-1)$-parallelisable if $k \equiv 1$ mod 4), and in all dimensions $4k+1 \geq 5$, provided that $TH^{2k+1}(M) = 0$ and also that
$M$ is $2k$-parallelisable.
\end{Remark}

%Even though they form a rich class, the manifolds under consideration in this paper could certainly be described as being topologically simple.  Bearing in mind the `difficult' examples above, it is tempting in the light of Theorem A to conjecture that every highly connected manifold which admits a positive scalar curvature metric also admits a Ricci positive metric.

Note that our use of plumbing in this paper as a means to establish the existence of Ricci positive metrics is essentially a surgery-based approach. (See section 2 for details.)

For the sake of completeness, we should mention the work of Boyer and Galicki on 1-connected 5-manifolds \cite{BG1} (but see also \cite{BG2}). These objects were classified by Barden \cite{Bar} (following earlier work Smale \cite{Sm}). The classification falls into spin and non-spin cases. Using Sasakian geometry, Boyer and Galicki were able to establish the existence of Ricci positive metrics on most (but not all) of the spin family. In contrast, only two non-spin simply-connected 5-manifolds are known to admit Ricci positive metrics, though conjecturally all might be expected to display such metrics.

In conclusion let us give some indication why positive, as opposed to negative Ricci curvature conditions are particularly interesting. By work of Lohkamp (\cite{L1}, \cite{L2}, \cite{L3}) it turns out that negative scalar and negative Ricci curvatures are in some sense generic: any manifold - either compact or non-compact - of dimension at least 3 admits a complete metric or negative Ricci (and therefore negative scalar) curvature. Moreover, such negatively curved metrics are $C^0$-dense in the space of all Riemannian metrics. Thus there is a significant asymmetry between the negative and positive cases, and in particular negative Ricci and scalar curvatures have no topological implications.

\bigskip

\bigskip

\section{Plumbing}\label{sec:plumbing}
%%%%%%%%%%%%%%%%%%%%%%%%%%%%%%%%%%%%%%%%%%%%%%%%%%%%%%%%%%%%%%%%%%%%%%%%%%%%%%%%%%%%%%%%
% (fold)
Plumbing is a construction which creates compact manifolds with boundary $W$ by gluing together a 
collection of disc bundles over closed manifolds.
In Section \ref{subsec:plumbing_arrangements} we review the construction of plumbing manifolds,
as well as the surgery description of the boundaries of plumbing manifolds.  We then describe the 
relevance of plumbing to the construction of Ricci positive metrics on the boundaries of plumbing manifolds.

A feature of the plumbing construction is that while the disjoint union of plumbing manifolds
$W_0$ and $W_1$ is by definition a plumbing manifold, it is by no means clear that the 
boundary connected sum $W_0 \natural W_1$ admits the structure of a plumbing manifold.
However, in Section \ref{subsec:connected_sums} we show that the situation with boundary
manifolds is often different.  Theorem \ref{thm:connected_sum_for_delTP-G} describes general
conditions for when the connected sum $\del W_0 \sharp \del W_1$ is the boundary of
a plumbing manifold.  This will be a key input to our arguments in Section \ref{sec:(4k-1)_manifolds}.

While techniques for constructing Ricci positive metrics via plumbing on $n$-manifolds only hold
for $n \geq 5$, in dimension $n=3$, the boundaries of plumbing manifolds, as we define them below, 
are a special case of graph manifolds.  It has long been known that the connected sum of two
graph manifolds is again a graph manifold: 
%this is implicit in the work of Waldhausen \cite{Waldhausen} and
%it was proven by Scharf \cite{Scharf} for plumbings of $D^2$-bundles over $2$-spheres.
%Scharf identified certain moves on the graphs which describe plumbing manifolds,
%such that the moves not alter the boundary $3$-manifold.  The move of Scharf's denoted RIII 
%by Neumann and Weintraub \cite[p.\,73]{N-W} shows that $\del \cal{TP}^4$ is closed under connected sum.
this follows by applying \cite[Proposition 2.1]{Neumann2} to the move splitting R6 \cite[p.\,305]{Neumann2}.
%shows that the connected sum of graph manifolds is again a graph manifold.  
Hence Theorem \ref{thm:connected_sum_for_delTP-G} below can be viewed as an extension of a part of theory of graph manifolds
in dimensions $3$ to higher dimensions.  For a futher brief discussion of $4$-dimensional plumbings and their boundaries,
see Section \ref{subsec:3-manifolds}.

\subsection{Plumbing arrangements} \label{subsec:plumbing_arrangements}
%%%%%%%%%%%%%%%%%%%%%%%%%%%%%%%%%%%%%%%%%%%%%%%%%%%%%%%%%%%%%%%%%%%%%%%%%%%%%%%%%%%%%%%%
We begin by recalling the construction of plumbing manifolds as described by Browder \cite[V \S2]{Bro}.
Let $p$ and $q$ be positive integers and set $m := p+q$.  
Consider oriented disc bundles $D^p \hookrightarrow E_1 \rightarrow B_1^q$ and $D^q \hookrightarrow E_2 \rightarrow B_2^p,$ 
with the fibres, base and total space of each bundle oriented compatibly. Now select points $x \in B_1$ and $y \in B_2.$ 
Restricting $E_1$ and $E_2$ to disc neighbourhoods $D^q_x \subset B_1,$ $D^p_y \subset B_2$ about the chosen points, we have a local product structure giving rise to diffeomorphisms $\pi_1^{-1}(D^q_x) \cong D^q_x \times D^p$ respectively $\pi_2^{-1}(D^p_y) \cong D^p_y \times D^q,$ 
where $\pi_i$ denotes the projection map of $E_i$ onto its base. Fixing these local trivializations, we next choose orientation preserving (reversing) diffeomorphisms $\phi_{+}:D^q \rightarrow D^p_y$ (respectively $\phi_{-}:D^p \rightarrow D^p_y$), and $\theta_{+}:D^q_x \rightarrow D^q$ (respectively $\theta_{-}:D^q_x \rightarrow D^q$). From these we construct diffeomorphisms 
$$I_{+}=(\phi_{+},\theta_{+}):D^p \times D^q_x \rightarrow D^p_y \times D^q;$$
$$I_{-}=(\phi_{-},\theta_{-}):D^p \times D^q_x \rightarrow D^p_y \times D^q.$$ 
Using either $I_{+}$ or $I_{-}$ together with the local trivializations, we can identify the two local bundle neighbourhoods to create a single manifold with boundary. It is easy to see that we can smooth this manifold near the site of the identification. This (smooth) manifold is the plumbing of $E_1$ and $E_2,$ which we will denote $E_1 \square E_2.$  Note that if one of $p$ or $q$ is even then $E_1 \square E_2$ is compatibly oriented with $E_1$ and $E_2$, whereas if $p$ and $q$ are both odd, $E_1 \square E_2$ is compatibly oriented with $E_1$ and $-E_2$.

In general, a plumbing manifold $W$ is a manifold constructed by a finite sequence of plumbings which can be described as follows.
Let $(\gamma_1, \dots, \gamma_r)$ be an ordered set of oriented smooth disc bundles $D^{r_i} \hookrightarrow E_i^m \to B^{s_i}$ 
where $(r_i, s_i) = (p, q)$ or $(q, p)$.  We then choose $E_{i_1}$ and $E_{i_2}$ with opposite fibre and base dimensions, and plumb them
together using either $I_+$ or $I_-$. We say we plumb with sign $+1$ if we use $I_{+},$ and sign $-1$ if we use $I_{-}.$  We continue in this manner a finite number of times - being careful to plumb on disjoint discs if we use the same bundle more than once.  
%There is no reason why we cannot plumb bundles together at multiple points, provided the plumbing neighbourhoods are disjoint, (though the manifolds under consideration in this paper will not require such an operation). More generally, we can take an ordered collection $E_1,...,E_r$ of $D^{2k}$-bundles over $S^{2k},$ classified by
%homotopy classes $\alpha_i \in \pi_{2k-1}(SO(2k))$,~$i = 1, \dots, r$,
%and plumb these together in some arrangement. 
This {\em plumbing arrangement} for $W$ can be represented by a {\em labelled graph},
\[ \mathfrak{g}_{} = \bigl( (\V, \E), (\gamma_1, \dots, \gamma_r) \bigr), \]
which consists of a graph $(\V, \E)$ with ordered vertex set $\V = (v_1, \dots, v_r)$ and a set of  
directed edges $\E$ with signs $\epsilon(e) = \pm 1$ for each $e \in \E$, together with an $r$-tuple of labels $(\gamma_1, \dots, \gamma_r)$ with the label $\gamma_i$ being associated to vertex $v_i.$ The label $\gamma_i$ will represent the bundle with total space $E_i$. A directed edge $e$ in the graph from $v_i$ to $v_j$ with sign $\epsilon(e)$ then corresponds to plumbing $E_i$ to $E_j$ with sign $\epsilon(e)$. Thus a plumbing of disc bundles can be completely described by such a labelled graph.
%A signed edge $\pm e$ beteween $v_i$ and $v_j$ represents a (single) plumbing between the corresponding bundles using the map $I_{\pm}.$
Since we do not consider plumbing bundles with themselves, there are no edges from $v_i$ to itself.
Multiple edges from $v_i$ to $v_j$ will then represent multiple plumbings of $E_i$ with $E_j$.

The labelled plumbing graph $\mathfrak{g}$ determines the plumbing manifold $W = W(\mathfrak{g})$ up to 
diffeomorphism.
Except in Sections and \ref{subsec:connected_sums} and \ref{subsec:4k+1}, at least one of $p$ or $q$ will be even,
and in this situation we will always assume that our 
plumbing arrangements are constructed using the map $I_{+}$.
Moreover, when one of $p$  or $q$ is even, the ordering of edges is not important and so
in this case we omit both the signs and the ordering of edges from the plumbing graph.
When $p$ and $q$ are both odd, we will assume for simplicity that our plumbing graphs give
rise to orientable plumbing manifolds $W$ which are oriented compatibly with
the total space of the bundle $E_i$ with smallest value of $i$ in each connected component.

%The most commonly encountered case of plumbing - and the relevant one for the constructions in this paper - 
The case of plumbing most relevant for this paper is where $p=q=j \geq 2$ and $B_1=B_2=S^{j}.$ Oriented $D^{j}$-bundles over $S^{j}$ are classified by the homotopy class of their clutching maps, that is, by elements of $\pi_{j-1}(SO(j)) \cong \pi_{j}(BSO(j)).$ 

%Either way, these diffeomorphisms are orientation preserving because of the dimensions under consideration, and thus $E_1 \square E_2$ can be oriented compatibly with $E_1,$ and $E_2.$ 

\begin{Definition} \label{def:plumbing_manifolds}
A $2j$-dimensional plumbing manifold $W = W(\mathfrak{g}_{})$ is a manifold obtained from a plumbing arrangement 
described by the labelled tree $\mathfrak{g}_{} = \bigl( (\V, \E), (\alpha_1, \dots \alpha_r) \bigr)$, $\alpha_i \in \pi_{j-1}(SO(j))$.  
%If, in addition, all the edges of $(\V, \E)$ have positive labels, then we shall call $W$ a {\em positive plumbing manifold}.
We let $\cal{P}^{2j}$ denote the set of diffeomorphism class plumbing manifolds:
%resp.~positive plumbing manifolds:
%
%\[ \cal{P}^{4k}_+ \subset 
\[ \cal{P}^{2j} := \{ W(\mathfrak{g}_{}) \}. \]
\end{Definition}

It is easy to see that (each component of) a plumbing manifold $W = W(\mathfrak{g}_{})$ created by plumbing $D^{j}$-bundles over $S^{j}$ has the homotopy type of a wedge of  $1$-spheres and $j$-spheres, with one $j$-sphere for each bundle. Moreover the 1-skeleton of $W$ has the same homotopy type as the plumbing graph $(\V, \E)$. Thus $W$ is simply-connected if and only if the plumbing graph is simply-connected: i.e. if and only if $(\V, \E)$ is a tree.
%(or `treelike'). 
In this case we shall call $W$ a {\em treelike plumbing manifold}.
We see immediately that a treelike plumbing $W$ is $(j-1)$-connected and that there is a homotopy equivalence $W \simeq \vee_{i=1}^rS^{2k}$,
since the base spheres of the plumbed bundles generate the homology group in dimension $2k$.  
In particular $H_{j}(W) \cong \Z^r$.  Moreover, if $j \geq 3$, then applying van Kampen's theorem to the boundary $\del W$,
we see that $\del W$ is simply connected.  
%Hence by Remark \ref{rem:Smale}, $W \in \cal{H}^{4k}$.
% where $r$ is the number of bundles used in the construction of $W$.

\begin{Definition} \label{def:treelike_plumbing_manifolds}
We let $\cal{TP}^{2j}$ denote the set of diffeomorphism class of treelike plumbing manifolds:
\[ \cal{TP}^{2j} \subset \cal{P}^{2j} := \{ W(\mathfrak{g}) \left| \text{$(\V, \E)$ is a tree} \right. \}. \]
% \mathfrak{g} = \bigl( (\V, \E), (\alpha_1, \dots \alpha_r) \bigr)
%\quad \text{and} \quad \cal{TP}^{4k} \subset \cal{H}^{4k}. \]
%
\end{Definition}

We next turn our attention to the case $j = 2k$ and to 
the intersection of middle-dimensional homology classes, we see that the intersection number between any two distinct homology classes represented by base spheres is simply the number of plumbings between the corresponding bundles (assuming the identification map $I_{+}$ is used for all plumbings). In particular, in the treelike case, this number will either be 1 or 0. The intersection number between a homology class represented by the base sphere
of $E_i$ and itself is just the self-intersection number of the zero-section within the bundle in question, and this is well-known to be equal to the Euler number of the bundle $E_i$. (See for example \cite[V.1.5]{Bro}.) Expressing the intersection form with respect to this natural homology basis, we obtain a symmetric integer matrix $A$ with Euler numbers on the diagonal and non-negative numbers off the diagonal. Notice that the matrix $A$
and the bundles $E_i$ completely determine the plumbing arrangement. 
%once the constituent bundles have been selected (the Euler number alone is not sufficient to determine the bundle). 
We thus obtain following result which can be found in \cite[V.2.1]{Bro}:
\begin{Theorem} \label{Browder} \label{thm:plumbing_realisation}
Let $(\alpha_1, \dots, \alpha_r)$ be a an $r$-tuple of homotopy classes $\alpha_i \in \pi_{2k-1}(SO(2k))$ with Euler numbers $a_i = e(\alpha_i),$ 
and let $A$ be an $(r \times r)$-symmetric integer matrix with diagonal entries $A_{ii} = a_i$ and non-negative off-diagonal entries.
Then for $k \geq 1$ there is a plumbing manifold $W^{4k}$ and a basis for $H_{2k}(W),$ such that with respect to this basis $A$ is the matrix of the intersection form $H_{2k}(W) \otimes H_{2k}(W) \rightarrow \Z.$
\end{Theorem}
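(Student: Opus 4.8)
The plan is to build a plumbing arrangement directly from the prescribed data and then to read its intersection form off the geometry of the construction, using the description of $H_{2k}$ and of intersection numbers recalled in the discussion preceding the statement (this is essentially Browder's argument, \cite[V.2.1]{Bro}).

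First I would write down the labelled graph $\mathfrak{g} = \bigl((\V,\E),(\alpha_1,\dots,\alpha_r)\bigr)$. The vertex set is $\V = (v_1,\dots,v_r)$, and the label attached to $v_i$ is the oriented $D^{2k}$-bundle $E_i \to S^{2k}$ with clutching class $\alpha_i \in \pi_{2k-1}(SO(2k))$, so that $e(E_i) = a_i$. Since the off-diagonal entries satisfy $A_{ij} = A_{ji} \ge 0$, for each pair $i < j$ I join $v_i$ and $v_j$ by exactly $A_{ij}$ edges, and I use no loops, in keeping with the fact that $A$ prescribes no self-plumbings. As $2k$ is even, we are in the regime where all plumbings are performed with $I_{+}$ and where the signs and ordering of edges are irrelevant. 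The labelled graph $\mathfrak{g}$ then determines a $4k$-dimensional plumbing manifold $W = W(\mathfrak{g})$; note that $W$ may be disconnected (when $A$ is block diagonal), which is permitted.

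It remains to compute the intersection form. Each component of $W$ has the homotopy type of a wedge of $1$-spheres and $2k$-spheres with one $2k$-sphere per bundle, so $H_{2k}(W) \cong \Z^r$, freely generated by the classes $z_i := [B_i]$ of the base spheres. The pairings $\langle z_i, z_j \rangle$ are exactly those discussed above: for $i \ne j$ each of the $A_{ij}$ plumbings of $E_i$ with $E_j$ contributes one transverse intersection point of $B_i$ with $B_j$, appearing in a suitable local chart as the intersection of the coordinate subspaces $\R^{2k}\times\{0\}$ and $\{0\}\times\R^{2k}$, and these points all carry the same sign because every plumbing uses $I_{+}$ and the bundles are compatibly oriented; hence $\langle z_i, z_j \rangle = A_{ij}$. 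For $i = j$, the self-intersection $\langle z_i, z_i \rangle$ equals the Euler number of the normal bundle of $B_i$ in $W$, which is $e(E_i) = a_i = A_{ii}$; see \cite[V.1.5]{Bro}. Thus the matrix of the intersection form of $W$ in the basis $(z_1,\dots,z_r)$ is $A$, as required.

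The only step needing real care, and the one I would spell out, is the local analysis at the plumbing sites: one must check that a single plumbing with $I_{+}$ really contributes $+1$ to the off-diagonal intersection number, with a sign that is consistent across all pairs and all plumbings, and that a transverse perturbation of the zero section $B_i$ can be arranged away from the finitely many plumbing discs, so that $\langle z_i, z_i \rangle$ is computed inside an undisturbed copy of $E_i$ and therefore equals $e(E_i)$. Both points come down to tracing through the identifications $I_{+} = (\phi_{+},\theta_{+})$ and the orientation conventions on fibres, bases and total spaces, as in \cite[V~\S\S1--2]{Bro}; no essentially new idea is involved beyond careful bookkeeping with orientations.
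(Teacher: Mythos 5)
Your proposal is correct and follows the same route the paper takes: the paper does not give a self-contained proof of Theorem~\ref{thm:plumbing_realisation} but cites Browder \cite[V.2.1]{Bro} and, in the paragraph immediately preceding the statement, records exactly the two facts you use — that off-diagonal intersection numbers count plumbings (with consistent sign when $I_{+}$ is used throughout) and that the diagonal entry is the Euler number of $E_i$ via the self-intersection of the zero section — together with the observation that the matrix $A$ and the bundles $E_i$ determine the plumbing arrangement. Your write-up simply inverts that observation explicitly (build $\mathfrak{g}$ with $A_{ij}$ edges between $v_i$ and $v_j$) and is entirely in line with the paper's intent; the local sign-and-orientation bookkeeping you flag is precisely what is deferred to Browder.
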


We conclude this subsection by discussing the boundaries of plumbing manifolds from the point of view
of surgery and positive Ricci curvature.
It is not difficult to see that given bundles $D^p \hookrightarrow E_1 \rightarrow S^q,$ $D^q \hookrightarrow E_2 \rightarrow S^p$ together with balls 
$D_1^q \subset S^q$ and $D_2^p \subset S^p,$ we have 
$$\partial(E_1 \square E_2)=(\partial E_1-\pi_1^{-1}(D_1^q)) \cup (\partial E_2-\pi^{-1}_2(D_2^p)),$$
where $\pi_1$ and $\pi_2$ are the projection maps for the sphere bundles $\partial E_1,$ $\partial E_2.$ The union is formed using a certain diffeomorphism of the boundaries, which arises from choices of local trivializations for $\pi_1^{-1}(D_1^q)$ and $\pi_2^{-1}(D_2^p).$ We claim that this boundary construction can be viewed as a surgery performed on a fibre sphere of $\partial E_1.$ To see this, we need to look at the gluing diffeomorphism in more detail. First, fix a local trivialization 
$$\iota:\pi_1^{-1}(D_1^q) \rightarrow S^{p-1} \times D^q.$$ 
Such a trivialization is unique up to composition with a `twisting map' $T:S^{p-1} \times D^q \rightarrow S^{p-1} \times D^q,$ defined in terms of a smooth map $\tau:S^{p-1} \rightarrow \hbox{SO}(q)$ by $T(x,y):=(x,\tau(x)y).$ Next, we will view the bundle $\partial E_2$ as 
$$\partial E_2=(D_N^p \times S^{q-1}) \cup_{\psi} (D^p_S \times S^{q-1}),$$ 
where $D^p_N,$ $D^p_S$ denote `northern' and `southern' hemispheres of the base sphere $S^p,$ and $\psi$ is a clutching map 
$$\psi:\partial D^p_N \times S^{q-1} \rightarrow \partial D^p_S \times S^{q-1}.$$
We now observe that we can compose $\psi$ with the induced boundary map 
$$\partial \iota:\partial (\pi_1^{-1}(D_1^q)) \rightarrow S^{p-1} \times S^{q-1}$$
(after canonically identifying $\partial D^p_N \times S^{q-1}$ with $S^{p-1} \times S^{q-1}$) to obtain a diffeomorphism 
$$\psi \circ \partial \iota:\partial (\pi_1^{-1}(D_1^q)) \rightarrow \partial D^p_S \times S^{q-1},$$
and this composition is precisely the gluing map if we view $\partial(E_1 \square E_2)$ as 
$$\partial(E_1 \square E_2)=\bigl( \partial E_1-\pi_1^{-1}(D_1^q) \bigr) \cup (D^p_S \times S^{q-1}).$$
This construction is clearly surgery on a fibre sphere of $\partial E_1,$ as claimed. Notice that the clutching map $\psi$ takes the form 
$\psi(x,y)=(x,\tau(x)y)$ for some smooth map $\tau:S^{p-1} \rightarrow \hbox{SO}(q).$ We can extend $\psi$ to a map 
$T:S^{p-1} \times D^q \rightarrow S^{p-1} \times D^q$ in the obvious way, and then regard the gluing map $\psi \circ \partial \iota$ as the boundary map induced by the composition $T \circ \iota.$ So this surgery is precisely surgery on a fibre sphere of $\partial E_1$ using the normal bundle trivialization $T \circ \iota.$ Thus the structure of the bundle $E_2$ is encoded into the trivialization needed for the surgery. Of course, we could equally regard the boundary effect of this plumbing as a certain surgery on a fibre sphere of $\partial E_2.$

From a geometric perspective, plumbing is useful for constructing metrics with positive Ricci curvature. Its relevance is due to its close relationship with surgery, as described above. There are various results which show how, under the right conditions, surgery can be performed on a Ricci positive manifold in such a way that the Ricci positive metric can be extended across the surgery (see for example \cite{SY}, \cite{Wr2}). Roughly speaking, one needs a product metric in a neighbourhood of the surgery, where the factors of the product are a round sphere and a round normal disc. Moreover, one needs the radius of the sphere to be very small compared to the radius of the normal disc. This is the opposite situation to that usually encountered, and is a major reason why Ricci positive surgery results are difficult to apply. One situation where the right conditions are easily arranged is performing surgery on a fibre sphere of a sphere bundle over a Ricci positive base manifold. Since the boundary effect of plumbing two disc bundles over spheres is precisely such a surgery, it is not difficult to deduce that many manifolds plumbed from disc bundles over spheres according to a simply connected plumbing graph have a boundary which supports a Ricci positive metric 
(see Theorem 1.1 from the Introduction). Thus plumbing descriptions for bounding manifolds can be used as blueprints for constructing manifolds via a sequence of surgeries (starting from a sphere-bundle over a sphere), all of which are of the correct type to apply a Ricci positive surgery result. This can be an effective way of constructing Ricci positive manifolds, not least because the topology of plumbing manifolds and their boundaries is relatively easy to identify.
% subsection plumbing_arrangement (end)
%%%%%%%%%%%%%%%%%%%%%%%%%%%%%%%%%%%%%%%%%%%%%%%%%%%%%%%%%%%%%%%%%%%%%%%%%%%%%%%%%%%%%%%%

% (fold)
\subsection{Connected sums via plumbing} \label{subsec:connected_sums}
%%%%%%%%%%%%%%%%%%%%%%%%%%%%%%%%%%%%%%%%%%%%%%%%%%%%%%%%%%%%%%%%%%%%%%%%%%%%%%%%%%%%%%%%
In this subsection we prove a connected sum theorem for boundaries of plumbings.  
Since the theorem holds for general plumbing manifolds, where the bases of the bundles
can be any connected closed smooth manifold, we first generalise the notation from
Section \ref{subsec:plumbing_arrangements}.

Let $p, q$ be poistive integers, let $m = p + q$ and let $\cal{C}^r$, $r = p, q$,
be a set of closed smooth oriented $r$-manifolds such that $S^r \in \cal{C}^r$.
We are now interested in $m$-manifolds $W$ which are obtained by plumbing smooth oriented bundles
$D^q \to E^m_i \to B^p_i$ and $D^p \to E^m_j \to B^q_j$ where $B^p_i \in \cal{C}^p$ and $B^q_j \in \cal{C}^q$.
As in Section \ref{subsec:plumbing_arrangements}, the plumbing arrangement for each $W$ can be described by a 
labelled graph $\mathfrak{g}_{}= \bigl((\V, \E), (\gamma_1, \dots \gamma_r)\bigr)$ 
where the labels $\gamma_i$ now label isomorphism classes of bundles $D^q \to E^m_i \to B^p_i$ or $D^p \to E^m_j \to B^q_j$, 
and we can only connect vertices which are labelled by bundles of complementary dimensions.
We call manifolds $W$ constructed in this way $(\cal{C}^p\!-\cal{C}^q$)-plumbings manifolds,
and we define 
\[ \cal{P}^m(\cal{C}^p, \cal{C}^q) := \{W(\mathfrak{g}) \}, \]
the set of diffeomorphism classes of $(\cal{C}^p, \cal{C}^q)$-plumbing manifolds.
With this notation the set $\cal{P}^{2j}$ of Definition \ref{def:plumbing_manifolds} is
$\cal{P}^{2j} = \cal{P}^{2j}(\{S^{j}\},\{S^{j}\})$.

As in Section \ref{subsec:plumbing_arrangements}, we call a plumbing manifold 
$W((\V, \E), (\gamma_1, \dots, \gamma_r))$ treelike if the graph $(\V, \E)$ is a tree and we define
\[ \cal{TP}^m(\cal{C}^p, \cal{C}^q) \subset \cal{P}^m(\cal{C}^p, \cal{C}^q), \]
to be the set of diffeomorphism classes of treelike $(\cal{C}^p\!-\cal{C}^q$)-plumbings.  We also define
\[ \del \cal{TP}^m(\cal{C}^p, \cal{C}^q) \subset \del \cal{P}^m(\cal{C}^p, \cal{C}^q), \]
%\quad \text{and} \quad \del 
%
to be the set of diffeomorphism classes of manifolds which are the boundaries of treelike $(\cal{C}^p\!-\cal{C}^q$)-plumbings,
respectively the boundaries of $(\cal{C}^p\!-\cal{C}^q)$-plumbings.

\begin{Theorem} \label{thm:connected_sum_for_delTP-G}
The sets $\del \cal{TP}^m(\cal{C}^p, \cal{C}^q)$ and $\del \cal{P}^m(\cal{C}^p, \cal{C}^q)$ are closed under connected sum.
\end{Theorem}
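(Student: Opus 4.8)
The plan is to prove the statement by an explicit plumbing construction, following the philosophy already flagged in the text that ``the proof is by explicit construction''. Given plumbing graphs $\mathfrak{g}_0 = ((\V_0,\E_0),(\gamma^0_1,\dots,\gamma^0_{r_0}))$ and $\mathfrak{g}_1 = ((\V_1,\E_1),(\gamma^1_1,\dots,\gamma^1_{r_1}))$ with boundaries $\del W(\mathfrak{g}_0)$ and $\del W(\mathfrak{g}_1)$, I want to produce a single plumbing graph $\mathfrak{g}$ (treelike whenever both $\mathfrak{g}_0$ and $\mathfrak{g}_1$ are trees) such that $\del W(\mathfrak{g}) \cong \del W(\mathfrak{g}_0) \sharp \del W(\mathfrak{g}_1)$. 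The key idea is that one should not glue the two graphs along an edge between old vertices; instead one introduces an auxiliary ``connector'' vertex labelled by a \emph{trivial} bundle over a sphere, plumbed once to a chosen vertex of $\mathfrak{g}_0$ and once to a chosen vertex of $\mathfrak{g}_1$. The reason for using the trivial bundle over $S^p$ (or $S^q$) as the connector is that plumbing in such a bundle, followed by reading off the boundary, amounts to a trivial (i.e.\ trivially framed) surgery, and its net effect on the boundary is essentially to take a connected sum rather than to mix the two handle structures.

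The steps I would carry out are as follows. \textbf{Step 1:} Recall from Section \ref{subsec:plumbing_arrangements} that each plumbing of $E_i \square E_j$ corresponds to a surgery on a fibre sphere of the sphere bundle $\del E_i$, with the framing dictated by the clutching data of $E_j$; and that $\del W(\mathfrak{g})$ is obtained from the disjoint union of the sphere bundles $\del E_v$, $v \in \V$, by performing one such surgery per edge. \textbf{Step 2:} Fix a vertex $v_0 \in \V_0$ and $v_1 \in \V_1$; pick a fibre $D^p$ in $\del E_{v_0}$ and a fibre $D^q$ in $\del E_{v_1}$ (choosing whichever complementary dimension is needed so that the trivial connector bundle over the appropriate sphere can be attached to both). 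Form $\mathfrak{g}$ as the disjoint union $\mathfrak{g}_0 \sqcup \mathfrak{g}_1$ together with one new vertex $v_\ast$ labelled by the trivial bundle $S^p \times D^q$ (say), and two new edges, $v_0$--$v_\ast$ and $v_\ast$--$v_1$, with sign $+1$ (or the appropriate sign in the all-odd case). If both input graphs are trees this $\mathfrak{g}$ is again a tree, and if $S^p, S^q$ lie in $\cal{C}^p, \cal{C}^q$ then $\mathfrak{g}$ is a legitimate $(\cal{C}^p,\cal{C}^q)$-plumbing graph, so $\del W(\mathfrak{g})$ lies in the claimed set. \textbf{Step 3:} Analyse $\del W(\mathfrak{g})$ directly. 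Because $E_{v_\ast}$ is trivial, $\del E_{v_\ast} = S^p \times S^{q-1}$, and the two surgeries attaching it to $\del E_{v_0}$ and $\del E_{v_1}$ are surgeries along unknotted, trivially framed fibre spheres. A careful local model shows that the first surgery turns $\del E_{v_0} \sqcup (S^p\times S^{q-1})$ into $\del E_{v_0}$ again (a trivial framed surgery on a fibre sphere re-cancels), and that doing the second surgery then realises the connected sum $\del W(\mathfrak{g}_0) \sharp \del W(\mathfrak{g}_1)$ — the ``tube'' $S^p \times S^{q-1} \times I$ becoming exactly the $S^{m-1}$-neck of a connected sum. \textbf{Step 4:} Check orientations and the sign/ordering bookkeeping in the all-odd case (where orientations of $E_i \square E_j$ depend on parities, as noted in the text), and verify the construction respects the conventions of Section \ref{subsec:plumbing_arrangements}.

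The main obstacle I expect is \textbf{Step 3}: establishing rigorously that inserting the trivial connector vertex produces precisely the connected sum of boundaries, rather than some twisted version or a manifold differing from $\del W(\mathfrak{g}_0)\sharp \del W(\mathfrak{g}_1)$ by a homotopy sphere. One has to track the gluing diffeomorphisms and framings through the two surgeries explicitly, using the local-product description of plumbing recalled above; the delicate point is that the first surgery must genuinely undo the addition of $\del E_{v_\ast}$ while leaving a disc's worth of ``room'' through which the second surgery connects the two pieces. I would handle this by building an explicit Morse-theoretic / handle picture: present $\del W(\mathfrak{g}_i)$ as surgery on $\del E_{v_i}$ (itself a sphere bundle, hence standard near a fibre), and show that the composite cobordism built from the two connector-edge surgeries is diffeomorphic rel boundary to the standard connected-sum cobordism $(\del W(\mathfrak{g}_0) \sqcup \del W(\mathfrak{g}_1)) \times I$ with a $1$-handle attached. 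A secondary, more routine obstacle is the sign/ordering conventions in the non-orientable-looking all-odd case, but these are bookkeeping and should follow the conventions already fixed in Section \ref{subsec:plumbing_arrangements}. Finally, the treelike case needs no extra argument: since $\mathfrak{g}$ is a tree whenever $\mathfrak{g}_0$ and $\mathfrak{g}_1$ are, closure of $\del\cal{TP}^m(\cal{C}^p,\cal{C}^q)$ under connected sum is immediate from the same construction.
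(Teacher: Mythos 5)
There is a genuine gap, and it lies exactly where you flagged the main obstacle: your Step~3 is not merely hard to make precise — it is false as stated, and the single-vertex connector does not produce the ordinary connected sum. If you plumb one trivial $D^q$-bundle over $S^p$ (your $v_\ast$, the paper's $0_p$) to a vertex $v_0$ of $\mathfrak{g}_0$ and to a vertex $v_1$ of $\mathfrak{g}_1$, then the effect on boundaries is to replace $\del E_{v_0}$ and $\del E_{v_1}$ by $\del E_{v_0} \setminus (D^q \times S^{p-1})$ and $\del E_{v_1} \setminus (D^q \times S^{p-1})$ and to join them by the ``tube'' $\del E_{v_\ast} \setminus 2(D^p \times S^{q-1}) \cong S^{p-1} \times I \times S^{q-1}$. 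This is the \emph{fibre} connected sum along $S^{p-1} \times S^{q-1}$, not the connected sum along $S^{m-2}$ — exactly the content of Lemma~\ref{lem:FCS} in the paper, and these two operations differ in general (compare homology when $E_1 = E_2 = D^p \times S^q$). Your informal justification — that a surgery on a fibre sphere with trivial framing ``re-cancels'' and returns $\del E_{v_0}$ — is also not correct: e.g.\ the boundary of the plumbing of two trivial bundles is $S^{m-1}$, not $S^{p-1} \times S^q$, so even the first surgery changes the diffeomorphism type.

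The paper fixes this by using a \emph{two-vertex} connector $W_\infty$, the plumbing of $0_p$ (trivial $D^q$-bundle over $S^p$) and $0_q$ (trivial $D^p$-bundle over $S^q$). First, $\gamma_1, \gamma_2$ are plumbed to $0_p$, giving the fibre connected sum. Then the additional plumbing of $0_q$ to $0_p$ performs one more surgery, on a fibre sphere of $\del E_{v_\ast}$ lying inside the tube $S^{p-1} \times I \times S^{q-1}$; the crucial technical step (Lemmas~\ref{lem:product} and~\ref{lem:trace}) is that this surgery converts $S^{q-1} \times S^{p-1} \times [-1,1]$ into $(S^{p-1} \times D^q)\,\sharp\,(S^{p-1} \times D^q)$, which is exactly the connected-sum neck. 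Once $\del W_i$ has a suitable $D^p$-bundle vertex to attach to, the rest of your plan (disjoint union, check treelike-ness, handle the all-odd signs, and reduce the remaining case by first connect-summing with $\del W_\infty \cong S^{m-1}$) is essentially the argument the paper gives. So the missing idea is the second trivial vertex and the surgery lemma that upgrades fibre connected sum to ordinary connected sum; without it the construction proves a weaker statement.
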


\begin{Remark} \label{rem:connected_sum_for_delTP-G}
For $p = q = 2$ and $\cal{C}^2 = \{S^2\}$ Theorem \ref{thm:connected_sum_for_delTP-G} follows from results in
\cite{Scharf}: see the move denoted RIII, \cite[p.\,73]{N-W}.  For $\cal{C}^2$ the set of all surfaces,
Theorem \ref{thm:connected_sum_for_delTP-G} follows from \cite[Proposition 2.1]{Neumann2} applied to the splitting move R6 \cite[p.\,305]{Neumann2}.
\end{Remark}

The rest of the Subsection is devoted to the proof of Theorem \ref{thm:connected_sum_for_delTP-G}.
Let $W_\infty$ be the plumbing manifold obtained by plumbing the trivial bundles
$D^q \times S^p$ and $D^p \times S^q$.  If $0_r$, $r = p, q$, denotes the trivial bundle
over $S^r$, then $W_\infty$ has plumbing graph
\[ \xymatrix{ 0_p \ar@{-}[r] & 0_q.}  \]
We observe that $W_\infty$ is diffeomorphic to the manifold
obtain by removing a small open disc from $S^p \times S^q$, so that $\del W_\infty \cong S^m$. 
To see this, simply write $S^p=D^p_+ \cup D^p_-$ and $S^q=D^q_+ \cup D^q_-$, and identify $D^m$ with $D^p_- \times D^q_-.$ Thus 
$$W_0=(D^p_+ \times D^q_+) \cup (D^p_+ \times D^q_-) \cup (D^p_- \times D^q_+)$$
which simplifies to 
$$W_0= (S^p \times D^q_-) \cup_{D^p_+ \times D^q_+} (D^p_+ \times S^q),$$
which is easily seen to be the plumbing of two trivial bundles as claimed.
Given two bundles $D^p$ bundles $\gamma_1$ and $\gamma_2$
over $q$-dimensional bases, we form the treelike plumbing manifold $Y$ with the following plumbing graph,
where the $+$ labels on the edges indicate that we plumb with sign $+1$.
\begin{equation} \label{eq:Y}
\entrymodifiers={++[o][F-]}
\xymatrix{ *\txt{} & 0_q \ar@{-}[dd] \\ \gamma_1 \ar@{-}[dr]_(0.425){+} & *\txt{} & \gamma_2 \ar@{-}[dl]^(0.425){+} \\ *\txt{} & 0_p }  
\end{equation}
%
% where $0_r$ denotes the trivial bundle bundle over $S^r$, $r = p, q$.

\begin{Proposition} \label{prop:basic}
If $\gamma_1$ and $\gamma_2$ are $D^p$-bundles over $q$-dimensional bases with total spaces $E_1$ and $E_2$, 
then the treelike plumbed manifold $Y$ obtained by plumbing $E_1$ and $E_2$ to $W_\infty$ as above in 
\eqref{eq:Y} 
is such that $\partial Y=\partial E_1 \sharp \partial E_2.$
\end{Proposition}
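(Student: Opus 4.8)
The plan is to analyse the boundary of $Y$ by tracking what each plumbing does to the boundary sphere bundles, using the surgery description developed in Section \ref{subsec:plumbing_arrangements}. Recall that $\partial W_\infty \cong S^m$, since $W_\infty$ is $S^p \times S^q$ with an open disc removed. The manifold $Y$ is obtained from $W_\infty$ by plumbing $E_1$ (a $D^p$-bundle over a $q$-manifold) to the vertex $0_p$ and $E_2$ to the vertex $0_p$ as well, on disjoint discs, each with sign $+1$. Equivalently, $Y = E_1 \,\square\, W_\infty \,\square\, E_2$ where both plumbings attach along the $0_p$ vertex of $W_\infty$.

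First I would use the formula $\partial(E_1 \square E_2) = (\partial E_1 - \pi_1^{-1}(D_1^q)) \cup (\partial E_2 - \pi_2^{-1}(D_2^p))$ and its interpretation as surgery. Plumbing $E_1$ onto the $0_p$ vertex of $W_\infty$ removes a trivially-embedded copy of $S^{p-1} \times D^q$ from $\partial W_\infty = S^m$ and glues in $(\partial E_1 - \pi_1^{-1}(D_1^q))$ — which is $\partial E_1$ with a trivial $S^{p-1} \times D^q$ removed — along the common boundary $S^{p-1} \times S^{q-1}$. Since the disc $D_1^q \subset S^q$ is a trivialising chart, $\partial E_1 - \pi_1^{-1}(D_1^q)$ is precisely $\partial E_1$ with an open disc $D^m$ removed (a neighbourhood of a fibre $S^{p-1}$, thickened), i.e. $\partial E_1$ minus an open $m$-disc. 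Gluing this into $S^m$ minus a disc along $S^{m-1}$ yields exactly the connected sum $S^m \sharp \partial E_1 \cong \partial E_1$. The key point here is that the gluing diffeomorphism of the punctured boundary spheres, once the twisting map $T$ is accounted for as in the surgery discussion, extends over the disc (or can be arranged to), so the result is a genuine connected sum and not some twisted identification. The second plumbing, attaching $E_2$ on a disjoint disc of the $0_p$ vertex, does the same operation a second time on a region disjoint from the first, producing $\partial E_1 \sharp \partial E_2$.

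The step I expect to be the main obstacle is verifying precisely that each plumbing onto $W_\infty$ realises the connected sum operation on the boundary rather than some other gluing — that is, checking that the identification diffeomorphism of the punctured $(m-1)$-spheres is isotopic to one extending over $D^m$. This requires care with the roles of the local trivialisations $\iota$, the twisting maps $\tau : S^{p-1} \to SO(q)$, and the hemisphere decomposition of the base, exactly as set up in the surgery discussion of Section \ref{subsec:plumbing_arrangements}; the fact that we plumb with sign $+1$ and that $0_p$, $0_q$ are trivial bundles is what makes the relevant clutching data trivial enough for the argument to close. Once this is established for a single plumbing onto $W_\infty$, the full statement follows by performing the two plumbings on disjoint discs and invoking associativity and commutativity of connected sum. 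I would also note that the intermediate manifold $E_1 \square W_\infty$ is itself treelike with graph $\gamma_1 - 0_q - 0_p$, so $Y$ is genuinely a treelike plumbing manifold as claimed.
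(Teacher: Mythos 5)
Your central claim, that the piece removed from $\partial E_1$ by the plumbing ``is precisely $\partial E_1$ with an open disc $D^m$ removed,'' is incorrect, and the error propagates through the rest of the argument. The boundary formula removes $\pi_1^{-1}(D_1^q) \cong S^{p-1} \times D^q$ from $\partial E_1$, a tubular neighbourhood of a fibre sphere $S^{p-1}$, which is not a disc when $p \geq 2$. Likewise, the piece removed from $\partial W_\infty \cong S^{m-1}$ by this plumbing is $S^{q-1} \times D^p$ (a tubular neighbourhood of a fibre $(q{-}1)$-sphere of $0_p$), not the ``$S^{p-1}\times D^q$'' you name. The two punctured pieces are glued along $S^{p-1} \times S^{q-1}$, not along an $(m{-}2)$-sphere, so the operation is not a priori a connected sum $\partial E_1 \sharp S^{m-1}$; whether the result simplifies to $\partial E_1$ is exactly what has to be proved. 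The ``key point'' you flag --- that the gluing diffeomorphism extends over a disc --- is therefore not a correct description of what must be verified, and you supply no argument for it in any form.

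The paper avoids the one-plumbing-at-a-time strategy. It first shows (Lemma~\ref{lem:FCS}) that the partial plumbing $E_1 \square 0_p \square E_2$ has boundary the fibre connected sum of $\partial E_1$ and $\partial E_2$, whose middle piece is explicitly the cylinder $S^{q-1}\times S^{p-1}\times[-1,1]$. The remaining plumbing of $0_q$ into $0_p$ then performs a surgery inside that cylinder, and the content of Lemmas~\ref{lem:product} and~\ref{lem:trace} is the computation that the outcome of this surgery is $(S^{p-1}\times D^q)\sharp(S^{p-1}\times D^q)$, which is the middle piece of a genuine connected sum. If you wanted to salvage an incremental argument, already the first step, $\partial(E_1\square W_\infty)\cong\partial E_1$, forces you to analyse how a cylinder $S^{q-1}\times S^{p-1}\times[-1,1]$ caps off after the $0_q$-surgery --- essentially the same computation as Lemma~\ref{lem:trace} --- and the second plumbing then has to be tracked relative to that modified boundary. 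As written, your proposal replaces both of these calculations with an assertion, and the assertion rests on the false premise that the removed regions are discs.
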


\begin{Remark} \label{rem:Scharf}
The reader can compare \eqref{eq:Y} and Proposition \ref{prop:basic} with the move RIII \cite[p.\,73]{N-W} and also the
splitting move R6 of \cite[p.\,305]{Neumann2} and \cite[Proposition 2.1]{Neumann2}.
\end{Remark}

\begin{proof}[Proof of Theorem \ref{thm:connected_sum_for_delTP-G} assuming Proposition \ref{prop:basic}] 
%Let $\bar{M}_1,\bar{M}_2$ be the plumbing manifolds corresponding to $M_1$ and $M_2.$ Suppose first that we can write $\bar{M}_i=(\bar{M}_i-\bar{E}_i)\cup \bar{E}_i,$ $i=1,2,$ where the $\bar{E}_i$ are $D^p$-bundles over $q$-dimensional bases $B^q_i,$ and where the union is along a $D^p$-bundle over $B^q_i.$ We now observe that incorporating the $\bar{E}_i$ into the plumbing arrangement of Proposition \ref{prop:basic} has the overall effect of creating a new connected plumbed manifold $\bar{M}$ given by $$\bar{M}=(\bar{M}_1-\bar{E}_1)\cup Y \cup (\bar{M}_2-\bar{E}_2).$$ Clearly, plumbing a bundle in one treelike manifold to a bundle in a different treelike manifold by a single plumbing creates a new treelike manifold. Applying this observation twice, we see that $\bar{M}$ is treelike if the $\bar{M}_i$ are treelike. It is now straightforward to see that $$\partial\bar{M}=\partial(\bar{M}_1-\bar{E}_1)\cup_{\partial E_1} E_1 \sharp E_2  \cup_{\partial E_2} \partial(\bar{M}_2-\bar{E}_2)$$ where $E_i :=\partial \bar{E}_i,$ and this expression simplifies to $\partial \bar{M}=M_1 \sharp M_2$ as required.

If suffices to consider two connected plumbing manifolds $W_1$ and $W_2$.
Suppose in the first case that the plumbing arrangements for each $W_i$ contain $D^p$-bundles with total spaces $E_i$ and $q$-dimensional bases $B^q_i$, $i = 1, 2$.
Hence we can write
$W_i=(W_i-E_i)\cup E_i,$ $i=1,2.$
We consider the disjoint union $W_1 \sqcup W_\infty \sqcup W_2$ and then plumb the bundles $E_i \subset W_i$
to $W_\infty$ according to the diagram \eqref{eq:Y},
that is, we incorporate $E_i$ into the plumbing arrangement of Proposition \ref{prop:basic}.  This has the overall effect of creating a new connected plumbed manifold $W$ given by 
$$W=(W_1-E_1)\cup Y \cup (W_2-E_2).$$ 
Clearly, plumbing a bundle in one treelike plumbing manifold to a bundle in a disjoint treelike plumbing manifold by 
a single plumbing creates a new treelike manifold. 
Applying this observation twice, we see that $W$ is treelike if the $W_i$ are treelike. It is now straightforward to see that 
$$\del W=\partial(W_1-E_1)\cup_{\partial E_1} \del E_1 \sharp \del E_2  \cup_{\partial E_2} \partial(W_2-E_2),$$ 
%where $E_i :=\partial \bar{E}_i,$ 
and this expression simplifies to $\partial W= \del W_1 \sharp \del W_2$ as required.

The above argument covers all situations except the case where at least one of $W_1$ or $W_2$ is a $D^q$-bundle over a $p$-dimensional base. 
Now recall that $S^{m-1}$ is the boundary of $W_\infty = (S^p \times S^q) - \textup{int}(D^m)$ 
is the plumbing of the trivial $D^p$-bundle over $S^q$ with the trivial $D^q$-bundle over $S^p$. 
%We claim that this is a plumbing of a trivial $D^p$-bundle over $S^q$ with a trivial $D^q$-bundle over $S^p$. 
%To see this, simply write $S^p=D^p_+ \cup D^p_-$ and $S^q=D^q_+ \cup D^q_-$, and identify $D^m$ with $D^p_- \times D^q_-.$ Thus 
%$$W_0=(D^p_+ \times D^q_+) \cup (D^p_+ \times D^q_-) \cup (D^p_- \times D^q_+)$$
%which simplifies to 
%$$W_0= (S^p \times D^q_-) \cup_{D^p_+ \times D^q_+} (D^p_+ \times S^q),$$
%which is easily seen to be the plumbing of two trivial bundles as claimed. 
%If $W_i$ is the total space of a $D^q$-bundle over a $p$-dimensional base, then we
We now apply the argument of the previous paragraph to $W_i$ and $W_{\infty}$ 
to show that $\del W_i \cong \del W_i \sharp S^{m-1}$ is the boundary of a plumbing manifold
$W_i'$ (treelike if $W_i$ is treelike), where the plumbing involves a $D^p$-bundle over a $q$-dimensional base. 
Thus the argument of the previous paragraph can be applied again to the new plumbing manifolds $W_i'$ 
to yield a final plumbing manifold $W$ (treelike if $W_1$ and $W_2$ are treelike) with boundary 
$\del W \cong \del W'_1 \sharp \del W'_2 \cong \del W_1 \sharp \del W_2.$
\end{proof} 

For the proof of Proposition \ref{prop:basic} we require three lemmas.  For the statement of Lemma \ref{lem:product} below,
we recall some basic terms of surgery.  Let $N$ be a compact $n$-manifold, either with boundary or closed, let $n = s+t-2$
and let $\phi \colon S^{s-1} \times D^{t-1} \to N$ be an embedding into the interior of $N$.  Then the trace of surgery on $\phi$
is the manifold 
\[ W_\phi : = (N \times I) \cup_\phi (D^{s} \times D^{t-1}) \]
where we regard $S^{s-1} = \del D^{s}$ and $S^{s-1} \times D^{t-1} \subset \del(D^{s} \times D^{t-1})$ as the domain of $\phi$, with the union taking place at one end of the product $N \times I.$ 
The outcome of surgery on $\phi$ is the manifold 
\[ N' : = (N \setminus \phi(S^s \times {\rm Int}(D^{t-1}))) \cup_{S^{s-1} \times S^{t-2}} (D^{s} \times S^{t-2}). \]
In particular, the trace of surgery on $\phi$ is a manifold with boundary
\[ \del W_\phi = N \cup (\del N \times I) \cup N'. \]
For the statement of the following lemma, let $i_\epsilon \colon [-\epsilon, \epsilon] \to [-1, 1]$ be the standard inclusion.

\begin{Lemma} \label{lem:product}
%Suppose that performing a certain surgery on a manifold $M$ produces a manifold $N$ and has trace $W.$ 
%Consider performing surgery on the corresponding embedded sphere in $M \times \{0\} \subset M \times [-1,1].$ This surgery results in the manifold $W \cup_N W$ and has trace $W \times [-1,1].$
Let $N$ be a closed $n$-manifold for $n = s+t-2$.
Suppose that surgery on an embedding $\phi \colon S^{s-1} \times D^{t-1} \to N$ has outcome $N'$ and trace $W_\phi$.
Consider performing surgery on the embedding $\phi \times i_\epsilon \colon S^{s-1} \times D^{t-1} \times [-\epsilon, \epsilon] \to N \times [-1, 1]$
%$M \times \{0\} \subset M \times [-1,1].$ 
The outcome of this is the manifold $W_\phi \cup_{N'} W_\phi$ and has trace $W_\phi \times [-1,1].$
\end{Lemma}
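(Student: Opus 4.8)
The plan is to compute both the trace and the outcome of surgery on $\phi \times i_\epsilon$ directly from the definitions, exploiting the product structure. First I would identify the attaching region: the embedding $\phi \times i_\epsilon$ has domain $S^{s-1} \times D^{t-1} \times [-\epsilon,\epsilon]$, which I regard as $S^{s-1} \times D^{t}$ after the standard identification $D^{t-1} \times [-\epsilon,\epsilon] \cong D^{t}$ (rescaling $[-\epsilon,\epsilon]$ to $[-1,1]$). Thus surgery on $\phi \times i_\epsilon$ inside the $(n+1)$-manifold $N \times [-1,1]$ is a surgery of the same sphere-dimension $s-1$ but one higher normal-disc dimension, and its trace is by definition
\[ W_{\phi \times i_\epsilon} = \bigl( (N \times [-1,1]) \times I \bigr) \cup_{\phi \times i_\epsilon} (D^{s} \times D^{t}). \]
The key observation is that the handle $D^s \times D^t = D^s \times D^{t-1} \times [-1,1]$ is attached only along the sub-collar $\phi(S^{s-1}\times D^{t-1}) \times [-\epsilon,\epsilon]$ sitting in the interior slice $N \times \{0\}$ — away from the two ends $N \times \{\pm 1\}$ — so I can rearrange the picture as a product in the last $[-1,1]$-direction. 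Concretely, $(N\times[-1,1])\times I$ with a handle $D^s \times D^{t-1} \times [-1,1]$ glued along $\phi(S^{s-1}\times D^{t-1})\times[-\epsilon,\epsilon]\times\{1\}$ is, after an isotopy straightening the collar, diffeomorphic to $\bigl((N\times I)\cup_\phi (D^s\times D^{t-1})\bigr)\times[-1,1] = W_\phi \times [-1,1]$: the $[-1,1]$ factor (the former $i_\epsilon$ direction, fattened) simply rides along as a trivial product once the handle is attached. This gives the claimed trace.

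For the outcome, I would either read it off as one boundary component of $W_\phi \times [-1,1]$, or compute it directly. By definition,
\[ (N\times[-1,1])' = \bigl( (N\times[-1,1]) \setminus (\phi\times i_\epsilon)(S^{s-1}\times \mathrm{Int}(D^{t})) \bigr) \cup_{S^{s-1}\times S^{t-1}} (D^s \times S^{t-1}). \]
Writing $S^{t-1} = D^{t-1}_+ \cup_{S^{t-2}} D^{t-1}_-$ as the union of two hemispheres and correspondingly splitting $D^s \times S^{t-1} = (D^s \times D^{t-1}_+) \cup_{D^s \times S^{t-2}} (D^s \times D^{t-1}_-)$, I recognize each half $D^s \times D^{t-1}_\pm$ as exactly the handle attached in forming $W_\phi$ from $N \times I$. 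Meanwhile the complement $(N\times[-1,1]) \setminus (\phi\times i_\epsilon)(S^{s-1}\times\mathrm{Int}(D^t))$ deformation-retracts onto two copies of $N \setminus \phi(S^{s-1}\times\mathrm{Int}(D^{t-1}))$ joined along a copy of $N'$ coming from the slice $N \times \{0\}$ minus the surgery region together with the two belt spheres of the hemispheres. Matching these pieces up shows $(N\times[-1,1])' \cong W_\phi \cup_{N'} W_\phi$, where the two copies of $W_\phi$ are glued along the common outcome $N'$. (Alternatively, and more cleanly, $\partial(W_\phi \times [-1,1]) = (W_\phi \times \{-1\}) \cup (\partial W_\phi \times [-1,1]) \cup (W_\phi \times \{1\})$; since $\partial W_\phi = N \cup N'$ and the $N$-part caps off against $N \times [-1,1]$ at the incoming end, what remains along the former $i_\epsilon$-direction is precisely $W_\phi \cup_{N'} W_\phi$, matching the general formula $\partial W_{\psi} = N \times[-1,1] \cup (\partial(N\times[-1,1])\times I) \cup (N\times[-1,1])'$.)

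The main obstacle I anticipate is bookkeeping rather than anything deep: carefully tracking the collar/corner structure so that the "straightening" diffeomorphism $W_{\phi\times i_\epsilon} \cong W_\phi \times [-1,1]$ is genuinely smooth (one must smooth corners where the handle meets the collar, exactly as in the basic plumbing smoothing of Section \ref{subsec:plumbing_arrangements}), and making sure the two copies of $W_\phi$ in the outcome are glued along $N'$ with the correct (orientation-compatible) identification. Once the product structure is exploited, no genuine computation remains — the statement is essentially the assertion that "surgery on $\phi \times i_\epsilon$" equals "$(\text{surgery on }\phi) \times [-1,1]$", which is visually clear from the handle description and which I would make precise by the explicit diffeomorphisms sketched above.
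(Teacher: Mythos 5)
Your proposal is correct and takes essentially the same route as the paper: identify the trace of the fattened surgery as $W_\phi\times[-1,1]$ by exploiting the product structure in the extra $[-1,1]$-direction, then read off the outcome from the boundary decomposition $\del(W_\phi\times[-1,1]) = (\del W_\phi\times[-1,1])\cup(W_\phi\times\{\pm1\})$ using $\del W_\phi = N\sqcup N'$. Your parenthetical second argument for the outcome is precisely the paper's argument; the paper supplements it with a short explicit check (using a collar of the $N'$-boundary of $W_\phi$) that this ``other'' boundary piece really is the surgery outcome, which you would want to keep, since your first, hemisphere-splitting computation is phrased in terms of deformation retraction rather than diffeomorphism and would need tightening to land on the diffeomorphism type rather than just the homotopy type.
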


\begin{proof} 
%The trace of a surgery is formed from the original manifold by forming the product with an interval $J=[j_0,j_1],$ and then adding a handle to the `end' of the product corresponding to $j_1.$ Suppose we perform an $(m-1)$-surgery on $M^{m+n-2}.$ This means that the trace 
%$W=(M \times J) \cup (D^m \times D^{n-1} \times \{j_1\}).$

Suppose that the trace $W_\phi$ is formed from the original manifold by forming the product with an interval $J=[j_0,j_1],$ 
and then adding a handle to the `end' of the product corresponding to $j_1.$  Hence 
$W_\phi=(N \times J) \cup (D^s \times D^{t-1} \times \{j_1\}).$

Suppose that the surgery on $N \times [-1,1]$ involves cutting out $S^{s-1} \times D^{t-1} \times (-\epsilon,\epsilon).$ 
The trace of this surgery will clearly be diffeomorphic to the trace created where the $D^t$-factor of the handle 
$D^s \times D^t$ contains all of $[-1,1].$ This latter trace is 
$$(N \times [-1,1] \times J) \cup (D^s \times D^{t-1} \times [-1,1] \times \{j_1\}).$$ 
By a simple rearrangement of terms, this is easily seen to be $W_\phi \times I$ as claimed.

To identify the result of the surgery on $N \times [-1,1],$ recall that $\partial W_\phi=N \sqcup N'.$ 
Therefore $\partial(W_\phi \times [-1,1]) = (\partial W_\phi \times [-1,1]) \cup (W_\phi \times \{-1,1\}).$ In other words we have 
\begin{align*} 
\partial(W_\phi \times [-1,1])&=((N \sqcup N') \times [-1,1]) \cup (W_\phi \times \{-1\}) \cup (W_\phi \times \{1\}) 
\\ &=(N \times [-1,1]) \cup \Bigl[(N' \times [-1,1]) \cup (W_\phi \times \{-1\}) \cup (W_\phi \times \{1\})\Bigr]. 
\end{align*}
We claim that the outcome of the surgery on $N \times [-1,1]$ is the `other' part of the boundary $\partial(W_\phi \times I),$ 
namely $(N' \times [-1,1]) \cup (W_\phi \times \{-1\}) \cup (W_\phi \times \{1\}).$ 
To see this, note that a neighbourhood of the $N'$-boundary of $W_\phi$ can be expressed as 
$(D^s \times D^{t-1} \times \{j_1\})\cup (N \times [j_1-\epsilon,j_1))$ for small $\epsilon >0.$ Since this neighbourhood is diffeomorphic to $W_\phi$ there is a diffeomorphism
\begin{multline*}
(N' \times [-1,1]) \cup (W_\phi \times \{-1\}) \cup (W_\phi \times \{1\}) \\
\cong (N' \times [-1,1]) \cup (N \times [j_1-\epsilon,j_1) \times \{-1\}) \cup (N \times [j_1-\epsilon,j_1) \times \{1\}).
\end{multline*} 
But this is clearly the same as the result of surgery on $N \times [-1-\epsilon,1+\epsilon]$ where we cut out the manifold
$S^{s-1} \times D^{t-1} \times (-1,1),$ and thus diffeomorphic to the result of performing a surgery on $N \times [-1,1]$ where we cut out $S^{s-1} \times D^{t-1} \times (-\epsilon,\epsilon).$ We therefore conclude that the result of the surgery is just two copies of $W_\phi$ glued along $N' \times [-1,1].$ Up to diffeomorphism this is $W_\phi \cup_{N'} W_\phi$ as claimed.
\end{proof} 

%For the next lemma, we let $\phi \colon S^{q-1} \times D^{p-1} \times [-\epsilon, \epsilon] \to S^{q-1} \times S^{p-1} \times [-1, 1]$ be the product of 
%the identity map, the inclusion $D^{p-1} \subset S^{p-1}$ and $i_\epsilon$.

\begin{Lemma}\label{lem:trace}
%Suppose we perform surgery on $S^{m-1} \times \{\ast\} \times \{0\} \subset S^{q-1} \times S^{p-1} \times [-1,1].$ 
%The result of such a surgery is $(S^{p-1} \times D^q) \sharp (S^{p-1} \times D^q).$
Suppose we perform surgery on $\phi \colon S^{q-1} \times D^{p-1} \times [-\epsilon, \epsilon] \to S^{q-1} \times S^{p-1} \times [-1, 1]$,
where $\phi$ is the product of 
the identity map, the inclusion $D^{p-1} \subset S^{p-1}$ and $i_\epsilon$.
The outcome of this surgery is $(S^{p-1} \times D^q) \sharp (S^{p-1} \times D^q).$
\end{Lemma}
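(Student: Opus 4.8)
The plan is to deduce the statement from Lemma~\ref{lem:product} applied to an un-thickened surgery. Set $N_0 := S^{q-1} \times S^{p-1}$, a closed manifold of dimension $m-2$, and let $\phi_0 \colon S^{q-1} \times D^{p-1} \to N_0$ be the product of the identity of $S^{q-1}$ with the fixed inclusion of the closed disc $D^{p-1} \subset S^{p-1}$, so that the map $\phi$ of the statement is exactly $\phi_0 \times i_\epsilon \colon S^{q-1} \times D^{p-1} \times [-\epsilon,\epsilon] \to N_0 \times [-1,1]$. With $s = q$, $t = p$ and $n = m-2$, Lemma~\ref{lem:product} then identifies the outcome of surgery on $\phi$ with $W_{\phi_0} \cup_{N_0'} W_{\phi_0}$, where $N_0'$ and $W_{\phi_0}$ are the outcome and the trace of surgery on $\phi_0$. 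Thus everything reduces to identifying these two manifolds.

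First I would identify $N_0'$. Write $S^{p-1} = D^{p-1} \cup_{S^{p-2}} D^{p-1}_c$ as the union of $D^{p-1}$ with the complementary closed disc $D^{p-1}_c$. Deleting $S^{q-1} \times {\rm Int}(D^{p-1})$ and gluing in $D^q \times S^{p-2}$ yields $N_0' = (S^{q-1} \times D^{p-1}_c) \cup_{S^{q-1} \times S^{p-2}} (D^q \times S^{p-2})$; since $\phi_0$ is a product of the identity and the standard inclusion the gluing is the standard one, so $N_0' \cong \partial(D^q \times D^{p-1}_c) \cong S^{m-2}$.

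The substantive step, which I expect to be the main obstacle, is identifying the trace $W_{\phi_0}$. It is a cobordism from $N_0 = S^{q-1} \times S^{p-1}$ to $N_0' \cong S^{m-2}$ obtained from the product cobordism by attaching a single index-$q$ handle $D^q \times D^{p-1}$ along $\phi_0$. Turning this handle over, $W_{\phi_0}$ is equally obtained from $S^{m-2} \times [0,1]$ by attaching a single handle of index $(m-1) - q = p-1$ along the belt sphere of the original handle; and since the original handle was attached along the standard $S^{q-1} \subset S^{q-1} \times S^{p-1}$ with its product framing, this belt sphere is an unknotted, trivially framed $S^{p-2} \subset S^{m-2}$. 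Capping the $S^{m-2} \times \{0\}$ end of $S^{m-2}\times[0,1]$ with a disc turns it into $D^{m-1}$, and identifying $D^{m-1} \cong D^{p-1}\times D^q$ compatibly with the attaching region $S^{p-2}\times D^q$ gives
\[ W_{\phi_0} \cup_{S^{m-2}} D^{m-1} \;\cong\; (D^{p-1}\times D^q)\cup_{S^{p-2}\times D^q}(D^{p-1}\times D^q) \;=\; (D^{p-1}\cup_{S^{p-2}} D^{p-1})\times D^q \;=\; S^{p-1}\times D^q. \]
Hence $W_{\phi_0}$ is $S^{p-1}\times D^q$ with an open $(m-1)$-disc removed from its interior, the $N_0'$-boundary being the boundary of the removed disc and the $N_0$-boundary being $\partial(S^{p-1}\times D^q)$. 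The care required here lies entirely in tracking unknottedness and framings through the turning-over of the handle, which is why I flag this as the delicate point.

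Finally, $W_{\phi_0}\cup_{N_0'} W_{\phi_0}$ is two copies of $S^{p-1}\times D^q$, each with an open interior disc deleted, glued along the resulting $(m-2)$-spheres — which is by definition $(S^{p-1}\times D^q)\sharp(S^{p-1}\times D^q)$, as claimed. An alternative, more hands-on route would be to decompose $S^{q-1}\times S^{p-1}\times[-1,1]$ directly along $S^{p-1} = D^{p-1}\cup D^{p-1}_c$ and $[-1,1] = [-1,-\epsilon]\cup[-\epsilon,\epsilon]\cup[\epsilon,1]$ and follow the effect of the surgery piece by piece, but the handle-turning argument above seems cleaner.
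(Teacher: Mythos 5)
Your proposal is correct and reaches the same conclusion, but the decisive step is handled by a genuinely different technique. Both you and the paper reduce via Lemma~\ref{lem:product} to identifying the outcome $N_0'\cong S^{m-2}$ (done the same way) and the trace $W_{\phi_0}$ of the un-thickened surgery, and then glue two copies of the trace along $N_0'$. For the trace, the paper works directly with the decomposition $W_{\phi_0} = (N_0\times J)\cup(D^q\times D^{p-1}_+)$ and absorbs collars to arrive at $W_{\phi_0}\cong (D^q\times S^{p-1})\setminus \mathrm{int}(D^{m-1})$; the intermediate assertion ``up to diffeomorphism $W$ is just $D^q\times D^{p-1}_+$'' should be read loosely, as $W_{\phi_0}$ is not literally a disc (it has nonzero homology in degrees $p-1$ and $m-2$), but the intended collar-absorption argument does deliver the stated conclusion once the remaining collar piece $S^{q-1}\times D^{p-1}_-\times J$ is tracked through. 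You instead turn the handle over: $W_{\phi_0}$ viewed from the $N_0'$ side is $N_0'\times J$ with a $(p-1)$-handle attached along the belt sphere of the original $q$-handle, which you identify as an unknotted, trivially framed $S^{p-2}\subset S^{m-2}$; capping $N_0'$ with a disc then gives $D^{m-1}\cup_{S^{p-2}\times D^q}(D^{p-1}\times D^q) = S^{p-1}\times D^q$, hence $W_{\phi_0}\cong (S^{p-1}\times D^q)\setminus\mathrm{int}(D^{m-1})$, exactly as in the paper. Your handle-duality route is arguably cleaner and localizes the nontrivial input precisely where you flag it, namely in verifying unknottedness and triviality of framing of the belt sphere; this is a standard fact for a handle attached along a standardly embedded, product-framed $S^{q-1}\times D^{p-1}$, but it would be worth a sentence making that verification explicit (e.g.\ noting that under the identification $N_0'\cong\partial(D^q\times D^{p-1}_c)$ the belt sphere is $\{0\}\times S^{p-2}\subset D^q\times S^{p-2}$ with its standard $D^q$-framing).
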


\begin{proof} The effect of surgery on $S^{q-1} \times D^{p-1} \subset S^{q-1} \times S^{p-1}$ is 
\begin{align*}
(S^{q-1} \times S^{p-1} \setminus S^{q-1} \times D^{p-1}_{+}) \cup (D^q \times S^{p-2}) &= S^{q-1} \times D^{p-1}_{-} \cup D^m \times S^{p-2} \\ &= S^{p+q-2}, 
\end{align*} 
where we have written the $S^{p-1}$ factor as $D^{p-1}_{+} \cup D^{p-1}_{-},$ with $D^{p-1}_{+}$ taken to be the normal bundle of the surgery sphere. Thus we see that the trace $W$ of the surgery is 
$$W=S^{q-1} \times S^{p-1} \times J \cup (D^q \times D^{p-1}_{+} \times \{j_1\}),$$ 
where as in the previous lemma we assume the interval $J$ is $[j_0,j_1].$ Up to diffeomorphism $W$ is just 
$D^q \times D^{p-1}_{+}.$ If we then add $D^q \times D^{p-1}_{-} \cong D^{p+q-1}$ we obtain 
$D^q \times S^{p-1}.$ Thus 
$$W=D^q \times S^{p-1} \setminus \hbox{int}D^{p+q-1}.$$ 
By Lemma \ref{lem:product} we see that the result of performing the surgery on $\phi$ is $W \cup_{S^{p+q-2}} W,$ that is, 
$$(D^q \times S^{p-1} \setminus \hbox{int}D^{p+q-1}) \cup_{S^{p+q-2}} (D^q \times S^{p-1} \setminus \hbox{int}D^{p+q-1}),$$ 
and this is just $(S^{p-1} \times D^q) \sharp (S^{p-1} \times D^q)$ as claimed.
\end{proof}

Now let $E_1$ and $E_2$ be $D^p$-bundles with base dimension $q$.% and corresponding disc bundles $\bar{E}_i.$

\begin{Lemma}\label{lem:FCS} 
Plumb both $E_1$ and $E_2$ to a trivial disc bundle $E_3:=D^q \times S^p.$ 
Call the resulting plumbing manifold $X.$ Then $\partial X$ is the fibre connected sum of $\del E_1$ and $\del E_2.$
\end{Lemma}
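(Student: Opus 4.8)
The plan is to reduce the statement of Lemma~\ref{lem:FCS} to the surgery-theoretic description of plumbing boundaries established earlier, and then to feed this into Lemmas~\ref{lem:product} and~\ref{lem:trace}. Recall from Section~\ref{subsec:plumbing_arrangements} that the boundary effect of plumbing two disc bundles over spheres is a surgery on a fibre sphere, where the structure of the second bundle is encoded in the normal framing used for the surgery. The key point is that here the bundle $E_3 = D^q \times S^p$ is \emph{trivial}, so plumbing $E_i$ to $E_3$ amounts to a surgery on a fibre sphere of $\partial E_i$ using the \emph{standard} framing; equivalently, it removes a tubular neighbourhood $\pi_i^{-1}(D^q)$ of a fibre sphere $S^{p-1} \subset \partial E_i$ and glues in $D^p \times S^{q-1}$ along the standard identification. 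Since $E_i$ is itself a $D^p$-bundle over $S^q$, we may write $\partial E_i$ as two copies of $D^q \times S^{p-1}$ glued along their common boundary $S^{q-1} \times S^{p-1}$ by the clutching map.

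First I would set up the decomposition of $X$. The manifold $X$ is obtained from $E_1 \sqcup E_3 \sqcup E_2$ by two plumbings, one attaching $E_1$ to $E_3$ and one attaching $E_2$ to $E_3$, on disjoint discs in the base $S^p$ of $E_3$. Writing $S^p = D^p_1 \cup D^p_{\mathrm{mid}} \cup D^p_2$ with $D^p_1, D^p_2$ the two plumbing discs, the boundary $\partial X$ is obtained from $\partial E_1 \sqcup \partial E_2$ by surgery, with the two surgery regions communicating through the ``middle'' piece $D^p_{\mathrm{mid}} \times S^{q-1}$ of $\partial E_3$. I would make this precise by observing that, after removing the fibre-sphere neighbourhoods from $\partial E_1$ and $\partial E_2$, the remaining part of $\partial E_3 = S^p \times S^{q-1}$ that gets glued in is a cylinder $S^{q-1} \times S^{p-1} \times [-1,1]$ (the product neighbourhood of the ``equatorial'' $S^{p-1}$ separating $D^p_1$ from $D^p_2$ inside $S^p$), together with two caps that reconstruct the pieces removed. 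This is exactly the situation of Lemma~\ref{lem:trace}: the gluing along this cylinder is a surgery on $\phi \colon S^{q-1} \times D^{p-1} \times [-\epsilon,\epsilon] \to S^{q-1} \times S^{p-1} \times [-1,1]$ whose outcome is $(S^{p-1} \times D^q) \sharp (S^{p-1} \times D^q)$.

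Next I would assemble the pieces. Denoting by $F_i \subset \partial E_i$ a tubular neighbourhood of a fibre sphere, so $F_i \cong S^{p-1} \times D^q$ and $\partial E_i = (\partial E_i \setminus \mathrm{int}\, F_i) \cup_{S^{p-1} \times S^{q-1}} F_i$, the computation above shows
\[
\partial X \cong (\partial E_1 \setminus \mathrm{int}\, F_1) \cup_{S^{p-1}\times S^{q-1}} \bigl[ (S^{p-1} \times D^q) \sharp (S^{p-1} \times D^q) \bigr] \cup_{S^{p-1}\times S^{q-1}} (\partial E_2 \setminus \mathrm{int}\, F_2).
\]
On the right-hand side, gluing $(\partial E_1 \setminus \mathrm{int}\, F_1)$ back to the first $S^{p-1} \times D^q$ reconstructs $\partial E_1$ minus an open ball (the ball being where the connected sum takes place), and similarly for $E_2$; hence the whole expression is $\partial E_1 \sharp \partial E_2$ — which, since the $F_i$ are fibre-sphere neighbourhoods, is precisely the fibre connected sum of $\partial E_1$ and $\partial E_2$. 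The use of Lemma~\ref{lem:product} is implicit inside Lemma~\ref{lem:trace} to handle the ``thickened'' surgery in the $[-1,1]$ direction.

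The main obstacle I anticipate is bookkeeping the identifications carefully enough to be sure that the gluing diffeomorphism along the middle cylinder really is the standard product surgery of Lemma~\ref{lem:trace} and not a twisted version of it: this is where the triviality of $E_3$ is essential, since a nontrivial bundle there would introduce a clutching twist $\tau \colon S^{q-1} \to SO(p)$ into $\phi$. One must also check that the two plumbings onto $E_3$, taking place over disjoint discs, genuinely decouple — i.e. that the ``middle'' separating sphere $S^{p-1} \subset S^p$ can be chosen to split $\partial E_3$ as a product cylinder between the two surgery sites — and that the resulting connected sum is performed along embedded balls transverse to the fibre spheres, so that it matches the definition of fibre connected sum. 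Everything else is a routine rearrangement of handle and product pieces of the kind already carried out in the proofs of Lemmas~\ref{lem:product} and~\ref{lem:trace}.
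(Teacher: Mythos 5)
There is a genuine error here: you have in effect proved Proposition~\ref{prop:basic} rather than Lemma~\ref{lem:FCS}, and in doing so you have misstated what the middle piece of $\partial X$ is. After plumbing $E_1$ and $E_2$ to $E_3$, the part of $\partial E_3 = S^{q-1}\times S^p$ that remains in $\partial X$ is $S^{q-1}\times\bigl(S^p\setminus(D^p_1\sqcup D^p_2)\bigr)\cong S^{q-1}\times S^{p-1}\times[-1,1]$, a product cylinder; nothing further happens to it. No surgery is performed on this cylinder in the configuration of Lemma~\ref{lem:FCS}, so Lemma~\ref{lem:trace} and Lemma~\ref{lem:product} play no role at all: $\partial X$ is simply $(\partial E_1\setminus\mathrm{int}\,F_1)$ glued to this cylinder glued to $(\partial E_2\setminus\mathrm{int}\,F_2)$, which is by definition the fibre connected sum. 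The surgery you invoke — replacing the cylinder by $(S^{p-1}\times D^q)\,\sharp\,(S^{p-1}\times D^q)$ — only occurs once you plumb the additional trivial bundle $E_4=D^p\times S^q$ onto $E_3$; that extra plumbing is exactly the step taken in the proof of Proposition~\ref{prop:basic}, which produces the plumbing $Y$ (not $X$) and the genuine connected sum $\partial E_1 \sharp \partial E_2$.

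Your closing remark that $\partial E_1\sharp\partial E_2$ ``is precisely the fibre connected sum'' is also false in general: for instance if $E_1=E_2$ are trivial $D^p$-bundles over $S^q$ then the fibre connected sum of $\partial E_1$ and $\partial E_2$ is again $S^{p-1}\times S^q$, whereas the ordinary connected sum $(S^{p-1}\times S^q)\sharp(S^{p-1}\times S^q)$ has larger Betti numbers. This distinction is the entire reason the paper separates Lemma~\ref{lem:FCS} (fibre connected sum, three bundles) from Proposition~\ref{prop:basic} (ordinary connected sum, four bundles). To repair the argument, delete the appeal to Lemma~\ref{lem:trace} and simply observe that the middle piece of the standard decomposition of $\partial X$ (cf.~\cite{Bro}, V~\S 2) is the untouched cylinder, so that $\partial X$ is the fibre connected sum as stated.
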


\begin{proof} The boundary effect of plumbing is well known (e.g. \cite{Bro} page 118). In particular the boundary $\partial X$ can be decomposed as the following union: 
$$(\del E_1 \setminus S^{p-1} \times D^q) \cup (E_3 \setminus \coprod_{j=1}^2 S^{q-1} \times D^p_j) \cup (\del E_2 \setminus S^{p-1} \times D^q),$$ 
where the identification at the various boundaries is made in the obvious way. But the middle component in the above decomposition is homeomorphic to $S^{q-1} \times S^{p-1} \times [-1,1],$ and so $\partial X$ is nothing other than the fibre connected sum of $E_1$ and $E_2$ as claimed.
\end{proof}

\begin{proof}[Proof of Proposition \ref{prop:basic}] Let $X$ denote the plumbed manifold in Lemma \ref{lem:FCS}. Consider a further plumbing of a trivial disc bundle $E_4:=D^p \times S^q$ to $E_3$ in the plumbing arrangement for $X$. Call the resulting plumbing manifold $Y$. We claim that 
$\partial Y= \del E_1 \sharp \del E_2.$ To see this, first note that the boundary effect of plumbing this trivial bundle is precisely a surgery on a fibre sphere of the bundle $\del E_3,$ or more accurately a surgery on a fibre sphere of 
$$\del E_3 \setminus \coprod_{j=1}^2 S^{q-1} \times D^p_j.$$ 
In order to investigate the topological effect of such a surgery we will work locally in the first instance, and consider 
$\del E_3\setminus \coprod_{j=1}^2 S^{q-1} \times D^p_j$ in isolation. With this local point of view, our surgery is equivalent to surgery on an $S^{q-1}$ factor of the product $S^{q-1} \times S^{p-1} \times [-1,1].$ By Lemma \ref{lem:trace}, the outcome of this surgery is 
$$(S^{p-1} \times D^q) \sharp \ (S^{p-1} \times D^q).$$
Thus returning to a global perspective, the boundary of the plumbing $Y$ using all four bundles must be 
$$(\del E_1 \setminus S^{p-1} \times D^q) \cup (S^{p-1} \times D^q) \sharp (S^{p-1} \times D^q) \cup (\del E_2 \setminus S^{p-1} \times D^q),$$ 
which is clearly just $\del E_1 \sharp \del E_2$ as claimed.
\end{proof}
 
% subsection connected sums via plumbing (end)
%%%%%%%%%%%%%%%%%%%%%%%%%%%%%%%%%%%%%%%%%%%%%%%%%%%%%%%%%%%%%%%%%%%%%%%%%%%%%%%%%%%%%%%%
%%%%%%%%%%%%%%%%%%%%%%%%%%%%%%%%%%%%%%%%%%%%%%%%%%%%%%%%%%%%%%%%%%%%%%%%%%%%%%%%%%%%%%%%

% section plumbing (end)
%%%%%%%%%%%%%%%%%%%%%%%%%%%%%%%%%%%%%%%%%%%%%%%%%%%%%%%%%%%%%%%%%%%%%%%%%%%%%%%%%%%%%%%%
%%%%%%%%%%%%%%%%%%%%%%%%%%%%%%%%%%%%%%%%%%%%%%%%%%%%%%%%%%%%%%%%%%%%%%%%%%%%%%%%%%%%%%%%

% (fold)
\section{Extended quadratic forms and extended quadratic linking forms}
\label{sec:eqfs_and_eqlfs}
%%%%%%%%%%%%%%%%%%%%%%%%%%%%%%%%%%%%%%%%%%%%%%%%%%%%%%%%%%%%%%%%%%%%%%%%%%%%%%%%%%%%%%%%
In this algebraic section we establish basic facts from the theory of extended quadratic 
forms and their boundaries, which are extended quadratic linking forms.  Extended quadratic forms
are complete invariants of the handlebodies appearing in Section \ref{sec:handlebodies}
and extended quadratic linking forms are important invariants of the odd-dimensional
manifolds appearing Section \ref{sec:(4k-1)_manifolds}.
%(Two manifolds $M_1^n$ and $M_2^n$ are said to be almost diffeomorphic if there is a homotopy 
%sphere $\Sigma^n$ such that $M_1 \sharp \Sigma \cong M_2.$) 

%\dccomm{Elaborate, give credit etc: Wall, Baues, Ranicki ... Crowley.}

%\dccomm{Do we bother with extended quadratic forms at all?  Or do we just state the 
%$(\lambda, \alpha)$ version, $n = 2k$?}

% (fold)
\subsection{Extended quadratic forms over $\pi_{n-1}\{SO(n)\}$} \label{subsec:eqfs}
%%%%%%%%%%%%%%%%%%%%%%%%%%%%%%%%%%%%%%%%%%%%%%%%%%%%%%%%%%%%%%%%%%%%%%%%%%%%%%%%%%%%%%%%
Let $SO(n)$ denote the group of orientation preserving orthogonal motions of $\R^n$.
We start with the homotopy group $\pi_{n-1}(SO(n))$ which we regard as the set of isomorphism
classes of oriented vector bundles over $S^n$.  There are two important homomorphisms
involving $\pi_{n-1}(SO(n))$:
\[ {\rm h} \colon \pi_{n-1}(SO(n)) \to \Z, \quad \xi \mapsto e(\xi) \quad \text{and} \quad 
{\rm p} \colon \Z \to \pi_{n-1}(SO(n)), \quad k \mapsto k \cdot \tau_{S^n}. \]
The homomorphism ${\rm h}$ associates to each bundle its Euler number and the homomorphism ${\rm p}$
maps $1 \in \Z$ to $\tau_{S^n}$, the tangent bundle of $S^n$.
The homomorphisms ${\rm h}$ and ${\rm p}$ are such that 
${\rm h}{\rm p}{\rm h} = 2 {\rm h}$ and ${\rm p}{\rm h}{\rm p} = {\rm 2p}$, since $e(\tau_{S^n})=2$ if
$n$ is even and $e(\tau_{S^n}) = 0$ if $n$ is odd.
The quadruple 
\[ \pi_{n-1}\{SO(n)\} : = \bigl( \pi_{n-1}(SO(n)), \Z, {\rm h}, {\rm p} \bigr) \]
is a {\em quadratic module} as defined in \cite[Definition 8.1]{Bau1}; see also \cite[\S 10]{Ranicki}.
Let $\epsilon := (-1)^n$. A bilinear form is said to be $\epsilon$-symmetric if it is symmetric in the case $\epsilon=1$ and skew-symmteric in the case $\epsilon=-1.$

\begin{Definition} \label{def:eqf}
An {\em extended quadratic form over $\pi_{n-1}\{SO(n)\}$} is 
triple $(H, \lambda, \mu)$ where $H$ is a finitely generated free abelian group, $\lambda$ 
is an $\epsilon$-symmetric bilinear form 
\[ \lambda \colon H \times H \to \Z, \]
and $\mu \colon H \to \pi_{n-1}(SO(n))$ is
a function which for all $x, y \in H$ satisfies 
\begin{equation} \label{eq:mu_and_p}
\mu(x + y) = \mu(x) + \mu(y) + {\rm p}(\lambda(x, y)),
%\mu(x + y) = \mu(x) + \mu(y) + \lambda(x, y) \cdot \tau_{S^n},
\end{equation}
and
\begin{equation} \label{eq:mu_and_h}
\quad {\rm h}(\mu(x)) = \lambda(x, x). 
\end{equation}
%
%Here $\tau_{S^n} \in \pi_{n-1}(SO(n))$ is the homotopy class of the characteristic function of the tangent
%bundle of $S^n$ and $e(\alpha(x)) \in \Z$ is the Euler number of the bundle $\alpha(x)$ (for a fixed orientation of $S^n$).
\end{Definition}
\noindent
There are obvious notions of isomorphism and orthogonal sum of extended quadratic forms.

In this paper we are mostly interested in the symmetric case when $n = 2k$ is even.  In this
case extended quadratic forms over $\pi_{2k-1}\{SO(2k)\}$ simplify as follows.
Let $S \colon \pi_{2k-1}(SO(2k)) \to \pi_{2k-1}(SO)$ be the suspension homomorphism.
By Bott periodicity, 
\[ \pi_{2k-1}(SO) \cong \Z,~\,\Z/2,~\,\Z,~\,0 \quad \text{as \quad $k \equiv 0,~\,1,~\,2,~\,3$~mod~\,$4$.} \]
Given an extended quadratic form $(H, \lambda, \mu)$, let
\[ \alpha \colon H \to \pi_{n-1}(SO) \]
be the stabilisation of $\mu$, that is, the composition $S \circ \mu.$ Since $S(\tau_{S^n}) = 0$, it follows from \eqref{eq:mu_and_p}
that $\alpha$ is a homomorphism.  Wall identified the properties of the pair $(\lambda, \alpha)$
as follows.  (Note that Wall uses $\alpha$ to denote what we call $\mu$ and $S \alpha$ to denote
what we call $\alpha$.)

\begin{Lemma}[{\cite[p.\,171]{Wa1}}] \label{lem:mu_and_alpha}
For every extended quadratic form $(H, \lambda, \mu)$ over $\pi_{2k-1}\{SO(2k)\}$,
$\mu(x)$ is uniquely determined by $\lambda(x, x) \in \Z$ and $\alpha(x) \in \pi_{2k-1}(SO)$.
In addition,
\begin{enumerate}
\item for $k = 2, 4$, $\lambda(x, x) \equiv \alpha(x)~mod~2$, (i.e.~$\alpha$ is {\em characteristic} for $\lambda$),
\item for $k \neq 2, 4$, $\lambda(x, x) \in 2 \Z$ (and $\alpha$ and $\lambda$ are not related).
\end{enumerate}
\end{Lemma}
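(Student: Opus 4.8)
The plan is to verify Lemma \ref{lem:mu_and_alpha} by unwinding the structure of the quadratic module $\pi_{2k-1}\{SO(2k)\}$ together with the classical computation of $\pi_{2k-1}(SO(2k))$ via the fibration $SO(2k) \to SO(2k{+}1) \to S^{2k}$. First I would recall that this fibration yields an exact sequence
\[ \pi_{2k}(S^{2k}) \xrightarrow{\partial} \pi_{2k-1}(SO(2k)) \xrightarrow{S} \pi_{2k-1}(SO(2k{+}1)) \to \pi_{2k-1}(S^{2k}) = 0, \]
and that $\partial(1) = \tau_{S^{2k}} \in \pi_{2k-1}(SO(2k))$ is (a generator of the image of) ${\rm p}$, while the Euler number homomorphism ${\rm h}$ restricted to $\ker S$ detects this class: $e(\tau_{S^{2k}}) = 2$. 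The key structural fact is that $\ker S = {\rm image}({\rm p})$ is infinite cyclic generated by $\tau_{S^{2k}}$ when $k$ is such that $\tau_{S^{2k}}$ has infinite order (this is where the $k=2,4$ dichotomy starts to appear, since for those values the relevant homotopy group $\pi_{2k-1}(SO(2k))$ has a $\Z \oplus \Z$ summand detected by $(e, S\mu)$, whereas otherwise the pair $(e/2, S\mu)$ already determines the element). I would cite Wall \cite[p.\,171]{Wa1}, but also reconstruct the argument: the point is that the map $\pi_{2k-1}(SO(2k)) \to \Z \times \pi_{2k-1}(SO)$, $\xi \mapsto (e(\xi), S\xi)$, is injective, which reduces the whole lemma to this statement about $\pi_{2k-1}(SO(2k))$.

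Granting injectivity of $\xi \mapsto (e(\xi), S\xi)$, the first assertion is immediate: for an extended quadratic form $(H, \lambda, \mu)$, equation \eqref{eq:mu_and_h} gives $e(\mu(x)) = {\rm h}(\mu(x)) = \lambda(x,x)$, and $S(\mu(x)) = \alpha(x)$ by definition of $\alpha$; so the pair $(\lambda(x,x), \alpha(x)) \in \Z \times \pi_{2k-1}(SO)$ determines $\mu(x)$ uniquely. For parts (1) and (2), I would use the identity ${\rm h}{\rm p} = 2 \cdot {\rm id}$ (equivalently $e(\tau_{S^{2k}}) = 2$) together with the known order of the cokernel of $S$, or rather the structure of $\ker S$. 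Concretely: the image of ${\rm p} = \ker S$ consists of multiples of $\tau_{S^{2k}}$, and ${\rm h}$ carries $\tau_{S^{2k}}$ to $2$. So for any $\xi$ with $S\xi = 0$ we get $e(\xi) \in 2\Z$. For general $\xi$, choose $\xi_0$ in the fixed set with $S\xi_0 = S\xi$; then $e(\xi) \equiv e(\xi_0) \bmod 2$. The question of whether one can always pick $\xi_0$ with prescribed even Euler number — i.e.\ whether $S$ admits a section on which $e$ is identically zero — is exactly what distinguishes the cases: for $k \ne 2, 4$ the answer is yes (there is a stable bundle lift realizing each class in $\pi_{2k-1}(SO)$ with Euler number $0$, forcing $e(\xi) \in 2\Z$ for all $\xi$, giving (2)), whereas for $k = 2, 4$ the vector bundle $\mu$ itself need not stabilise with even Euler number, and instead $e(\xi) \bmod 2$ is pinned to the mod-$2$ reduction of $S\xi$ under $\pi_{2k-1}(SO) \cong \Z \twoheadrightarrow \Z/2$ or the natural map to $\Z/2$, which is precisely the characteristic condition $\lambda(x,x) \equiv \alpha(x) \bmod 2$ in (1). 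Translating back via \eqref{eq:mu_and_h} and the definition of $\alpha$ gives the statements about $(\lambda, \alpha)$.

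I expect the main obstacle to be making the dichotomy at $k = 2, 4$ precise without simply quoting Wall: one needs the explicit identification of $\pi_{2k-1}(SO(2k))$ for small $k$ — in particular the Hopf-invariant-type phenomena at $k=2$ ($\pi_3(SO(4)) \cong \Z \oplus \Z$, detected by Euler number and second Pontryagin/stable class) and $k = 4$ ($\pi_7(SO(8)) \cong \Z \oplus \Z$), versus the generic situation where $e$ takes only even values on $\pi_{2k-1}(SO(2k))$. The cleanest path is probably to black-box this as Wall's computation \cite[p.\,171]{Wa1} and present the lemma as a direct bookkeeping consequence: state that Wall's analysis of $\pi_{2k-1}(SO(2k))$ shows the map $\xi \mapsto (e(\xi), S\xi)$ is injective with image characterized by the parity condition in (1) for $k = 2, 4$ and by $e(\xi)$ even in (2) otherwise, and then observe via \eqref{eq:mu_and_p}, \eqref{eq:mu_and_h} that $e(\mu(x)) = \lambda(x,x)$ and $S\mu(x) = \alpha(x)$, so the assertions about $\mu$ and the relation between $\lambda$ and $\alpha$ follow at once. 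If a self-contained argument is wanted, the fibration sequence above plus the value $e(\tau_{S^{2k}}) = 2$ supplies everything except the identification of which $k$ admit the zero-Euler-number section, and for that one falls back on the EHP-type or Bott-periodicity input.
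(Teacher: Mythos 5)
The paper does not prove this lemma: it is stated purely as a citation to Wall \cite[p.\,171]{Wa1} and used as a black box throughout, so there is no "paper proof" to compare against in detail. Your reconstruction of the underlying argument is essentially the right one. The fibration $SO(2k) \to SO(2k{+}1) \to S^{2k}$ identifies $\ker S$ with the cyclic subgroup generated by $\tau_{S^{2k}}$ (with $\pi_{2k-1}(SO(2k{+}1)) \cong \pi_{2k-1}(SO)$ since $2k{+}1$ sits at the edge of the stable range), and since $e(\tau_{S^{2k}}) = 2 \neq 0$, the restriction of $e$ to $\ker S$ is injective, whence $(e, S)$ is injective; translating via ${\rm h}(\mu(x)) = \lambda(x,x)$ from~\eqref{eq:mu_and_h} and $S\mu(x) = \alpha(x)$ gives the uniqueness statement. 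You are also right to observe that this injectivity yields only the first sentence of the lemma — parts (1) and (2) concern the \emph{image} of $(e, S)$ in $\Z \times \pi_{2k-1}(SO)$, for which the case-by-case structure of $\pi_{2k-1}(SO(2k))$ cannot be sidestepped — and your section-theoretic framing (does $S$ admit a section along which $e$ vanishes?) is the right way to organise the dichotomy. Deferring the verification of which $k$ fall into which case to Wall, as you do, is exactly what the paper does. One small slip: at $k = 2$ the relevant stable invariant is $p_1$, not $p_2$; the relation in $\pi_3(SO(4)) \cong \Z^2$ is $p_1 \equiv 2e \bmod 4$, equivalently $p_1/2 \equiv e \bmod 2$, which is precisely the characteristic condition $\alpha(x) \equiv \lambda(x,x) \bmod 2$. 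At $k = 4$ it is $p_2$ that plays this role.
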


In the light of Lemma \ref{lem:mu_and_alpha}, we shall 
identify the extended quadratic form
$(H, \lambda, \mu)$ with the triple $(H, \lambda, \alpha)$.  We define
\[ \cal{F}^{4k} := \{(H, \lambda, \alpha) \} \]
to be the set of isomorphism classes of extended quadratic forms over $\pi_{2k-1}\{SO(2k)\}$.
% subsection eqfs (end)
%%%%%%%%%%%%%%%%%%%%%%%%%%%%%%%%%%%%%%%%%%%%%%%%%%%%%%%%%%%%%%%%%%%%%%%%%%%%%%%%%%%%%%%%

% (fold)
\subsection{Extended quadratic linking forms} \label{subsec:eqlfs}
%%%%%%%%%%%%%%%%%%%%%%%%%%%%%%%%%%%%%%%%%%%%%%%%%%%%%%%%%%%%%%%%%%%%%%%%%%%%%%%%%%%%%%%%
In this subsection we define extended quadratic linking forms over $\pi_{2k-1}\{SO(2k)\}$. Given an extended quadratic form over $\pi_{2k-1}\{SO(2k)\}$ there is a corresponding `algebraic boundary' (discussed in Section \ref{subsec:boundaries_of_eqfs}). This algebraic boundary is an extended linking form. Such extended linking forms can be viewed as the torsion analogues of extended quadratic forms.

Let $G$ be a finite abelian group.  A nonsingular symmetric linking form of $G$ is a bilinear
function
\[ b \colon G \times G \to \Q/\Z \]
such that for all $x, y \in G$,
\begin{enumerate}
\item $b(x, y) = b(y, x)$,
\item $b(x, y) = 0$ for all $y \in G$ if and only if $x  = 0$.
\end{enumerate}

\begin{Example} \label{exa:linking_form}
Let $N$ be a closed $(4k{-}1)$ manifold and let $TH^{2k}(N)$ denote the torsion subgroup of $H^{2k}(N)$.
Poincar\'{e} duality for $N$ gives rise to a linking form 
$b_N \colon TH^{2k}(N) \times TH^{2k}(N) \to \Q/\Z$.  See Section \ref{subsec:rational_homotopy_spheres}.
\end{Example}

A quadratic refinement of a linking form $b$ is a function
\[ q \colon G \to \Q/\Z \]
such that for all $x, y \in G$
\[ q(x + y) - q(x) - q(y) = b(x, y). \]

For any such linking form $b,$ a corresponding $q$ exists \cite[Lemma 2.30]{C1}.  
%Indeed if $\cal{Q}(b)$ denotes the set of quadratic refinements of $b$, then
%$G$ acts freely and transitively on $\cal{Q}(b)$ \cite[Lemma 2.31]{C1},
%by the action
%
%\[ \cal{Q}(b) \times G \to \cal{Q}(b), \quad (q, a) \mapsto \bigl( x \mapsto q(x) + b(x, a) \bigr).\]
%
%and if $G$ contains no 2-torsion, then $b$ and $q$ determine each other via the above equation. 
%To see this, note that without 2-torsion the above equation yields $2q(x)=b(x,x).$ 
%However if $G$ contains 2-torsion, then for a given $b,$ the quadratic refinement is not in general unique. 
%
The function $q$ has further properties, depending on $k$.  If $k = 2, 4$ then
\[ q(x) - q(-x) = b(x, \beta) \]
for some $\beta \in 2 \cdot G$: The element $\beta$ is called the {\em homogeneity defect} of $q$.
If $k \neq 2, 4$, then 
\[ q(x) = q(-x),\]
in which case $q$ is {\em homogeneous}.
%In the first case we say that $q$ have homogeneity defect $\beta$ and in the second
%case we say that $q$ is homogenous.  
%Notice that in the case that $q$ has homogeneity defece $\beta$,
%that $\beta \in 2 \cdot G$.
%Given a triples $(G_0, q_0, \beta_0)$ and $(G_1, q_1, \beta_1)$ and an isomorphism $A : G_1 \cong G_0$
%then we shall write $(q_1, \beta_1)$

\begin{Definition} \label{def:eqlfd}
An extended quadratic linking form over $\pi_{2k-1}\{SO(2k)\}$ is a quadruple $(G, b, q, \beta)$ where
\begin{enumerate}
\item $G$ is a finite abelian group;
\item $b \colon G \times G \to \Q/\Z$ is a nonsingular symmetric linking form on $G$;
\item An element $\beta$ where
\[ \beta \in \left\{ \begin{array}{cl} 
(2 \cdot G) \otimes \pi_{2k-1}(SO)& \text{if $k = 2, 4$,}\\
G \otimes \pi_{2k-1}(SO) & \text{if $k \neq 2, 4$;} 
%G \otimes \Z_2 & \text{if $k \equiv 1$ mod $4$,}\\
%0 & \text{if $k \equiv 3$ mod $4$.}
\end{array} \right. \]
%\[ \beta \in \left\{ \begin{array}{cl} 
%2 \cdot G & \text{if $k = 2, 4$,}\\
%G & \text{if $k > 4$ is even,} \\ 
%G \otimes \Z_2 & \text{if $k \equiv 1$ mod $4$,}\\
%0 & \text{if $k \equiv 3$ mod $4$.}
%\end{array} \right. \]
\item $q \colon G \to \Q/\Z$ is a quadratic refinement of $b$ which is homogeneous if $k \neq 2, 4$
and which has homogeneity defect $\beta$ if $k = 2, 4$;
\end{enumerate}
\end{Definition}

There are obvious notions of isomorphism and orthogonal sum of extended quadratic linking forms.
We define
\[ \cal{Q}^{4k-1} =\{ (G, b, q, \beta) \} \]
to be the set of isomorphism classes of extended quadratic linking forms over $\pi_{2k-1}\{SO(2k)\}$.

The classification of linking forms and their quadratic refinements is a tractable if delicate
problem.  For more on these topics, see \cite{Wa2, K-K, C1}.

% subsection eqlfs (end)
%%%%%%%%%%%%%%%%%%%%%%%%%%%%%%%%%%%%%%%%%%%%%%%%%%%%%%%%%%%%%%%%%%%%%%%%%%%%%%%%%%%%%%%%

% (fold)
\subsection{The boundaries of nondegenerate extended quadratic forms} \label{subsec:boundaries_of_eqfs}
%%%%%%%%%%%%%%%%%%%%%%%%%%%%%%%%%%%%%%%%%%%%%%%%%%%%%%%%%%%%%%%%%%%%%%%%%%%%%%%%%%%%%%%%
We first identify two important subsets of $\cal{F}^{4k}$.
The adjoint homomorphism of the symmetric form $(H, \lambda)$ is the homomorophism
\[ \wh \lambda \colon H \to H^*, \quad x \mapsto (y \mapsto \lambda(x, y)), \]
where $H^* : = \Hom(H, \Z)$ is the dual of $H$.
A bilinear form $(H, \lambda)$ is called {\em nonsingular} if $\wh \lambda$ is an isomorphism
and {\em nondegenerate} if $\wh \lambda$ lies in a short
exact sequence
\begin{equation} \label{eq:adjoint}
0 \xra{~~~} H \xra{~\wh \lambda~} H^* \xra{~\pi~} G \to 0,
\end{equation}
where $G : = \coker(\wh \lambda)$ is a finite group, since it is a quotient of a finitely generated abelian 
group by a subgroup of the same rank.
We call an extended quadratic form $(H, \lambda, \alpha)$ nondegenerate, respectively nonsingular, if $(H, \lambda)$ is 
nondegenerate, respectively nonsingular.  We define 
\[ \cal{F}^{4k}_{\rm nd} : = \{(H, \lambda, \alpha) \,|\, \text{$(H, \lambda)$ is nondegenerate} \} \subset \cal{F}^{4k}, \]
and
\[ \cal{F}^{4k}_{\rm ns} : = \{(H, \lambda, \alpha) \,|\, \text{$(H, \lambda)$ is nonsingular} \} \subset \cal{F}^{4k}_{\rm nd}, \]
to be the the set of isomorphism classes of nondegenerate, respectively nonsingular, extended quadratic forms over $\pi_{2k-1}\{SO(2k)\}$.

Henceforth we assume that $(H, \lambda, \alpha) \in \cal{F}^{4k}_{\rm nd}$.
If we tensor the short exact sequence \eqref{eq:adjoint} with the rationals then, since $G$ is finite, we obtain an isomorphism
\[ \wh \lambda_\Q \colon H \otimes \Q \cong H^* \otimes \Q. \]
In particular $\wh \lambda_\Q$ is invertible with inverse $(\wh \lambda_\Q)^{-1}$
which induces a nondegenerate form on $H^*$ by pulling back $\lambda_\Q$, 
the extension of $\lambda$ to $H \otimes \Q$.  We denote this form by $\lambda^{-1}$:
\[ \lambda^{-1} \colon H^* \times H^* \to \Q.\]
Explicitly, $\lambda^{-1}$ is the composition $$H^* \times H^* \xra{(\hbox{id}\otimes 1,\hbox{id}\otimes 1)} (H^* \otimes \Q) \times (H^* \otimes \Q) \xra{(\wh \lambda_\Q)^{-1} \times (\wh \lambda_\Q)^{-1}} (H \otimes \Q) \times (H \otimes \Q) \xra{\lambda_\Q} \Q.$$ 

To define the algebraic boundary of $(H, \lambda, \alpha)$, we must distinguish the cases $k = 2, 4$, when
$\alpha$ is characteristic for $\lambda$ (in the sense that it determines $\lambda$ by Lemma \ref{lem:mu_and_alpha}), from the cases $k \neq 2, 4$ when $\lambda$ is even.  Notice that
$\alpha \in H^*$ if $k$ is even, $\alpha \in H^* \otimes \Z_2$ if $k \equiv 1$~mod~$4$, and $\alpha=0$ if $k \equiv 3$~mod~$4$.

\begin{Definition} \label{def:boundary_of_an_eqf}
Let $(H, \lambda, \alpha) \in \cal{F}^{4k}_{\rm nd}$.  
The boundary of $(H, \lambda, \alpha)$ is the quadruple
\[ \del(H, \lambda, \alpha) = (G, b, q, \beta), \]
where $G : = \coker(\wh \lambda)$ and $b$, $q$ and $\beta$ are defined as follows:
%Given $x \in H^*$, write $[x]$ for $\pi(x)$:
%
\begin{enumerate}
\item \label{def:boundary_of_an_eqf:linking_form}
For all $k$,
\[ b \colon G \times G \to \Q/\Z, \quad (\pi(x), \pi(y)) \longmapsto \lambda^{-1}(x, y)~\text{mod}~\Z, \]
where $\pi:H^* \rightarrow G$ is the map from \eqref{eq:adjoint}.
\item \label{def:boundary_of_an_eqf:q_k=2,4}
If $k = 2, 4$ then
\[ q \colon G \to \Q/\Z, \quad \pi(x) \longmapsto 
\frac{\lambda^{-1}(x, x) + \lambda^{-1}(x, \alpha)}{2} ~\text{mod}~\Z , \]
and $\beta : = \pi(\alpha) \in G$.  
%If $b$ denotes the linking form defined by $q$ then we have
%
%\[ q(\pi(x)) - q(-\pi(x)) = b(\pi(x), \beta) .\]
%
\item \label{def:boundary_of_an_eqf:q_k<>2,4}
If $k \neq 2, 4$, then 
\[ q \colon G \to \Q/\Z, \quad \pi(x) \longmapsto \frac{\lambda^{-1}(x, x)}{2} ~\text{mod}~\Z , \]
and $\beta : = \pi(\alpha) \in G$ if $k$ is even,  
$\beta : = \pi \otimes \id_{\Z_2}(\alpha \otimes 1) \in G \otimes \Z_2$ if $k \equiv 1$~mod~$4$, and $\beta:=0$ if $k \equiv 3$~mod~$4$.
%and $\pi_{\Z_2} := \pi \otimes \id_{\Z_2} \colon H^* \otimes \Z_2 \to G \otimes \Z_2$.
\end{enumerate}
\end{Definition}

%\begin{Remark} \label{rem:boundary_of_an_eqlf}
%We point out that that the signs used in Defintion~\ref{def:boundary_of_an_eqf} are the opposite from those used in
%\cite[Definition 2.22]{C1} and \cite[Definition 2.50]{C1}.  This is because \cite[Definition 2.22]{C1} gave the
%wrong sign for the linking form of $M$.  For further discussion of this point, see Remark \ref{rem:eqlf_definition}.
%\end{Remark}

It is immediate from the Definition \ref{def:boundary_of_an_eqf} that the isomorphism class of $\del(H, \lambda, \alpha)$ is an isomorphism invariant of $(H, \lambda, \alpha)$.  That is,
an isomorphism $B \colon (H_0, \lambda_0, \alpha_0) \cong (H_1, \lambda_1, \alpha_1)$ induces an isomorphism
\[ \del B \colon \del(H_0, \lambda_0, \alpha_0) \cong \del (H_1, \lambda_1, \alpha_1). \]
Hence there is a well defined function
\begin{equation} \label{eq:del}
\del \colon \cal{F}^{4k}_{\rm nd} \to \cal{Q}^{4k-1}, \quad (H, \lambda, \alpha) \mapsto \del(H, \lambda, \alpha) .	
\end{equation}

Recall that $\cal{F}^{4k}_{\rm ns} \subset \cal{F}^{4k}_{\rm nd}$ is the set of isomorphism classes of
nonsingular extended quadratic forms over $\pi_{2k-1}(SO(2k))$.  By definition, 
\[ (H, \lambda, \alpha) \in \cal{F}^{4k}_{\rm ns} \Longleftrightarrow \del(H, \lambda, \alpha) = (0, 0, 0, 0).\]
It follows that if we add a nonsingular form $(H_0, \lambda_0, \alpha_0)$ to a nondegenerate form 
$(H_1, \lambda_1, \alpha_1)$, we do not change the boundary:
\[ \del \bigl( (H_0, \lambda_0, \alpha_0) \oplus (H_1, \lambda_1, \alpha_1) \bigr)  = \del(H_1,\lambda_1, \alpha_1).\]
We shall call two nondegenerate extended quadratic forms $(H_0, \lambda_0, \alpha_0)$ and $(H_1, \lambda_1, \alpha_1)$
{\em stably isomorphic} if they become isomorphic after the addition of nonsingular forms.
%, and we write $(H_0, \lambda_0, \alpha_0) \cong_s (H_1, \lambda_1, \alpha_1)$.  
It is clear that stably isomorphic forms have isomorphic boundaries.

%there are extended quadratic forms 
%$(H_0, \lambda_0, \alpha_0), (H_1, \lambda_1, \alpha_1) \in \cal{F}^{4k}_{\rm nd}$ and an isomorphism
%
%\[ (H_0, \lambda_0, \alpha_0) \oplus (H_2, \lambda_2, \alpha_2) \cong (H_1, \lambda_1, \alpha_1) \oplus (H_3, \lambda_3, \alpha_3).\]
%

\begin{Theorem} \label{thm:boundaries_of_eqfs}
Let $(H_0, \lambda_0, \alpha_0)$ and $(H_1, \lambda_1, \alpha_1) \in \cal{F}^{4k}_{\rm nd}$.
\begin{enumerate}
\item For every isomorphism $A \colon \del(H_0, \lambda_0, \alpha_0) \cong \del(H_1, \lambda_1, \alpha_1)$, there
are nonsingular extended quadratic forms $(H_2, \lambda_2, \alpha_2)$ and $(H_3, \lambda_3, \alpha_3)$ and
an isomorphism
\[ B \colon (H_0, \lambda_0, \alpha_0) \oplus (H_2, \lambda_2, \alpha_2) \cong (H_1, \lambda_1, \alpha_1) \oplus (H_3, \lambda_3, \alpha_3),\]
such that $\del B = A$.
\item $(H_0, \lambda_0, \alpha_0)$ and $(H_1, \lambda_1, \alpha_1)$ are stably isomorphic if and only if
there is an isomorphism $\del (H_0, \lambda_0, \alpha_0) \cong \del (H_1, \lambda_1, \alpha_1)$.

\end{enumerate} 
\end{Theorem}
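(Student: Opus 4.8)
The plan is to derive part~(2) from part~(1): given an isomorphism $A \colon \del(H_0, \lambda_0, \alpha_0) \cong \del(H_1, \lambda_1, \alpha_1)$, part~(1) produces nonsingular forms $(H_2, \lambda_2, \alpha_2)$, $(H_3, \lambda_3, \alpha_3)$ and an isomorphism $B \colon (H_0, \lambda_0, \alpha_0) \oplus (H_2, \lambda_2, \alpha_2) \cong (H_1, \lambda_1, \alpha_1) \oplus (H_3, \lambda_3, \alpha_3)$, which is exactly a stable isomorphism; the converse implication is the observation recorded just before the theorem. So all of the content lies in part~(1).

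For part~(1), write $\wh\lambda_i \colon H_i \to H_i^*$ and $\pi_i \colon H_i^* \to G_i$ for the maps in the sequence \eqref{eq:adjoint} attached to $(H_i, \lambda_i)$, so that $b_i(\pi_i x, \pi_i y) \equiv \lambda_i^{-1}(x, y) \bmod \Z$. The key construction — an algebraic version of gluing two bounding manifolds along a diffeomorphism of their boundaries — is the pullback lattice
\[ P := \{(x, y) \in H_0^* \oplus H_1^* \mid A(\pi_0 x) = \pi_1 y \}, \]
with the form $\Lambda$ obtained by restricting the $\Q$-valued form $\lambda_0^{-1} \oplus (-\lambda_1^{-1})$ on $H_0^* \oplus H_1^*$. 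Since $A$ is an isometry of the linking forms, for $p = (x, y)$, $p' = (x', y')$ in $P$ we get $\Lambda(p, p') \equiv b_0(\pi_0 x, \pi_0 x') - b_1(A\pi_0 x, A\pi_0 x') = 0 \bmod \Z$, so $\Lambda$ is integral; and as $\lambda_0^{-1}\oplus(-\lambda_1^{-1})$ has determinant of absolute value $|G_0|^{-1}|G_1|^{-1}$ while $[H_0^* \oplus H_1^* : P] = |G_1| = |G_0|$, the form $\Lambda$ is unimodular. If $k \neq 2, 4$ the hypothesis that $A$ preserves the (homogeneous) quadratic refinements forces $\Lambda$ to be even; if $k = 2, 4$, nonsingularity of $\Lambda$ supplies a characteristic element. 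Hence in all cases $(P, -\Lambda)$ underlies a nonsingular extended quadratic form $(P, -\Lambda, \alpha_P)$ for a suitable $\alpha_P$ fixed below. By construction $x \mapsto (\wh\lambda_0 x, 0)$ and $y \mapsto (0, \wh\lambda_1 y)$ isometrically embed $(H_0, \lambda_0)$ and $(H_1, -\lambda_1)$ into $(P, \Lambda)$ as orthogonal sublattices of jointly full rank, with $P/(\wh\lambda_0 H_0 \oplus \wh\lambda_1 H_1) \subset G_0 \oplus G_1$ equal to the graph of $A$; that is, $(P, \Lambda)$ is $(H_0, \lambda_0)$ glued to $(H_1, -\lambda_1)$ along $A$.

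To finish, consider $(H_0, \lambda_0, \alpha_0) \oplus (P, -\Lambda, \alpha_P)$, which is stably isomorphic to $(H_0, \lambda_0, \alpha_0)$ since $(P, -\Lambda, \alpha_P)$ is nonsingular. It contains the isotropic direct summand $\Gamma := \{(x, (\wh\lambda_0 x, 0)) \mid x \in H_0\}$, and a direct computation shows that the outcome of algebraic surgery on $\Gamma$ — the form and $\mu$-function descended from $\lambda_0 \oplus (-\Lambda)$ and $\mu_0 \oplus \mu_P$ to $\Gamma^\perp/\Gamma$ — is isomorphic to $(H_1, \lambda_1, \alpha_1)$ for the choice of $\alpha_P$ described below; here $\Gamma^\perp/\Gamma$ is freely generated by the classes of the vectors $(0, (0, \wh\lambda_1 b))$, $b \in H_1$, on which $\lambda_0 \oplus (-\Lambda)$ restricts to $\lambda_1$. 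Since algebraic surgery on an isotropic direct summand changes an extended quadratic form only within its stable isomorphism class — a standard fact which does not depend on the present theorem — the forms $(H_0, \lambda_0, \alpha_0)$ and $(H_1, \lambda_1, \alpha_1)$ are stably isomorphic; unwinding this stable isomorphism (which adjoins nonsingular forms, including $(P, -\Lambda, \alpha_P)$, to each side) yields nonsingular $(H_2, \lambda_2, \alpha_2)$, $(H_3, \lambda_3, \alpha_3)$ and an isomorphism $B$, and because the gluing data of $(P, \Lambda)$ was $A$ itself, tracking cokernels through the surgery gives $\del B = A$.

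I expect the main obstacle to be the choice of $\alpha_P$, i.e.\ propagating the refinement data $\alpha$ (and thereby $q$ and $\beta$) rather than just the linking form $b$. For $\Gamma$ to be isotropic for $\mu_0 \oplus \mu_P$ and for the surgery to recover exactly $\alpha_1$, the stabilisation $\alpha_P$ must restrict on $\wh\lambda_0 H_0 \oplus \wh\lambda_1 H_1 \subset P$ to the homomorphism $(\wh\lambda_0 x, \wh\lambda_1 b) \mapsto -\alpha_0(x) + \alpha_1(b)$; the obstruction to extending this over $P$ lives in $\mathrm{Ext}^1\!\big(P/(\wh\lambda_0 H_0 \oplus \wh\lambda_1 H_1), \pi_{2k-1}(SO)\big) \cong \mathrm{Ext}^1(G_0, \pi_{2k-1}(SO))$ and must be shown to vanish — this is exactly where one uses that $A$ respects $\beta$ (and, through Definition \ref{def:boundary_of_an_eqf}, $q$), rather than merely $b$. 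When $k = 2, 4$ there is the extra requirement that $\alpha_P$ be characteristic for $-\Lambda$, to be met compatibly via Lemma \ref{lem:mu_and_alpha} and the relation between $\alpha_i$ and $\lambda_i$; and when $k \equiv 1 \bmod 4$ one must keep track of the $\Z/2$-reductions in the definitions of $\alpha$ and $\beta$. Once these compatibilities are verified, together with the classical input (cf.\ \cite{Wa2}) handling the underlying bilinear forms, the theorem follows.
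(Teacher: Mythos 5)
Your proposal takes a different route from the paper, whose proof of this theorem is a citation: for $k \equiv 3 \pmod 4$ the triple reduces to an even symmetric form and the statement is Wall~\cite[Theorem p.\,156]{Wa5}; for $k$ even it is \cite[Lemma 3.12]{C1}; and for $k \equiv 1 \pmod 4$ the paper asserts the arguments of \cite{C1} adapt. You instead try to reconstruct a self-contained argument via the pullback lattice $P$ (an algebraic analogue of gluing two null-bordisms along a diffeomorphism of their boundaries) followed by algebraic surgery on the diagonal isotropic summand $\Gamma$. That is in the same spirit as the cited sources, and the pullback construction itself is correct: $\Lambda$ is integral by the isometry hypothesis, unimodular by the index/determinant count, and even when $k \neq 2,4$ by the preservation of $q$; the identifications $\Gamma^\perp/\Gamma \cong H_1$ and $\del \Lambda = 0$ all check out. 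Your derivation of part~(2) from part~(1) is also fine.

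However there are gaps, one of which you do not flag. The assertion that ``algebraic surgery on an isotropic direct summand changes an extended quadratic form only within its stable isomorphism class --- a standard fact'' is false for nondegenerate (not nonsingular) forms without further hypotheses: take $(M,\phi)=\left(\Z^2, \left(\begin{smallmatrix} 0 & 2 \\ 2 & 0 \end{smallmatrix}\right)\right)$ and $\Gamma = \langle e_1 \rangle$, an isotropic direct summand with $\Gamma^\perp = \Gamma$, so $\Gamma^\perp/\Gamma = 0$ is nonsingular while $(M,\phi)$ is not. What one needs, and what does hold in your situation, is that $\Gamma^\perp$ is a direct summand and that $\phi$ induces a perfect pairing $\Gamma \times (M/\Gamma^\perp) \to \Z$; then for any complement $S$ of $\Gamma^\perp$, the restriction $\phi|_{\Gamma\oplus S}$ is nonsingular (it has the block shape $\left(\begin{smallmatrix} 0 & C \\ C^T & * \end{smallmatrix}\right)$ with $C$ invertible), whence $M = (\Gamma\oplus S)\perp(\Gamma\oplus S)^\perp$ splits orthogonally, and a rank--determinant count shows $(\Gamma\oplus S)^\perp \cong \Gamma^\perp/\Gamma$. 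These hypotheses must be verified in your setup: $M/\Gamma^\perp \cong H_0^*$ via $(y,(u,v))\mapsto u - \wh\lambda_0 y$, and the induced pairing with $\Gamma \cong H_0$ is $(x,w)\mapsto -w(x)$, which is indeed perfect --- but none of this is in your write-up. Beyond that, the two gaps you do acknowledge remain open: the existence of an $\alpha_P$ extending $(\wh\lambda_0 x, \wh\lambda_1 b) \mapsto -\alpha_0(x) + \alpha_1(b)$ over all of $P$ and, for $k=2,4$, characteristic for $\Lambda$ (this genuinely requires that $A$ preserve $\beta$, since the relevant extension problem is obstructed in general); and the claim $\del B = A$, which you assert by ``tracking cokernels'' but do not establish. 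Until these are closed the proposal is an outline, not a proof; but the outline is the right one, and completing it would give a unified argument replacing the paper's case-by-case citation.
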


\begin{proof}
For $k \equiv 3$~mod~$4$ then a nondegenerate extended quadratic form $(H, \lambda, \alpha)$ is just
a nondegenerate even symmetric bilinear form $(H, \lambda)$.  In this case, the theorem is proven by Wall
\cite[Theorem p.\,156]{Wa5}.  In the case where $k \equiv 0$~mod~$2$ this is proved in \cite[Lemma 3.12]{C1}.  
For the remaining case,
$k \equiv 1$~mod~$4$, an extended quadratic form $(H, \lambda, \alpha)$ is an even form $(H, \lambda)$ and a 
homomorphism $\alpha \colon H \to \Z_2$.  The arguments from \cite[Lemma 3.12]{C1} for the case $k \equiv 0$~mod~$2$ 
but $k \neq 2, 4$ can be easily adapted. 
%
%\dccomm{Improve references.  Perhaps elaborate on the $k \equiv 1$ case.}
\end{proof}

We conclude this section with an important lemma for computing the algebraic 
boundary $\del(H, \lambda, \alpha)$ in concrete cases.  
Recall the short exact sequence \eqref{eq:adjoint}
%noting that we can give more detail about the group $G$ arising from the exact sequence 
%
\[ 0 \xra{~~~} H \xra{~\wh \lambda~} H^* \xra{~\pi~} G \to 0. \]
We shall need to determine both the order of the finite group $G$ and the inverse form
$(H^*, \lambda^{-1})$.

%It is clear from this definition that in order to compute the algebraic boundary $ \del(H, \lambda, \alpha)$ we will need to determine $\lambda^{-1}.$ (We will have do this explicitly in several cases in Section \ref{sec:realising_extended_quadratic_linking_forms}.)  The key to doing this is the following:

\begin{Lemma}\label{lem:inverse_form}
%Let $A$ be the matrix of the rational adjoint map $\wh \lambda \otimes \Q$ with respect to some basis for $H \otimes \Q$ and the corresponding dual basis for $H^* \otimes \Q.$ Then the matrix of the 
Let $A$ be the (integral) matrix of the adjoint map $\wh \lambda$ with respect to some basis for $H$ and the corresponding dual basis for $H^*$.  
\begin{enumerate}
\item \label{lem:inverse_form:lambda}
The matrix of the 
%rationalized 
form $\lambda^{-1}_\Q:H^* \otimes \Q \times H^* \otimes \Q \rightarrow \Q$ with respect to the induced dual basis on
$H^*$ is $A^{-1}.$
\item \label{lem:inverse_form:detA}
$|{\rm det}(A)| = |G|.$
\end{enumerate}
\end{Lemma}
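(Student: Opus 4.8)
The plan is to unwind the definition of $\lambda^{-1}$ (respectively $\lambda^{-1}_\Q$) given in Section \ref{subsec:boundaries_of_eqfs} and translate it into matrix language, and then to invoke the structure theorem for finitely generated abelian groups (Smith normal form) for part \eqref{lem:inverse_form:detA}. For part \eqref{lem:inverse_form:lambda}: fix a basis $e_1, \dots, e_r$ of $H$, let $e_1^*, \dots, e_r^*$ be the dual basis of $H^*$, and let $A = (A_{ij})$ be the matrix of $\wh\lambda$, so that $\wh\lambda(e_j) = \sum_i A_{ij} e_i^*$; note $A_{ij} = \lambda(e_i, e_j)$ since $\wh\lambda(e_j)(e_i) = \lambda(e_j, e_i) = \lambda(e_i, e_j)$ by $\epsilon$-symmetry with $\epsilon = +1$ in the even case. (In the skew case one would track signs, but here $n = 2k$ is even.) Tensoring with $\Q$, the map $(\wh\lambda_\Q)^{-1} \colon H^*\otimes\Q \to H\otimes\Q$ has matrix $A^{-1}$ with respect to the dual basis $\{e_i^*\}$ and the basis $\{e_i\}$. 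By the explicit formula for $\lambda^{-1}$ displayed after \eqref{eq:adjoint}, for $x, y \in H^*\otimes\Q$ we have $\lambda^{-1}_\Q(x,y) = \lambda_\Q\bigl((\wh\lambda_\Q)^{-1}x, (\wh\lambda_\Q)^{-1}y\bigr)$. Writing $x = \sum x_i e_i^*$, $y = \sum y_j e_j^*$, so $(\wh\lambda_\Q)^{-1}x$ has coordinate vector $A^{-1}\mathbf{x}$ in the basis $\{e_i\}$, and using that $\lambda_\Q$ has matrix $A$ in that basis, we get $\lambda^{-1}_\Q(x,y) = (A^{-1}\mathbf{x})^{T} A (A^{-1}\mathbf{y}) = \mathbf{x}^{T}(A^{-1})^{T} A A^{-1}\mathbf{y} = \mathbf{x}^{T}(A^{-1})^{T}\mathbf{y}$. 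Since $A$ is symmetric, $(A^{-1})^{T} = A^{-1}$, so the matrix of $\lambda^{-1}_\Q$ in the dual basis is $A^{-1}$, as claimed.

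For part \eqref{lem:inverse_form:detA}: apply the classification of homomorphisms between free abelian groups of the same rank, i.e.~choose bases of $H$ and $H^*$ (not necessarily dual to each other) in which $\wh\lambda$ is represented by a diagonal matrix $\mathrm{diag}(d_1, \dots, d_r)$ with $d_i \geq 1$ (the elementary divisors); this is possible since $\wh\lambda$ is injective by nondegeneracy, hence all $d_i \neq 0$, and we may take them positive. Then $G = \coker(\wh\lambda) \cong \bigoplus_i \Z/d_i$, so $|G| = \prod_i d_i$. Changing bases multiplies the matrix $A$ on either side by an element of $GL_r(\Z)$, which has determinant $\pm 1$, so $|\det(A)| = \prod_i d_i = |G|$.

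I do not expect any genuine obstacle here — both parts are essentially bookkeeping with matrices plus one appeal to Smith normal form. The only point requiring a little care is keeping straight which basis is dual to which, and in particular making sure the symmetry of $A$ is used correctly so that the transpose in the computation of $\lambda^{-1}_\Q$ disappears; getting the indices right in $\wh\lambda(e_j) = \sum_i \lambda(e_i,e_j)e_i^*$ versus the coordinate description of $(\wh\lambda_\Q)^{-1}$ is the one place a sign or transpose error could creep in. Everything else is routine linear algebra over $\Z$ and $\Q$.
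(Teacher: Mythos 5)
Your proof is correct and follows essentially the same route as the paper's: unwind the definition of $\lambda^{-1}_\Q$ into matrix language for part (1) and appeal to Smith normal form for part (2). The only cosmetic difference is in part (1), where the paper uses the identity $\lambda_\Q(u,v) = \wh\lambda_\Q(u)(v)$ to collapse $\lambda^{-1}_\Q(x,y)$ directly to $x\bigl((\wh\lambda_\Q)^{-1}y\bigr) = \mathbf{x}^T A^{-1}\mathbf{y}$, while you carry out the full matrix computation $(A^{-1}\mathbf{x})^T A (A^{-1}\mathbf{y})$ and then cancel using symmetry of $A$; both are the same calculation in slightly different dress.
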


\begin{proof}
\eqref{lem:inverse_form:lambda}
The map $\lambda^{-1}_\Q$ is the composition 
$$(H^* \otimes \Q) \times (H^* \otimes \Q) \xra{(\wh \lambda_\Q)^{-1} \times (\wh \lambda_\Q)^{-1}} 
(H \otimes \Q) \times (H \otimes \Q) \xra{\lambda_\Q} \Q.$$ 
Therefore given elements $v^* \otimes q$ and $w^* \otimes q'$ in $H^* \otimes \Q$ we have 
\begin{align} 
\lambda^{-1}_\Q(v^* \otimes q,w^* \otimes q')&=\lambda_\Q((\wh \lambda_\Q)^{-1}(v^* \otimes q),(\wh \lambda_\Q)^{-1}(w^* \otimes q')) \\
&= \wh \lambda_\Q((\wh \lambda_\Q)^{-1}(v^* \otimes q))((\wh \lambda_\Q)^{-1}(w^* \otimes q')) \\
&= (v^* \otimes q)((\wh \lambda_\Q)^{-1}(w^* \otimes q')).
\end{align}
Expressing $v^* \otimes q$ and $w^* \otimes q'$ with respect to the given basis for $H^* \otimes \Q$, we have $$\lambda^{-1}_\Q(v^* \otimes q,w^* \otimes q')=(v^* \otimes q)(A^{-1}(w^* \otimes q')).$$ Thus the matrix of 
$\lambda^{-1}_\Q$ with respect to the specified basis is precisely $A^{-1}$ as claimed.

\eqref{lem:inverse_form:detA}
Given any $\Z$-linear map $\Z^n \rightarrow \Z^n,$ we can choose bases for each copy of $\Z^n$ so that the corresponding 
matrix is in Smith Normal Form, that is, the matrix is diagonal with each non-zero diagonal entry dividing the next. 
In our case, this amounts to choosing unimodular matrices $P$ and $Q$ such that $PAQ=\hbox{diag}\{a_1,...,a_n\}.$ 
We have $|\det A|=\det (PAQ)=\Pi_{i=1}^n a_i.$
\par
By re-choosing the bases for $H$ and $H^*$ if necessary, we can now re-express the above short exact sequence as
$$0 \lra \Z^n \xra{~\lambda'~} \Z^n \xra{~\pi'~} \Z_{a_1} \times \dots \times \Z_{a_n} \lra 0,$$ 
where $\lambda'$ has matrix 
$\hbox{diag}\{a_1,...,a_n\}$ and $\pi'$ is reduction modulo $a_i$ on the $i^{th}$ factor of $\Z^n$ for each $i.$ 
By exactness and the first isomorphism theorem for groups we have 
$$G \cong \Z^n/\hbox{im}(\lambda') \cong \Z_{a_1} \times \dots \times \Z_{a_n}.$$ 
We therefore have $$|G|=\Pi_{i=1}^n a_i=|\det A|$$ as claimed.
\end{proof}

%\begin{Lemma}\label{SNF}
%
%\[  |{\rm det}(A)| = |G|.\]
%With the notation above, $ |{\rm det}(A)| = |G|.$
%\end{Lemma}

% subsection subsec:boundaries_of_eqfs (end)
%%%%%%%%%%%%%%%%%%%%%%%%%%%%%%%%%%%%%%%%%%%%%%%%%%%%%%%%%%%%%%%%%%%%%%%%%%%%%%%%%%%%%%%%

% (fold)
\subsection{Treelike forms} \label{subsec:treelike_forms}
%%%%%%%%%%%%%%%%%%%%%%%%%%%%%%%%%%%%%%%%%%%%%%%%%%%%%%%%%%%%%%%%%%%%%%%%%%%%%%%%%%%%%%%%
In this subsection we define the set of treelike extended quadratic froms, $\cal{TF}^{4k}$,
which is the algebraic analogue of the set of treelike plumbings $\cal{TP}^{4k}$; see
Lemma \ref{lem:tree_recognition}.

Let $\ul{a} = (a_1, \dots, a_r)$ be an $r$-tuple of integers.  A {\em $\Z$-labelled tree}
$\mathfrak{t} = \bigl( (\V, \E), \ul{a} \bigr)$ is a tree $(\V, \E)$ with ordered vertex set $\V = (v_1, \dots, v_r)$ and
an integer $a_i$ assigned to each vertex $v_i \in \V$.  Since $(\V, \E)$ 
is a tree the set of edges between $v_i$ and $v_j$,
$\E_{i, j}$, is either empty or contains one edge: $|\E_{i, j}| = 0, 1$.
From a $\Z$-labelled tree $\mathfrak{t} = \bigl((\V, \E), \ul{a} \bigr)$
we obtain a symmetric bilinear form $\lambda_{\mathfrak{t}}$ on $H_{\mathfrak{t}}$, 
the free abelian group with basis $\V$, by setting
\begin{equation} \label{eq:plumb}  \lambda_{\mathfrak{t}}(v_i, v_j) 
= \left\{\begin{array}{cl} |\E_{i, j}|& \text{if $i \neq j$,}  \\
a_i \in \Z & \text{if $i = j$,} \end{array} \right. 
\end{equation}
and extending linearly to all of $H_\mathfrak{t}$.
For example if $\ul{a} = \{a_1, a_2, \dots , a_r\}$, the $\Z$-labelled tree
\begin{equation} \label{eq:D(a)}
\xymatrix{ \mathfrak{d}(\ul{a}) : = & a_1 \ar@{-}[r] & a_2 \ar@{-}[r] & \dots \ar@{-}[r] & a_r, }  
\end{equation}
defines the bilinear form $(H_{\mathfrak{d}(\ul{a})}, \lambda_{\mathfrak{d}(\ul{a})})$ with matrix
\begin{equation} \label{eq:AD(a)} 
A_{\mathfrak{d}(\ul{a})} = 
\left( \begin{array}{ccccccc} 
a_1 & 1 & 0 & \dots & 0 & 0\\
1 & a_2 & 1 & \dots & 0 & 0\\
0 & 1 & a_3 & \dots & 0 & 0\\
\vdots & \vdots & \vdots & \ddots & \vdots & \vdots \\
0 & 0 & 0 & \dots & a_{r-1} & 1 \\
0 & 0 & 0 & \dots & 1 & a_r
\end{array} \right)
\end{equation}
with respect to the obvious basis of $H_{\mathfrak{d}(\ul{a})}.$

\begin{Definition} \label{def:treelike_form}
A symmetric bilinear form $(H, \lambda)$ is called treelike if and only if 
\[ (H, \lambda) \cong (H_\mathfrak{t}, \lambda_\mathfrak{t}) \]
for some $\Z$-labelled tree $\mathfrak{t}$.  Such a form will be called `even' if the diagonal 
entries of the corresponding matrix are all even. We define the subset $\cal{TF}^{4k} \subset \cal{F}^{4k}$,
\[  \cal{TF}^{4k} : = \{ (H, \lambda, \alpha)\, | \, \text{$(H, \lambda)$ is treelike} \},  \]
to be the set of extended quadratic forms over $\pi_{2k-1}\{SO(2k)\}$ with treelike symmetric form $(H, \lambda)$.
\end{Definition}

In general, it seems to be a difficult problem to decide it a given symmetric bilinear from $(H, \lambda)$
is treelike, and Lemma \ref{lem:non-treelike} in Section \ref{sec:symmetric_forms_are_stably_treelike}
gives examples of forms which are not treelike.  
Also, we do not know if the set of treelike extended quadratic forms $\cal{TF}^{4k}$ is closed under
direct sum.  However, for the hyperbolic form $H_+(\Z) : = (H_{\mathfrak{d}(0, 0)}, \lambda_{\mathfrak{d}(0, 0)})$ 
with matrix
%to any sum, where  denotes the treelike bilinear form $(H,\lambda)$ with $H=\Z$ and 
%$\lambda$ given by the matrix 
\begin{equation} \label{eq:hyperbolic_form}
\left( \begin{array}{cc}
0&1 \\
1&0 \\
\end{array}\right),
\end{equation}
if we add the extended form $(H, \lambda, \alpha) = (H_+(\Z), 0)$ to a sum of treelike forms,
we obtain a treelike form.

\begin{Lemma} \label{lem:stable_closure_of_treelike_forms}
For any $j \geq 0$, if $(H_i, \lambda_, \alpha_i) \in \cal{TF}^{4k}$, then
$\bigoplus_{i=1}^j (H_i, \lambda_i, \alpha_i) \oplus (H_+(\Z), 0) \in \cal{TF}^{4k}$.  
\end{Lemma}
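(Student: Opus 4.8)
The plan is to reduce the statement to one about the underlying symmetric bilinear forms. Membership in $\cal{TF}^{4k}$ constrains only the form $(H, \lambda)$ and not the homomorphism $\alpha$, and a direct sum of extended quadratic forms is again an extended quadratic form; so it suffices to prove that the symmetric bilinear form $\bigoplus_{i=1}^j (H_i, \lambda_i) \oplus H_+(\Z)$ is treelike in the sense of Definition \ref{def:treelike_form}. To this end I would fix, for each $i$, a $\Z$-labelled tree $\mathfrak{t}_i = \bigl((\V_i, \E_i), \ul{a}_i\bigr)$ with $(H_i, \lambda_i) \cong (H_{\mathfrak{t}_i}, \lambda_{\mathfrak{t}_i})$; if $H_i = 0$ take $\mathfrak{t}_i$ empty, and otherwise choose an arbitrary vertex $v_i \in \V_i$.

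The key construction is a single $\Z$-labelled tree $\mathfrak{t}$ that ties all the $\mathfrak{t}_i$ together using exactly one hyperbolic block. Let $\mathfrak{t}$ be the disjoint union of the trees $\mathfrak{t}_i$ together with two additional vertices $e$ and $f$, both labelled $0$, an edge joining $e$ to $f$, and, for each nonempty $\mathfrak{t}_i$, an edge joining $v_i$ to $e$. Then $\mathfrak{t}$ is connected and acyclic, hence a tree. I would claim that $(H_{\mathfrak{t}}, \lambda_{\mathfrak{t}}) \cong \bigoplus_{i=1}^j (H_i, \lambda_i) \oplus H_+(\Z)$. Since the Gram matrix of the pair $(e, f)$ is the hyperbolic matrix \eqref{eq:hyperbolic_form}, which is unimodular, one has $H_{\mathfrak{t}} = \langle e, f\rangle \oplus \langle e, f\rangle^{\perp}$ with $\langle e, f \rangle \cong H_+(\Z)$. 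A basis for $\langle e, f\rangle^{\perp}$ is obtained by replacing each chosen vertex $v_i$ by $v_i - f$ and keeping every other vertex of every $\mathfrak{t}_i$ unchanged (this is a unimodular change of basis of $H_{\mathfrak{t}}$, and the new vectors are orthogonal to $e$ and $f$ because $\lambda_{\mathfrak{t}}(v_i, e) = \lambda_{\mathfrak{t}}(f, e) = 1$ while $f$ is adjacent only to $e$). A short computation — using that $f$ is adjacent only to $e$, so $\lambda_{\mathfrak{t}}(f, w) = 0$ for every vertex $w$ of every $\mathfrak{t}_i$ — shows that in this basis the form on $\langle e, f\rangle^{\perp}$ is block diagonal, with the block indexed by $i$ equal to $(H_{\mathfrak{t}_i}, \lambda_{\mathfrak{t}_i})$; the only point requiring attention is the vanishing $\lambda_{\mathfrak{t}}(v_i - f, v_{i'} - f) = -\lambda_{\mathfrak{t}}(v_i, f) - \lambda_{\mathfrak{t}}(f, v_{i'}) + \lambda_{\mathfrak{t}}(f,f) = 0$ for $i \neq i'$. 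This proves the claim, and hence the lemma; the case $j = 0$ is simply the observation that $H_+(\Z) = (H_{\mathfrak{d}(0,0)}, \lambda_{\mathfrak{d}(0,0)})$ is treelike by definition.

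There is no serious obstacle here, and the only real subtlety is the choice of the connecting configuration so that no negative off-diagonal entries are produced. For example, joining two trees by the naive path $v_1 - e - f - v_2$ fails: orthogonalising against $\langle e, f\rangle$ forces the substitutions $v_1 \mapsto v_1 - f$ and $v_2 \mapsto v_2 - e$, and then $\lambda(v_1 - f, v_2 - e) = -1$, which is not permitted in a $\Z$-labelled tree form, whose off-diagonal entries must equal edge multiplicities. Attaching all the $v_i$ to a common vertex $e$ carrying a single pendant vertex $f$ avoids this. It is also worth noting why one should argue for all $j$ simultaneously rather than by induction: a direct sum of treelike forms corresponds to a forest rather than a tree, so one cannot build the statement up one summand at a time with a fresh hyperbolic block each time — the single hyperbolic summand has to connect all $j$ pieces at once.
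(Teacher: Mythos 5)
Your proof is correct and relies on exactly the same combinatorial configuration as the paper's: a ``star'' in which one hyperbolic vertex ($z_2$ in the paper, $e$ in yours) is joined to a pendant zero-labelled vertex ($z_1$, resp.\ $f$) and to one chosen vertex of each tree $\mathfrak{t}_i$. The difference is purely one of direction: the paper starts from the basis of the orthogonal direct sum $\bigoplus \mathfrak{t}_i \sqcup \mathfrak{d}(0,0)$ and applies the unipotent change $x_i \mapsto x_i + z_1$ to produce a basis whose Gram graph is that tree, whereas you start from the tree and recover the direct sum by splitting off the unimodular sublattice $\langle e, f\rangle$ and exhibiting the basis $\{v_i - f\}\cup\{\text{other vertices}\}$ of its orthogonal complement. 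These are inverse substitutions of one another (with $v_i = x_i + z_1$, $f = z_1$, $e = z_2$), so the two arguments are really the same. Your version is perhaps a touch cleaner, since the decomposition $H_{\mathfrak{t}} = \langle e, f\rangle \oplus \langle e, f\rangle^\perp$ with $\langle e, f\rangle$ unimodular gives the orthogonal splitting for free, and your side remarks — that the naive path $v_1\text{--}e\text{--}f\text{--}v_2$ produces an unwanted $-1$ entry, and that one cannot induct on $j$ because a forest is not a tree — correctly identify the only nontrivial choices in the argument, even though neither observation appears in the paper.
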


%\begin{Lemma} \label{lem:stable_closure_of_treelike_forms}
%Suppose that  $(H_0, \lambda_0, \alpha_0), (H_1, \lambda_1, \alpha_1) \in \cal{TF}^{4k}$, then 
%
%\[ (H_0, \lambda_0, \alpha_0) \oplus (H_+(\Z), 0) \oplus (H_1, \lambda_1, \alpha_1) \in \cal{TF}^{4k} .\]
%
%\end{Lemma}

\begin{proof}
The property of being treelike only depends on the underlying symmetric form.  
The following sequence of diagrams illustrates the basis change needed when $j = 2$.
We start from the disjoint union of two $\Z$-labelled trees:
\[ 
\entrymodifiers={++[o][F-]}
\xymatrix{ a_1 \ar@{-}[dr] & *\txt{} & *\txt{} & *\txt{} & *\txt{} & *\txt{} & *\txt{} & *\txt{} & b_1 \\
*\txt{...} & a_4 \ar@{-}[rr] & *\txt{} & a_2 & *\txt{} & b_3 \ar@{-}[rr] & *\txt{} &  b_4 \ar@{-}[ur] \ar@{-}[dr] & *\txt{...}\\ 
a_3 \ar@{-}[ur] & *\txt{} & *\txt{} & *\txt{} & *\txt{} & *\txt{} & *\txt{} & *\txt{} & b_2 }  \]
Then we introduce a disjoint ``hyperbolic component'' with labels $(0, 0)$:
\[
\entrymodifiers={++[o][F-]}
\xymatrix{ a_1 \ar@{-}[dr] & *\txt{} & *\txt{} & *\txt{} & 0 \ar@{-}[dd] & *\txt{} & *\txt{} & *\txt{} & b_1 \\
*\txt{...} & a_4 \ar@{-}[rr] & *\txt{} & a_2 & *\txt{} & b_3 \ar@{-}[rr] & *\txt{} &  b_4 \ar@{-}[ur] \ar@{-}[dr] & *\txt{...}\\ 
a_3 \ar@{-}[ur] & *\txt{} & *\txt{} & *\txt{} & 0 & *\txt{} & *\txt{} & *\txt{} & b_2 }  \]
If the labels $(a_1, a_2, a_3, a_4), (b_1, b_2, b_3, b_4)$ and $(0, 0)$ correspond
to bases $\{x_1, x_2, x_3, x_4\}$, $\{y_1, y_2, y_3, y_4\}$ and $\{z_1, z_2\}$, we introduce the basis change
\[ x_2 \longmapsto x_2 + z_1, \quad y_3 \longmapsto y_3 + z_1, \]
leaving all other basis elements fixed.  The corresponding graph is the following tree:
\[
\entrymodifiers={++[o][F-]}
\xymatrix{ a_1 \ar@{-}[dr] & *\txt{} & *\txt{} & *\txt{} & 0 \ar@{-}[dd] & *\txt{} & *\txt{} & *\txt{} & b_1 \\
*\txt{...} & a_4 \ar@{-}[rr] & *\txt{} & a_2 \ar@{-}[dr] & *\txt{} & b_3 \ar@{-}[rr] & *\txt{} &  b_4 \ar@{-}[ur] \ar@{-}[dr] & *\txt{...}\\ 
a_3 \ar@{-}[ur] & *\txt{} & *\txt{} & *\txt{} & 0 \ar@{-}[ur] & *\txt{} & *\txt{} & *\txt{} & b_2 }  \]

In the general case, we simply choose a basis element $x_i \in H_i$ and make the basis change
$x_i \mapsto x_i + z_1$.  The outcome is then a treelike basis for $\bigoplus_{i=1}^j(H_i, \lambda_i) \oplus H_+(\Z)$.
\end{proof}
%
% subsection treelike forms (end)
%%%%%%%%%%%%%%%%%%%%%%%%%%%%%%%%%%%%%%%%%%%%%%%%%%%%%%%%%%%%%%%%%%%%%%%%%%%%%%%%%%%%%%%%

% section eqfs_and_eqlfs (end)
%%%%%%%%%%%%%%%%%%%%%%%%%%%%%%%%%%%%%%%%%%%%%%%%%%%%%%%%%%%%%%%%%%%%%%%%%%%%%%%%%%%%%%%%
%%%%%%%%%%%%%%%%%%%%%%%%%%%%%%%%%%%%%%%%%%%%%%%%%%%%%%%%%%%%%%%%%%%%%%%%%%%%%%%%%%%%%%%%

% (fold)
\section{Handlebodies} \label{sec:handlebodies}
%%%%%%%%%%%%%%%%%%%%%%%%%%%%%%%%%%%%%%%%%%%%%%%%%%%%%%%%%%%%%%%%%%%%%%%%%%%%%%%%%%%%%%%%
In this section we recall Wall's classifiction of $(n-1)$-connected $2n$-manifolds with
simply connected boundary, $n \geq 3$, by their extended intersection froms.
We then show that treelike plumbings in dimension $4k$ are precisely those handlebodies with treelike
extended intersection forms.  This allows us to give a second proof of a special
case of Theorem \ref{thm:connected_sum_for_delTP-G}.

% (fold)
\subsection{The classification of handlebodies} \label{subsec:classification_of_handlebodies}
%%%%%%%%%%%%%%%%%%%%%%%%%%%%%%%%%%%%%%%%%%%%%%%%%%%%%%%%%%%%%%%%%%%%%%%%%%%%%%%%%%%%%%%%
Recall that Smale, \cite[\S 1]{Sm} identified $\cal{H}(n)$, the set 
diffeomorphism classes manifolds $W$ which are obtained by attaching a finite number of $n$-handles to $D^{2n}$.
That is, we identify $D^n \times S^{n-1} = D^n \times \del D^n$ as a codimension-$0$ submanifold of
the boundary of an $n$-handle $D^n \times D^n$, we take an embedding $\phi \colon \sqcup_{i=1}^r D^n \times S^{n-1} \to S^{2n-1}$
and then form $W$ by attaching $r$ $n$-handles $D^n \times D^n$ to $D^{2n}$ along the embedding $\phi$:
\[ W : = D^{2n} \cup_\phi (\sqcup_{i=1}^r D^n \times D^n) .\]
Such manifolds are examples of handlebodies.  
Since we shall work in the oriented category, {\em henceforth all manifolds are assumed to be oriented
and all diffeomorphisms are assumed orientation preserving}. Modifying Smale's notation we let
\[ \cal{H}_{}^{2n}  = \{ W | W \cong  D^{2n} \cup_\phi (\sqcup_{i=1}^r D^n \times D^n) \} \]
denote the set of (oriented) diffeomorphism classes of (oriented) handlebodies obtained from $D^{2n}$
by attaching a finite number of $n$-handles. Up to homotopy, adding an $n$-handle to a manifold with boundary is equivalent to adding an $n$-cell. Thus manifolds in $\cal{H}_{}^{2n}$ have the homotopy type of a wedge of $n$-spheres. In particular this means that $H_n(W)$ is free and $H_i(W)=0$ for any $0<i<n.$

\begin{Theorem}[{\cite[Theorem 1.2]{Sm}}] \label{thm:Smale}
Let $n \geq 3$ and let $W^{2n}$ be an $(n-1)$-connected manifold with $(n-2)$-connected boundary.  Then $W \in \cal{H}^{2n}$.
\end{Theorem}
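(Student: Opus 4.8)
The plan is to prove the theorem by Morse theory: start from an arbitrary smooth handle decomposition of the compact manifold $W$ (which exists by compactness), written with handles attached in non-decreasing order of index beginning with $0$-handles, and then simplify it by handle slides and cancellations until only a single $0$-handle and some $n$-handles remain. The hypotheses $n \ge 3$ (so that $\dim W = 2n \ge 6$) and the connectivity assumptions are precisely what is needed to push the simplification through. Observe first that $\partial W$ is connected and simply connected: it is $(n-2)$-connected with $n \ge 3$, so $\pi_0(\partial W) = \pi_1(\partial W) = 0$. (Implicitly $\partial W \neq \emptyset$; otherwise the statement fails already for $S^{2n}$.)

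First I would eliminate the handles of index greater than $n$. The key point is that the pair $(W, \partial W)$ is $(n-1)$-connected: in the long exact homotopy sequence of the pair, $\pi_i(W, \partial W)$ is trapped between $\pi_i(W)$ and $\pi_{i-1}(\partial W)$, both of which vanish for $1 \le i \le n-1$ because $W$ is $(n-1)$-connected and $\partial W$ is $(n-2)$-connected (the cases $i=0,1$ being covered by connectedness of $\partial W$). Turning a handle decomposition of $W$ upside down interchanges a handle of index $j$ with a dual handle of index $2n-j$ attached to a collar $\partial W \times I$; since $(W, \partial W)$ is $(n-1)$-connected and $2n \ge 6$, the standard relative handle-simplification argument (valid because the intermediate levels one meets are simply connected) allows one to trade away and cancel all dual handles of index $\le n-1$. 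Dualising back, $W$ admits a handle decomposition with handles only of index $\le n$.

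Next I would remove the handles of index $1, \dots, n-1$ and all but one $0$-handle. Since $W$ is connected, the surplus $0$-handles cancel against $1$-handles. For $1 \le i \le n-1$ the $i$-handles are dealt with inductively on $i$: because $W$ is simply connected and $H_i(W) = 0$ for $0 < i < n$, the relevant portion of the handle chain complex is exact, so after a change of basis realised by handle slides the incidence map from $(i+1)$-handles to $i$-handles exhibits a unimodular block, and the corresponding complementary pair of handles can then be cancelled geometrically by the Whitney trick. Iterating, all handles of index $1, \dots, n-1$ disappear, leaving one $0$-handle $D^{2n}$ and finitely many $n$-handles $D^n \times D^n$ attached along an embedding $\sqcup_{i=1}^r D^n \times S^{n-1} \to S^{2n-1}$; that is, $W \in \cal{H}^{2n}$. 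As a by-product one reads off $W \simeq \vee^r S^n$ and $H_n(W) \cong \Z^r$.

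The main obstacle will be the geometric realisation of these algebraic cancellations, i.e. the Whitney trick, and especially the final round of cancellations between $(n-1)$-handles and $n$-handles: there the attaching sphere of an $n$-handle (dimension $n-1$) and the belt sphere of an $(n-1)$-handle (dimension $n$) have complementary dimension inside the $(2n-1)$-dimensional level, so algebraic intersection numbers are a priori only the images of geometric ones. One must check that the relevant complement is simply connected so that excess intersection points can be cancelled in pairs; this holds because, after the earlier cancellations, the level in question is $(n-2)$-connected, hence simply connected as $n \ge 3$. All of the moves used are instances of the techniques underlying the $h$-cobordism theorem, and the argument is in essence that of Smale \cite{Sm} (compare also Wall \cite{Wa1}).
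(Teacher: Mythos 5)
The paper gives no proof of this result: it is cited directly from Smale, so there is no argument in the paper itself to compare against. Your handle-theoretic sketch is essentially Smale's own argument — establish that $(W,\partial W)$ is $(n-1)$-connected from the long exact sequence of the pair, dualise to eliminate handles of index $>n$, then eliminate the handles of index $1,\dots,n-1$ (and surplus $0$-handles) using the connectivity of $W$, with the Whitney trick realising the cancellations since $2n\ge 6$ and the relevant intermediate levels are simply connected.
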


\begin{Remark} \label{rem:Smale}
Notice that if $n \geq 3$ and the boundary of $W$, $\del W$, in Theorem \ref{thm:Smale} is simply connected, then
$\del W$ is indeed $(n-2)$-connected.  This is because Poincar\'{e} duality for $W$ shows that $\wt H_*(\del W) = 0$
for $* \leq n-2$, from which the connectedness of the boundary follows by the Hurewicz theorem. % and then we apply Whitehead's theorem.
\end{Remark}

For $n \geq 3$, the classification the manifolds $W \in \cal{H}^{2n}$ was given by Wall \cite{Wa1}.
We present an equivalent, cohomological version of Wall's classification: see Baues \cite{Bau1} for a similar discussion
using homology.
The {\em intersection form} of $W$ is a bilinear pairing
\[ \lambda_W \colon H^{n}(W, \del W) \times H^{n}(W, \del W) \rightarrow \Z, \] 
which can be algebraically defined using the the connecting homomorphism
$j \colon H^{n}(W, \del W) \to H_n(W)$ coming from the homology long exact sequence of the pair $(W,\del W)$ together with the Poincar\'{e} duality isomorphism
$\PD \colon H^{n}(W) \cong H_{n}(W, \del)$.  We have
\begin{equation} \label{eq:lambda}
\lambda(x, y) := \an{\PD(j(x)), y}. 
\end{equation}
Geometrically, the intersection form can be computed via the intersection of oriented submanifolds which
are Poincar\'{e} duals to classes $x_1, x_2 \in H^n(W, \del W)$.
Suppose that $\wh x_1$ and $\wh x_2$ are $n$-dimensional homology classes which are dual to $x_1$ and
$x_2$ and which can be represented by compact oriented submanifolds $N_1$ and 
$N_2.$  Without loss of generality, suppose that $N_1$ and $N_2$ are in general position with respect to each other. 
This means that the submanifolds interesect transversally in a finite collection of points. There is a notion of sign 
which can be associated to such an intersection, depending on the orientations of the submanifolds and the global orientation
of $W$. It can be shown that the cohomological intersection of $x_1$ and $x_2$ defined in \eqref{eq:lambda} above
is equal to the number of (geometric) intersection points of $N_1$ and 
$N_2$ counted with sign. For more details see \cite[V \S 1]{Bro}. 
%More generally, the geometric intersection of two chains 
%$c_1, c_2 \in C_{2k}(W)$ can be defined in the same way, giving a bilinear form 
%$(\ ,\ ):C_{2k}(W) \times C_{2k}(W) \rightarrow \Z.$

In addition to the intersection form of $W$, Wall identifies an extra invariant
\[ \mu_W \colon H^n(W, \del W) \to \pi_{n-1}(SO(n)), \quad x \mapsto \nu_{\wh x}. \]
Here $\wh x \in H_n(W)$ is Poincar\'{e} dual to $x$ and 
%$SO(n)$ is the group of orthogonal roations of $\R^n$, we identify $\pi_{n-1}(SO(n))$ with the group of isomorphism
%classes of linear $\R^n$-bundles over the $n$-sphere 
and $\nu_{\wh x}$ is the isomorphism class of the normal bundle of an embedding
\[ f_{\wh x} \colon S^n \to W \]
representing $\wh x$: as in Section \ref{subsec:eqfs}, we identify the homotopy group $\pi_{n-1}(SO(n))$
with the set of isomorphism classes of oriented rank $n$ vector bundles over $S^n$.
By the Hurewicz theorem $\pi_n(W) \cong H_n(W)$ and by Haefliger's classification of embeddings \cite[Th\'{e}or\`{e}m d'approximation]{H},
the homology class $\wh x$ is represented by an embedding $f_{\wh x}$ which is unique up to isotopy: hence the isomorphism class 
of the bundle $\nu_ {\wh x}$ is well-defined.  Wall \cite[Lemma 2]{Wa1} shows that the function $\mu_W$ is a 
quadratic refinement of $\lambda_W$ in the sense that 
%for all $x, y \in H_n(W)$, we have
together $\mu_W$ and $\lambda_W$ satisfy equations \eqref{eq:mu_and_p} and~\eqref{eq:mu_and_h} from 
Section \ref{subsec:eqfs} above.  
Hence the triple $(H^n(W, \del W), \lambda_W, \mu_W)$ is an extended quadratic form over $\pi_{n-1}\{SO(n)\}$,
called the {\em extended intersection form of $W$}.

We now assume that $n = 2k$ is even.
Recall from Section \ref{subsec:eqfs} that the stabilisation of $\mu_W$ is a homomorphism 
$\alpha_W = S\mu_W \colon H_{2k}(W) \to \pi_{2k-1}(SO)$ and that 
%when $n = 2k$ is even,
the pair $(\lambda_W, \alpha_W)$ determines the pair $(\lambda_W, \mu_W)$.
Hence when $n = 2k$, we shall use the equivalent triple
\[ (H^{2k}(W, \del W), \lambda_W, \alpha_W)  \]
for the extended intersection form of $W$.
Wall's arguments in \cite{Wa1} prove that a diffeomorphism of handlebodies
$f \colon W_0 \cong W_1$ induces an isomorphism of their extended quadratic forms: 
%$f_* \colon (H_n(W_0), \lambda_{W_0}, \alpha_{W_0}) \cong (H_n(W_1), \lambda_{W_1}, \alpha_{W_1})$.
%
\[ f^* \colon (H^n(W_1, \del W_1), \lambda_{W_1}, \alpha_{W_1}) \cong (H^n(W_0, \del W_0), \lambda_{W_0}, \alpha_{W_0}). \]
We now state the cohomological version of Wall's classification of handlebodies in
the case $n = 2k$ is even.
%\begin{Theorem}[{\cite[p.\,168]{Wa1}} ]\label{thm:wall_2n_classification}
%For $n \ge 3$ the diffeomorphism classes of smooth $(n-1)$-connected $2n$-manifolds $W$ with non-empty
%simply connected boundary are in natural bijection with the isomorphism classes of their extended 
%intersection forms $(H_n(W), \lambda_W, \alpha_W)$.  Moreover, for handlebodies $W_0$ and $W_1$,
%every isomorphism 
%$A \colon (H_{2k}(W_0), \lambda_{W_0}, \alpha_{W_0}) \cong (H_{2k}(W_1), \lambda_{W_1}, \alpha_{W_1})$,
%is realised by a diffeomorphism $f_A \colon W_0 \cong W_1$.
%\end{Theorem}
%
%We now restate Wall's Theorem \ref{thm:wall_2n_classification} in more detail as follows.

%\begin{Theorem}[{\cite[p.\,168]{Wa1}}] \label{thm:wall_2n_classification}
%The assignment of its extended intersection form to a handlebody defines a bijection,
%%
%\[  \cal{H}^{4k} \equiv \cal{F}^{4k}, \quad W \mapsto (H_{2k}(W), \lambda_W, \alpha_W), \]
%
%which maps the boundary connected sum of handlebodies to the orthogonal sum of forms;
%
%\[ W_0 \natural W_1 \mapsto (H_{2k}(W_0), \lambda_{W_0}, \alpha_{W_0}) \oplus (H_{2k}(W_1), \lambda_{W_1}, \alpha_{W_1}).\]
%
%Moreover every isomorphism of extended quadratic forms,
%$A \colon (H_{2k}(W_0), \lambda_{W_0}, \alpha_{W_0}) \cong (H_{2k}(W_1), \lambda_{W_1}, \alpha_{W_1})$,
%is realised by a diffeomorphism $f_A \colon W_0 \cong W_1$.
%
%\end{Theorem}

\begin{Theorem}[{\cite[p.\,168]{Wa1}}] \label{thm:wall_2n_classification}
For all $k \geq 2$,
the assignment of its extended intersection form to a handlebody defines a bijection,
\[  \cal{H}^{4k} \equiv \cal{F}^{4k}, \quad W \mapsto (H^{2k}(W, \del W), \lambda_W, \alpha_W), \]
which maps the boundary connected sum of handlebodies to the orthogonal sum of forms;
\[ W_0 \natural W_1 \mapsto (H^{2k}(W_0, \del W_0), \lambda_{W_0}, \alpha_{W_0}) \oplus (H^{2k}(W_1, \del W_1), \lambda_{W_1}, \alpha_{W_1}).\]
Moreover, every isomorphism of extended intersection forms,
\[ A \colon (H^{2k}(W_1, \del W_1), \lambda_{W_1}, \alpha_{W_1}) \cong (H^{2k}(W_0, \del W_0), \lambda_{W_0}, \alpha_{W_0}), \]
is realised by a diffeomorphism $f_A \colon W_0 \cong W_1$.
\end{Theorem}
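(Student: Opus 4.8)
The plan is to derive this theorem as the cohomological repackaging, specialised to $n = 2k$, of Wall's classification of $(n{-}1)$-connected $2n$-manifolds with boundary \cite{Wa1}: the geometric substance is Wall's, and the work here is to install the dictionary and check the three assertions — bijectivity, compatibility of $\natural$ with $\oplus$, and realisation of isomorphisms by diffeomorphisms. I would begin by confirming that $W \mapsto (H^{2k}(W, \del W), \lambda_W, \alpha_W)$ is well defined with target $\cal{F}^{4k}$. For $W \in \cal{H}^{4k}$ the group $H_{2k}(W)$ is finitely generated free, since $W$ is homotopy equivalent to a wedge of $2k$-spheres; Poincar\'{e}--Lefschetz duality then makes $H^{2k}(W, \del W)$ free. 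The identities \eqref{eq:mu_and_p} and \eqref{eq:mu_and_h} for $(\lambda_W, \mu_W)$ were recorded above from \cite[Lemma 2]{Wa1}, and replacing $\mu_W$ by $\alpha_W = S\mu_W$ costs nothing by Lemma \ref{lem:mu_and_alpha}. Invariance under orientation-preserving diffeomorphism was noted before the statement, so the assignment descends to diffeomorphism classes.

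For the additivity claim I would compute directly. A boundary connected sum $W_0 \natural W_1$ is formed by gluing $W_0$ to $W_1$ along a $(4k{-}1)$-disc in each boundary, so excision and Mayer--Vietoris yield a natural isomorphism $H^{2k}(W_0 \natural W_1, \del(W_0 \natural W_1)) \cong H^{2k}(W_0, \del W_0) \oplus H^{2k}(W_1, \del W_1)$. Representing cohomology classes from the two summands by Poincar\'{e}-dual cycles that can be pushed into the respective pieces shows the summands are $\lambda$-orthogonal, while $\alpha$ obviously restricts to $\alpha_{W_0}$ and $\alpha_{W_1}$. Hence $\natural$ goes to $\oplus$.

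Surjectivity is a realisation statement. Given $(H, \lambda, \alpha) \in \cal{F}^{4k}$, pick a basis $e_1, \dots, e_r$ of $H$ to obtain the symmetric integer matrix $A = (\lambda(e_i, e_j))$ and the framing data $\mu(e_i) \in \pi_{2k-1}(SO(2k))$, the latter recovered from $\lambda(e_i, e_i)$ and $\alpha(e_i)$ by Lemma \ref{lem:mu_and_alpha}. I would then attach $r$ handles $D^{2k} \times D^{2k}$ to $D^{4k}$ along a framed link $\phi \colon \sqcup_{i=1}^r S^{2k-1} \times D^{2k} \hookrightarrow S^{4k-1}$ in which the $i$-th component has normal-framing invariant $\mu(e_i)$ and the $i$-th and $j$-th cores have linking number $A_{ij}$; such a link exists since disjoint embedded $(2k{-}1)$-spheres in $S^{4k-1}$ can be given any prescribed integer linking numbers. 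By construction the extended intersection form of the resulting $W \in \cal{H}^{4k}$ is $(H, \lambda, \alpha)$. (When $\lambda$ is treelike one may cite Theorem \ref{thm:plumbing_realisation} instead.)

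The main obstacle is injectivity together with the stronger assertion that every isomorphism $A$ of extended intersection forms is induced by a diffeomorphism; this is exactly Wall's classification, which I would import, sketching why $k \geq 2$ suffices. Since $2k \geq 3$, Smale's Theorem \ref{thm:Smale} equips every $W \in \cal{H}^{4k}$ with a handle decomposition consisting of one $0$-handle and some $2k$-handles, and such a decomposition is determined up to isotopy and handle slides by its attaching framed link. Moreover $k \geq 2$ puts the attaching $(2k{-}1)$-spheres into the metastable range ($4k - 1 > 3k$), so Haefliger's unknotting and unlinking results \cite{H} show the framed link is itself determined, up to isotopy and handle slides, by its linking matrix together with the normal framings of its components — equivalently by $(\lambda_W, \mu_W)$. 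Finally, any isomorphism $A$ of extended quadratic forms is a product of elementary basis changes preserving $\lambda$ and $\alpha$; each such change is realised by a handle slide, an ambient diffeomorphism of the handlebody inducing precisely that change on $H^{2k}(-, \del)$. Composing these slides yields $f_A \colon W_0 \cong W_1$ with $f_A^* = A$, and in particular handlebodies with isomorphic extended intersection forms are diffeomorphic. This gives injectivity and completes the proof.
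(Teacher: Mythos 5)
The paper does not prove this theorem; it states it with the citation \cite[p.\,168]{Wa1} and uses it as imported input, so there is no in-paper proof to compare your sketch against. Your proposal is explicit that the geometric substance is Wall's, and that is exactly the paper's stance, so at the level of strategy you are aligned with the source.

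As a reconstruction of Wall's argument, however, the injectivity paragraph is where the real work lies and where your sketch is imprecise in two places. First, you attribute to Haefliger \cite{H} the statement that, in the metastable range, a framed link of $(2k{-}1)$-spheres in $S^{4k-1}$ is determined up to isotopy and handle slides by its linking matrix and component framings. Haefliger's paper gives the unknotting of a single sphere and the classification of the isotopy class of an embedding; the multi-component statement you need, after quotienting by handle slides, is not a direct quote of \cite{H} but is one of Wall's key lemmas in \cite{Wa1} (built using Haefliger as one ingredient together with an analysis of the attaching maps and the Grassmann data $\mu$). Second, the claim that any isomorphism of extended quadratic forms preserving $(\lambda, \alpha)$ factors as a product of elementary basis changes each realised by a handle slide is a nontrivial statement about the automorphism group of a quadratic module over $\pi_{2k-1}\{SO(2k)\}$; that the relevant transvections generate, and that each is geometrically a handle slide, is again part of what Wall proves rather than something one can assert. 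Since you are importing Wall's theorem anyway, neither point is a fatal gap, but were you to write the proof out in full you would need to replace the invocation of Haefliger with the appropriate lemma from \cite{Wa1} and justify the generation-by-transvections claim. The well-definedness, additivity under $\natural$, and surjectivity paragraphs are fine as written.
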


\subsection{Treelike plumbings} \label{subsec:treelike_plumbings}
%%%%%%%%%%%%%%%%%%%%%%%%%%%%%%%%%%%%%%%%%%%%%%%%%%%%%%%%%%%%%%%%%%%%%%%%%%%%%%%%%%%%%%%%
%We then give our first proof of Theorem \ref{thm:}, 
%which states that $\del \cal{TP}^{4k-1}$ is closed under the operation of connected sum.
%
%In this section subsecting we use Walls classification of handlbodies $W$ by their extended intersection forms 
%(see Theorems \ref{thm:wall_2n_classification} and \ref{thm:wall_2n_classification}) to give an algebraic
%criterion for when a general handlebody $W$ is treelike.  

In Section \ref{subsec:plumbing_arrangements}, we introduced the set of diffeomorphism classes of treelike 
plumbing manifolds $\cal{TP}^{4k}$ and the set of diffeomorphism classes of boundaries of treelike
plumbings
\[ \del\cal{TP}^{4k} = \{\del W | W \in \cal{TP}^{4k-1} \}. \]
Since each treelike plumbing $W$ is $(2k{-}1)$-connected we see that $\cal{TP}^{4k} \subset \cal{H}^{4k}$
and Wall's Theorem \ref{thm:wall_2n_classification} applies.  
In Section \ref{subsec:treelike_forms} we introduced the set
treelike extended quadratic forms $\cal{TF}^{4k}$.  
In Lemma \ref{lem:tree_recognition} below we prove that a handlebody $W \in \cal{H}^{4k}$ is a treelike plumbing
if an only if its extended intersection form is treelike. 
We then use this fact to
prove the following special case of Theorem \ref{thm:connected_sum_for_delTP-G}.

\begin{Theorem} \label{thm:TP_and_connected_sum}
If $M_0, M_1 \in \del \cal{TP}^{4k}$ then $M_0 \sharp M_1 \in \del \cal{TP}^{4k}$.
\end{Theorem}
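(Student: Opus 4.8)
The plan is to deduce Theorem~\ref{thm:TP_and_connected_sum} from the algebraic structure of treelike extended quadratic forms, mirroring the plumbing-theoretic proof of Theorem~\ref{thm:connected_sum_for_delTP-G} but keeping everything inside $\cal{H}^{4k}$ via Wall's classification (Theorem~\ref{thm:wall_2n_classification}). Given $M_i = \del W_i$ with $W_i \in \cal{TP}^{4k}$, I want to produce a single treelike plumbing $W$ with $\del W = M_0 \sharp M_1$. The key observation is that $M_0 \sharp M_1 = \del(W_0 \natural W_1)$, since connected sum of boundaries is realised by boundary connected sum of the bounding manifolds; and by Theorem~\ref{thm:wall_2n_classification} the boundary connected sum $W_0 \natural W_1$ corresponds to the orthogonal sum of extended intersection forms. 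So the entire problem reduces to: \emph{if $(H_0,\lambda_0,\alpha_0)$ and $(H_1,\lambda_1,\alpha_1)$ are treelike, is there a treelike form whose boundary equals $\del\bigl((H_0,\lambda_0,\alpha_0)\oplus(H_1,\lambda_1,\alpha_1)\bigr)$?}

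First I would invoke Lemma~\ref{lem:tree_recognition} (the handlebody $W$ is a treelike plumbing iff its extended intersection form is treelike), so that it suffices to find \emph{any} handlebody $W$ whose extended intersection form is treelike and whose boundary is $M_0\sharp M_1$. Next, the crucial input is Lemma~\ref{lem:stable_closure_of_treelike_forms}: the orthogonal sum $(H_0,\lambda_0,\alpha_0)\oplus(H_1,\lambda_1,\alpha_1)\oplus(H_+(\Z),0)$ is again treelike. Now I use the fact — recorded just before Definition~\ref{def:treelike_form} and following from the discussion of nonsingular forms and $\del$ in Section~\ref{subsec:boundaries_of_eqfs} — that $(H_+(\Z),0)$ is nonsingular (its matrix $\bigl(\begin{smallmatrix}0&1\\1&0\end{smallmatrix}\bigr)$ has determinant $-1$, so $\coker\wh\lambda = 0$). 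Hence adding it does not change the algebraic boundary:
\[ \del\bigl((H_0,\lambda_0,\alpha_0)\oplus(H_1,\lambda_1,\alpha_1)\oplus(H_+(\Z),0)\bigr) = \del\bigl((H_0,\lambda_0,\alpha_0)\oplus(H_1,\lambda_1,\alpha_1)\bigr). \]

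Then I would put the pieces together as follows. Let $(H,\lambda,\alpha)$ be the treelike form $(H_0,\lambda_0,\alpha_0)\oplus(H_1,\lambda_1,\alpha_1)\oplus(H_+(\Z),0)$, and let $W$ be the corresponding treelike plumbing manifold provided by Wall's bijection in Theorem~\ref{thm:wall_2n_classification} (which lands in $\cal{TP}^{4k}$ by Lemma~\ref{lem:tree_recognition}). On the other hand, $W_0\natural W_1$ is a handlebody with extended intersection form $(H_0,\lambda_0,\alpha_0)\oplus(H_1,\lambda_1,\alpha_1)$, so $W$ and $(W_0\natural W_1)\natural H_+$ — where $H_+ := D^{2k}\times S^{2k}$ minus a disc, the handlebody realising $(H_+(\Z),0)$ — have isomorphic extended intersection forms, hence are diffeomorphic by Theorem~\ref{thm:wall_2n_classification}. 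Since $\del H_+ \cong S^{4k-1}$ and connected sum with $S^{4k-1}$ is trivial, $\del\bigl((W_0\natural W_1)\natural H_+\bigr) = \del W_0 \sharp \del W_1 \sharp S^{4k-1} = M_0\sharp M_1$. Therefore $\del W = M_0\sharp M_1$ with $W \in \cal{TP}^{4k}$, as required.

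The main obstacle — the one genuinely load-bearing step — is Lemma~\ref{lem:stable_closure_of_treelike_forms}, i.e. that the orthogonal sum of treelike forms becomes treelike after adding a hyperbolic summand. The difficulty is that $\cal{TF}^{4k}$ is not known to be closed under $\oplus$ on its own (as the text explicitly flags), so one cannot simply say "a disjoint union of trees is a tree" — disjoint trees form a forest, not a tree, and the corresponding plumbing is disconnected with disconnected boundary. The hyperbolic summand supplies exactly one extra basis vector $z_1$ that can be added to a chosen generator of each component ($x_i \mapsto x_i + z_1$), re-linking all the components through a single new vertex while preserving the bilinear form; the proof of Lemma~\ref{lem:stable_closure_of_treelike_forms} carries this out. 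Granting that lemma, the rest is bookkeeping with Wall's classification and the triviality of $S^{4k-1}$-summands.
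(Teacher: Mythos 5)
Your proposal is correct and follows essentially the same route as the paper: both reduce to Lemma~\ref{lem:stable_closure_of_treelike_forms} (treelike forms become treelike after adding a hyperbolic summand), then invoke Lemma~\ref{lem:tree_recognition} and Wall's classification (Theorem~\ref{thm:wall_2n_classification}) to realise the resulting treelike form by a treelike plumbing $W_0\natural W_\infty\natural W_1$ with boundary $M_0\sharp S^{4k-1}\sharp M_1$. One small slip: the handlebody realising $(H_+(\Z),0)$ is $S^{2k}\times S^{2k}$ minus an open disc, not $D^{2k}\times S^{2k}$ minus a disc (the latter realises the zero form on $\Z$); this does not affect the argument since you correctly use only its boundary and its intersection form.
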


%Recall from Section \ref{subsec:treelike_forms} the set of treelike forms $\cal{TF}^{4k}$.
%We first show that treelike plumbings are recognised by their intersection forms being treelike.

\begin{Lemma} \label{lem:tree_recognition}
If $W \in \cal{H}^{4k}$, then
$W \in \cal{TP}^{4k}$ if and only if $(H^{2k}(W, \del W), \lambda_W, \alpha_W) \in \cal{TF}^{4k}$.
\end{Lemma}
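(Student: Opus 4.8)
The plan is to prove both implications using Wall's classification (Theorem \ref{thm:wall_2n_classification}), which reduces the topology to algebra: a handlebody $W \in \cal{H}^{4k}$ lies in $\cal{TP}^{4k}$ if and only if it is diffeomorphic to some treelike plumbing $W(\mathfrak{t})$, and by Wall this happens if and only if its extended intersection form is isomorphic to that of $W(\mathfrak{t})$. So the task is to show that the extended intersection forms realised by treelike plumbings are exactly the treelike extended quadratic forms $\cal{TF}^{4k}$.

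\textbf{Forward direction.} Suppose $W \in \cal{TP}^{4k}$, so $W = W(\mathfrak{g})$ for a labelled tree $\mathfrak{g} = ((\V,\E),(\alpha_1,\dots,\alpha_r))$ with $\alpha_i \in \pi_{2k-1}(SO(2k))$. As noted after Definition \ref{def:treelike_plumbing_manifolds}, $W$ is $(2k{-}1)$-connected with $H_{2k}(W) \cong \Z^r$ generated by the base-sphere classes $e_1,\dots,e_r$; by Poincar\'e--Lefschetz duality $H^{2k}(W,\del W) \cong H_{2k}(W)$ carries the dual basis. The discussion preceding Theorem \ref{Browder} computes $\lambda_W$ on this basis: $\lambda_W(e_i,e_j) = |\E_{i,j}| \in \{0,1\}$ for $i \neq j$ (the number of plumbings, which is $0$ or $1$ in the tree case), and $\lambda_W(e_i,e_i) = e(\alpha_i) = h(\alpha_i)$, the Euler number. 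Thus $(H^{2k}(W,\del W),\lambda_W)$ is exactly the treelike form $(H_\mathfrak{t},\lambda_\mathfrak{t})$ associated to the $\Z$-labelled tree $\mathfrak{t} = ((\V,\E),(a_1,\dots,a_r))$ with $a_i = e(\alpha_i)$, via \eqref{eq:plumb}. Hence $(H^{2k}(W,\del W),\lambda_W,\alpha_W) \in \cal{TF}^{4k}$. (Here $\mu_W(e_i)$ is computed from the normal bundle of the embedded base sphere $S^{2k} \hookrightarrow W$, which is precisely the bundle $\gamma_i$ one plumbs, so $\mu_W(e_i) = \alpha_i$; but for membership in $\cal{TF}^{4k}$ only the treelike-ness of $\lambda_W$ is needed.)

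\textbf{Converse direction.} Suppose $(H^{2k}(W,\del W),\lambda_W,\alpha_W) \in \cal{TF}^{4k}$, so after an isomorphism we may assume $\lambda_W = \lambda_\mathfrak{t}$ for some $\Z$-labelled tree $\mathfrak{t} = ((\V,\E),(a_1,\dots,a_r))$, with $\lambda_W(e_i,e_i) = a_i$ on the treelike basis. By Lemma \ref{lem:mu_and_alpha}, the quadratic refinement $\mu_W$ is determined by $\lambda_W$ together with $\alpha_W$, so $\mu_W(e_i)$ is a well-defined class $\beta_i \in \pi_{2k-1}(SO(2k))$ with $h(\beta_i) = a_i$ and $S(\beta_i) = \alpha_W(e_i)$. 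Apply Browder's realisation (Theorem \ref{Browder}, using the tree $\mathfrak{t}$ as the plumbing graph and the classes $\beta_i$ as bundle labels, which is legitimate since $h(\beta_i) = a_i$ matches the required diagonal entries): this produces a treelike plumbing manifold $W' = W(\mathfrak{g}')$ whose intersection form on the base-sphere basis is $\lambda_\mathfrak{t} = \lambda_W$ and whose $\mu_{W'}(e_i) = \beta_i = \mu_W(e_i)$. Thus $(H^{2k}(W',\del W'),\lambda_{W'},\alpha_{W'}) \cong (H^{2k}(W,\del W),\lambda_W,\alpha_W)$ as extended quadratic forms, and by Theorem \ref{thm:wall_2n_classification} this isomorphism is realised by a diffeomorphism $W \cong W'$. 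Since $W' \in \cal{TP}^{4k}$, we conclude $W \in \cal{TP}^{4k}$.

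\textbf{Main obstacle.} The crux is matching the \emph{unstable} quadratic refinement $\mu_W$ (valued in $\pi_{2k-1}(SO(2k))$) rather than just the stable $\alpha_W$ and the form $\lambda_W$: Browder's theorem needs actual bundle classes over $S^{2k}$, and one must check that the $\beta_i = \mu_W(e_i)$ produced by Lemma \ref{lem:mu_and_alpha} both have the correct Euler numbers and stabilise correctly, so that the realising plumbing genuinely reproduces the given extended form. The rest is bookkeeping with the homology basis and Wall's classification, both of which are cited from the excerpt.
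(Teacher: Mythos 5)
Your proof is correct and follows essentially the same strategy as the paper's: in the nontrivial direction you transport the treelike basis across an isomorphism of forms, read off the bundle labels directly from $\mu_W$ evaluated on that basis, build the corresponding treelike plumbing, and invoke Wall's classification (Theorem \ref{thm:wall_2n_classification}) to identify it with $W$. The invocation of Lemma \ref{lem:mu_and_alpha} and the explicit Euler-number bookkeeping via Theorem \ref{thm:plumbing_realisation} are slightly more detail than the paper supplies, but the argument is the same; the paper defines $\alpha_i := \mu_W(\theta(v_i))$ directly without passing through Browder's realisation statement.
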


\begin{proof}
If $(H^{2k}(W, \del W), \lambda_W, \alpha_W) \in \cal{TF}^{4k}$, we choose a $\Z$-labelled tree 
$\mathfrak{t} = ((\V, \E), (a_i))$ with vertex set $V = (v_1, \dots, v_r)$ and an isomorphism
\[  \theta \colon (H_\mathfrak{t}, \lambda_\mathfrak{t}) \cong (H^{2k}(W, \del W), \lambda_W).    \]
We define the bundles $\alpha_i := \mu_W(\theta(v_i)) \in \pi_{2k-1}(SO(2k))$, and consider
the plumbing arrangement determined by the labelled tree $\mathfrak{u} : = ((\V, \E), (\alpha_i))$
and form the plumbing manifold $W(\mathfrak{u})$.  By construction, the extended intersection
forms of $W$ and $W(\mathfrak{u})$ are isomorphic.  By Theorem \ref{thm:wall_2n_classification}
there is a diffeomorphism $W \cong W(\mathfrak{g})$ and so $W \in \cal{TP}^{4k}$.

%Setting $\alpha_X : = \theta^* \alpha_W$, we have the extended quadratic form $(H_X, \lambda_X, \alpha_X)$
%which, by construction, is isomorphic to the extended intersection form of $W$, 
%$(H^{2k}(W, \del W), \lambda_W, \alpha_W)$ via $\theta$.
%
%By , $(H_X, \lambda_X, \alpha_X)$ is realised as the extended intersection form a treelike plumbing
%manifold $W_P$: 
%
%\[ (H_{2k}(W_P), \lambda_{W_P}, \alpha_{W_P}) = (H_X, \lambda_X, \alpha_X). \]
%
%Again by Theorem \ref{thm:wall_2n_classification}, there is a diffeomorphism $f_\theta \colon W_P \cong W$ such that 
%$f_{\theta*} = \theta \colon H_{2k}(W_P) \cong H_{2k}(W)$.  In particular, $W \in \cal{TP}^{4k}$.

Conversely, if $W \in \cal{TP}^{4k}$ then $W \cong W(\mathfrak{u})$, $\mathfrak{u} = ((\V, \E), (\alpha_i))$, 
is realised via a plumbing arrangement with a treelike graph $(\V, \E)$. 
By the discussion prior to Theorem \ref{thm:plumbing_realisation}, the intersection form of $W(\mathfrak{u})$ is treelike, 
and so by definition, the extended quadratic form of $W$ is treelike.
\end{proof}

Now consider the handlebody
\[W_\infty := (S^{2k} \times S^{2k} - \textup{int}(D^{4k})) \]
which has extended intersection form $(H_+(\Z), 0)$.

\begin{Proposition} \label{prop:stable_closure_of_treelike_plumbings}
Suppose that $W_0, W_1 \in \cal{TP}^{4k}$.  Then
$W_0 \natural W_\infty \natural W_1 \in \cal{TP}^{4k}$.
\end{Proposition}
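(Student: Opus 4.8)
The plan is to combine the algebraic stability result of Lemma~\ref{lem:stable_closure_of_treelike_forms} with Wall's classification of handlebodies, Theorem~\ref{thm:wall_2n_classification}, via the tree-recognition criterion of Lemma~\ref{lem:tree_recognition}. First I would record that each of $W_0$ and $W_1$, being treelike plumbings, lies in $\cal{H}^{4k}$, so by Theorem~\ref{thm:wall_2n_classification} it has a well-defined extended intersection form $(H^{2k}(W_i, \del W_i), \lambda_{W_i}, \alpha_{W_i}) \in \cal{TF}^{4k}$ (the membership in $\cal{TF}^{4k}$ is exactly what Lemma~\ref{lem:tree_recognition} gives us). The handlebody $W_\infty = (S^{2k} \times S^{2k} - \textup{int}(D^{4k}))$ has extended intersection form $(H_+(\Z), 0)$, as noted just before the statement.

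Next I would use the fact, also from Theorem~\ref{thm:wall_2n_classification}, that the boundary connected sum of handlebodies corresponds to the orthogonal sum of their extended intersection forms. Hence the extended intersection form of $W_0 \natural W_\infty \natural W_1$ is
\[ (H^{2k}(W_0, \del W_0), \lambda_{W_0}, \alpha_{W_0}) \oplus (H_+(\Z), 0) \oplus (H^{2k}(W_1, \del W_1), \lambda_{W_1}, \alpha_{W_1}). \]
By Lemma~\ref{lem:stable_closure_of_treelike_forms} (applied with $j = 2$), the sum of two treelike extended quadratic forms together with one copy of $(H_+(\Z), 0)$ is again treelike, i.e.\ lies in $\cal{TF}^{4k}$. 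Therefore the extended intersection form of $W_0 \natural W_\infty \natural W_1$ is treelike.

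Finally, I would invoke Lemma~\ref{lem:tree_recognition} in the other direction: a handlebody in $\cal{H}^{4k}$ whose extended intersection form is treelike is itself a treelike plumbing. Since $W_0 \natural W_\infty \natural W_1$ is a boundary connected sum of handlebodies (hence again a handlebody, an element of $\cal{H}^{4k}$) with treelike extended intersection form, we conclude $W_0 \natural W_\infty \natural W_1 \in \cal{TP}^{4k}$, as required.

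I do not expect any serious obstacle here; the proposition is essentially a formal bookkeeping consequence of results already in place. The only point requiring a line of care is the observation that a boundary connected sum of handlebodies from $\cal{H}^{4k}$ is again a handlebody in $\cal{H}^{4k}$ — this is immediate since attaching $2k$-handles to $D^{4k}$ and then boundary-connect-summing is the same as attaching the union of the handle collections to $D^{4k}$ — and that $W_\infty$ itself is such a handlebody, which is exactly why it was introduced with that notation just above the statement. All the real work (both the algebraic basis change producing the tree, and the realisation of form isomorphisms by diffeomorphisms) has already been done in Lemmas~\ref{lem:stable_closure_of_treelike_forms} and~\ref{lem:tree_recognition} and Theorem~\ref{thm:wall_2n_classification}.
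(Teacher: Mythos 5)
Your argument matches the paper's proof exactly: both apply Lemma~\ref{lem:stable_closure_of_treelike_forms} to conclude the extended intersection form of $W_0 \natural W_\infty \natural W_1$ is treelike, and then invoke Lemma~\ref{lem:tree_recognition} to conclude the handlebody is a treelike plumbing. You have simply spelled out the supporting bookkeeping (membership in $\cal{H}^{4k}$, additivity of forms under boundary connected sum via Theorem~\ref{thm:wall_2n_classification}) that the paper leaves implicit.
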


\begin{proof}
By Lemma~\ref{lem:stable_closure_of_treelike_forms}, the extended intersection form
of $W_0 \natural W_\infty \natural W_1$ is treelike.  Hence by Lemma 
\ref{lem:tree_recognition}, $W_0 \natural W_\infty \natural W_1$ is treelike.
\end{proof}

\begin{proof}[Proof of Theorem \ref{thm:TP_and_connected_sum}]
Suppose that $M_i = \del W_i$ for $W_i \in \cal{TP}^{4k}$, $i = 0, 1$.
Then
\[ M_0 \sharp M_1 = M_0 \sharp S^{4k-1} \sharp M_1 = \del(W_0 \natural W_\infty \natural W_1). \]
By Lemma~\ref{prop:stable_closure_of_treelike_plumbings}, $W_0 \natural W_\infty \natural W_1 \in \cal{TP}^{4k}$
and so $M_0 \sharp M_1 \in \del \cal{TP}^{4k-1}$.
\end{proof}

% subsection treelike forms and treelike plumbings (end)
%%%%%%%%%%%%%%%%%%%%%%%%%%%%%%%%%%%%%%%%%%%%%%%%%%%%%%%%%%%%%%%%%%%%%%%%%%%%%%%%%%%%%%%%

% section handlebodies (end)
%%%%%%%%%%%%%%%%%%%%%%%%%%%%%%%%%%%%%%%%%%%%%%%%%%%%%%%%%%%%%%%%%%%%%%%%%%%%%%%%%%%%%%%%
%%%%%%%%%%%%%%%%%%%%%%%%%%%%%%%%%%%%%%%%%%%%%%%%%%%%%%%%%%%%%%%%%%%%%%%%%%%%%%%%%%%%%%%%

% (fold)
\section{$(2k{-}1)$-parallelisable $(2k{-}2)$-connected $(4k{-}1)$-manifolds} 
\label{sec:(4k-1)_manifolds}
%%%%%%%%%%%%%%%%%%%%%%%%%%%%%%%%%%%%%%%%%%%%%%%%%%%%%%%%%%%%%%%%%%%%%%%%%%%%%%%%%%%%%%%%
% (fold)
We define 
\[ \cal{M}_{2n-1, n-2}^{O\an{n-1}} := \{ M^{2n-1} \,|\, \text{$M$ is $(n-2)$-connected and $(n-1)$-parallelisable} \}, \]
to be the set of diffeomorphism classes of closed $(n-2)$-connected and $(n-1)$-parallelisable manifolds of dimension $2n-1$.
For questions about positive Ricci curvature, an important subset of $\cal{M}_{2n-1, n-2}^{O\an{n-1}}$ 
is made up of those manifolds $M$ which are the boundaries of handlebodies.
Hence we define 
%Now we define $\del \cal{H}^{2}$ to be the set of diffeomorphism classes of $(2n-1)$-manifolds $M$
%which are diffeomorphic to the boundary of some $W \in \cal{H}^{2m}$:
%
\[ \del \cal{H}^{2n} := \{ M \, | \, \text{$M = \del W$ and $W \in \cal{H}^{2n}$} \} \subset \cal{M}_{2n-1, n-2}^{O\an{n-1}}. \]
Recall also that $\del \cal{TP}^{2n}$ is the set of diffeomorphism classes of manifolds which 
are the boundaries of treelike plumbings.
Evidently we have inclusions
\[ \del \cal{TP}^{2n} \subset \del \cal{H}^{2n} \subset \cal{M}_{2n-1, n-2}^{O\an{n-1}}.\]

Another important subset of $\cal{M}_{2n-1, n-2}^{O\an{n-1}}$ is $\Theta_{2n-1}$, the set 
of oriented diffeomorphism classes of manifolds $\Sigma$ which are homootpy equivelant to $S^{2n+1}$:
\[ \Theta_{2n-1} := \{ \Sigma \, | \, \Sigma \simeq S^{2n-1} \} .\]
The set $\Theta_{2n-1}$ forms a group under connected sum which was famously first studied by
Kervaire and Milnor \cite{Kervaire&Milnor}.  We now briefly recall the structure of the group
$\Theta_{2n-1}$ identified by Kervaire and Milnor and an imporant addition of Brumfiel when $n = 2k$.
Let $\Omega_{m}^{\rm fr}$ denote the framed bordism group of stably framed $m$-manifolds and let
\[ J_m \colon \pi_m(SO) \to \Omega_{m}^{\rm fr}, \quad \gamma \mapsto [S^m, \gamma \cdot F_0], \]
be the homomorphism obtained by using a homotopy class $\gamma \colon S^m \to SO$ to reframe
the standard, nullbordant framing $F_0$, of $S^m$, and taking the framed bordism class of
the resulting manifold.  The Pontrjagin-Thom isomorphism identifies $\Omega_m^{\rm fr}$,
with $\pi_m^S$, the stable $m$-step of homotopy classes of maps between spheres of dimensions differing by $m$
\cite{B-tD}[Satz 3.1 and 4.9].
By a Theorem of Serre  \cite{Se}, $\pi_m^S$ is finite if $m > 0$ and hence the cokernel of the $J$-homomoprhism,
${\rm Coker}(J_m)$ is finite.  Kevaire and Milnor \cite[Theorem 3.1]{Kervaire&Milnor} proved
that every homotopy sphere $\Sigma$ is stably parallelisable and then defined the homomorphism
\[ \eta \colon \Theta_m \to {\rm Coker}(J_m), \quad \Sigma \mapsto [\Sigma, F], \]
by choosing a stable framing $F$ for a homotopy sphere $\Sigma$, taking the framed bordism class of $(\Sigma, F)$,
and then modding out by the possible choices of stable framing.  

By definition, the kernel of $\eta$ is $bP_{m+1} \subset \Theta_{m}$,
the subgroup of homotopy spheres which bound parallelisable manifolds, and Kervaire and Milnor
proved that $bP_{m+1}$ is a finite cyclic group.
When $m = 4k-1$, Brumfiel \cite{Brumfiel} gave a splitting of the inclusion $bP_{4k} \subset \Theta_{4k-1}$ by showing
that every homotopy sphere $\Sigma \in \Theta_{4k-1}$ bounds a spin manifold $W$ with vanishing 
decomposable Pontrjagin numbers and signature $\sigma(W)$ divisible by $8$ \cite[Theorems 1.5 and 1.6]{Brumfiel}.  Brumfiel then defined
the homomorphism
\[ f \colon \Theta_{4k-1} \to bP_{4k}, \quad f(\Sigma) : = \frac{\sigma(W)}{8} \in \Z/|bP_{4k}| \cong bP_{4k}.\]

\begin{Theorem}[{\cite[Theorem 6.6]{Kervaire&Milnor}}, {\cite[Theorem 1.4]{Brumfiel}}] \label{thm:homotopy_spheres}
For all $2n-1 \geq 5$, there is a short exact sequence
\[ 0 \to bP_{2n} \to \Theta_{2n-1} \to {\rm Coker}(J_{2n-1}) \to 0.\]
When $n = 2k$ is even, this sequence is split by the Brumfiel invariant
$f \colon \Theta_{4k-1} \to bP_{4k}$.
\end{Theorem}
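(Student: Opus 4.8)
The plan is to establish exactness of the sequence first and then the splitting. Since $bP_{2n}$ is by construction a subgroup of $\Theta_{2n-1}$ and, as recalled above, coincides with $\ker(\eta)$, the sequence $0 \to bP_{2n} \to \Theta_{2n-1} \xra{\eta} {\rm Coker}(J_{2n-1}) \to 0$ is automatically exact at $bP_{2n}$ and at $\Theta_{2n-1}$, so the only point requiring an argument is surjectivity of $\eta$. For this I would use framed surgery below the middle dimension. Via the Pontrjagin--Thom isomorphism $\Omega_{2n-1}^{\rm fr} \cong \pi_{2n-1}^S$, every element of ${\rm Coker}(J_{2n-1}) = \pi_{2n-1}^S/{\rm im}(J_{2n-1})$ is represented by a stably framed closed manifold $(N^{2n-1}, F)$. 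Because $2n-1$ is odd and $n \ge 3$, one can perform a finite sequence of framed surgeries on embedded spheres of dimension $\le n-1$ — each of codimension $\ge n \ge 3$, hence generically embedded with normal framing supplied by $F$ — to replace $(N, F)$ by a framed-cobordant pair $(N', F')$ with $N'$ being $(n{-}1)$-connected; as the dimension is odd, no middle-dimensional surgery obstruction intervenes. Poincar\'e duality then forces $N'$ to be a homotopy sphere, and since framed surgery preserves the framed cobordism class we have $\eta([N']) = [N', F'] \bmod {\rm im}(J_{2n-1})$, which is the class we started from. Hence $\eta$ is onto and the sequence is short exact.

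For the splitting when $n = 2k$, recall that $f(\Sigma) = \sigma(W)/8 \in \Z/|bP_{4k}| \cong bP_{4k}$, where $W$ is a spin coboundary of $\Sigma$ with vanishing decomposable Pontrjagin numbers and $8 \mid \sigma(W)$; the existence of such $W$ is \cite[Theorems 1.5 and 1.6]{Brumfiel}. I would verify three points. First, $f$ is well defined: given two admissible coboundaries $W, W'$ of $\Sigma$, the closed manifold $X := W \cup_\Sigma (-W')$ is spin with vanishing decomposable Pontrjagin numbers (Mayer--Vietoris, together with $\Sigma$ being a homotopy sphere, identifies the relevant Pontrjagin classes of $X$ with those of $W$ and of $W'$), and by Novikov additivity $\sigma(X) = \sigma(W) - \sigma(W')$; by the Hirzebruch signature theorem $\sigma(X) = s_k\langle p_k(X), [X]\rangle$, where $s_k$ is the coefficient of $p_k$ in $L_k$, since all decomposable terms of $L_k$ evaluate to zero on $X$, and for closed spin $4k$-manifolds of this type the signature is divisible by $8|bP_{4k}|$ — this is the divisibility underlying Kervaire--Milnor's computation of $|bP_{4k}|$ — so $\sigma(W) \equiv \sigma(W') \bmod 8|bP_{4k}|$. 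Second, $f$ is a homomorphism, because the boundary connected sum $W_0 \natural W_1$ of admissible coboundaries of $\Sigma_0$ and $\Sigma_1$ is an admissible coboundary of $\Sigma_0 \sharp \Sigma_1$ with $\sigma(W_0 \natural W_1) = \sigma(W_0) + \sigma(W_1)$.

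Third, $f$ restricts to the identity on $bP_{4k}$. If $\Sigma \in bP_{4k}$ then, after surgery below the middle dimension on a parallelisable coboundary, $\Sigma$ bounds a $(2k{-}1)$-connected parallelisable manifold $V$. This $V$ is spin with all Pontrjagin classes zero; its intersection form is unimodular because $\Sigma$ is a homotopy sphere, and even because $\alpha_V = 0$, by Lemma~\ref{lem:mu_and_alpha}; hence $8 \mid \sigma(V)$ and $V$ is an admissible coboundary for $\Sigma$. Therefore, using the first point, $f(\Sigma) = \sigma(V)/8 \bmod |bP_{4k}|$, which is precisely the image of $\Sigma$ under Kervaire--Milnor's isomorphism $bP_{4k} \cong \Z/|bP_{4k}|$. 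Thus $f$ composed with the inclusion $bP_{4k} \hookrightarrow \Theta_{4k-1}$ is the identity, so $f$ splits the sequence.

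The main obstacle is the first point above — the well-definedness of $f$ — together with the two inputs it relies on: Brumfiel's construction of an admissible coboundary for an arbitrary homotopy sphere (an obstruction-theoretic argument killing decomposable Pontrjagin numbers and adjusting the signature modulo $8$), and the fact that a closed spin $4k$-manifold with vanishing decomposable Pontrjagin numbers has signature divisible by $8|bP_{4k}|$, which ultimately rests on the surgery-theoretic determination of $|bP_{4k}|$ in terms of Bernoulli numbers and the order of ${\rm im}(J_{4k-1})$. By comparison, surjectivity of $\eta$ is comparatively elementary surgery theory, and once the classical structure of $bP_{4k}$ is granted, the homomorphism property and the identification of $f$ on $bP_{4k}$ are routine.
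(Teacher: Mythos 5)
The paper states this theorem with citations to Kervaire--Milnor and Brumfiel and gives no proof of its own, so there is no internal argument to compare against; your reconstruction correctly traces the classical route of the cited sources. One gloss is worth flagging. The phrase ``as the dimension is odd, no middle-dimensional surgery obstruction intervenes'' gives the right conclusion (reflecting $L_{2n-1}(\Z)=0$), but it conceals the genuinely delicate part of Kervaire--Milnor's Section 6: surgery on an $(n{-}1)$-sphere in $N^{2n-1}$ has codimension exactly $n$, so a single such surgery need not reduce $H_{n-1}$, and showing that one can nevertheless eventually kill $H_{n-1}$ is where their careful analysis of the torsion linking pairing comes in. This is not simply an instance of general-position surgery strictly below the middle, and a reader should not be left thinking surjectivity is ``automatic'' once $(n{-}2)$-connectedness is reached. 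The rest of your outline is sound: for the splitting, the appeal to Lemma~\ref{lem:mu_and_alpha} to conclude that $\lambda_V$ is even, hence $8\mid\sigma(V)$, for a $(2k{-}1)$-connected parallelisable coboundary $V$ is legitimate (via Theorem~\ref{thm:Smale}, $V$ is a handlebody, so the lemma applies), and the well-definedness of $f$ indeed reduces to the fact that a closed spin $4k$-manifold $X$ with vanishing decomposable Pontrjagin numbers has $\sigma(X)$ divisible by $8\,|bP_{4k}|$. This follows from $\sigma(X) = (s_k/\hat a_k)\,\hat A(X)$ together with the integrality (evenness when $k$ is odd) of $\hat A(X)$ and the Bernoulli-number computation of $|bP_{4k}|$, and you are right to identify this, together with Brumfiel's construction of an admissible coboundary, as the real content of the splitting.
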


Now recall $BO\an{n} \to BO$, the $(n-1)$-fold connected covering of $BO$. 
In Section \ref{subsec:BO<n>-bordism} below, we define the bordism groups
of $\Omega_*^{O\an{n-1}}$ of $BO\an{n}$-manifolds and 
Lemma~\ref{lem:BO<n>-orientation}~\eqref{lem:BOan<n>-orientation:structure} shows that every 
$M \in \cal{M}^{O\an{n-1}}_{{2n-1},{n-2}}$ defines a unique bordism class 
$[M] \in \Omega_*^{O\an{n-1}}$.
When $n=2k \geq 4$, we have following theorem, 
the third part of which is the main topological result of this paper.

\begin{Theorem} \label{thm:delTP}
Let $k \geq 2$ and $M \in \cal{M}_{4k-1,2k-2}^{O\an{2k-1}}$.
\begin{enumerate}
\item \label{thm:delTP:bordism}
$M \in \del \cal{H}^{4k}$ if and only if $[M] = 0 \in \Omega_{4k-1}^{O\an{2k-1}}$.
\item \label{thm:delTP:homotopy_sphere}
There is a homotopy sphere $\Sigma$ with $f(\Sigma)= 0$ such that $M \sharp \Sigma \in \del \cal{H}^{4k}$. 
\item \label{thm:delTP:main}
If $M \in \del \cal{H}^{4k}$, then $M \in \del \cal{TP}^{4k}$.  Consequently
$\del \cal{H}^{4k} = \del \cal{TP}^{4k}$.
\end{enumerate}
\end{Theorem}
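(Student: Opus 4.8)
The plan is to establish \eqref{thm:delTP:bordism}, \eqref{thm:delTP:homotopy_sphere} and \eqref{thm:delTP:main} in turn, with the bulk of the work in \eqref{thm:delTP:main}. For the ``only if'' direction of \eqref{thm:delTP:bordism}, a handlebody $W \in \cal{H}^{4k}$ is $(2k-1)$-connected with the homotopy type of a wedge of $2k$-spheres, so its stable normal bundle lifts (essentially uniquely) to $BO\an{2k}$; since $M = \del W$ is $(2k-2)$-connected and $(2k-1)$-parallelisable, the $BO\an{2k}$-structure $M$ inherits from $W$ is the canonical one of Lemma~\ref{lem:BO<n>-orientation}, so $W$ witnesses $[M]=0$. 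Conversely, given a compact $BO\an{2k}$-manifold $V$ with $\del V = M$ I would use the $BO\an{2k}$-structure to frame the normal data of surgeries and kill $\pi_i(V)$ for $i \le 2k-1$ in $\mathrm{int}(V)$, making $V$ be $(2k-1)$-connected; since $M$ is $(2k-2)$-connected and $2k \ge 3$, Smale's Theorem~\ref{thm:Smale} then gives $V \in \cal{H}^{4k}$, so $M \in \del\cal{H}^{4k}$. For \eqref{thm:delTP:homotopy_sphere}: by \eqref{thm:delTP:bordism} it suffices to produce $\Sigma$ with $f(\Sigma) = 0$ and $[\Sigma] = -[M]$ in $\Omega^{O\an{2k-1}}_{4k-1}$. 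Surgery below the middle dimension represents every class of $\Omega^{O\an{2k-1}}_{4k-1}$ by a $(2k-1)$-connected $(4k-1)$-manifold, which Poincar\'e duality forces to be a homotopy sphere; hence $\Theta_{4k-1} \to \Omega^{O\an{2k-1}}_{4k-1}$ is onto and some $\Sigma_0$ has $[\Sigma_0] = -[M]$. Spheres in $bP_{4k}$ bound parallelisable, hence $BO\an{2k}$-, manifolds, so $bP_{4k}$ lies in the kernel of $\Theta_{4k-1} \to \Omega^{O\an{2k-1}}_{4k-1}$; writing $\Sigma_0 = \Sigma_{bP} \sharp \Sigma$ with $\Sigma_{bP} \in bP_{4k}$ and $f(\Sigma) = 0$ via the Brumfiel splitting of Theorem~\ref{thm:homotopy_spheres} gives $[\Sigma] = [\Sigma_0] = -[M]$, as needed.

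For \eqref{thm:delTP:main}, the inclusion $\del\cal{TP}^{4k} \subseteq \del\cal{H}^{4k}$ is clear; for the converse let $M = \del W$ with $W \in \cal{H}^{4k}$ and extended intersection form $(H, \lambda, \alpha)$, and split $(H, \lambda) = (R, 0) \oplus (H_0, \lambda_0)$ into its radical and a nondegenerate complement. By Wall's Theorem~\ref{thm:wall_2n_classification} this exhibits $W \cong W_R \natural W_0$, where $W_R$ is a boundary connected sum of $D^{2k}$-bundles over $S^{2k}$ — each a one-vertex treelike plumbing, so $\del W_R$ is a connected sum of boundaries of treelike plumbings — and $W_0$ has nondegenerate extended intersection form. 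As $\del W = \del W_R \sharp \del W_0$, Theorem~\ref{thm:TP_and_connected_sum} reduces us to showing $\del W_0 \in \del\cal{TP}^{4k}$; so I may assume $(H, \lambda, \alpha) \in \cal{F}^{4k}_{\rm nd}$, in which case $\del W$ is a rational homology sphere and the algebraic boundary $\del(H, \lambda, \alpha) \in \cal{Q}^{4k-1}$ is defined. The crucial input — proved in Section~\ref{sec:realising_extended_quadratic_linking_forms} — is that every element of $\cal{Q}^{4k-1}$ is the algebraic boundary of a \emph{treelike} extended quadratic form. Granting this, I would choose a treelike $(H', \lambda', \alpha')$ with $\del(H', \lambda', \alpha') = \del(H, \lambda, \alpha)$ and let $W'$ be a handlebody realizing it (Theorem~\ref{thm:wall_2n_classification}); by Lemma~\ref{lem:tree_recognition}, $W' \in \cal{TP}^{4k}$.

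Finally I would compare $M=\del W$ with $\del W'$. Since $\del(H, \lambda, \alpha) = \del(H', \lambda', \alpha')$, the first part of Theorem~\ref{thm:boundaries_of_eqfs} yields nonsingular forms $N, N' \in \cal{F}^{4k}_{\rm ns}$ and an isomorphism $(H, \lambda, \alpha) \oplus N \cong (H', \lambda', \alpha') \oplus N'$. Adjoining a hyperbolic plane $(H_+(\Z), 0)$ to each of $N$ and $N'$, I may take their underlying symmetric forms to be indefinite unimodular; for $k \neq 2, 4$ these are automatically even by Lemma~\ref{lem:mu_and_alpha}, hence isomorphic to a direct sum of hyperbolic planes $H_+(\Z)$ and copies of $\pm E_8$ (realised by the $E_8$-plumbing, whose graph is a tree), and therefore treelike, using Lemma~\ref{lem:stable_closure_of_treelike_forms} to adjoin the hyperbolic summand to a sum of treelike forms. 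Thus the handlebodies $W_N$, $W_{N'}$, and also $-W_N$ (whose form $-N$ is again indefinite even unimodular), all lie in $\cal{TP}^{4k}$. Applying Theorem~\ref{thm:wall_2n_classification} to the displayed isomorphism and taking boundaries gives $M \sharp \del W_N \cong \del W' \sharp \del W_{N'}$ with $\del W_N$ a homotopy sphere; connect-summing with $\del(-W_N) = -\del W_N$ and using $\del W_N \sharp (-\del W_N) \cong S^{4k-1}$ yields $M \cong \del W' \sharp \del W_{N'} \sharp \del(-W_N)$, a connected sum of boundaries of treelike plumbings, so $M \in \del\cal{TP}^{4k}$ by Theorem~\ref{thm:TP_and_connected_sum}; this also gives the equality $\del\cal{H}^{4k} = \del\cal{TP}^{4k}$. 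The cases $k = 2, 4$ follow the same pattern once the odd-form analogues of the relevant algebraic statements — supplied by \cite{C1} and Section~\ref{sec:realising_extended_quadratic_linking_forms} — are in hand. The single hardest step is the treelike-realization result of Section~\ref{sec:realising_extended_quadratic_linking_forms}: Wall \cite{Wa2} already realizes every such linking form as the algebraic boundary of an even symmetric bilinear form, but forcing the realizing form to be \emph{treelike}, i.e.\ to come from a simply-connected plumbing graph, is the substantive work, done there by decomposing the linking form into indecomposables and realizing each by an explicit plumbing tree.
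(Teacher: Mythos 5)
Your proofs of (1) and (2) follow the paper's route. For part (3) you share with the paper the decisive algebraic input, Theorem~\ref{thm:realisation_of_eqlfs}, but otherwise take a genuinely different and more self-contained path. The paper first uses Wall's topological decomposition \cite[Theorem 7]{Wa4} to reduce to rational homotopy spheres, and then applies the almost-diffeomorphism classification Theorem~\ref{thm:classification_of_QHS}~\eqref{thm:classification_of_QHS:complete} (whose proof rests on Wilkens' Theorem~\ref{thm:Wilkens}) to produce $M_T \cong \del W \sharp \Sigma$ with $W$ a treelike plumbing; the paper then needs $\Sigma \in \del\cal{TP}^{4k}$ but asserts this without supplying the argument. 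You instead split off the radical of the extended intersection form of a bounding handlebody to reduce to the nondegenerate case, and work directly with the purely algebraic Theorem~\ref{thm:boundaries_of_eqfs} together with Wall's Theorem~\ref{thm:wall_2n_classification} to obtain $\del W_0 \sharp \del W_N \cong \del W' \sharp \del W_{N'}$ with $N, N'$ nonsingular. Adjoining one hyperbolic plane and appealing to the classification of indefinite unimodular lattices ($p\,H_+(\Z) \oplus q(\pm E_8)$ in the even case $k \ne 2,4$, and $p\langle 1\rangle \oplus q\langle -1\rangle$ in the odd case $k = 2,4$) renders $W_N$ and $W_{N'}$, and hence the comparing homotopy spheres, treelike by construction. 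This bypasses Theorem~\ref{thm:classification_of_QHS} and Wilkens' theorem entirely and explicitly supplies the step the paper leaves implicit, at the modest cost of invoking the Serre/Milnor--Husemoller classification. One small bookkeeping point to track: by Definition~\ref{def:eqlf_of_M}, the extended quadratic linking form of $M = \del W$ is $\del(H^{2k}(W, M), -\lambda_W, \alpha_W)$, so the realizing treelike plumbing should carry a minus sign on its intersection form, exactly as in the paper's own phrasing (``there is a treelike plumbing $W$ with extended intersection form isomorphic to $(H, -\lambda, \alpha)$''); this does not affect the logic of your argument.
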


\noindent
Theorem \ref{thm:delTP} plays a central role in the proofs of our main results:
Theorem C is an equivalent statement of Theorem~\ref{thm:delTP}~\eqref{thm:delTP:main}, 
and Theorems A and A$'$ both follow by combining Theorem 1.1 with Theorem \ref{thm:delTP}.  
Moreover, Theorem B follows from Theorem 1.1 when combined with Corolloary \ref{cor:7-11} below.
%Note that we in fact obtain the more refined
%statement that the homotopy sphere $\Sigma$ in Theorem A can be chosed with vanishing Brumfiel invariant: $f(\Sigma) = 0$.

\begin{Remark}
Part (2) of Theorem \ref{thm:delTP}, without the extra condition $f(\Sigma) = 0$, was 
proven in \cite[Theorem 8]{Wa4}, which was written before Brumfiel's paper \cite{Brumfiel}.
\end{Remark}

In \cite{Kervaire&Milnor} it is shown that $\Theta_7 \cong bP_8$ and $\Theta_{11} \cong bP_{12}.$ The isomorphisms are given by the Brumfiel invariant $f$. 
%Thus in dimensions 7 and 11, a homotopy sphere $\Sigma$ satisfying $s(\Sigma)=0$ must be the standard sphere. 
%In dimensions 7 and 11 we have that $BO\an{4}=B\hbox{Spin}$ and $BO\an{8}=B\hbox{String}$, and moreover $\Omega_7^{Spin} = 0 = \Omega_{11}^{String}$, by \cite[p.\,201]{Milnor-spin} and \cite[p.\,538]{G}.
Since all 2-connected 7-manifolds (respectively 4-connected 11-manifolds) are automatically spin (respectively string) as a consequence of their connectedness, we obtain the following corollary to Theorem~\ref{thm:delTP} (2) and (3).

\begin{Corollary}\label{cor:7-11}
$\cal{M}_{7,2}^{Spin} = \del \cal{TP}^8$ and $\cal{M}_{11, 4}^{String} = \del \cal{TP}^{12}$. 
That is, every 2-connected 7-manifold is diffeomorphic to the boundary of
a treelike plumbing $W \in \cal{TP}^{8}$, 
and every 4-connected 11-manifold is diffeomorphic to the boundary of
a treelike plumbing $W \in \cal{TP}^{12}$.
%
%Every 2-connected 7-manifold (respectively 4-connected 11-manifold) is diffeomorphic to the boundary of a handlebody $W^8$ 
%(respectively $W^{12}$) constructed by attaching a number of 4-handles (respectively 6-handles) to $D^8$ (respectively $D^{12}$).
\end{Corollary}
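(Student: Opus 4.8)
The plan is to derive Corollary~\ref{cor:7-11} directly from Theorem~\ref{thm:delTP}, parts \eqref{thm:delTP:homotopy_sphere} and \eqref{thm:delTP:main}, together with the fact recorded just above that the Brumfiel invariant $f$ gives isomorphisms $\Theta_7 \cong bP_8$ and $\Theta_{11} \cong bP_{12}$. We specialise Theorem~\ref{thm:delTP} to $k = 2$ (dimension $7$) and $k = 3$ (dimension $11$); since in both cases $k \not\equiv 1$~mod~$4$, the $(2k{-}1)$-parallelisability hypothesis built into $\cal{M}_{4k-1,2k-2}^{O\an{2k-1}}$ is automatic.

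First I would note that the superscripts $Spin$ and $String$ record no data beyond connectivity. A $2$-connected $7$-manifold $M$ has $H_1(M) = H_2(M) = 0$, hence $H^2(M; \Z/2) = 0$, so $w_1(M) = w_2(M) = 0$ and $M$ is spin; moreover its $3$-skeleton is homotopy equivalent to a wedge of $3$-spheres, over which the tangent bundle is trivial because $\pi_3(BSO) = \pi_2(SO) = 0$, so $M$ is $3$-parallelisable. Thus $\cal{M}_{7,2}^{Spin} = \cal{M}_{7,2}^{O\an{3}}$. The same reasoning, now using that the $5$-skeleton of a $4$-connected $11$-manifold is a wedge of $5$-spheres, that $\pi_5(BSO) = \pi_4(SO) = 0$, and that $H^2 = H^4 = 0$ kills both $w_2$ and the string obstruction, gives $\cal{M}_{11,4}^{String} = \cal{M}_{11,4}^{O\an{5}}$.

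One inclusion holds for every $k \geq 2$: the chain of inclusions displayed just before Theorem~\ref{thm:delTP} gives $\del \cal{TP}^{4k} \subset \cal{M}_{4k-1,2k-2}^{O\an{2k-1}}$. For the reverse inclusion, fix $k \in \{2, 3\}$ and let $M \in \cal{M}_{4k-1,2k-2}^{O\an{2k-1}}$. By Theorem~\ref{thm:delTP}~\eqref{thm:delTP:homotopy_sphere} there is a homotopy sphere $\Sigma^{4k-1}$ with $f(\Sigma) = 0$ and $M \sharp \Sigma \in \del \cal{H}^{4k}$, and Theorem~\ref{thm:delTP}~\eqref{thm:delTP:main} then gives $M \sharp \Sigma \in \del \cal{TP}^{4k}$. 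Since $f \colon \Theta_{4k-1} \to bP_{4k}$ is an isomorphism for $4k{-}1 \in \{7, 11\}$, the vanishing $f(\Sigma) = 0$ forces $\Sigma \cong S^{4k-1}$, and therefore $M = M \sharp S^{4k-1} \cong M \sharp \Sigma \in \del \cal{TP}^{4k}$. Combining the two inclusions with the identifications of the previous paragraph yields $\cal{M}_{7,2}^{Spin} = \del \cal{TP}^8$ and $\cal{M}_{11,4}^{String} = \del \cal{TP}^{12}$, as claimed.

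I do not expect a genuine obstacle here: granting Theorem~\ref{thm:delTP}, the corollary is formal, the only dimension-specific input being that $\Theta_7$ and $\Theta_{11}$ are detected by the Brumfiel invariant, so that the homotopy sphere appearing in Theorem~\ref{thm:delTP}~\eqref{thm:delTP:homotopy_sphere} is in fact standard. The one point demanding a little care is the bookkeeping that identifies the $Spin$-, respectively $String$-, structure sets with the $O\an{3}$-, respectively $O\an{5}$-, structure sets.
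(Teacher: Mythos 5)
Your proof is correct and takes essentially the same route as the paper: the paper deduces the corollary from Theorem~\ref{thm:delTP}~(2) and~(3) together with the observations that $\Theta_7 \cong bP_8$ and $\Theta_{11} \cong bP_{12}$ via the Brumfiel invariant $f$, and that $2$-connected $7$-manifolds (resp.\ $4$-connected $11$-manifolds) are automatically spin (resp.\ string) and $(2k{-}1)$-parallelisable by connectivity. You have simply spelled out these steps in more detail.
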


The proofs of parts (1) and (2) of Theorem~\ref{thm:delTP} are given in Section~\ref{subsec:BO<n>-bordism}.
The proof of part (3) consists of two parts and runs as follows.  In Section \ref{subsec:rational_homotopy_spheres} 
we present the classification of rational homotopy spheres
$M \in \del \cal{H}^{4k}$ up to connected sum with homotopy spheres and we reduce
Theorem~\ref{thm:delTP}~\eqref{thm:delTP:main} to the completely algebraic Theorem \ref{thm:realisation_of_eqlfs}.
We prove Theorem \ref{thm:realisation_of_eqlfs} in Section~\ref{sec:realising_extended_quadratic_linking_forms}.  

% (fold)
\subsection{$BO\an{n}$-bordism} \label{subsec:BO<n>-bordism}
%%%%%%%%%%%%%%%%%%%%%%%%%%%%%%%%%%%%%%%%%%%%%%%%%%%%%%%%%%%%%%%%%%%%%%%%%%%%%%%%%%%%%%%%

In this subsection we prove Theorem \ref{thm:delTP}~\eqref{thm:delTP:bordism} and~\eqref{thm:delTP:homotopy_sphere}.
%investigate when $M \in \cal{M}_{2n-1, n-2}^{O\an{n-1}}$ lies in $\del \cal{H}^{2n-1}$.
%
%The following lemma follows from elementary obstruction theory applied to the fibration
%$BO\an{n} \to BO$.
We begin by defining the bordism groups $\Omega_*^{O\an{n-1}}$.  Recall that a {\em $BO\an{n}$-structure}
on a compact manifold $X$ is an equivalence class of diagram
\[ \xymatrix{ & BO\an{n} \ar[d] \\
X \ar[ur]^{\bar \nu} \ar[r]^{\nu} & BO} \]
where $\nu \colon X \to BO$ classifies the stable normal bundle of $X$.  
Two closed $BO\an{n}$-manifolds
$(N_0, \bar \nu_0)$ and $(N_1, \bar \nu_1)$ are $BO\an{n}$-bordant if there there is a compact
$BO\an{n}$-manifold $(X, \bar \nu_X)$ with boundary the disjoint union $(N_0, \bar \nu_0)$ and
$(N_1, -\bar \nu_1)$, where $-\bar \nu_1$ is the $BO\an{n}$-structure induced on 
$N_1 \times \{0\} \subset N_1 \times I$
from a $BO\an{n}$-structure on $N_1 \times I$ which restricts to $\bar \nu_1$ on $N_1 \times \{1\}$.  
We have the bordism group of closed $m$-dimensional $BO\an{n}$-manifolds
\[ \Omega_m^{O\an{n}} : = \{ (N, \bar \nu) \,|\, \text{$(N, \bar \nu)$ is a closed $BO\an{n}$-manifold}\}/\text{$BO\an{n}$-bordism,} \]
where addition is given by disjoint union and $-[N, \bar \nu] = [N, -\bar \nu]$.
For more details about the definition of the $BO\an{n}$-bordism groups,
we refer the reader to \cite[Chapter II]{Stong}.

For the statement of the following lemma, 
we recall that a manifold $X$ is called $(n-1)$-parallelisable if its tangent bundle
is trivial when restricted to every $(n-1)$-skeleton of $X$.

\begin{Lemma} \label{lem:BO<n>-orientation}
\begin{enumerate}
\item \label{lem:BOan<n>-orientation:existence}
A manifold $X$ admits a $BO\an{n}$-structure $\bar \nu \colon X \to BO\an{n}$
if and only if $X$ is $(n-1)$-parallelisable.
\item \label{lem:BOan<n>-orientation:handlebody}
Every manifold $M \in \del \cal{H}^{2n}$ is $(n-1)$-parallelisable.
\item \label{lem:BOan<n>-orientation:structure}
Every connected $(n-2)$-connected $(n-1)$-parallelisable manifold $X$ admits
precisely two equivalence classes of $BO\an{n}$-orientation which are determined by
the orientation they induce on $X$. \qedhere
\end{enumerate}
\end{Lemma}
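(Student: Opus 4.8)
The plan is to treat the three parts largely by obstruction theory, since $BO\an{n} \to BO$ is precisely the stage of the Postnikov/Whitehead tower that kills the homotopy groups of $BO$ in degrees $\le n-1$. For part \eqref{lem:BOan<n>-orientation:existence}, I would observe that a $BO\an{n}$-structure on $X$ is a lift of the stable normal classifying map $\nu \colon X \to BO$ through the $(n-1)$-connected cover $BO\an{n} \to BO$. Since the homotopy fibre of this map has homotopy groups concentrated in degrees $\le n-1$, the obstructions to the existence of such a lift lie in $H^{i+1}(X; \pi_i(\text{fibre}))$ for $i \le n-1$, hence depend only on the restriction of $\nu$ to the $n$-skeleton of $X$, equivalently on the restriction to an $(n-1)$-skeleton together with the attaching of $n$-cells. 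The key point is that $\nu|_{X^{(n-1)}}$ is null-homotopic (i.e. $X$ admits a $BO\an{n}$-structure over its $(n-1)$-skeleton) if and only if the stable normal bundle, equivalently the stable tangent bundle, is trivial over every $(n-1)$-skeleton, which is the definition of $(n-1)$-parallelisability; and once the map is null over the $(n-1)$-skeleton the remaining obstruction over $n$-cells vanishes because $\pi_{n-1}$ of the fibre maps isomorphically and the bundle is already trivial there. I would phrase this carefully using that $X$ has the homotopy type of a CW complex and cite \cite[Chapter II]{Stong} for the obstruction-theoretic framework.

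For part \eqref{lem:BOan<n>-orientation:handlebody}, the cleanest route is to recall from Section \ref{subsec:classification_of_handlebodies} that if $M = \del W$ with $W \in \cal{H}^{2n}$, then $W$ is built from $D^{2n}$ by attaching $n$-handles, so $W$ is $(n-1)$-connected and in particular parallelisable (its tangent bundle is trivial, being that of a handlebody with handles only in the middle dimension — indeed $W$ deformation retracts onto a wedge of $n$-spheres and its stable tangent bundle is classified by a map $\vee S^n \to BO$, which restricted to the $(n-1)$-skeleton, a point, is trivial; more to the point $W$ is stably parallelisable by Wall's analysis, or one notes $TW$ restricted to any $(n-1)$-skeleton is trivial since that skeleton sits inside a contractible subcomplex up to dimension $n-1$). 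Then $M = \del W \hookrightarrow W$ has normal bundle trivial, so $TM \oplus \varepsilon^1 \cong TW|_M$, and since the latter is trivial over $(n-1)$-skeleta and $M^{(n-1)}$ maps into $W^{(n-1)}$, we conclude $M$ is $(n-1)$-parallelisable. (Alternatively, and perhaps more cleanly, invoke that $M$ is $(n-2)$-connected by Remark \ref{rem:Smale} and apply part \eqref{lem:BOan<n>-orientation:existence} after checking directly that $TM$ is trivial on $(n-1)$-skeleta from the handle structure of $W$.)

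For part \eqref{lem:BOan<n>-orientation:structure}, I would count lifts rather than just detect existence. Given that $X$ is $(n-2)$-connected and $(n-1)$-parallelisable, existence of a $BO\an{n}$-structure follows from \eqref{lem:BOan<n>-orientation:existence}. The set of vertical homotopy classes of lifts of a fixed $\nu \colon X \to BO\an{n}$ (once one exists) is acted on, via the usual obstruction-theory analysis, by groups $H^i(X; \pi_i(\text{fibre}))$; since $X$ is $(n-2)$-connected, $\wt H^i(X) = 0$ for $i \le n-2$, so the only potentially nonzero contribution is $H^{n-1}(X; \pi_{n-1}(\text{fibre}))$. Now $\pi_{n-1}$ of the fibre of $BO\an{n} \to BO$ is $\pi_{n-1}(BO) = \pi_{n-2}(O)$, and one must identify $H^{n-1}(X; \pi_{n-2}(O))$: because $X$ is $(n-2)$-connected of dimension — well, $X$ here is the $(2n-1)$-manifold and $H^{n-1}(X) = 0$ since $X$ is $(n-2)$-connected forces $H^{n-1}$ to be... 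I need to be careful: $(n-2)$-connected gives $\wt H^i = 0$ for $i \le n-2$ but says nothing about $H^{n-1}$. The resolution is that the relevant changes of $BO\an{n}$-structure that genuinely alter the lift correspond to $H^0(X;\Z/2) = \Z/2$ coming from the orientation (the bottom of the tower, $w_1$), and one argues that changes in higher degree either do not occur ($\pi_{n-2}(O)$ enters $H^{n-1}$, and one shows this does not change the structure up to the ambiguity recorded by orientation, by comparing with the analogous statement for $BSO$ or $BSpin$). \textbf{The main obstacle} I anticipate is exactly this bookkeeping in part \eqref{lem:BOan<n>-orientation:structure}: pinning down precisely which cohomology groups act nontrivially on the set of $BO\an{n}$-structures and showing the orbit is governed solely by $H^0(X;\Z/2)$, i.e. by the induced orientation — this requires knowing the low-degree $k$-invariants of the tower $BO\an{n} \to \dots \to BSO \to BO$ and using $(n-2)$-connectedness of $X$ to kill all intermediate contributions. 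I would organise this by induction up the tower $BO \leftarrow BSO \leftarrow BO\an{4} \leftarrow \dots \leftarrow BO\an{n}$, at each stage checking that the obstruction/indeterminacy group $H^{i}(X; \pi_{i-1}(O))$ vanishes for $2 \le i \le n-1$ by $(n-2)$-connectedness (noting $i \le n-1$ so $i - 1 \le n-2 \le$ the connectivity range only for $i \le n-2$, and handling $i = n-1$ separately using that the corresponding homotopy operation is detected by a characteristic class already constrained by $(n-1)$-parallelisability), leaving only the $w_1$-stage which contributes the factor of two from the orientation.
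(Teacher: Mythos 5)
Your plans for parts \eqref{lem:BOan<n>-orientation:existence} and \eqref{lem:BOan<n>-orientation:handlebody} are essentially the paper's: obstruction theory for the lift through $BO\an{n} \to BO$, and factoring $\nu_M$ through the inclusion $M \hookrightarrow W$. One gap in \eqref{lem:BOan<n>-orientation:existence}: the null-homotopy of $\bar\nu|_{X^{(n-1)}}$ only gives \emph{stable} triviality of $TX|_{X^{(n-1)}}$, and to conclude actual triviality (which is what ``$(n-1)$-parallelisable'' demands) the paper has to invoke Kervaire--Milnor's \cite[Lemma 3.5]{Kervaire&Milnor} that a vector bundle over a complex of dimension strictly less than the fibre dimension is trivial iff stably trivial. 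Your plan slides past this step.

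The genuine error is in part \eqref{lem:BOan<n>-orientation:structure}, and it is an off-by-one in the homotopy of the fibre. The homotopy fibre $F$ of $BO\an{n} \to BO$ is the Postnikov section $P^{n-2}(O)$: the long exact sequence gives $\pi_i(F) \cong \pi_{i+1}(BO) \cong \pi_i(O)$ for $i \le n-2$ and $\pi_i(F) = 0$ for $i \ge n-1$. You assert $\pi_{n-1}(F) = \pi_{n-1}(BO) = \pi_{n-2}(O)$, which misplaces the degree by one; in fact $\pi_{n-1}(F) = 0$, and it is $\pi_{n-2}(F)$ that equals $\pi_{n-2}(O)$. Once this is corrected, the ``main obstacle'' you flag evaporates: the difference obstructions for vertical homotopies between lifts live in $H^i(X;\pi_i(F))$, which vanish for $1 \le i \le n-2$ because $X$ is $(n-2)$-connected and for $i \ge n-1$ because the coefficients vanish, leaving only $H^0(X;\pi_0(F)) = H^0(X;\Z/2) \cong \Z/2$. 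There is no $H^{n-1}$ term to analyse, and the inductive climb up the Whitehead tower you sketch is unnecessary. The paper sidesteps even the $H^0$ bookkeeping by fixing an orientation and treating $BO\an{n} \to BSO$, so the fibre $P^{n-2}(SO)$ is connected and \emph{every} obstruction group $H^*(X;\pi_*(P^{n-2}(SO)))$ vanishes; the two structures then come from the two choices of $\nu^+ \colon X \to BSO$. Either version works, but your plan as written rests on an incorrect identification of $\pi_{n-1}(F)$ and proposes machinery that the correct degree count shows is not needed.
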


\begin{proof}
\eqref{lem:BOan<n>-orientation:existence}  Let $X^{(n-1)}$ be an $n$-skeleton for $X$.
Since $BO\an{n}$ is $(n-1)$-connected, if $X$ admits a $BO\an{n}$-structure $\bar \nu$,
then $\bar \nu|_{X^{(n-1)}}$ is null-homotopic, hence $\nu|_{X^{(n-1)}}$ is null-homotopic. From this we deduce that the tangent bundle of $X$ restricted to $X^{(n-1)}$ is stably trivial, since $\nu|_{X^{(n-1)}}$ is a $KO$-theory inverse for the stable tangent bundle over $X^{(n-1)}.$ Lemma 3.5 of \cite{Kervaire&Milnor} states that a vector bundle over a complex where the fibre dimension exceeds the base dimension is trivial if and only if it is stably trivial. As this is precisely the situation for $TX|_{X^{(n-1)}}$ we conclude that $X$ is $(n-1)$-parallelisable.
  
To argue in the other direction, we note that by definition, there is a fibration
$P^{n-2}(O) \to BO\an{n} \to BO$ where $P^{n-2}(O)$ is the $(n-2)^{nd}$ Postnikov
stage of $O$: see \cite[IX Theorem 2.8]{Wh} for the definition of $P^{n-2}(O)$.
The obstructions to lifting $\nu \colon X \to BO\an{n}$ lie in the group
$H^*(X; \pi_{*-1}(P^{n-2}(O))$ \cite[VI Theorem 6.2]{Wh}, which vanishes for $* > n-2$.  Since 
the restriction homomorphism $H^*(X; \pi_{*-1}(P^{n-2}(O)) \to H^*(X^{(n-1)}; \pi_{*-1}(P^{n-2}(O))$
is an isomorphism, we see that these obstructions vanish if $X$ is $(n-1)$-parallelisable.

\eqref{lem:BOan<n>-orientation:handlebody}  Let $M = \del W$, for $W \in \cal{H}^{2n}$.
Since $W$ is $(n-1)$-connected, it follows that $W$ is $(n-1)$-parallelisable.
Since the classifying map of the stable normal bundle of $M$, $\nu \colon M \to BO$
factors through the inclusion $M \to W$, it follows that $M$ is $(n-1)$-parallelisable.

\eqref{lem:BOan<n>-orientation:structure}  Let $\nu^+ \colon X \to BSO$ be an orientation
of the stable normal bundle of $X$ and let
$\bar \nu_i$, $i=0, 1$ be two 
$BO\an{n}$-structures on $X$ compatible with $\nu^+$.  We now
work with the $(n-1)$-connective covering over $BSO$, $BO\an{n} \to BSO$,
and consider maps $f_{\bar \nu_i} \colon X \to BO\an{n}$, $i = 0, 1$,
covering $\nu^+$, representing the two $BO\an{n}$-structures $\nu_0$ and $\nu_1$ on $X$.  
We may assume that both $f_{\bar \nu_0}$ and $f_{\bar \nu_1}$ are lifts of
the same map $f_{\nu^+} \colon X \to BSO$, representing $\nu^+$.
The obstructions to finding a vertical homotopy over $BO\an{n} \to BSO$ between $f_{\bar \nu_0}$ and 
$f_{\bar \nu_1}$ lie in the groups $H^*(X; \pi_{*}(P^{n-2}(SO))$, see \cite[VI Theorem 6.12]{Wh}, 
and these groups vanish since $P^{n-2}(SO)$ is connected 
and $X$ is $(n-2)$-connected. (Note that a vertical homotopy is a homotopy between maps into the total space of a fibration which projects to the same map on the base independent of the homotopy parameter.) 
It follows that $\bar \nu_0$ and $\bar \nu_1$ are equivalent $BO\an{n}$-structures on $X$.
\end{proof}

\begin{proof}[Proof of Theorem~\ref{thm:delTP}~\eqref{thm:delTP:bordism} and~\eqref{thm:delTP:homotopy_sphere}]
\eqref{thm:delTP:bordism}  One direction is clear: if $M \in \del \cal{H}^{2n}$, then by definition $M =\del W$ where
$W \in \cal{H}^{2n}$ and so by Lemma~\ref{lem:BO<n>-orientation} $W$ is a $BO\an{n}$-manifold, i.e. a $BO\an{n}$ null-bordism of $M$. Thus $[M]=0 \in \Omega^{O\an{2k-1}}_{4k-1}.$
%
%One direction is clear: if $M \in \del \cal{H}^{2m}$, then by definition $M = W$ where
%$W \in \cal{H}^{2m}$ and so $W$ admits a $BO\an{m}$-structure $\bar \nu_W \colon W \to BO\an{m}$
%which is a $BO\an{m}$ null-bordism of $(M, \bar \nu)$, where $\bar \nu = \del \bar \nu_W$ 
%is the induced $BO\an{m}$-structure on $M$.
%
Conversely, suppose that $W_0$ is a $BO\an{n}$-null bordism of $M$.
Then by \cite[Theorem 3]{Mi}, we may perform surgeries on the interior of $W_0$ to 
obtain an $(n-1)$-connected manifold $W$ with boundary $M$.  By Theorem \ref{thm:Smale},
$W \in \cal{H}^{2n}$ and hence $M \in \del \cal{H}^{2n}$.
%
%Conversely, suppose that $(W_0, \bar \nu_{W_0})$ is a $BO\an{m}$-null bordism of $(M, \bar \nu)$.
%Then by \cite[Theorem 3]{Mi}, we may perform surgeries on the interior of $W_0$ to 
%obtain an $(m-1)$-connected manifold $W$ with boundary $M$.  By Theorem \ref{thm:Smale}
%$W \in \cal{H}^{2m}$ and hence $M \in \del \cal{H}^{2m}$.

\noindent
\eqref{thm:delTP:homotopy_sphere}  
Let $M \in \cal{M}_{4k-1}^{O\an{2k-1}}$. It suffices to show that $M$ is bordant over $BO\an{2k}$ to
some homotopy sphere $\Sigma$ with $s(\Sigma) = 0$, for then $[M \sharp (-\Sigma)] = 0 \in \Omega_{4k-1}^{O\an{2k-1}}$, and by part \eqref{thm:delTP:bordism}
$M \sharp (-\Sigma) \in \del \cal{H}^{4k}$.  Moreover, $s(-\Sigma) = 0$.
Let $\bar{\nu} \colon M \to BO\an{2k}$ be a lift of the classifying map
of the stable normal bundle of $M$.  We consider the problem of doing surgery on the normal 
map $\bar \nu \colon M \to BO\an{2k}$ until it is an isomorphism on all homotopy groups $\pi_i$ for $i \leq 2k-1$.
We see that the arguments of Kervaire and Milnor \cite{Kervaire&Milnor} can be used in this situation
and that there is no obstruction.  Hence $M$ is bordant over $BO\an{2k}$ to a homotopy sphere 
$\Sigma_0 \to BO\an{2k}$.  
Now there is a homotopy sphere $\Sigma_1 \in bP_{4k}$ such that $s(\Sigma_1) = -s(\Sigma_0)$.  
We set $\Sigma_2 : = \Sigma_1 \sharp \Sigma_0$, so that $s(\Sigma_2)= 0$.
Since $\Sigma_1$ bounds a parallelisable manifold, it bounds over $BO\an{2k}$,
and so
\[ [M] = [\Sigma_0] = [\Sigma_0 \sharp \Sigma_1] = [\Sigma_2] \in \Omega_{4k-1}^{O\an{2k-1}}.\]
Thus choosing a homotopy sphere $\Sigma$ so that $[\Sigma]=-[\Sigma_2] \in \Omega_{4k-1}^{O\an{2k-1}}$ means that $s(\Sigma)=0$ and $[M \sharp \Sigma]=0 \in \Omega_{4k-1}^{O\an{2k-1}}.$ Thus by (1) above, $M \sharp \Sigma \in \del \cal{H}^{2n}$.
\end{proof}

% subsection boundaries of handlebodies (end)
%%%%%%%%%%%%%%%%%%%%%%%%%%%%%%%%%%%%%%%%%%%%%%%%%%%%%%%%%%%%%%%%%%%%%%%%%%%%%%%%%%%%%%%%

% (fold)
\subsection{Rational homotopy spheres} 
\label{subsec:rational_homotopy_spheres}
%%%%%%%%%%%%%%%%%%%%%%%%%%%%%%%%%%%%%%%%%%%%%%%%%%%%%%%%%%%%%%%%%%%%%%%%%%%%%%%%%%%%%%%%

%In this subsection we reduce the proof of Theorem \ref{thm:delTP}~\eqref{thm:delTP:main} to
%the purely algebraic Theorem \ref{thm:realisation_of_eqlfs} which we prove in Section~\ref{sec:realising_extended_quadratic_linking_forms}.

Let $\del \cal{H}^{4k}_{\QHS} \subset \del \cal{H}^{4k}$ be the set of diffeomorphism classes 
of manifolds $M$ which are the boundaries of handlebodies and also rational homology spheres:
\[ \del \cal{H}^{4k}_{\QHS} = \{ M  \in \del \cal{H}^{4k} 
\, | \, H^*(M; \Q) \cong H^*(S^{4k-1}; \Q) \}. \]
%\subset \del \cal{H}^{4k}.\]
%
When $k = 1$, $\cal{H}^{4k}_{\QHS}$ is a set of rational {\em homotopy spheres} 
but we are mostly interested in the case $k \geq 2$ when
$\cal{H}^{4k}_{\QHS}$ is a set of rational {\em homology spheres}.
So for the remainder of this section {\em we assume $k \geq 2$.}

In this subsection we define the extended quadratic linking form of $M \in \del \cal{H}^{4k}_{\QHS}$,
which is closely related to the usual linking form of $M$; see \eqref{eq:linking_form} below.
We then use the extended quadratic linking form to give the diffeomorphism classification of manifolds in $\del \cal{H}^{4k}_{\QHS}$ up to connected sum with homotopy spheres.

We begin by recalling the definition of the linking form of a closed $(2n-1)$-manifold $N$.
Let $TH^{n}(N)$ denote the torsion subgroup of $H^{n}(N)$ and let $x, y \in TH^n(N)$.
Since $x$ maps to zero in $H^n(N; \Q)$ it has a preimage $\ol x \in H^{n-1}(N; \Q/\Z)$
under the Bockstein for the coefficient sequence $\Z \to \Q \to \Q/\Z$.  We define
\begin{equation} \label{eq:linking_form} 
b_N :TH^{n}(N) \times TH^{n}(N) \rightarrow \Q/\Z, \quad (x, y) \mapsto \an{\ol x \cup y, [N]} \in \Q/\Z. 
\end{equation}
The linking form $b_N$ is a well-defined nonsingular $(-1)^n$-symmetric pairing: see \cite[Exercises 53-55]{D-K}.
As with the intersection form, there is a well known geometric interpretation of the linking form.
For $x,y \in TH^{n}(N)$ let $\wh x = \PD(x)$ and $\wh y = PD(y)$.
These are classes in $TH_{n-1}(N)$ which are
represented by cycles $c_{\wh x}$ and $c_{\wh y}.$ Choose a chain $w_{\wh y} \in C_{n}(N)$ such that 
$\partial w_{\wh y}=k c_{\wh y}$ for some $k \in \Z.$ (Note that $w_{\wh y}$ exists because $\wh y$ is a torsion element of $H_{n-1}(N).$) 
The linking form of $N$ satisfies 
$$ b_N(x,y) = \frac{(c_{\wh x},w_{\wh y})}{k}~\in~\Q/\Z,$$ 
where $(\ ,\ ):C_{n-1}(N) \times C_{n}(N) \rightarrow \Z$ is the intersection form on chains.
% discussed above. 
%It can be shown that the value of $b_N(x,y)$ is independent of the choices used to define it.

%The linking form $b_M$ is the foundational part of the extended quadratic linking form of 
%$M \in \del \cal{TP}^{4k}_{\QHS}$.  We now define the superstructure.
%We now identify two key invariants of $M \in \cal{M}_{4k-1}^{O\an{2k-1}}$.

In addition to the linking form, another important invariant of $M \in \del \cal{H}^{4k}_{\QHS}$
is the primary obstruction to the stable triviality of the tangent bundle
of $M$.  This is a cohomology class
\[ \beta_M \in H^{2k}(M; \pi_{2k-1}(SO)).\]
Note that by Bott-periodicity, $\pi_{2k-1}(SO) \cong \Z, \Z/2, \Z, 0$ for $k \equiv 0, 1, 2, 3$~mod~$4$.

\begin{Remark} \label{rem:Pontrjagin_class}
Although we do not use the following fact, we point out that by \cite{Kervaire}, when $k = 2j$ is even,
\[ a_j(2j-1)!\beta_M = p_j(M),  \]
where $p_j(M)$ is the $j^{th}$ Pontrjagin class of $M$ and $a_j = (3 - (-1)^j)/2$.
\end{Remark}

%%% comment with nice summary and theorem about eqlfs and eqfs
\begin{comment}
To connect the algebra of the handle-bodies to their boundaries, there is a completely 
algebraic consturction of a pair $(G, q, \beta)$ from an extended quadratic form
$(H, \lambda, \alpha)$: we write
%
\[ (G, q, \beta) = \del(H, \lambda, \alpha) .\]
%
This construction is such that if $M = \del W$, then
%
\[ \del(H_{2k}(W), \lambda_W, \alpha_W) = (H^{2k}(M), q_M, \beta_M) .\]
%

Hence it suffices to prove the following algebraic statement

\begin{Theorem} \label{thm:algebraic-realisation_of_q_and_beta}
For every isomorphism class of extended quadratic linking form $[G, q, \beta]$ there is a treelike extended quadratic form $(H, \lambda, \alpha) \in \cal{TF}^{4k}$ such that
%
\[ [\del(H, \lambda, \alpha)] = [G, q, \beta]. \]
%
\end{Theorem}

We shall prove Theorem \ref{thm:algebraic-realisation_of_q_and_beta} is 
Section \ref{sec:realising_extended_quadratic_linking_forms}.

\end{comment}

We now define the extended quadratic linking form of $M \in \del \cal{H}^{4k}$ which
extends and refines the triple $(H^{2k}(M), b_M, \beta_M)$.  
We exploit the fact that $M = \del W$, for $W$ a handlebody, and that handlebodies
are classified by their extended quadratic forms $(H^{2k}(W, \del W), \lambda_W, \alpha_W)$; see Theorem~\ref{thm:wall_2n_classification}.
Since there is a homotopy equivalence $W \simeq \vee S^{2k}$ and $M$ is a rational homotopy sphere, 
the cohomology long exact sequence of the pair $(W, M)$ contains the short exact sequence
\[ 0 \to H^{2k}(W, M) \xra{~\wh \lambda_W~} H^{2k}(W) \xra{~~~} H^{2k}(M) \to 0.\]
Hence $\coker{\wh \lambda_W} = H^{2k}(M)$ and it follows that $(H^{2k}(W), \lambda_W, \alpha_W)$
is nondengenerate.  We recall from 
Section \ref{subsec:boundaries_of_eqfs} that nondegenerate
extended quadratic form $(H^{2k}(W, \del W), \lambda_W, \alpha_W)$ defines an extended quadratic 
linking form on the finite abelian group $G : = \coker(\wh \lambda) = H^{2k}(M)$, and this
is how we define the extended quadratic linking form of $M$.
%
%In the current situation we take $H=H^{2k}(W, \del W) \cong H_{2k}(W).$ 
%As observed at the start of Section \ref{sec:handlebodies}, this group is free, and $H_{2k-1}(W)=0.$ 
%Under these circumstances we claim that that the finite group $G$ satisfies $G \cong H_{2k-1}(M;\Z).$ To see this consider the following portion of the long exact homology sequence of the pair $(W,M):$
%$$H_{2k}(W) \xra{~j_{\ast}~} H_{2k}(W,M) \xra{\partial} H_{2k-1}(M) \rightarrow H_{2k-1}(W).$$
%By assumption, the final group is zero, and so by exactness we have $$H_{2k-1}(M) \cong H_{2k}(W,M)/\hbox{im}j_{\ast}.$$ Given any element $x \in H_{2k}(W),$ consider the image $j_{\ast}(x) \in H_{2k}(W,M).$ We have isomorphisms $$H_{2k}(W,M) \cong H^{2k}(W) \cong \Hom(H_{2k}(W),\Z),$$ where the first isomorphism is Poincar\'e duality, and the second is given by the Universal Coefficient Theorem (using the assumption that $H_{2k-1}(W)=0$). It is straightforward to check that $j_{\ast}(x)$ maps to $\wh \lambda(x)$ under this composition of isomorphisms, and by comparing the two exact sequences we deduce that $G \cong H_{2k-1}(M)$ as claimed. Of course by Poincar\'e duality, we also have $G \cong H^{2k}(M).$
%We now digress briefly to discuss the some algebra of extended quadratic forms.
%

\begin{Definition} \label{def:eqlf_of_M}
Given $M \in \del \cal{H}^{4k}_{\QHS}$, chose any handlebody $W \in \cal{H}^{4k}$ with $\del W = M$.  
The extended quadratic linking form of $M$ is defined to be the quadruple
\[ (H^{2k}(M), b_M, q_M, \beta_M) : = \del(H^{2k}(W, M), -\lambda_W, \alpha_W).\]	
where the right-hand side is the algebraic boundary (Definition~\ref{def:boundary_of_an_eqf}) of the extended intersection form of $-W$ (see Section~\ref{subsec:classification_of_handlebodies}). 
\end{Definition}

\begin{Remark} \label{rem:eqlf_definition}
A few words are needed concerning Definition \ref{def:eqlf_of_M}.  
Firstly, the sign of $q_M$ in Definition \ref{def:eqlf_of_M}, differs from that given
in~\cite[Definition 2.10]{C1}.  This is because \cite[Definition 2.22]{C1} gave the
wrong sign for the linking of form $b_M$.
Indeed, the proof of
\cite[Theorem 2.1]{A-H-V} shows that the linking form $b_M$
as defined in \eqref{eq:linking_form} above is the negative of the linking form as defined in 
Definition~\ref{def:boundary_of_an_eqf}~\eqref{def:boundary_of_an_eqf:linking_form}.

The fact that extended quadratic linking form of $M$ is a well-defined almost diffeomorphism invariant
follows from the proof \cite[Theorem 8]{Wa4} when $k \neq 2, 4$, and when $k = 2, 4$ this is 
this follows from \cite[Lemma 2.15]{C1}: in both cases the signs must be changed (the proof
of \cite[Theorem 8]{Wa4} also uses the wrong sign for the linking form).  
For the reader's convenience, and the sake of a unified presentation, we give the proof in
Theorem~\ref{thm:classification_of_QHS}~\eqref{thm:classification_of_QHS:invariant} below.
\end{Remark}

Recall that an almost diffeomorphism between closed $m$-manifolds $N_0$ and $N_1$
is a diffeomorphism $f \colon N_0 \cong N_1 \sharp \Sigma$ for some homotopy $m$-sphere $\Sigma$.
When working with almost diffeomorphisms, it is convenient to regard $N_1$ and $N_1 \sharp \Sigma$
as the same topological space.  In particular we identify $H^*(N_1) = H^*(N_1 \sharp \Sigma)$.
The following classification theorem states that the extended quadratic linking form $(H^{2k}(M), b_M, q_M, \beta_M)$
is a complete almost diffeomorphism invariant of a rational homotopy sphere $M \in \del \cal{H}^{4k}_{\QHS}:$

\begin{Theorem} \label{thm:classification_of_QHS}
Let $k \geq 2$ and $M_0, M_1 \in \del \cal{H}^{4k}_{\QHS}$.
\begin{enumerate}
\item \label{thm:classification_of_QHS:invariant}
An almost diffeomorphism $f \colon M_0 \cong M_1 \sharp \Sigma$ induces an isomorphism
of extended quadratic linking forms:
\[ f^* \colon (H^{2k}(M_1); b_{M_1}, q_{M_1}, \beta_{M_1}) \cong (H^{2k}(M_0); b_{M_0}, q_{M_0}, \beta_{M_0}). \]
\item \label{thm:classification_of_QHS:complete}
For any isomorphism of extended quadratic linking forms
\[ A \colon (H^{2k}(M_1); b_{M_1}, q_{M_1}, \beta_{M_1}) \cong (H^{2k}(M_0); b_{M_0}, q_{M_0}, \beta_{M_0}), \]
there is a homotopy sphere $\Sigma \in \del \cal{H}^{4k}$ 
and a diffeomorphism  $f_A \colon M_0 \cong M_1 \sharp \Sigma$ such that
\[ f^*_A = A \colon H^{2k}(M_1) \cong H^{2k}(M_0) .\]
\end{enumerate}
\end{Theorem}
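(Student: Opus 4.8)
The plan is to deduce both statements from Wall's classification of handlebodies, Theorem~\ref{thm:wall_2n_classification}, together with the algebraic properties of the boundary map $\del\colon\cal{F}^{4k}_{\mathrm{nd}}\to\cal{Q}^{4k-1}$ collected in Theorem~\ref{thm:boundaries_of_eqfs}. Throughout I would use the elementary facts that a boundary connected sum of handlebodies is again a handlebody, that $\del W_0\,\sharp\,\del W_1=\del(W_0\,\natural\,W_1)$, and hence that $\del\cal{H}^{4k}$ is closed under connected sum; I also recall that a handlebody whose boundary is a homotopy sphere has nonsingular extended intersection form (the cohomology sequence of the pair has vanishing cokernel), and that, by Definition~\ref{def:eqlf_of_M}, the extended quadratic linking form of $M=\del W$ is $\del$ of $(H^{2k}(W,M),-\lambda_W,\alpha_W)$, the extended intersection form of $-W$.

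\emph{Part (1).} First I would dispose of the homotopy sphere. Given $M_0,M_1\in\del\cal{H}^{4k}$ and $M_0\cong M_1\,\sharp\,\Sigma$, the sphere $\Sigma\cong M_0\,\sharp\,(-M_1)$ lies in $\del\cal{H}^{4k}$, so it bounds a handlebody $W_\Sigma$ with nonsingular extended intersection form. Using a handlebody $W_1\,\natural\,W_\Sigma$ for $M_1\,\sharp\,\Sigma$ and the fact that adding a nonsingular form does not change the algebraic boundary (the remark just before Theorem~\ref{thm:boundaries_of_eqfs}), the extended quadratic linking form of $M_1\,\sharp\,\Sigma$ agrees, under the canonical identification $H^{2k}(M_1\,\sharp\,\Sigma)=H^{2k}(M_1)$, with that of $M_1$. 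Thus it suffices to treat a diffeomorphism $g\colon M_0\cong M_1$, and in particular (taking $g=\id$) to show the form of Definition~\ref{def:eqlf_of_M} is independent of the bounding handlebody. For this I would take handlebodies $W_0,W_1$ with a diffeomorphism $g\colon\del W_0\cong\del W_1$, glue to form the closed oriented manifold $V:=W_0\cup_g(-W_1)$, and observe via Mayer--Vietoris (using that $M$ is a $(2k{-}2)$-connected rational homology sphere, so $H^{2k-1}(M)=0$) that $V$ is $(2k{-}1)$-connected; hence $V_\circ:=V\setminus\mathrm{int}(D^{4k})\in\cal{H}^{4k}$ has nonsingular extended intersection form. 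A Mayer--Vietoris/Novikov-additivity computation identifies this nonsingular form as the ``gluing'' of the extended intersection forms of $W_0$ and of $-W_1$ along the common linking form of $M$; an algebraic manipulation of this situation then forces $\del(H^{2k}(W_0,M),-\lambda_{W_0},\alpha_{W_0})\cong\del(H^{2k}(W_1,M),-\lambda_{W_1},\alpha_{W_1})$, via the isomorphism that $g^*$ induces on the cokernels $H^{2k}(M)$. The linking form $b_M$ and the tangential class $\beta_M$ are in any case intrinsic to $M$ and natural, so only the quadratic refinement $q_M$ requires this argument. This is the content of \cite[Theorem~8]{Wa4} for $k\neq2,4$ and of \cite[Lemma~2.15]{C1} for $k=2,4$, with the sign corrections of Remark~\ref{rem:eqlf_definition} (including the $\cite{A-H-V}$ sign of $b_M$).

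\emph{Part (2).} Choose handlebodies $W_i\in\cal{H}^{4k}$ with $\del W_i=M_i$; then $A$ is an isomorphism $\del(H^{2k}(W_1,M_1),-\lambda_{W_1},\alpha_{W_1})\cong\del(H^{2k}(W_0,M_0),-\lambda_{W_0},\alpha_{W_0})$. By Theorem~\ref{thm:boundaries_of_eqfs}(1) there are nonsingular forms $(H_2,\lambda_2,\alpha_2)$, $(H_3,\lambda_3,\alpha_3)$ and an isomorphism
\[ B\colon (H^{2k}(W_1,M_1),-\lambda_{W_1},\alpha_{W_1})\oplus(H_2,\lambda_2,\alpha_2)\ \cong\ (H^{2k}(W_0,M_0),-\lambda_{W_0},\alpha_{W_0})\oplus(H_3,\lambda_3,\alpha_3) \]
with $\del B=A$. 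Realise $(H_i,\lambda_i,\alpha_i)$ by handlebodies $V_i\in\cal{H}^{4k}$ (Theorem~\ref{thm:wall_2n_classification}), necessarily with boundaries homotopy spheres $\Sigma_i:=\del V_i$. Then $B$ is the isomorphism of extended intersection forms of the handlebodies $(-W_1)\,\natural\,V_2$ and $(-W_0)\,\natural\,V_3$, so by Theorem~\ref{thm:wall_2n_classification} it is realised by an orientation-preserving diffeomorphism $f_B\colon(-W_0)\,\natural\,V_3\cong(-W_1)\,\natural\,V_2$ with $f_B^*=B$. Restricting to boundaries and reversing orientations gives a diffeomorphism $M_0\cong M_1\,\sharp\,\Sigma$ with $\Sigma:=\Sigma_3\,\sharp\,(-\Sigma_2)=\del(V_3\,\natural\,(-V_2))\in\del\cal{H}^{4k}$. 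Finally, since $f_B^*=B$ and the nonsingular summands contribute trivially to the cokernels, the map induced on $H^{2k}(M_1)=\coker(\wh\lambda_{W_1})\to\coker(\wh\lambda_{W_0})=H^{2k}(M_0)$ is exactly $\del B=A$, and connect-summing off the two homotopy spheres does not change this identification. Taking $f_A$ to be the resulting diffeomorphism completes the argument.

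\emph{Main obstacle.} The substantive point is Part (1): proving that the quadratic refinement $q_M$ depends only on the almost-diffeomorphism type of $M$, not on the bounding handlebody. Since $q_M$ is manufactured from the rational inverse form $\lambda_W^{-1}$ of a handlebody, its well-definedness is precisely the ``gluing'' computation above, where the careful bookkeeping of orientation signs (and the sign correction of Remark~\ref{rem:eqlf_definition}) must be carried out. Part~(2), by contrast, is essentially formal once Theorem~\ref{thm:boundaries_of_eqfs}(1) and Wall's realisation statement are available.
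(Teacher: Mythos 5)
Your Part (2) is essentially identical to the paper's proof: obtain $B$ with $\del B = A$ from Theorem~\ref{thm:boundaries_of_eqfs}(1), realise the nonsingular summands by handlebodies with homotopy-sphere boundaries using Theorem~\ref{thm:wall_2n_classification}, realise $B$ by a diffeomorphism of boundary-connected-sums, and restrict to boundaries. This part is sound.

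Part (1) has two problems. The first is a small but genuine error: the assertion that ``$\Sigma \cong M_0 \sharp (-M_1)$'' is false, because there is no cancellation law for connected sum of closed manifolds — $M_1 \sharp \Sigma \sharp (-M_1)$ is not diffeomorphic to $\Sigma$ unless $M_1$ happens to be a homotopy sphere. The correct way to conclude $\Sigma \in \del\cal{H}^{4k}$ is via the bordism criterion of Theorem~\ref{thm:delTP}\eqref{thm:delTP:bordism}: since $[M_0] = [M_1] = 0 \in \Omega^{O\an{2k-1}}_{4k-1}$ and $[M_0] = [M_1] + [\Sigma]$, we get $[\Sigma] = 0$, hence $\Sigma \in \del\cal{H}^{4k}$. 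This is exactly what the paper does and your version omits it.

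The second problem is more substantial. Your plan to establish well-definedness (and naturality with respect to $g$) by gluing $W_0 \cup_g (-W_1)$ and running a Mayer--Vietoris/Novikov-additivity computation is a genuinely different route from the paper, but the step you describe as ``an algebraic manipulation of this situation then forces $\del(\dots)\cong\del(\dots)$'' is precisely where all the content lies, and you do not carry it out. It is not a routine exercise: the Mayer--Vietoris sequence for $V = W_0 \cup_g (-W_1)$ involves the finite groups $H^{2k}(M), H^{2k-1}(M)$, so $H_{2k}(V)$ does not split as an orthogonal sum of $H_{2k}(W_0)$ and $H_{2k}(W_1)$, and extracting the comparison of boundary quadratic refinements requires a Witt-group type argument for extended quadratic forms that the paper deliberately avoids setting up. The paper instead invokes Wilkens' theorem (Theorem~\ref{thm:Wilkens}): after a stabilisation by handlebodies with sphere boundaries, the diffeomorphism $f$ of boundaries extends to a diffeomorphism $F$ of the handlebodies themselves, and then $f^* = \del F^*$ follows formally from Theorem~\ref{thm:wall_2n_classification} and the stability of $\del$. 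This is a nontrivial geometric extension theorem, not a piece of form algebra, and it is the key input your proposal lacks. Deferring to \cite[Theorem 8]{Wa4} and \cite[Lemma 2.15]{C1} as you do at the end is parallel to the paper's Remark~\ref{rem:eqlf_definition}, but the paper then supplies the proof via Wilkens; your sketch does not replace it.
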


%%%%%% old proof using Wall %%%%%%%
\begin{comment}
\begin{proof}
The first part of the theorem is part of \cite[Theorem 7]{Wa4} if $k \neq 2, 4$.
For $k = 4, 8$ this is part of \cite[Theorem A]{C1}.  We now prove the second part of the theorem.
If $M_0, M_1 \in \del \cal{TP}^{4k-1}$, then by
by Lemma \ref{lem:Omega_and_H} $[M_0] = [M_1] = 0 \in \Omega_{4k-1}^{O\an{2k-1}}$.
But since $M_0 \cong M_1 \sharp \Sigma$ it follows that
%
\[ [\Sigma] = [M_0] - [M_1] = 0 \in \Omega_{4k-1}^{O\an{2k-1}}. \]
%
Applying Lemma \ref{lem:Omega_and_H} again, we conclude that $\Sigma \in \del \cal{TP}^{4k-1}$.
%
\end{proof}
\end{comment}

\begin{Remark} \label{rem:Wall}
For $k \neq 2, 4$, the above classification result Theorem \ref{thm:classification_of_QHS} (2) can also be deduced from \cite[Theorem 7]{Wa4}: see \cite[\S 14]{Wa4} and the proof of \cite[Theorem 8]{Wa4}.
%provided one knows that the quadratic linking form defined by Wall can be identified
%with the quadratic linking form we have defined above.  So far we did not find such an identification in the literature, hence we give the proof below.
%\dccomm{Up-date!!!!}
\end{Remark}

\noindent{\it Proof of Theorem \ref{thm:classification_of_QHS}}
We first note that by \ref{thm:delTP}~\eqref{thm:delTP:bordism} we have $[M_0]=[M_1]=0 \in \Omega^{O\an{2k-1}}_{4k-1}.$ Given a diffeomorphism $M_0 \cong M_1 \sharp \Sigma$ we see that $[M_1\sharp \Sigma]=0 \in \Omega^{O\an{2k-1}}_{4k-1}$ also. From this we deduce that $[\Sigma]=0,$ and so by \ref{thm:delTP}~\eqref{thm:delTP:bordism} again we have $\Sigma \in \del \cal{H}^{4k}.$ 
Hence there is a handlebody $W_\Sigma$ with $\del W_\Sigma = \Sigma,$ as well as handlebodies $W_i$ with $\del W_i = M_i.$
At this point  we shall need the following
theorem of Wilkens PhD thesis \cite{Wi1} which was restated with Wilkens' proof in \cite{C1}.

\begin{Theorem}[{\cite[Theorem 3.2]{Wi1}}, {\cite[Theorem 2.24]{C1}}] \label{thm:Wilkens}
With the notation above, there are additional handlebodies $W_2$ and $W_3$ with boundaries
the standard sphere and a diffeomorphism $F$ such the following diagram commutes:
\[ \xymatrix{ M_0 \ar[d] \ar[r]^f & M_1 \sharp \Sigma \ar[d] \\ 
W_0 \natural W_2 \ar[r]^(0.4)F & W_1 \natural W_\Sigma \natural W_3. } \]
\end{Theorem}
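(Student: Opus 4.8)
The plan is to reduce the statement to a purely algebraic assertion about extended quadratic forms and then to invoke Wall's classification of handlebodies (Theorem~\ref{thm:wall_2n_classification}) together with the boundary theory of Theorem~\ref{thm:boundaries_of_eqfs}. Write $V := W_1 \natural W_\Sigma$, so that $V \in \cal{H}^{4k}$ with $\del V \cong M_1 \sharp \Sigma$. Under the standard identifications $\del(W_0 \natural W_2) = M_0 \sharp S^{4k-1} = M_0$ and $\del(V \natural W_3) = M_1 \sharp \Sigma \sharp S^{4k-1} = M_1 \sharp \Sigma$, the assertion becomes: there are handlebodies $W_2, W_3$ with $\del W_2 = \del W_3 = S^{4k-1}$ and a diffeomorphism $F \colon W_0 \natural W_2 \cong V \natural W_3$ with $F|_{\del} = f$. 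In words, any diffeomorphism between the boundaries of two handlebodies extends to a diffeomorphism of the handlebodies after stabilising each side by a handlebody bounding the standard sphere.

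First I would set up the relevant geometry. Form the closed oriented $4k$-manifold $X := W_0 \cup_f (-V)$. Since $W_0$ and $V$ are $(2k{-}1)$-connected and the common boundary $M_0 \in \del\cal{H}^{4k}_{\QHS}$ is a rational homology sphere, hence $(2k{-}2)$-connected by Remark~\ref{rem:Smale}, a Mayer--Vietoris and van Kampen argument shows that $X$ is simply connected with $\wt H_i(X) = 0$ for $i < 2k$; thus $X$ and $-X$ are $(2k{-}1)$-connected, and by Smale's Theorem~\ref{thm:Smale} removing an open disc from each produces handlebodies $X \setminus \textup{int}(D^{4k})$ and $(-X)\setminus\textup{int}(D^{4k})$ in $\cal{H}^{4k}$, both bounding $S^{4k-1}$. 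Dualising the handle decompositions of $V$ and of $W_0$ presents $X \setminus \textup{int}(D^{4k})$ as $W_0$ with finitely many $2k$-handles attached along $M_0$, and $(-X)\setminus\textup{int}(D^{4k})$ as $V$ with finitely many $2k$-handles attached along $\del V$. This handle picture is what will drive the final step.

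Next I would pass to algebra. By Theorem~\ref{thm:wall_2n_classification}, any isomorphism $B$ of extended intersection forms from $(H^{2k}(V,\del V),\lambda_V,\alpha_V)\oplus(H^{2k}(W_3,\del W_3),\lambda_{W_3},\alpha_{W_3})$ to $(H^{2k}(W_0,\del W_0),\lambda_{W_0},\alpha_{W_0})\oplus(H^{2k}(W_2,\del W_2),\lambda_{W_2},\alpha_{W_2})$ is realised by a diffeomorphism $F_0 \colon W_0 \natural W_2 \cong V \natural W_3$, and if the two summands $(H^{2k}(W_i,\del W_i),\lambda_{W_i},\alpha_{W_i})$ are nonsingular then $\del W_i = S^{4k-1}$. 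Now the algebraic boundary (Definition~\ref{def:boundary_of_an_eqf}) of the extended intersection form of a handlebody recovers the extended quadratic linking form of its boundary (Definition~\ref{def:eqlf_of_M}), an intrinsic invariant which is natural under diffeomorphisms --- this is the content of the results cited in Remark~\ref{rem:eqlf_definition}, does not itself depend on the present theorem, and can also be read off from the Mayer--Vietoris sequence of $X$ together with Poincar\'{e}--Lefschetz duality (the analogue for linking forms of Novikov additivity; compare \cite[Theorem 2.1]{A-H-V} for the linking form alone). Hence $f^*\colon H^{2k}(M_1 \sharp \Sigma) \cong H^{2k}(M_0)$ is an isomorphism of the algebraic boundaries of the forms of $V$ and of $W_0$ (up to the sign conventions of Definition~\ref{def:eqlf_of_M}), so Theorem~\ref{thm:boundaries_of_eqfs}(1) produces nonsingular extended quadratic forms --- realised by handlebodies $W_2, W_3$ with $\del W_i = S^{4k-1}$ via Theorem~\ref{thm:wall_2n_classification} --- and an isomorphism $B$ as above with $\del B = f^*$, whence a diffeomorphism $F_0 \colon W_0 \natural W_2 \cong V \natural W_3$ realising $B$.

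It remains to arrange that the diffeomorphism restricts to $f$ on the boundary, which I expect to be the main obstacle. Since $\del B = f^*$, the diffeomorphism $F_0$ restricts on $M_0$ to a diffeomorphism inducing $f^*$ on $H^{2k}$, so $g := f \circ (F_0|_{\del})^{-1}$ is a self-diffeomorphism of $M_0$ inducing the identity on $H^*(M_0)$. The task is then to absorb $g$: after replacing $W_2$ by $W_2 \natural W_\infty$ (one further stabilisation), $g$ extends to a diffeomorphism of $W_0 \natural W_2 \natural W_\infty$, which may be composed with $F_0$ to yield the required $F$. Concretely, up to isotopy $g$ is a product of rotations of the normal bundles of the middle-dimensional embedded spheres and of twists supported near cancelling handle pairs, and each such generator extends over a handlebody containing the relevant sphere or handle pair; the handle picture for $X$ from the previous step identifies which spheres and handles to use. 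This is Wilkens' argument, which I would carry out following \cite{Wi1} (see also the treatment around \cite[Theorem 2.24]{C1}). Everything else --- the connectivity computation for $X$, the appeals to the handlebody theory of Smale and Wall, and the purely algebraic input of Theorem~\ref{thm:boundaries_of_eqfs} --- should be routine.
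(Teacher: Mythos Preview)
The paper does not supply its own proof of this theorem: it is quoted verbatim as a black box from Wilkens' thesis \cite[Theorem~3.2]{Wi1} (restated in \cite[Theorem~2.24]{C1}) and then immediately used to establish Theorem~\ref{thm:classification_of_QHS}\eqref{thm:classification_of_QHS:invariant}. So there is no in-paper argument to compare against; I can only comment on your proposal on its own terms.

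Your outline is broadly the right shape, but there is a circularity hazard you should make more explicit. In the paper's internal logic, Wilkens' theorem is \emph{the input} used to prove that the extended quadratic linking form $(H^{2k}(M), b_M, q_M, \beta_M)$ is a well-defined diffeomorphism invariant (Remark~\ref{rem:eqlf_definition} and the proof of Theorem~\ref{thm:classification_of_QHS}\eqref{thm:classification_of_QHS:invariant}). Your Step~3 needs precisely this invariance in order to feed $f^*$ into Theorem~\ref{thm:boundaries_of_eqfs}(1). You flag this and assert that the invariance ``does not itself depend on the present theorem''; that is defensible for $b_M$ (via \cite{A-H-V}) and for $\beta_M$ (it is the intrinsically defined primary obstruction class), but for the quadratic refinement $q_M$ you would need to give a genuinely independent argument, e.g.\ by analysing the intersection form of the closed manifold $X = W_0 \cup_f (-V)$ directly. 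This is doable but is real work, not a remark.

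The second point is that your final step --- absorbing the residual self-diffeomorphism $g$ of $M_0$ acting trivially on cohomology, after a further $W_\infty$-stabilisation --- is where the actual geometric content lives, and you essentially defer to \cite{Wi1} for it. That is honest, but it means your write-up is a reduction to Wilkens rather than an alternative proof. If you want a self-contained argument, you should spell out why every such $g$ extends over a suitably stabilised handlebody; Wall's realisation of automorphisms in Theorem~\ref{thm:wall_2n_classification} together with Kreck's or Wall's analysis of diffeomorphisms of highly connected manifolds is the natural toolkit.
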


\begin{proof}[Proof of Theorem \ref{thm:classification_of_QHS} continued.] Let $W_4 : = W_0 \natural W_2$ and $W_5 : = W_1 \natural W_\Sigma \natural W_3$.
Since $\del W_2, \del W_\Sigma$ and $\del W_3$ are all homotopy spheres.  It follows that
\[ (H^{2k}(M_0), b_{M_0}, q_{M_0}, \beta_{M_0}) = \del(H^{2k}(W_4, \del W_4), -\lambda_{W_4}, \alpha_{W_4}) \]
and
\[ (H^{2k}(M_1), b_{M_1}, q_{M_1}, \beta_{M_1}) = \del(H^{2k}(W_5, \del W_5), -\lambda_{W_5}, \alpha_{W_5}).\]
The diffeomorphism $F$ in Theorem \ref{thm:Wilkens} induces an isomorphism of extended quadratic forms
\[ F^* \colon (H^{2k}(W_5), -\lambda_{W_5}, \alpha_{W_5}) \cong (H^{2k}(W_4), -\lambda_{W_4}, \alpha_{W_4}) \]
and hence $f^* = \del F^*$ is an isomorphism of the extended quadratic linking forms on the boundary,
as required.  This proves Theorem~\ref{thm:classification_of_QHS}~\eqref{thm:classification_of_QHS:invariant}.

Now suppose we are given $A \colon H^{2k}(M_1) \cong H^{2k}(M_0)$, an isomorphism of extended quadratic
linking forms.  For $i=0,1$ there are handlebodies $W_i$ such that $M_i = \del W_i$ and  
\[ \del(H^{2k}(W_i, M_i), -\lambda_{W_i}, \alpha_{W_i}) = (H^{2k}(M_i), b_{M_i}, q_{M_i}, \beta_{M_i}).\]
By Theorem \ref{thm:boundaries_of_eqfs} there are non-singular extended quadratic forms 
$(H_2,\lambda_2,\alpha_2)$, $(H_3,\lambda_3,\alpha_3)$ and an isomorphism of extended quadratic forms
\[ B \colon (H_0, -\lambda_0, \alpha_0) \oplus (H_2, -\lambda_2, \alpha_2) \cong (H_1, -\lambda_1, \alpha_1) \oplus (H_3, -\lambda_3, \alpha_3),\]
where we have abbreviated $(H^{2k}(W_i, \del W_i), \lambda_{W_i}, \alpha_{W_i}) = (H_i, \lambda_i, \alpha_i)$ for $i=0,1.$
Next, by Theorem ~\ref{thm:wall_2n_classification} there are handlebodies $W_2,$ $W_3$ with extended intersection forms $(H_2,\lambda_2,\alpha_2)$, $(H_3,\lambda_3,\alpha_3).$ We claim that the boundaries of these handlebodies are homotopy spheres. To see this, recall that the algebraic boundary of a non-singular extended quadratic form is zero (as observed before Theorem \ref{thm:boundaries_of_eqfs}). In particular this means that the group $G=H_{2k-1}(\del W_i)=0$ for $i=2,3,$ from which the claim follows easily. Next, by Theorem~\ref{thm:wall_2n_classification} again, the isomorphism $B$ above is realised by a diffeomoprhism $f_B \colon W_0 \natural W_2 \cong W_1 \natural W_3$.
Restricting $f_B$ to the boundary, we obtain a diffeomorphism
\[ \del f_B \colon M_0 \sharp \Sigma_2 \cong M_1 \sharp \Sigma_3 .\]
This entails that there is a diffeomorphism $f \colon M_0 \cong M_1 \sharp \Sigma$, where 
$\Sigma = \Sigma_2 \sharp (-\Sigma_1) \in \del \cal{H}^{4k}$.
This proves Theorem~\ref{thm:classification_of_QHS}~\eqref{thm:classification_of_QHS:complete}.
\end{proof}

Now recall the boundary map of $\del \colon \cal{F}^{4k}_{\rm nd} \to \cal{Q}^{4k-1}$ of \eqref{eq:del}, which
associates to every nondegenerate extended quadratic form the boundary extended 
quadratic linking form.  We define $\cal{TF}^{4k}_{\rm nd} \subset \cal{F}^{4k}_{\rm nd}$ be
the set of isomorphism classes of nondegenerate treelike extended quadratic forms.
The following theorem is the main algebraic result of this paper and will
be proven in Section \ref{sec:realising_extended_quadratic_linking_forms}. 

\begin{Theorem} \label{thm:realisation_of_eqlfs}
The boundary map $\del \colon \cal{TF}^{4k}_{\rm nd} \to \cal{Q}^{4k-1}$ is onto.  
That is, for every extended quadratic linking form $(G, b, q, \beta)$, 
there is a nondegenerate treelike 
extended quadratic form $(H, \lambda, \alpha)$
such that $(G, b, q, \beta)$ is isomorphic to $\del(H, \lambda, \alpha)$.
%	
%For every isomorphism class $[G, b, q, \beta]$ of extended quadratic linking form, 
%there is a nondegenerate extended quadratic form $(H, \lambda, \alpha)$ such that
%
%\[ [\del(H, \lambda, \alpha)] = [G, b, q, \beta]. \]
%
%there is a manifold $M \in \del \cal{TP}^{4k}_{\QHS}$ such that
%
%\[ [H^{2k}(M), q_M, \beta_M] = [G, b, q, \beta]. \]
%
\end{Theorem}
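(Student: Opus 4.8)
The plan is to realise $(G, b, q, \beta)$ one indecomposable summand at a time, and then to assemble the pieces using the stable closure property of treelike forms recorded in Lemma~\ref{lem:stable_closure_of_treelike_forms}. First I would appeal to the classification of linking forms and their quadratic refinements (see \cite{Wa2, K-K, C1}) to write the given extended quadratic linking form as an orthogonal sum $(G, b, q, \beta) \cong \bigoplus_{i=1}^{j} (G_i, b_i, q_i, \beta_i)$ of indecomposables. As indicated in the introduction, these indecomposables fall into three families, supported respectively on $\Z/p^t$ for $p$ an odd prime, on $\Z/2^t$, and on $(\Z/2^t)^2$, each carrying the appropriate quadratic refinement and, when $k = 2, 4$, homogeneity defect $\beta_i$.

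The heart of the argument, to be carried out in the remainder of Section~\ref{sec:realising_extended_quadratic_linking_forms}, is to exhibit for each indecomposable $(G_i, b_i, q_i, \beta_i)$ an explicit $\Z$-labelled tree $\mathfrak{t}_i$ together with a compatible choice of tangential data $\alpha_i$ so that the treelike extended quadratic form $(H_{\mathfrak{t}_i}, \lambda_{\mathfrak{t}_i}, \alpha_i)$ satisfies $\del(H_{\mathfrak{t}_i}, \lambda_{\mathfrak{t}_i}, \alpha_i) \cong (G_i, b_i, q_i, \beta_i)$. The computational tool here is Lemma~\ref{lem:inverse_form}: if $A_{\mathfrak{t}_i}$ is the matrix of the tree then $|G_i| = |\det A_{\mathfrak{t}_i}|$, and the inverse form $\lambda^{-1}$ governing $b_i$ and $q_i$ via Definition~\ref{def:boundary_of_an_eqf} has matrix $A_{\mathfrak{t}_i}^{-1}$; in particular $\det A_{\mathfrak{t}_i} \neq 0$, so $(H_{\mathfrak{t}_i}, \lambda_{\mathfrak{t}_i}, \alpha_i)$ is automatically nondegenerate. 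For the two cyclic families one can take $\mathfrak{t}_i$ to be a chain $\mathfrak{d}(\ul{a})$ as in \eqref{eq:D(a)}, choosing the diagonal labels $\ul{a}$ via a continued-fraction expansion so that $\det A_{\mathfrak{d}(\ul{a})}$ is the prescribed prime power and a corner entry of $A_{\mathfrak{d}(\ul{a})}^{-1}$ realises the prescribed linking value; the map $\alpha_i$ is then determined, up to the freedom measured by $\beta_i$, by the constraints of Lemma~\ref{lem:mu_and_alpha}, and vanishes when $k \equiv 3$~mod~$4$. The forms supported on $(\Z/2^t)^2$ require a tree that is not a chain (for instance a star on three or four vertices), and arranging that both $b_i$ and the quadratic refinement $q_i$ --- including the mod~$2$ homogeneity data relevant to $k = 2, 4$ --- come out correctly is the delicate point.

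Once all indecomposables have been realised treelike, the theorem follows by a standard stabilisation. Put
\[ (H, \lambda, \alpha) := \bigoplus_{i=1}^{j} (H_{\mathfrak{t}_i}, \lambda_{\mathfrak{t}_i}, \alpha_i) \oplus (H_+(\Z), 0). \]
Each summand is nondegenerate and the hyperbolic form $(H_+(\Z), 0)$ is nonsingular, so $(H, \lambda, \alpha)$ is nondegenerate; and it is treelike by Lemma~\ref{lem:stable_closure_of_treelike_forms}, so $(H, \lambda, \alpha) \in \cal{TF}^{4k}_{\rm nd}$. Since the algebraic boundary of a direct sum is the direct sum of the boundaries (immediate from Definition~\ref{def:boundary_of_an_eqf}) and is unchanged by adding a nonsingular summand, we get
\[ \del(H, \lambda, \alpha) = \bigoplus_{i=1}^{j} \del(H_{\mathfrak{t}_i}, \lambda_{\mathfrak{t}_i}, \alpha_i) \cong \bigoplus_{i=1}^{j} (G_i, b_i, q_i, \beta_i) \cong (G, b, q, \beta), \]
which proves that $\del \colon \cal{TF}^{4k}_{\rm nd} \to \cal{Q}^{4k-1}$ is onto.

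The main obstacle is the middle step: constructing the treelike realisations of the indecomposables, and especially handling the $2$-primary even forms together with the tangential invariant $\beta$ when $k = 2, 4$, where the coupling between the characteristic element $\alpha$ and the linking form leaves the least room to manoeuvre. Once these building blocks are in hand, the reduction to indecomposables and the hyperbolic stabilisation of Lemma~\ref{lem:stable_closure_of_treelike_forms} make the assembly routine.
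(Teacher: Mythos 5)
Your proposal follows the same overall strategy as the paper: decompose into indecomposables, realise each by an explicit labelled tree, and assemble using Lemma~\ref{lem:stable_closure_of_treelike_forms}. The assembly step and the use of Lemma~\ref{lem:inverse_form} to read off the boundary linking form from $A_{\mathfrak{t}}^{-1}$ are both on target, and your identification of the cyclic, hyperbolic, and pseudo-hyperbolic families as the building blocks matches Theorem~\ref{thm:indcomposable_qlfs}.

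However, there is a gap precisely at the point you flag as ``the delicate point.'' You propose to decompose the full extended quadratic linking form $(G, b, q, \beta)$ into indecomposable pieces $(G_i, b_i, q_i, \beta_i)$ and then choose $\alpha_i$ on each tree to produce the prescribed $\beta_i$ and $q_i$. But the only classification of indecomposables available (Theorem~\ref{thm:indcomposable_qlfs}) is for \emph{homogeneous} quadratic linking forms $(G,b,q)$ without a $\beta$; there is no classification of indecomposable extended quadratic linking forms to appeal to, and for $k = 2, 4$ the quadratic refinement $q$ need not be homogeneous. Moreover, for $k = 2, 4$ the constraint that $\alpha$ be characteristic for $\lambda$ means it is not a free parameter, so it is not automatic that every admissible $(q,\beta)$ over a given $(G,b)$ can be hit by varying $\alpha$ on a fixed tree. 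The paper resolves both issues at once with Lemma~\ref{lem:removing_beta}: once a single treelike form $(H_0,\lambda_0,\alpha_0)$ presents some linking form in $\cal{Q}^{4k-1}(G_0,b_0)$ (for $k = 2, 4$), respectively $\cal{Q}^{4k-1}(G_0,b_0,q_0)$ (for $k \neq 2, 4$), then the \emph{entire} set $\cal{Q}^{4k-1}(G_0,b_0)$, resp.\ $\cal{Q}^{4k-1}(G_0,b_0,q_0)$, lies in $\del\cal{TF}^{4k}$ — the $k=2,4$ case cites \cite[Corollary 2.23 (2)]{C1}, which is a genuine input, not a formal consequence of Lemma~\ref{lem:mu_and_alpha}. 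This decouples $\beta$ and, when relevant, $q$ from the tree construction altogether, and reduces the theorem to realising each indecomposable \emph{homogeneous} $(G,b,q)$ by an \emph{even} labelled tree, which is exactly what Lemmas~\ref{lem:cyclic}, \ref{lem:hyperbolic}, and \ref{lem:psuedo-hyperbolic1} deliver. You would need to prove something equivalent to Lemma~\ref{lem:removing_beta} to close the argument.
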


%We conclude this section by showing Theorem \ref{thm:realisation_of_eqlfs} and results from earlier sections,
%can be used to prove Theorem~\ref{thm:delTP}~\eqref{thm:delTP:main}, which states that if $M$ is the boundary of a
%handlebody, then $M$ is also the boundary of a treelike plumbing.

\begin{proof}[Proof of Theorem~\ref{thm:delTP}~\eqref{thm:delTP:main}]
Let $M$ be the boundary of a handlebody $W \in \cal{H}^{4k}$.  We must show that $M$ is the boundary of a treelike plumbing.
We first reduce to the case where $M$ is a rational homotopy sphere.
By \cite[Theorem 7]{Wa4}, every $M \in \del \cal{H}^{4k}$ can be written as a connected sum
\[ M = M_T \sharp \bigl( \sharp_{i = 1}^b M_i \bigr), \]
where $M_T$ is a rational homotopy sphere, $b$ is the rank of $H_{2k-1}(M; \Q)$ and each 
$M_i$ is the total space of a $(2k{-}1)$-sphere bundle over $S^{2k}$.  Since each $M_i$ clearly
belongs to $\del \cal{TP}^{2m}$, and by Theorem \ref{thm:connected_sum_for_delTP-G} or
Theorem~\ref{thm:TP_and_connected_sum}, $\del \cal{TP}^{2m}$ is closed under connected sum, it remains to show that $M_T \in \del \cal{TP}^{4k}$. 

Let $M_T$ have extended quadratic linking form $(H^{2k}(M_T), b_{M_T}, q_{M_T}, \beta_{M_T})$.  
By Theorem~\ref{thm:realisation_of_eqlfs}, there is a nondegenerate treelike extended quadratic form
$(H, \lambda, \alpha)$ and an isomorphism 
\[ \del(H, \lambda,\alpha) \cong (H^{2k}(M_T), b_{M_T}, q_{M_T}, \beta_{M_T}).\]
By Lemma~\ref{lem:tree_recognition} there is a treelike plumbing $W$ with extended intersection
form isomorphic to $(H, -\lambda, \alpha)$.  Hence $\del W$ and $M_T$ have isomorphic extended
quadratic linking forms and so by
Theorem~\ref{thm:classification_of_QHS}~\eqref{thm:classification_of_QHS:complete}, there is a homotopy sphere
$\Sigma \in \del \cal{H}^{4k}$ and a diffeomorphism $\del W \sharp \Sigma \cong M_T$.  Now, 
$\del W$ and $\Sigma$ both belong to $\del \cal{TP}^{4k}$, and so by Theorem \ref{thm:connected_sum_for_delTP-G} or
Theorem~\ref{thm:TP_and_connected_sum}, $M_T \in \del \cal{TP}^{4k}$.
\end{proof}

\section{Realising linking forms as the boundaries of treelike forms} 
\label{sec:realising_extended_quadratic_linking_forms}
%%%%%%%%%%%%%%%%%%%%%%%%%%%%%%%%%%%%%%%%%%%%%%%%%%%%%%%%%%%%%%%%%%%%%%%%%%%%%%%%%%%%%%%%
In this section we prove Theorem \ref{thm:realisation_of_eqlfs} which states
%that the function $\del \colon \cal{TF}^{4k} \to \cal{Q}^{4k-1}$ is onto.
that for every extended quadratic linking form $(G, b, q, \beta)$ there is a nondegenerate 
treelike extended quadratic form $(H, \lambda, \alpha)$
such that $(G, b, q, \beta)$ is isomorphic to $\del(H, \lambda, \alpha)$.
In this situation we shall say that $(H, \lambda, \alpha)$ {\em presents} $(G, b, q, \beta)$.
We begin by outlining the strategy of the proof. 

We call an extended quadratic linking form $(G, b, q, \beta)$ 
{\em decomposable} if it can be written as a nontrivial orthogonal sum, and {\em indecomposable} otherwise.
%and we let $\del \cal{TF}^{4k} := {\rm Im}( \del \colon \cal{TF}^{4k} \to \cal{Q}^{4k-1})$ be the image of
%the boundary map.
By Lemma \ref{lem:stable_closure_of_treelike_forms}, the image of the boundary map 
$\del \colon \cal{TF}^{4k} \to \cal{Q}^{4k-1}$
%, the image of $\del$, 
is closed under orthogonal sum.  Hence it suffices to prove that every indecomposable extended quadratic linking form
$(G, b, q, \beta)$ 
%lies in $\del \cal{TF}^{4k}$.  
can be presented by a nondegenerate treelike form $(H, \lambda, \alpha)$.
We define
\[ \cal{Q}^{4k-1}(G_0, b_0, q_0) := \{ (G, b, q, \beta) \,|\,(G, b, q) \cong (G_0, b_0, q_0) \} \subset \cal{Q}^{4k-1} \]
and 
\[ \cal{Q}^{4k-1}(G_0, b_0) := \{ (G, b, q, \beta) \,|\,(G, b) \cong (G_0, b_0) \} \subset \cal{Q}^{4k-1} \]
to be, respectively, the set of isomorphism classes of linking forms with a fixed isomorphism class of quadratic refinement,
repsectively a fixed isomorphism class of linking form.  Recall that the quadratic refinement $q$ determines $b$ and
so $\cal{Q}^{4k-1}(G_0, b_0, q_0) \subset \cal{Q}^{4k-1}(G_0, b_0)$.

The next result says that if an extended quadratic form $(H,\lambda,\alpha)$ over $\pi_{2k-1}\{\hbox{SO}(2k)\}$ has boundary linking form $(G_0,b_0)$ in the case $k=2,4,$ respectively boundary linking form with quadratic refinement $(G_0,b_0,q_0)$ in the case $k \neq 2,4,$ then we can present all possible extended quadratic linking forms extending $(G_0,b_0)$ respectively $(G_0,b_0,q_0)$ as the boundaries of treelike forms. In fact, in the former case we can produce all quadratic refinements of $(G_0, b_0)$ and corresponding elements $\beta$ in this way, and in the latter case all possible elements $\beta.$  %Note that the splitting of the result into the cases $k=2,4$ and $k\neq 2,4$ mirrors the two cases in Lemma \ref{lem:mu_and_alpha}. 

%The next result says that if an extended quadratic form $(H,\lambda,\alpha)$ over $\pi_{2k-1}\{\hbox{SO}(2k)\}$ has boundary linking form $(G_0,b_0)$ in the case $k=2,4,$ respectively boundary linking form with quadratic refinement $(G_0,b_0,q_0)$ in the case $k \neq 2,4,$ then we can realise all possible extended quadratic linking forms extending $(G_0,b_0)$ respectively $(G_0,b_0,q_0)$ as the boundaries of treelike forms. In other words in the former case we can produce all quadratic refinements of $(G_0,b_0)$ and corresponding elements $\beta$ in this way, and in the latter case all possible elements $\beta.$ Note that the splitting of the result into the cases $k=2,4$ and $k\neq 2,4$ mirrors the two cases in Lemma \ref{lem:mu_and_alpha}. 

\begin{Lemma} \label{lem:removing_beta}
Let $(H_0, \lambda_0, \alpha_0)$ be a nondegenerate treelike extended quadratic form.
\begin{enumerate}
\item \label{lem:removing_beta:24}
If $k = 2, 4$ and $\del(H_0, \lambda_0, \alpha_0) \in \cal{Q}^{4k-1}(G_0, b_0)$, then 
$\cal{Q}^{4k-1}(G_0, b_0) \subset \del \cal{TF}^{4k}$. 
\item \label{lem:revmoing_beta:not24}
If $k \neq 2, 4$ and $\del(H_0, \lambda_0, \alpha_0) \in \cal{Q}^{4k-1}(G_0, b_0, q_0)$, then 
$\cal{Q}^{4k-1}(G_0, b_0, q_0) \subset \del \cal{TF}^{4k}$. 
\end{enumerate}
\end{Lemma}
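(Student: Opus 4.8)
The plan is to reduce the statement to two things: first, that changing the stable tangential invariant $\alpha_0$ of a treelike form by an arbitrary homomorphism does not change the underlying symmetric form (hence treelikeness is preserved), and second, that every extended quadratic linking form in the relevant fibre of $\cal{Q}^{4k-1}$ over $(G_0,b_0)$ (respectively $(G_0,b_0,q_0)$) is realised as $\del(H_0,\lambda_0,\alpha)$ for a suitable choice of $\alpha$. So suppose $(G,b,q,\beta) \in \cal{Q}^{4k-1}(G_0,b_0)$ when $k=2,4$, or $(G,b,q,\beta)\in\cal{Q}^{4k-1}(G_0,b_0,q_0)$ when $k\neq 2,4$. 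Fix an isomorphism $(G_0,b_0)\cong(G,b)$ (respectively $(G_0,b_0,q_0)\cong(G,b,q)$) and use it to identify $\del(H_0,\lambda_0,\alpha_0) = (G,b,q_0',\beta_0)$ for some quadratic refinement $q_0'$ and element $\beta_0$; when $k\neq 2,4$ we have $q_0'=q$ already.

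First I would analyse the effect of replacing $\alpha_0$ by $\alpha_0 + \delta$ for a homomorphism $\delta\colon H_0 \to \pi_{2k-1}(SO)$. Since the symmetric form $(H_0,\lambda_0)$ is unchanged and remains treelike, the new triple $(H_0,\lambda_0,\alpha_0+\delta)$ still lies in $\cal{TF}^{4k}$ and is still nondegenerate, with the same cokernel $G = \coker(\wh\lambda_0)$ and the same linking form $b$. Using Definition \ref{def:boundary_of_an_eqf}, in the case $k=2,4$ the quadratic refinement changes by
\[ q_{\alpha_0+\delta}(\pi(x)) - q_{\alpha_0}(\pi(x)) = \frac{\lambda_0^{-1}(x,\delta)}{2} \bmod \Z, \]
viewing $\delta \in H_0^*$, and the homogeneity defect changes from $\pi(\alpha_0)$ to $\pi(\alpha_0+\delta) = \pi(\alpha_0) + \pi(\delta)$. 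In the case $k\neq 2,4$ the quadratic refinement $q$ is untouched (it only depends on $\lambda_0^{-1}$), and $\beta$ changes by $\pi(\delta)$ (or its mod-$2$ reduction when $k\equiv 1$ mod $4$, with $\beta=0$ always when $k\equiv 3$ mod $4$, in which case there is nothing to prove since $\beta_0 = 0 = \beta$ and $q_0' = q$ forces $\del(H_0,\lambda_0,\alpha_0)=(G,b,q,\beta)$ outright).

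The heart of the argument, and the main obstacle, is then surjectivity of the map $\delta \mapsto \del(H_0,\lambda_0,\alpha_0+\delta)$ onto the appropriate fibre. For $k=2,4$ I need: given any quadratic refinement $q$ of $b$ and any compatible homogeneity defect $\beta \in 2\cdot G$, to find $\delta\in H_0^*$ with $q = q_0' + \tfrac12\lambda_0^{-1}(-,\delta)$ and $\pi(\alpha_0+\delta)=\beta$. The difference $q - q_0'$ is a function $G \to \Q/\Z$; since both $q$ and $q_0'$ refine the same $b$, $q-q_0'$ is a homomorphism $G \to \tfrac12\Z/\Z \cong \Z/2$, and one checks (this is where the compatibility of $\beta$ with $q$ is used, together with the fact that $\beta_0$ is compatible with $q_0'$) that it is realised by pairing against $\pi(\delta')$ for some $\delta'$ with the prescribed behaviour. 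The surjectivity of $\pi\colon H_0^* \to G$ from the exact sequence \eqref{eq:adjoint} lets us hit the target value of $\beta$, and the freedom to add elements of $\wh\lambda_0(H_0) = \ker\pi$ to $\delta$ (which changes $q$ only through $\lambda_0^{-1}(-,\wh\lambda_0(h)) = \lambda_0(-,h)$, an integer, hence trivial mod $\Z$) shows the two conditions on $\delta$ can be satisfied simultaneously. For $k\neq 2,4$ the situation is easier: only $\beta$ (or its mod-$2$ reduction) needs adjusting, and this follows immediately from surjectivity of $\pi$ (respectively $\pi\otimes\id_{\Z_2}$). In all cases the resulting form $(H_0,\lambda_0,\alpha_0+\delta)$ is a nondegenerate treelike extended quadratic form presenting the given $(G,b,q,\beta)$, so $(G,b,q,\beta)\in\del\cal{TF}^{4k}$, which proves both parts.
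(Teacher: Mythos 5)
Your overall strategy --- vary $\alpha$ over the characteristic vectors for $\lambda_0$ and check that the induced boundary sweeps out the whole fibre --- is the same as the paper's, which for $k \neq 2,4$ argues exactly as you do and for $k = 2, 4$ defers the surjectivity claim to an external citation, [C1, Corollary 2.23(2)]. You instead try to prove the $k = 2, 4$ case directly. This is a legitimate thing to attempt, but as written it has several gaps.

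First, you never enforce the constraint that $\delta$ must lie in $2H_0^*$; for $k = 2, 4$ the homomorphism $\alpha_0 + \delta$ needs to remain characteristic for $\lambda_0$, so only even $\delta$ give a triple in $\cal{F}^{4k}$ at all. Second, the assertion that $q - q_0'$ is a homomorphism with image in $\tfrac{1}{2}\Z/\Z$ is false in general: if $q$ has homogeneity defect $\beta$ and $q_0'$ has defect $\beta_0$, then $2(q-q_0')(x) = b(x, \beta - \beta_0)$, which is nonzero unless $\beta = \beta_0$. Third, the claim that adding $\wh\lambda_0(h) \in \ker\pi$ to $\delta$ changes $q$ ``only by $\lambda_0^{-1}(-,\wh\lambda_0(h))$, an integer, hence trivial mod $\Z$'' drops the factor of $\tfrac{1}{2}$ from Definition~\ref{def:boundary_of_an_eqf}~\eqref{def:boundary_of_an_eqf:q_k=2,4}: the actual change is $\tfrac{1}{2}\lambda_0^{-1}(x,\wh\lambda_0(h)) = \tfrac{1}{2}x(h)$, which is not an integer in general. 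This error is also logically odd, since your stated strategy is to first hit $\beta$ via surjectivity of $\pi$ and then use the freedom of adding $\ker\pi$ to correct $q$; if that freedom left $q$ unchanged, the strategy could not succeed.

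The argument can be salvaged cleanly. Write $\delta = 2\delta'$ with $\delta' \in H_0^*$; then the change in $q$ is $b(\cdot, \pi(\delta'))$ and the change in the homogeneity defect is $2\pi(\delta')$. Given a target $(q,\beta)$, nonsingularity of $b$ yields a unique $g \in G$ with $q - q_0' = b(\cdot, g)$, and comparing the defect equations $q(x) - q(-x) = b(x,\beta)$ and $q_0'(x) - q_0'(-x) = b(x,\beta_0)$ forces $\beta - \beta_0 = 2g$ automatically. Choosing $\delta'$ with $\pi(\delta') = g$ (possible since $\pi$ is onto) then realises $(q,\beta)$ simultaneously. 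With this replacement and the $\delta \in 2H_0^*$ bookkeeping, your proof becomes a self-contained alternative to the paper's citation.
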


\begin{proof}
% This notation is not used in the proof. In both cases, we let $\cal{TF}^{4k}(H, \lambda)$ be the set of extended quadratic forms with fixed symmetric form $(H, \lambda)$. 
%For notational convenience, let us use the symbol $H^*$ here to denote the usual dual $\hbox{Hom}(H,\Z)$ if $k \equiv 0$ or 2 mod 4, to denote $H^* \otimes \Z_2$ if $k \equiv 1 \hbox{ mod }4$, and be interprested as 0 if $k \equiv 3 \hbox{ mod }4.$ 
As $\alpha \in H^* \otimes \pi_{2k-1}(SO)$ varies over all possible values of $\alpha$
for which $(H_0, \lambda_0, \alpha) \in \cal{F}^{4k}$, we look at the linking forms $\del(H_0, \lambda_0, \alpha)$
which arise.  
If $k = 2, 4$ then by the proof of \cite[Corollary 2.23 (2)]{C1}, every linking form 
$(G, b, q, \beta) \in \cal{Q}^{4k-1}(G_0, b_0)$ arises in this way.
If $k \neq 2, 4$ then $\alpha$ is independent of $\lambda$, by Lemma \ref{lem:mu_and_alpha}, and since $\pi \colon H^* \to G$ is onto,
all possible values of $\beta \in G \otimes \pi_{2k-1}(SO)$ arise from $\alpha \in H^* \otimes \pi_{2k-1}(SO)$.
\end{proof}

Notice that every linking form $(G, b)$ has a homogeneous quadratic refinement $q$. This is axiomatic in the case $k \neq 2,4,$ 
(see Definition \ref{def:eqlfd}) and in the case $k=2,4$ this follows from 
Lemma~\ref{lem:removing_beta}~\eqref{lem:removing_beta:24} by choosing the homogeneity defect $\beta$ to be 0. With this in mind, the above discussion shows that it suffices to prove that every homogeneous quadratic linking form $(G, b, q)$ can be presented by an even treelike form $(H, \lambda)$ as
in Lemma~\ref{def:boundary_of_an_eqf}~\eqref{def:boundary_of_an_eqf:linking_form}:
Recall from Section \ref{subsec:treelike_forms}, that a $\Z$-labelled tree $\mathfrak{t} = ((\V, \E), (\ul a))$ defines 
a symmetric bilinear from $(H_\mathfrak{t}, \lambda_\mathfrak{t})$ which is even if and only if each $a_i \in \ul a$ is even.
%Since from here on, we only consider laballs in $2\Z$, 
We shall call a $\Z$-lablled tree with even labels an {\em even labelled tree} and
we shall say that an even labelled tree $\mathfrak{t}$ presents a linking form $(G, b, q)$
if the even symmetric form $(H_\mathfrak{t}, \lambda_\mathfrak{t})$ presents $(G, b, q)$.
Note the significance of evenness here: by Lemma \ref{lem:mu_and_alpha} 
the bilinear form $\lambda$ takes values in $2\Z$ if $k \neq 2,4.$ 
(Evenness is not required in the case $k=2,4,$ though assuming it does not create an impediment.) 
We now give a list which contains all the indecomposable homogeneous quadratic linking forms on finite abelian groups.

\begin{Theorem}[{\cite[Theorems 4 and 5]{Wa2}}] \label{thm:indcomposable_qlfs}
Let $p$ be a prime, let $j \geq 0$ be an integer and let $\theta$ be an integer prime to $p$ such that $-p^j < \theta <p^j$.
Let $x$ generate $\Z_{p^j}$ and let $y, z$ generate $\Z_{p^j}^2$.
Every indecomposable homogeneous quadratic linking form is isomorphic to one of the following:
\[ \text{Cyclic}: \quad q^\theta_{p^j} : = (G, b, q) = \left(\Z_{p^j}, \left( \frac{\theta}{p^j} \right),\  q(x) = \frac{\theta}{2p^j} \right);  \]
\[ \text{Hyperbolic}: \quad H(\Z_{2^j}) : =  (G, b, q) = \left( \Z_{2^j}^2, 
\left( \begin{array}{cc} 0 & 2^{-j} \\ 2^{-j} & 0 \end{array}\right),\  q(y) = 0 = q(z)  \right)  \]
\[ \text{Pseudo-hyperbolic}: \quad F(\Z_{2^j}) := (G, b, q) = \left( \Z_{2^j}^2, 
\left( \begin{array}{cc} 2^{1-j} & 2^{-j}\\ 2^{-j} & 2^{1-j} \end{array} \right),\  q(y) = 2^{-j}= q(z)  \right).  \]
\end{Theorem}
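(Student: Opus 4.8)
The plan is to reprove Wall's classification \cite{Wa2} by the classical method of splitting off orthogonal summands, after first checking that the three displayed families really do consist of indecomposable homogeneous quadratic linking forms. Verifying the list entries is routine: nonsingularity of $b$ amounts to the displayed matrix being invertible over the relevant cyclic ring (its determinant is a $p$-adic unit), the displayed $q$ is checked on generators to satisfy $q(x+y) - q(x) - q(y) = b(x,y)$ and to be well defined on the finite group, and homogeneity $q(-v) = q(v)$ holds because in each case $q$ is (the reduction of) a quadratic expression that is even in the coordinates. Indecomposability of $q^\theta_{p^j}$ is immediate since its underlying group is cyclic; for $H(\Z_{2^j})$ and $F(\Z_{2^j})$, a nontrivial orthogonal decomposition of $\Z_{2^j}^2$ would, by the structure theorem for finite abelian groups, have to be $\langle a\rangle \perp \langle b\rangle$ with $a, b$ of order $2^j$ and $b$ nonsingular on $\langle a\rangle$, which forces $b(a,a)$ to have order $2^j$; but one computes directly that every element $v$ of $H(\Z_{2^j})$, respectively $F(\Z_{2^j})$, has $b(v,v)$ of order at most $2^{j-1}$, a contradiction. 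That the listed forms are pairwise non-isomorphic is read off from an elementary invariant such as the Gauss sum $\sum_{g \in G} e^{2\pi i q(g)}$, which separates the square classes of $\theta$ and distinguishes $H(\Z_{2^j})$ (real Gauss sum) from $F(\Z_{2^j})$ (purely imaginary Gauss sum).

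For the structure theorem, the first step is the primary decomposition $G = \bigoplus_p G_{(p)}$. Since $b$ takes values in $\Q/\Z$, elements of $G_{(p)}$ and $G_{(q)}$ with $p \ne q$ are automatically $b$-orthogonal, and $q$ respects this splitting, so it suffices to treat the case where $G$ is a $p$-group of exponent $p^j$. For $p$ odd, $2$ is a unit in $\Z/p^j$, so $q$ is determined by $b$ through the relation $2q(v) = b(v,v)$; hence it is enough to split the symmetric linking form $(G,b)$, and this is achieved by producing an element $x$ of order $p^j$ whose self-linking $b(x,x)$ also has order $p^j$. Such an $x$ exists because, starting from any generator $x_0$ of a cyclic summand of top order and any $g$ with $b(x_0, g)$ of order $p^j$, the self-linking of $x_0 + \lambda g$ varies as a nonconstant function of the unit $\lambda$ modulo $p$ — here one uses crucially that $2$ is invertible mod $p$. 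Then $\langle x\rangle$ is a nonsingular subform, $G = \langle x\rangle \perp \langle x\rangle^\perp$, the summand $\langle x\rangle$ is some $q^\theta_{p^j}$, and one inducts on $|G|$.

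The case $p = 2$ is the main obstacle, precisely because the argument above breaks down when the relevant residue $2c$ vanishes mod $2$. Take $x$ of order $2^j$. If $b(x,x)$ has order $2^j$ for some such $x$ (``odd type''), split off $\langle x\rangle \cong q^\theta_{2^j}$ and induct. Otherwise every element of maximal order has self-linking of order at most $2^{j-1}$, and one must extract a rank-$2$ summand. Choosing $x$ of order $2^j$ and, by nonsingularity, $y$ with $b(x,y)$ of order exactly $2^j$ — which forces $y$ to have order $2^j$ and $x, y$ to be linearly independent — a careful choice of $x$ and $y$ arranges that $\langle x, y\rangle \cong \Z_{2^j}^2$ with $b$ restricting to a nonsingular form, so that $G = \langle x, y\rangle \perp \langle x, y\rangle^\perp$. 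It remains to identify this rank-$2$ even piece together with its homogeneous quadratic refinement: after normalising the off-diagonal entry of $b$ to $2^{-j}$ and reducing the diagonal entries as far as elementary basis changes allow, the bilinear form is seen to be one of the two classical even rank-$2$ linking forms over $\Z_2$ (distinguished by a discriminant square class, cf.\ \cite{K-K}), and for each of these the condition $2q(v) = b(v,v)$ pins down the homogeneous refinement up to isomorphism, yielding exactly $H(\Z_{2^j})$ or $F(\Z_{2^j})$. Carrying the quadratic refinement correctly through all the splittings on $2$-groups, and checking that the basis changes in the rank-$2$ step really do exhaust the possibilities, is the delicate part of the argument; an induction on $|G|$ then finishes the proof.
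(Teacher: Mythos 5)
The paper does not prove this theorem: it is a direct citation to Wall \cite[Theorems 4 and 5]{Wa2}, so there is no internal proof to compare against. Taken on its own terms, your sketch follows the standard strategy (primary decomposition, then splitting off cyclic or rank-two orthogonal summands), and many of the steps you flag as routine are indeed correct. For instance, that any $y$ with $b(x,y)$ of order $2^j$ must itself have order $2^j$, and that $\langle x,y\rangle$ must be isomorphic to $\Z_{2^j}^2$ under the running hypothesis that every maximal-order element has self-linking of order $\le 2^{j-1}$, both hold by the arguments you indicate.

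There is, however, a genuine gap at $p=2$, $j=1$ in the identification step. You argue that after normalisation the rank-two even bilinear form is one of two classical forms, and that for each of them ``the condition $2q(v)=b(v,v)$ pins down the homogeneous refinement up to isomorphism, yielding exactly $H(\Z_{2^j})$ or $F(\Z_{2^j})$.'' For $j\ge 2$ this is correct: $b_H(\Z_{2^j})\not\cong b_F(\Z_{2^j})$, and the transvections $z\mapsto z+2^{j-1}y$, $y\mapsto y+2^{j-1}z$ preserve each of these forms and act transitively on their four homogeneous refinements. But for $j=1$ the diagonal entry $2^{1-j}$ vanishes in $\Q/\Z$, so $b_H(\Z_2)$ and $b_F(\Z_2)$ are the \emph{same} bilinear form (the unique even nonsingular linking form on $\Z_2^2$), and that single form carries exactly \emph{two} non-isomorphic homogeneous quadratic refinements, separated by the Arf invariant; these are $H(\Z_2)$ and $F(\Z_2)$. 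Your deduction would therefore produce only one indecomposable piece at $j=1$, whereas Wall's list contains two. Your own Gauss-sum check of pairwise non-isomorphism already detects the discrepancy, since for $j=1$ the Gauss sum is essentially the Arf invariant, so the verification half and the constructive half of your argument are inconsistent at $j=1$. Closing the gap requires showing that the complete invariant of the indecomposable rank-two piece is the pair (bilinear form, Arf invariant), not the bilinear form alone; this is precisely the ``delicate part'' you alluded to without supplying.
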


%For brevity, {\em henceforth we use the term ``linking form'' to mean ``homogeneous quadratic linking form''}.
%Recall from Section \ref{subsec:treelike_forms}, that a $\Z$-labelled tree $\mathfrak{t} = ((\V, \E), (\ul a))$ defines 
%a symmetric bilinear from $(H_\mathfrak{t}, \lambda_\mathfrak{t})$ which is even if and only if each $a_i \in \ul a$ is even.
%%Since from here on, we only consider laballs in $2\Z$, 
%We shall call a $\Z$-lablled tree with even labells an {\em even labelled tree} and
%we shall say that an even labelled tree $\mathfrak{t}$ presents a linking form $(G, b, q)$
%if the even symmetric form $(H_\mathfrak{t}, \lambda_\mathfrak{t})$ presents $(G, b, q)$.

To prove Theorem \ref{thm:realisation_of_eqlfs} it suffices to list even labelled trees
which present the quadratic linking forms listed in Theorem \ref{thm:indcomposable_qlfs}.
%Recall that 
%Lemma~\ref{lem:inverse_form}~\eqref{lem:inverse_form:lambda} which states that the matrix of an 
%induced linking form is the modulo $\Z$ reduction of the inverse matrix for the intersection form. 
%However we shall proceed by introducing a small amount of elementary theory for computing determinants of plumbing matrices.
This seems to us to be a non-trivial task: Wall \cite[Theorem 6]{Wa2} lists the rational inverses of 
the intersection matrices of even forms which present the quadratic linking forms in Theorem \ref{thm:indcomposable_qlfs}.  
In the cyclic case it is not
obvious that the corresponding integral even forms are treelike and in Section \ref{subsec:non-treelike_forms} below
we show that Wall's forms in the hyperbolic and psuedo-hyperbolic case are not treelike.

In Section \ref{subsec:cyclic_linking_forms} we modify Wall's arguments to find even labelled trees presenting 
all indecomposable cyclic linking forms; see Lemma \ref{lem:cyclic}.  In Sections \ref{subsec:hyperbolic_linking_forms} and 
\ref{subsec:pseudo-hyperbolic_qlfs} we find even labelled trees which present all indecomposable hyperbolic 
and pseudo-hyperbolic quadratic linking forms; see Lemmas \ref{lem:hyperbolic} and \ref{lem:psuedo-hyperbolic1}.
%cases by developing a small amount of elementary theory for computing determinants of treelike forms.
Theorem \ref{thm:indcomposable_qlfs} and 
Lemmas~\ref{lem:stable_closure_of_treelike_forms}, \ref{lem:cyclic}, \ref{lem:hyperbolic} and 
\ref{lem:psuedo-hyperbolic1} combine to prove Theorem \ref{thm:realisation_of_eqlfs}.

%Given a graph with finite vertex set $V$ and finite edge set $E$ we let
%$E_{v_i, v_j}$ denote the set of edges between the verticies $v_i, v_j \in V$:
%in all our examples $|E_{v_i, v_j}| = 0, 1$.
%Next we label each vertex $v_i \in V$ by an integer $a_i$.  
%From the labelled tree $X = (V, E, \{a_i\})$
%we obtain a symmetric bi-linear form $\lambda_{X}$ on $H_X$, the free abelian group with basis $V$, by

%\begin{equation} \label{eq:plumb}  \lambda_{X}(v_i, v_j) 
%= \left\{\begin{array}{cl} |E_{v_i, v_j}|& \text{if $i \neq j$,}  \\
%a_i \in \Z & \text{if $i = j$.} \end{array} \right. 
%\end{equation}

%For example if $\ul{a} = \{a_1, a_2, \dots , a_n\}$, the labelled tree
%
%\begin{equation} \label{eq:D(a)}
%\xymatrix{ \mathfrak{d}(\ul{a}) : = & a_1 \ar@{-}[r] & a_2 \ar@{-}[r] & \dots \ar@{-}[r] & a_n, }  
%\end{equation}
%
%corresponds to a bilinear form with matrix
%\begin{equation} \label{eq:AD(a)} 
%A_{\mathfrak{d}(\ul{a})} = 
%\left( \begin{array}{ccccccc} 
%a_1 & 1 & 0 & \dots & 0 & 0\\
%1 & a_2 & 1 & \dots & 0 & 0\\
%0 & 1 & a_3 & \dots & 0 & 0\\
%\vdots & \vdots & \vdots & \ddots & \vdots & \vdots \\
%0 & 0 & 0 & \dots & a_{n-1} & 1 \\
%0 & 0 & 0 & \dots & 1 & a_n
%\end{array} \right). 
%\end{equation}
%with respect to the obvious basis of $H_{\mathfrak{d}(\ul{a})}.$
%\par
%In parallel to this algebraic construction, recall from \ref{sec:plumbing} that a plumbing is determined by a weighted graph $V,$ as above, or equivalently (\ref{Browder}) a by matrix for its intersection form, which corresponds to $\lambda_V$ in the above.

% (fold)
\subsection{Non-treelike forms} \label{subsec:non-treelike_forms}
%%%%%%%%%%%%%%%%%%%%%%%%%%%%%%%%%%%%%%%%%%%%%%%%%%%%%%%%%%%%%%%%%%%%%%%%%%%%%%%%%%%%%%%%
In this subsection we consider symmetric bilinear forms $(H, \lambda)$ which are
not assumed to be even.  The boundary linking form of $(H, \lambda)$ as defined in 
Lemma~\ref{def:boundary_of_an_eqf}~\eqref{def:boundary_of_an_eqf:linking_form} will
be denoted $\del(H, \lambda)$.  We shall show that there are many non-treelike symmetric forms
by applying the following simple observation, which gives a criterion for finding non-treelike symmetric forms.

\begin{Lemma} \label{lem:treelike_criterion}
Let $(H, \lambda)$ be a nondegenerate symmetric bilinear form and suppose that there is an
integer $k > 1$ such that $(H, \lambda) \cong (H, k\lambda_0)$ for some other 
nondegenerate symmetric bilinear form $\lambda_0$ on $H$.  If $(H, \lambda)$ is treelike
then the linking form $\del(H, \lambda)$ is isomorphic to a sum of cyclic linking forms.
\end{Lemma}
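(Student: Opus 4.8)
The plan is to test the divisibility hypothesis directly in the vertex basis of the tree. First I would record the elementary, basis‑independent reformulation of the hypothesis: $(H,\lambda)\cong(H,k\lambda_0)$ for some integral symmetric form $\lambda_0$ holds if and only if the matrix of $\lambda$ with respect to \emph{every} basis of $H$ (equivalently, with respect to \emph{some} basis) has all of its entries divisible by $k$. Indeed, if $\phi$ realises the isomorphism then in any basis the matrix of $\lambda$ equals $k(P^{T}A_0P)$, where $P$ represents $\phi$ and $A_0$ represents $\lambda_0$; conversely, if the matrix of $\lambda$ in one basis is $kB$ with $B$ integral, then the form with matrix $B$ serves as $\lambda_0$.

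Now assume in addition that $(H,\lambda)$ is treelike, say $(H,\lambda)\cong(H_{\mathfrak{t}},\lambda_{\mathfrak{t}})$ for a $\Z$‑labelled tree $\mathfrak{t}=\bigl((\V,\E),\ul{a}\bigr)$. By \eqref{eq:plumb} the matrix $A_{\mathfrak{t}}$ of $\lambda_{\mathfrak{t}}$ in the vertex basis has off‑diagonal entries $|\E_{i,j}|\in\{0,1\}$ and diagonal entries $a_i$. Since $k>1$ divides every entry of $A_{\mathfrak{t}}$, all off‑diagonal entries must vanish, i.e.\ $\mathfrak{t}$ has no edges; hence $A_{\mathfrak{t}}=\hbox{diag}(a_1,\dots,a_r)$, each $a_i$ is divisible by $k$, and nondegeneracy of $\lambda$ forces $a_i\neq 0$. (With the convention that trees are connected this already forces $\mathrm{rk}\,H=1$; if forests are allowed, $r$ is arbitrary. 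Either way $\lambda$ is diagonalised by the vertex basis.)

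It then remains to identify the boundary of a diagonal form. By Lemma~\ref{lem:inverse_form}~\eqref{lem:inverse_form:lambda} the inverse form $\lambda^{-1}_{\Q}$ has matrix $\hbox{diag}(1/a_1,\dots,1/a_r)$ in the dual basis, while $G=\coker(\wh\lambda)=\bigoplus_{i=1}^r\Z/|a_i|$; so by Definition~\ref{def:boundary_of_an_eqf}~\eqref{def:boundary_of_an_eqf:linking_form},
\[ \del(H,\lambda)\cong\bigoplus_{i=1}^r\Bigl(\Z/|a_i|,\ \bigl(\tfrac{1}{a_i}\bigr)\Bigr), \]
an orthogonal sum of linking forms on cyclic groups. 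Each summand decomposes, via the Chinese Remainder Theorem (equivalently the standard orthogonal splitting of a linking form into its $p$‑primary parts), into an orthogonal sum of linking forms on cyclic $p$‑groups, each of which is a cyclic linking form in the sense of Theorem~\ref{thm:indcomposable_qlfs}, and the claim follows. The only step requiring any care is the basis‑independence of divisibility in the first paragraph — this is what licenses reading the hypothesis off the vertex basis; everything after that is forced, so I do not anticipate a genuine obstacle.
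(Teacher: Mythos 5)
Your proof is correct and takes essentially the same route as the paper: work in a treelike basis, observe that divisibility of all entries by $k>1$ kills the off-diagonal entries (which are $0$ or $1$), and read off that a diagonal form has boundary a sum of cyclic linking forms. The one place you add genuine value is the opening paragraph: the paper writes ``$A_{ij}=kA^0_{ij}$'' directly as if $\lambda=k\lambda_0$ on the nose, whereas the hypothesis only gives $(H,\lambda)\cong(H,k\lambda_0)$; your observation that divisibility of the Gram matrix by $k$ is basis-independent (because the change-of-basis matrix $P$ is unimodular, so $A=k\,(P^{T})^{-1}A_0P^{-1}$ with $(P^{T})^{-1}A_0P^{-1}$ integral) cleanly justifies the reduction to the treelike basis and fills this small gap.
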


\begin{proof}
Let $(h_1, \dots, h_r)$ be a treelike basis for $(H, \lambda)$, let $A$ and $A^0$ be the matricies
of $\lambda$ and $\lambda_0$ with respect to this basis.  Since $(h_1, \dots, h_r)$ is a treelike basis
for $(H, \lambda)$, it follows that the off-diagonal matrix elements $A_{ij}$ are either zero or one.
But $A_{ij} = k A^0_{ij}$, and so $A_{ij} = 0$.  It follows that $A$ is diagonal and hence 
$\del(H, \lambda)$ is a sum of cyclic linking forms.
\end{proof}

We next define some symmetric forms where we can apply Lemma \ref{lem:treelike_criterion}.
Firstly, we let $b_H(\Z_{2^j})$ and $b_F(\Z_{2^j})$ be the linking
forms underlying the quadratic linking forms $H(\Z_{2^j})$ and $F(\Z_{2^j})$ respectively.
Let $H_+(2^j)$ denote the symmetric bilinear form on $\Z^2$ with intersection matrix
$$\left( \begin{array}{cc}
0&2^j \\
2^j&0 \\
\end{array}\right).$$
The boundary linking form of $H_+(2^j)$
is $b_H(\Z_{2^j})$ and clearly $H_+(2^j) = 2^j H_+(1)$. 

In the pseudo-hyperbolic case, we define $F_+(2^j)$ to be the even symmetric bilinear form over $\Z^4$
as follows.  Setting
\[ a_j : = \frac{1}{3}(2^j - (-1)^j) \quad \text{and} \quad b_j : = (-1)^{j-1},\]
with $j \geq 1$, the form $F_+(2^j) = (\Z^4, \lambda_{F_+(2^j)})$ is represented by the following matrix:
\[  A(F_+(2^j)) : = \left( \begin{array}{cccc}
2^{j+1}(4a_jb_j-1 -2^{j}b_j)~& -2^j(4a_jb_j-1)~& 2^{j+1}b_j~& -2^j~ \\
-2^j(4a_jb_j-1) & 2^{j+1}(4a_jb_j-1) & -2^{j+2}b_j & 2^{j+1} \\
2^{j+1}b_j & -2^{j+2}b_j & 6 b_j & -12 \\
-2^j & 2^{j+1} & -12 & 12a_j - 2^{j+1} \end{array}   \right)  \]
One calculates that the boundary linking form of $F_+(2^j)$ is $b_F(\Z_{2^j})$.
Indeed, Wall \cite[\S 7]{Wa2} writes down a symmetric matrix 
$B(F_{2^j})$ over $\Q^4$ whose inverse matrix is $A(F_+(2^j))$
and whose quadratic boundary is the pseudo-hypberbolic quadratic linking form $F(\Z_{2^j})$.  
It is clear that $(\Z^4, \lambda_{F_+(2^j)}) = (\Z^4, 2\lambda'_{F_+(2^j)})$ for an integral
form $(\Z^4, 2\lambda'_{F_+(2^j)})$.

Now let $(G, b) = (G_0, b_0) \oplus (G_1, b_1)$ where $(G_1, b_1)$ is 
isomorphic to either $b_H(\Z_{2^j})$ or $b_F(\Z_{2^j})$ and where 
$2^j \cdot G_0$ is an odd torsion group.  Then for all $x \in G$, there is 
an odd integer $q$ such that
$q 2^{j-1}b(x, x) = 0$.  It follows that
$(G, b)$ contains no cyclic orthogonal summand on $\Z_{2^j}$
and in particular $(G, b)$ is not isomorphic to a sum of cyclic linking forms.  
Applying Lemma~\ref{lem:treelike_criterion} to the above discussion we obtain

\begin{Lemma} \label{lem:non-treelike}
Fix a positive integer $j$ and 
let $(H, 2 \lambda)$ be a nondegenerate symmetric bilinear form such that $2^{j} \cdot \coker(\wh \lambda)$
is odd torsion.  Then the symmetric bilinear forms
\[ (H, 2\lambda) \oplus H_+(2^j) \quad \text{and} \quad
(H, 2 \lambda) \oplus F_+(2^{j}) \]
are not treelike.  In particular for any positive integers $i_s$ and $j_t$, any non-trivial sum of forms $H_+(2^{i_s})$ and $F_+(2^{j_t})$ is not treelike. \qed
\end{Lemma}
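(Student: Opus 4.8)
The plan is to obtain this as an immediate consequence of the treelike criterion, Lemma~\ref{lem:treelike_criterion}, applied with $k=2$, together with the boundary-linking-form computations recorded just above and the observation in the paragraph immediately preceding the statement.

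First I would record that each of the two forms in question is divisible by $2$ as an integral symmetric form. Since the Gram matrix of $H_+(2^j)$ is exactly twice that of $H_+(2^{j-1})$, we have $(H,2\lambda)\oplus H_+(2^j) = 2\cdot\bigl((H,\lambda)\oplus H_+(2^{j-1})\bigr)$; and since $\lambda_{F_+(2^j)} = 2\lambda'_{F_+(2^j)}$ for the integral form $\lambda'_{F_+(2^j)}$ noted above, $(H,2\lambda)\oplus F_+(2^j) = 2\cdot\bigl((H,\lambda)\oplus \lambda'_{F_+(2^j)}\bigr)$. In both cases the halved form is nondegenerate: $\lambda$ is nondegenerate because $2\lambda$ is, $H_+(2^{j-1})$ has nonzero determinant, and $\lambda'_{F_+(2^j)}$ has nonzero determinant because $A(F_+(2^j))$ is invertible; a direct sum of nondegenerate forms is nondegenerate. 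Hence Lemma~\ref{lem:treelike_criterion} applies with $k=2$ to either of our two forms.

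Consequently, if $(H,2\lambda)\oplus H_+(2^j)$ were treelike, its boundary linking form would be isomorphic to a sum of cyclic linking forms, and likewise for $(H,2\lambda)\oplus F_+(2^j)$. Since it is immediate from the construction of Definition~\ref{def:boundary_of_an_eqf}~\eqref{def:boundary_of_an_eqf:linking_form} that the boundary of an orthogonal sum is the orthogonal sum of the boundaries, and since $\del(H_+(2^j)) = b_H(\Z_{2^j})$ and $\del(F_+(2^j)) = b_F(\Z_{2^j})$ as computed above, these boundaries are $\del(H,2\lambda)\oplus b_H(\Z_{2^j})$ and $\del(H,2\lambda)\oplus b_F(\Z_{2^j})$ respectively. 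Writing $(G_0,b_0) := \del(H,2\lambda)$, the hypothesis that $2^j\cdot\coker(\wh\lambda)$ is odd torsion bounds the $2$-primary part of $G_0$, and the paragraph preceding the lemma then shows that neither $(G_0,b_0)\oplus b_H(\Z_{2^j})$ nor $(G_0,b_0)\oplus b_F(\Z_{2^j})$ contains a cyclic orthogonal summand on $\Z_{2^j}$; hence neither is isomorphic to a sum of cyclic linking forms. This contradiction proves that the two forms are not treelike.

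For the final assertion I would take $H=0$, so that $\coker(\wh\lambda)=0$ and the odd-torsion hypothesis is vacuous: given a nontrivial sum $\bigoplus_s H_+(2^{i_s})\oplus\bigoplus_t F_+(2^{j_t})$, let $m$ be the largest exponent appearing, single out one summand of exponent $m$ to play the role of $H_+(2^m)$ or $F_+(2^m)$, and absorb all the remaining summands into the role of $(H,2\lambda)$; since each remaining summand has exponent at most $m$, one checks from the explicit matrices that the group $\coker(\wh\lambda)$ of the corresponding halved form has $2$-primary part killed by $2^m$, so the first part of the lemma applies. The step I expect to require the most care is the bookkeeping in the previous paragraph --- matching correctly the $2$-adic exponent that controls the torsion of $(G_0,b_0)=\del(H,2\lambda)$ against the level $2^j$ of the (pseudo-)hyperbolic summand, which is exactly where the ``odd torsion'' hypothesis is used --- while the divisibility step is routine but must be carried out carefully enough to certify that the halved forms are nondegenerate, since that is what licenses the application of Lemma~\ref{lem:treelike_criterion}.
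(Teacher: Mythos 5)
Your proposal follows the paper's intended route: the paper gives no explicit proof (the lemma carries a \qed), leaving the reader to combine the paragraph immediately preceding it with Lemma~\ref{lem:treelike_criterion}, which is precisely what you do. You also add a welcome check that the halved forms are nondegenerate (needed to invoke the criterion) and that $\del$ commutes with orthogonal sum. So the approach matches.

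There is one bookkeeping point you gloss over in the same way the paper does, but which deserves more care because it is where the hypothesis actually gets used. The preceding paragraph's argument assumes ``$2^j\cdot G_0$ is odd torsion'' for the linking group $G_0$ on which $b_0$ lives; but in your application $G_0 = \coker(\widehat{2\lambda})$, not $\coker(\widehat{\lambda})$. Passing to Smith normal form, if $\coker(\widehat{\lambda})\cong\prod\Z_{a_i}$ then $\coker(\widehat{2\lambda})\cong\prod\Z_{2a_i}$, so the hypothesis ``$2^j\cdot\coker(\widehat{\lambda})$ odd torsion'' a priori only yields ``$2^{j+1}\cdot G_0$ odd torsion.'' Your sentence ``the hypothesis $\dots$ bounds the $2$-primary part of $G_0$'' is true but leaves the exponent unstated, and the preceding paragraph's conclusion (no $\Z_{2^j}$ cyclic summand, hence not a sum of cyclics) depends on that exponent being small enough. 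You should make this translation explicit and confirm that the cyclic-summand exclusion still goes through with the exponent you actually obtain, rather than simply citing the paragraph.

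Two smaller points on the ``In particular'' clause. First, the opening ``I would take $H=0$'' contradicts the rest of the paragraph, where $(H,2\lambda)$ absorbs all the remaining summands; you presumably mean that the single-summand case is the degenerate instance with $H=0$, after which the general case reduces to the first part of the lemma with $H\neq 0$, but as written the passage reads as a false start. Second, the phrase ``one checks from the explicit matrices that $\coker(\widehat{\lambda})$ $\dots$ has $2$-primary part killed by $2^m$'' is asserted rather than verified: for the $H_+(2^{i_s-1})$ summands of the halved form this is immediate, but for the $F'_+(2^{j_t})$ summands it requires knowing the cokernel of $\widehat{F'_+(2^{j_t})}$, which neither you nor the paper actually computes. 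A sentence supplying that computation (or at least a bound on its $2$-primary exponent in terms of $j_t$) would close the argument.
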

% subsection non-treelike_forms (end)
%%%%%%%%%%%%%%%%%%%%%%%%%%%%%%%%%%%%%%%%%%%%%%%%%%%%%%%%%%%%%%%%%%%%%%%%%%%%%%%%%%%%%%%%

% (fold)
\subsection{Cyclic linking forms} \label{subsec:cyclic_linking_forms}
%%%%%%%%%%%%%%%%%%%%%%%%%%%%%%%%%%%%%
In this subsection subsection we show how to present $2$-primary cyclic linking forms by $2\Z$-labelled treelike
graphs.  Our arguments in this section are based on the proof of \cite[Theorem 6]{Wa1}.
We shall require the follow lemma which is a simple consequence of the classification of cyclic quadratic linking forms.
%Recall that cyclic linking forms are defined on the group $G \cong p^k$, for $p$ and prime and $k$
%and positive integer.  They are expressed by the linking matrix
%
%\[ b^\theta = \left( \theta \cdot p^{-k} \right).\]
%
%where $\theta$ is an integer co-prime to $p$.
%If $p$ is odd, then $b^{\theta_1} \cong b^{\theta_2}$ are isomorphic if and only if the mod $p$
%reductions of $\theta_1$ and $\theta_2$ are either both quadratic residues or both quadratic non-residues.
%If $p=2$, then $b^\theta$ is isomorphic to precisely one of $b^1, b^3, b^5$ or $b^7$.  These linking
%forms are non-isometric for $k \geq 3$.  For $k = 2$, $b^1 = b^5$ and $b^3 = b^7$ give two distinct
%isometry classes and for $k = 1$, $b^1 = b^3 = b^5 = b^7$.  As a consequence, we have the following
%simple

\begin{Lemma} \label{lem:theta}
%%%%%%%%%%%%%
Every linking form $q^\theta_{p^j}$ is isomorphic to some $q^{\theta'}_{p^j}$ where $\theta'$ and $p$ 
have the opposite parity.		
\end{Lemma}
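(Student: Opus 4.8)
The plan is to split into the even-prime case, which is immediate, and the odd-prime case, which requires only a change of residue representative for $\theta$. Throughout one may assume $j \geq 1$, since for $j = 0$ the group $\Z_{p^j}$ is trivial and there is nothing to prove.

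First I would dispose of $p$ even: then $p = 2$, and since $\theta$ is prime to $p$ it is odd, so $\theta$ and $p$ already have opposite parity and one takes $\theta' = \theta$.

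For $p$ odd I would produce an even $\theta'$. If $\theta$ is already even, take $\theta' = \theta$. Otherwise $\theta$ is odd, and since $\gcd(\theta,p)=1$ with $j \geq 1$ one has $\theta \neq 0$; set $\theta' := \theta - p^j$ if $\theta > 0$ and $\theta' := \theta + p^j$ if $\theta < 0$. Then $-p^j < \theta' < p^j$; the integer $\theta'$ is even, being a difference of the two odd numbers $\theta$ and $p^j$; and $\theta' \equiv \theta \pmod{p^j}$, hence $\theta' \equiv \theta \pmod{p}$, so $\gcd(\theta',p) = 1$. Thus $q^{\theta'}_{p^j}$ is again one of the cyclic quadratic linking forms of Theorem~\ref{thm:indcomposable_qlfs}, now with $\theta'$ of parity opposite to $p$.

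The remaining, and only substantive, step is to check that $q^\theta_{p^j} \cong q^{\theta'}_{p^j}$. Since $\theta \equiv \theta' \pmod{p^j}$, the underlying linking forms $(\theta/p^j)$ and $(\theta'/p^j)$ on $\Z_{p^j}$ literally coincide; and I would then invoke the classification of cyclic quadratic linking forms in the following sharp form: for $p$ odd, a linking form on $\Z_{p^j}$ has a unique homogeneous quadratic refinement, because the difference of two such refinements is a homomorphism $\Z_{p^j} \to \Q/\Z$ with values in $\{0, \tfrac{1}{2}\}$ and hence vanishes as $p^j$ is odd. It follows that $q^\theta_{p^j}$ and $q^{\theta'}_{p^j}$ also have the same quadratic refinement, so the two forms coincide and are in particular isomorphic. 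I do not expect any genuine obstacle: the entire content is this last uniqueness observation, which allows one to shift the representative $\theta$ freely within its class modulo $p^j$ and, when $p$ is odd, to land on the even representative lying in $(-p^j, p^j)$.
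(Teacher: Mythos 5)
Your proof is correct, and it takes a genuinely different route from the paper's. Both arguments dispose of $p=2$ at once. For $p$ odd, the paper invokes Wall's classification of linking forms on $\Z_{p^j}$ — the isomorphism class is determined by whether $\theta$ is a quadratic residue or non-residue modulo $p$ — and then observes that both residue classes contain even integers (with $p=3$ handled separately via $\theta' = \pm 2$), thereby producing an \emph{isomorphic} form with $\theta'$ even. You instead shift $\theta$ by $\mp p^j$ to obtain an even $\theta'$ in the range $(-p^j, p^j)$ with $\theta' \equiv \theta \pmod{p^j}$, so the underlying bilinear linking forms $b^\theta_{p^j}$ and $b^{\theta'}_{p^j}$ literally coincide, and then you use the uniqueness of a homogeneous quadratic refinement of a linking form on a group of odd order. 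This is more elementary — no quadratic residue theory is needed — and it yields the stronger conclusion that $q^\theta_{p^j}$ and $q^{\theta'}_{p^j}$ are equal, not merely isomorphic. Your uniqueness observation (the difference of two homogeneous refinements is a homomorphism $\Z_{p^j} \to \Q/\Z$ killed by $2$, hence trivial since $p^j$ is odd) is correct and is exactly what makes the shift by $p^j$ harmless. One minor stylistic point: you describe this as ``invoking the classification,'' but in fact you prove the required uniqueness directly, which is cleaner than citing a classification theorem.
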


\begin{proof}
%%%%%%%%%%%%%
In the case $p = 2,$ note that both $\theta$ and $\theta'$ must be odd. 
%It is easily seen then that mapping $\theta \mapsto \theta'$ induces an automorphism of $\Z_{2^j},$ which in turn gives the desired isomorphism between $q^{\theta}_{2^j}$ and $q^{\theta'}_{2^j}$. 
We can therefore assume that $p$ is odd. In this case, linking forms over $\Z_{p^j}$ are determined up to isomorphism by whether $\theta$ mod $p$ is a quadratic residue or a quadratic non-residue \cite[Theorem 4]{Wa2}. Since $1$ and $4$ are both quadratic residues mod $p$ for all $p>3,$ it follows that for such $p$ the set of quadratic residues and the set of quadratic non-residues both contain even integers. Thus for any $\theta$ we can find an {\it even} $\theta'$ such that $q^{\theta}_{p^j} \cong q^{\theta'}_{p^j}.$ If $p = 3$, then for any admissable choice of $\theta$ we can take either $\theta' = -2$ or $\theta' = 2,$ since $-2$ is a quadratic residue and 2 is a non-residue. 
\end{proof}

We will need the following specific version of the Euclidean algorithm.
We suppose that $d_1$ and $d_2$ are co-prime 
integers of opposite parity and that $|d_1| > |d_2|$.
We shall be interested in the situation where $(d_1, d_2) = (p^j, \theta)$.
%and $p$ and $\theta$ have opposite parity.
Since $\frac{-\theta}{-p^j} = \frac{\theta}{p^j}$, we shall
have the freedom to change the sign of {\em both} $d_1$ and $d_2$.
We define even integers $a_i \in 2 \Z$ and integers $d_i \in \Z$, as follows. 
Starting from $i = 1$, let
\begin{equation} \label{eq:a_i}
d_i = a_i \cdot d_{i+1} - d_{i+2} 
\end{equation}
where $0 \leq |d_{i+2}| < |d_{i+1}|$.  
That is we choose the even integer multiple of $d_{i+1}$ which is closest to $d_i$,
so that $|d_{i+2}| < |d_{i+1}|$.  As in the usual Euclidean algorithm, if 
$d_{i+2}$ divides $d_{i+1}$, then by induction $d_{i+2}$ divides $d_1$ and $d_2$.
Since we assume that $d_1$ and $d_2$ are coprime, we see that the numbers
$|d_i|$ form a descending sequence of non-negative integers with alternating parity
and which must therefore finish with 1 and 0.
Let $d_{n+2} = 0$ so that $d_{n+1} = \pm 1$ and 
define the sequence $\{d_1', d_2', \dots , d_{n+1}'\}$ by 
\[ d_i' := d_{n+1} \cdot d_i .\]
We note that the $d_i'$ also satisfy \eqref{eq:a_i} and that if $d_{n+1} = -1$,
then we have changed the sign of both $d_1$ and $d_2$.  Moreover, putting
$i = n-1$ and $i=n$ into \eqref{eq:a_i} above we obatin the equations,
\[ d'_{n-1} = a_{n-1}\cdot d'_{n} - 1 \quad \text{and} \quad d_n' = a_n.\]
Setting $\ul{a} = (a_1, \dots, a_n)$ we take the even labelled tree $\mathfrak{d}(\ul{a})$ as 
defined in \eqref{eq:D(a)} in Section \ref{subsec:treelike_forms}.

\begin{Lemma} \label{lem:D(a)}
%%%%%%%%%%%%%
With the notation above, set $(d_1, d_2) = (p^j, \theta)$, where $\theta$ and $p$ have opposite parity.
The even labelled tree $\mathfrak{d}(\ul{a})$ presents $q^\theta_{p^j}$.
\end{Lemma}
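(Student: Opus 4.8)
The plan is to compute the boundary linking form of the even treelike form $(H_{\mathfrak{d}(\ul a)}, \lambda_{\mathfrak{d}(\ul a)})$ directly from its matrix $A := A_{\mathfrak{d}(\ul a)}$ of \eqref{eq:AD(a)}, using Lemma~\ref{lem:inverse_form}. Since each $a_i \in 2\Z$ the form is even, and $A$ is the tridiagonal continuant matrix with diagonal $(a_1, \dots, a_n)$ and off-diagonal entries $1$. The essential observation is that the continuants of $A$ satisfy exactly the recursion \eqref{eq:a_i} defining the sequence $(d_i')$. Write $P_k$ for the determinant of the bottom right $k \times k$ block of $A$, so $P_0 = 1$ and $P_1 = a_n$; expanding along the first row of that block gives $P_k = a_{n-k+1} P_{k-1} - P_{k-2}$. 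Since $d_n' = a_n \cdot d_{n+1}^2 = a_n$ and $d_{n+1}' = d_{n+1}^2 = 1$, comparison with $d_i' = a_i d_{i+1}' - d_{i+2}'$ gives $P_k = d_{n-k+1}'$ for all $k$ by induction. In particular,
\[ \det A = P_n = d_1' = d_{n+1}\cdot p^j \qquad \text{and} \qquad M_{11} = P_{n-1} = d_2' = d_{n+1}\cdot \theta, \]
where $M_{11}$ is the $(1,1)$-minor of $A$ and $d_{n+1} = \pm 1$.

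It follows first that $|\det A| = p^j$, so by Lemma~\ref{lem:inverse_form}~\eqref{lem:inverse_form:detA} the form $(H_{\mathfrak{d}(\ul a)}, \lambda_{\mathfrak{d}(\ul a)})$ is nondegenerate and $G := \coker(\wh\lambda_{\mathfrak{d}(\ul a)})$ has order $p^j$. Second, since $\det A$ and $M_{11}$ share the unit factor $d_{n+1}$, the $(1,1)$-entry of $A^{-1}$ is $(A^{-1})_{11} = M_{11}/\det A = \theta/p^j$, with no residual sign. Next I would show that $G$ is cyclic of order $p^j$, generated by $\bar e_1^* := \pi(e_1^*)$, where $(e_1^*, \dots, e_n^*)$ is the dual basis of $H^*$ and $\pi \colon H^* \to G$ is the projection of \eqref{eq:adjoint}. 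The order of $\bar e_1^*$ is the least $m > 0$ with $m \cdot A^{-1}e_1^* \in \Z^n$; the entries of $A^{-1}e_1^*$ are those of the first column of the adjugate $\mathrm{adj}(A)$ divided by $\det A = \pm p^j$, and the top entry of that column is the cofactor $M_{11} = \pm\theta$, which is prime to $p$. Hence the least such $m$ equals $p^j$, so $\langle \bar e_1^*\rangle$ has order $p^j = |G|$ and $G = \langle \bar e_1^*\rangle \cong \Z_{p^j}$.

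Finally, by Lemma~\ref{lem:inverse_form}~\eqref{lem:inverse_form:lambda} the matrix of $\lambda^{-1}$ with respect to the dual basis is $A^{-1}$, so Definition~\ref{def:boundary_of_an_eqf} gives, for the even form $(H_{\mathfrak{d}(\ul a)}, \lambda_{\mathfrak{d}(\ul a)})$,
\[ b(\bar e_1^*, \bar e_1^*) = (A^{-1})_{11} \bmod \Z = \frac{\theta}{p^j}, \qquad q(\bar e_1^*) = \frac{(A^{-1})_{11}}{2} \bmod \Z = \frac{\theta}{2p^j}. \]
Since $q^\theta_{p^j}$ is cyclic of order $p^j$ with a generator $x$ satisfying $b(x, x) = \theta/p^j$ and $q(x) = \theta/2p^j$, and a morphism of cyclic linking forms with quadratic refinement is determined by, and may be prescribed freely on, the image of a generator (subject only to matching $b$ and $q$ there), the assignment $x \mapsto \bar e_1^*$ extends to an isomorphism $q^\theta_{p^j} \cong \del(H_{\mathfrak{d}(\ul a)}, \lambda_{\mathfrak{d}(\ul a)})$; that is, $\mathfrak{d}(\ul a)$ presents $q^\theta_{p^j}$. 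The only delicate points are the sign bookkeeping — carrying the unit $d_{n+1}$ through the continuant identities so that $(A^{-1})_{11}$ emerges as $\theta/p^j$ rather than its negative — and verifying that $G$ is genuinely cyclic and generated by $\bar e_1^*$ rather than merely abelian of order $p^j$; once the identity $P_k = d_{n-k+1}'$ is in hand, both reduce to the single fact that $M_{11}$ is prime to $p$.
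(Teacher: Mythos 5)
Your proof is correct and follows essentially the same line as the paper's: induct on the continuant recursion to identify $\det A = d_{n+1}p^j$ and the $(1,1)$-minor as $d_{n+1}\theta$, invoke Lemma~\ref{lem:inverse_form} to get $|G|=p^j$ and $(A^{-1})_{11}=\theta/p^j$, and then deduce $G$ is cyclic generated by $\pi(e_1^*)$. The only cosmetic difference is that you establish cyclicity by inspecting the adjugate column of $A^{-1}e_1^*$ directly, whereas the paper reads it off from $b(g,g)=\theta/p^j$ together with nonsingularity of $b$ — the same $\gcd(\theta,p)=1$ fact in two guises.
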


\begin{proof}%[Proof of Lemma \ref{lem:D(a)}]
%%%%%%%%%%%%%
Let $(H_{\mathfrak{d}(\ul{a})}, \lambda_{\mathfrak{d}(\ul{a})})$ be the even symmetric form defined by $\mathfrak{d}(\ul{a})$
and let $(G, b, q) = \del(H_{\mathfrak{d}(\ul{a})}, \lambda_{\mathfrak{d}(\ul{a})})$.
We wrote down the intersection matrix $A_{\mathfrak{d}(\ul{a})}$ of $\lambda_{\mathfrak{d}(\ul{a})}$ with respect to 
the canonical basis of $H_{\mathfrak{d}(\ul{a})}$ in \eqref{eq:AD(a)} in Section \ref{subsec:treelike_forms}.
By induction, the last $i$ rows and columns of $A_{\mathfrak{d}(\ul{a})}$ have determinant $d'_{n-i+1}$.
In particular, $A_{\mathfrak{d}(\ul{a})}$ has determinant $d_1' = d_{n+1} \cdot p^j$ and the submatrix 
$M_{1, 1}(A)$ obtained by deleting the first row and column of $A$ has determinant $d_2' = d_{n+1} \cdot \theta$.  
It follows that the $(1, 1)$-entry of $A^{-1}_{\mathfrak{d}(\ul{a})}$
is
\[  A^{-1}_{\mathfrak{d}(\ul{a})}(1, 1) =  \frac{d_2'}{d_1'} = \frac{\theta}{p^j}.\]
The free abelian group $H_{\mathfrak{d}(\ul{a})}$ is generated by the vertices $\{v_1,...,v_n\}$ of the tree $\mathfrak{d}(\ul{a}),$ and we have the dual basis element $v_1^* \in H^*$. %$\frac{A^{-1}_{\mathfrak{d}(\ul{a})}(v_1^*,v_1^*)}{2} = \frac{\theta}{2p^k}$,
Since $|\det(A_{\mathfrak{d}(\ul{a})})| = p^j$, we deduce from 
Lemma~\ref{lem:inverse_form}~\eqref{lem:inverse_form:detA} that $G := \coker(\wh \lambda_{\mathfrak{d}(\ul{a})})$
has $|G|=p^j.$  Since $A^{-1}_{\mathfrak{d}(\ul{a})}(1, 1) =\theta/p^j$ we infer from Definition~\ref{def:boundary_of_an_eqf}~\eqref{def:boundary_of_an_eqf:linking_form} and 
Lemma~\ref{lem:inverse_form}~\eqref{lem:inverse_form:lambda}, that if we set $g:=\pi(v_1^*) \in G,$ we must have $q(g)=\theta/2\cdot p^j$ and hence $b(g, g) = \theta/p^j$.  
It follows from this that $G=\langle g \rangle,$ i.e. $G \cong \Z_{p^j}.$  Thus $(G, b, q)$ is an indecomposable cyclic linking form on $\Z_{p^j},$ and so is isomorphic to $q_{p^j}^\theta$.  Thus $\mathfrak{d}(\ul{a})$ presents $q^\theta_{p^j}$.
\end{proof}

From Theorem~\ref{thm:indcomposable_qlfs} and Lemmas~\ref{lem:D(a)} and~\ref{lem:theta}, we obtain

\begin{Lemma} \label{lem:cyclic}
%%%%%%%%%%%%
Every indecomposable cyclic 
%homogeneous quadratic 
linking form can be presented by an even labelled tree.
\end{Lemma}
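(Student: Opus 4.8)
The plan is to obtain Lemma~\ref{lem:cyclic} as a formal consequence of the three facts already assembled in this subsection: the classification of indecomposable homogeneous quadratic linking forms (Theorem~\ref{thm:indcomposable_qlfs}), the ``opposite parity'' normalisation of cyclic forms (Lemma~\ref{lem:theta}), and the explicit realisation of a normalised cyclic form by the Euclidean-algorithm tree (Lemma~\ref{lem:D(a)}). In outline: start from an indecomposable cyclic linking form, put it in the form \(q^\theta_{p^j}\), replace \(\theta\) by a representative of suitable parity, and then quote Lemma~\ref{lem:D(a)}.

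In detail, first I would apply Theorem~\ref{thm:indcomposable_qlfs} to write an arbitrary indecomposable cyclic linking form as \(q^\theta_{p^j}\), where \(p\) is prime, \(j \geq 0\), and \(\theta\) is prime to \(p\) with \(-p^j < \theta < p^j\); the case \(j = 0\) is the form on the zero group, presented by the empty labelled tree, so assume \(j \geq 1\). Next I would invoke Lemma~\ref{lem:theta} to find \(\theta'\) of parity opposite to that of \(p\) with \(q^\theta_{p^j} \cong q^{\theta'}_{p^j}\); since the problem is now to present \(q^{\theta'}_{p^j}\) by an even labelled tree, it suffices to verify the hypotheses of Lemma~\ref{lem:D(a)} for the pair \((d_1, d_2) = (p^j, \theta')\). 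Co-primality of \(p^j\) and \(\theta'\) is immediate because \(\theta'\) is prime to \(p\); opposite parity holds by construction; and \(|p^j| > |\theta'|\) holds because, inspecting the proof of Lemma~\ref{lem:theta}, one has \(\theta' = \theta\) with \(|\theta| < 2^j\) when \(p = 2\), while for odd \(p\) the representative \(\theta'\) may be chosen in \((-p, p) \subset (-p^j, p^j)\) (for instance \(\theta' = \pm 2\) when \(p = 3\)). Lemma~\ref{lem:D(a)} then produces an even labelled tree \(\mathfrak{d}(\ul a)\) that presents \(q^{\theta'}_{p^j} \cong q^\theta_{p^j}\), completing the argument.

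The substantive work has already been done in Lemmas~\ref{lem:theta} and~\ref{lem:D(a)}: the former is a quadratic-residue computation and the latter builds the tree via a sign-controlled, even-step Euclidean algorithm and identifies its inverse matrix. Consequently the only point that genuinely needs care at this stage is bookkeeping, namely confirming that the coprimality, parity, and size constraints demanded by Lemma~\ref{lem:D(a)} survive the passage from \(q^\theta_{p^j}\) to the isomorphic normalised form \(q^{\theta'}_{p^j}\); beyond that I anticipate no obstacle.
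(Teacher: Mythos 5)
Your proposal is correct and follows exactly the route the paper takes: the paper's proof of Lemma~\ref{lem:cyclic} is a one-line citation of Theorem~\ref{thm:indcomposable_qlfs}, Lemma~\ref{lem:D(a)} and Lemma~\ref{lem:theta}, which is precisely the chain you assemble. Your extra verification that the pair $(p^j,\theta')$ satisfies the coprimality, parity and size hypotheses of Lemma~\ref{lem:D(a)} is the bookkeeping the paper leaves implicit, and it checks out.
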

% subsection subsec:cyclic_linking_forms (end)
%%%%%%%%%%%%%%%%%%%%%%%%%%%%%%%%%%%%%%%%%%%%%%%%%%%%%%%%%%%%%%%%%%%%%%%%%%%%%%%%%%%%%%%%%%%%%%%%

% (fold)
\subsection{Hyperbolic linking forms} \label{subsec:hyperbolic_linking_forms}
%%%%%%%%%%%%%%%%%%%%%%%%%%%%%%%%%%%%%
In this subsection, we show how to present the $2$-primary hyperbolic linking forms $H(\Z_{2^j})$ by 
even labelled trees.  We begin with some general considerations which we shall also use in the pseudo-hyperbolic
case which follows in Section \ref{subsec:pseudo-hyperbolic_qlfs}.

Consider the following situation.  We let $\mathfrak{t} = ((\V, \E), (a_i))$ be a $\Z$-labelled tree with distinguished
vertex $v_0$.  We let $\wh{\mathfrak{t}}$ be the labelled subgraph obtained by removing $v_0$ and of course all the edges containing $v_0$.  
Given $a \in \Z$, we form a new labelled tree by adding new verticies 
$\{v_{-1}, \dots, v_{-m}\}$ each with label $a$ and a single edge from each new vertex to $v_0$.  We shall consider just the two labelled trees
where $m = 1, 2$,
\[ a-\mathfrak{t} := 
\entrymodifiers={++[o][F-]}
\xymatrix{a \ar@{-}[r] & *\txt{\,$\mathfrak{t}$} }\,, \]
and
\[ 
\entrymodifiers={++[o][F-]}
\xymatrix{ *\txt{} & a \ar@{-}[dr] \\ *\txt{$(a,a)-\mathfrak{t}$ := } & *\txt{} & *\txt{\,$\mathfrak{t}$\,.} \\ *\txt{} & a \ar@{-}[ur]  } \]
Let $A$ be the intersection matrix of $\lambda_{(a,a)-\mathfrak{t}}$ obtained from the
basis $\{v_{-2}, v_{-1}, v_0, v_1, \dots, v_r\}$ and let
$M_{-1, -2}(A)$ be the submatrix of $A$ obtained by deleting the -1 row and -2 column from $A$.

\begin{Lemma} \label{lem:det}
%%%%%%%%%%%%%
Let $x = \det(\lambda_\mathfrak{t})$ and $y = \det(\lambda_{\wh{\mathfrak{t}}})$.
We adopt the convention that if $\wh{\mathfrak{t}}$ is the empty graph, then $\det(\lambda_{\wh{\mathfrak{t}}}) = 1$.
Then
\begin{enumerate}
\item \label{lem:det:a}
$\det(\lambda_{a-\mathfrak{t}}) = ax - y$,
\item \label{lem:det:aa}
$\det(\lambda_{(a,a)-\mathfrak{t}}) = a^2x - 2ay$,
\item \label{lem:det:M}
$\det(M_{-1,-2}) = -y$.
\end{enumerate}
\end{Lemma}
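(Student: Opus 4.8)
The plan is to prove all three formulas by iterated cofactor expansion, exploiting the fact that each new vertex $v_{-1}$, $v_{-2}$ is joined to $\mathfrak{t}$ by a single edge whose other endpoint is $v_0$, so the corresponding rows of the intersection matrix are almost entirely zero. Throughout I would order the basis so that the new vertices come first, $v_0$ comes next, and the vertices of $\wh{\mathfrak{t}}$ come last; then the matrix of $\lambda_\mathfrak{t}$ occupies the block indexed by $\{v_0, v_1, \dots, v_r\}$, the matrix $A_{\wh{\mathfrak{t}}}$ of $\lambda_{\wh{\mathfrak{t}}}$ occupies the block indexed by $\{v_1, \dots, v_r\}$, and the only entries linking $v_0$ to $\wh{\mathfrak{t}}$ lie in the $v_0$-row and $v_0$-column.

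For \eqref{lem:det:a}, I would expand $\det(\lambda_{a-\mathfrak{t}})$ along the $v_{-1}$-row, which reads $(a, 1, 0, \dots, 0)$. The $a$-term contributes $a\det(\lambda_{\mathfrak{t}}) = ax$. In the $1$-term the relevant minor is obtained by deleting the $v_{-1}$-row and the $v_0$-column; its $v_{-1}$-column is $(1, 0, \dots, 0)^{T}$, and a further one-step expansion along that column leaves exactly the matrix of $\lambda_{\wh{\mathfrak{t}}}$, with determinant $y$ and an overall sign making the term $-y$. Hence $\det(\lambda_{a-\mathfrak{t}}) = ax - y$.

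For \eqref{lem:det:aa}, I would order the basis $v_{-2}, v_{-1}, v_0, v_1, \dots, v_r$ and expand $\det(\lambda_{(a,a)-\mathfrak{t}})$ along the $v_{-2}$-row $(a, 0, 1, 0, \dots, 0)$. Deleting the $v_{-2}$-row and $v_{-2}$-column leaves precisely the intersection matrix of $\lambda_{a-\mathfrak{t}}$, so the $a$-term gives $a(ax - y)$ by \eqref{lem:det:a}. In the $1$-term, deleting the $v_{-2}$-row and $v_0$-column and then expanding along the resulting $v_{-2}$-column (again a single $1$, now in the $v_0$-slot) leaves the block-diagonal matrix with blocks $(a)$ and $A_{\wh{\mathfrak{t}}}$; tracking the signs this contributes $-ay$. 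Adding the two terms gives $a^2 x - 2ay$. Part \eqref{lem:det:M} is the same computation with one fewer layer: delete the $v_{-1}$-row and the $v_{-2}$-column of $A$, expand along the surviving $v_{-1}$-column (a single $1$ in the $v_0$-slot), then along the surviving $v_{-2}$-row (a single $1$ in the $v_0$-slot), and what remains is $A_{\wh{\mathfrak{t}}}$, the net effect of the two expansions being a single sign change; thus $\det(M_{-1,-2}) = -y$.

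I do not expect a genuine obstacle here: every expansion is along a row or column with at most one nonzero off-diagonal entry, so the recursion terminates immediately. The only point demanding care is the sign bookkeeping in the nested Laplace expansions together with the consistent identification of each residual submatrix with $A_\mathfrak{t}$, $A_{\wh{\mathfrak{t}}}$, or the block-diagonal matrix $\operatorname{diag}(a, A_{\wh{\mathfrak{t}}})$; keeping the new vertices at the front of the ordering and $v_0$ adjacent to the vertices of $\wh{\mathfrak{t}}$ should keep this transparent.
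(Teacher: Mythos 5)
Your proof is correct and follows essentially the same route as the paper: iterated cofactor expansion, with part (1) established by a one-step expansion along the new row, part (2) reduced to part (1) by expanding along the $v_{-2}$-row, and part (3) obtained by two further one-step expansions with a single net sign change. The paper phrases part (2) as a direct reapplication of part (1) to the tree $a\text{-}\mathfrak{t}$ (noting $\wh{a\text{-}\mathfrak{t}}$ is the disjoint union of an isolated $a$-vertex and $\wh{\mathfrak{t}}$), but this is computationally identical to what you do.
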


\begin{proof}
%%%%%%%%%%%%%
\eqref{lem:det:a}
Ordering the vertex set of $a-\mathfrak{t}$ as $(v_{-1}, v_0, \dots, v_n)$ where $(v_0, \dots v_n)$ is the vertex
set of $X$ and $v_0$ is the distinguished vertex where we attached the new edge, we see that $\lambda_{a-\mathfrak{t}}$
has intersection matrix
\[ \left( \begin{array}{ccccc} a & 1 & 0 & 0 & \dots\\
1 & a_{0} & a_{01} & a_{02} & \dots \\
0 & a_{01} & a_1 & a_{12} & \dots \\
0 & a_{02} & a_{12} & a_2 & \dots \\
\vdots & \vdots & \vdots & \vdots & \ddots \end{array} \right). \]
Using the first column expansion for the determinant of $\lambda_{a-\mathfrak{t}}$ gives the result.

\eqref{lem:det:aa}
Follows by applying part \eqref{lem:det:a} to $\lambda_{(a,a)-\mathfrak{t}}$.

\eqref{lem:det:M}
We order the vertex set of $(a,a)-\mathfrak{t}$ as $\{v_{-2}, v_{-1}, v_0, \dots, v_r \}$ where $\{v_0, \dots v_r\}$ is the vertex set of $\mathfrak{t}$ and $v_0$ is the distinguished vertex where we attached the new edge.  With respect to this basis, the intersection matrix of $\lambda_{(a, a)-\mathfrak{t}}$ is
\[ %\lambda_{(a, a)-X} = 
A = \left( \begin{array}{cccccc} 
a & 0 & 1 & 0 & 0 & \dots\\
0 & a & 1 & 0 & 0 & \dots \\
1 & 1 & a_0 & a_{01} & a_{02} & \dots \\
0 & 0 & a_{01} & a_1  & a_{12} & \dots \\
0 & 0 & a_{02} & a_{12} & a_2 & \dots \\
\vdots & \vdots & \vdots & \vdots & \vdots & \ddots  \end{array} \right). \]
%
%We can then present the intersection matrix of $\lambda_{(a,a)-\mathfrak{t}}$, the submatrix 
Let $M := M_{-1, -2}(A)$ and consider the following matricies:
\[ 
%\lambda_{(a, a)-X} = 
%\left( \begin{array}{ccccc} a & 0 & 1 & 0 & \dots\\
%0 & a & 1 & 0 & \dots \\
%1 & 1 & a_0 & a_{01} & \dots \\
%0 & 0 & a_{01} & a_1  & \dots \\
%0 & 0 & a_{02} & a_{12} & \dots \end{array} \right);
%\quad
M = 
\left( \begin{array}{ccccc} 
0 & 1 & 0 & 0 & \dots \\
1 & a_0 & a_{01} & a_{02} & \dots \\
0 & a_{01} & a_1  & a_{12} & \dots \\
0 & a_{02} & a_{12} & a_2 & \dots \\
\vdots & \vdots & \vdots & \vdots & \ddots\end{array} \right),~~
%\quad
M_1 = 
\left( \begin{array}{ccccc} 
1 & a_{01} & a_{02} & \dots \\
0 & a_1  & a_{12} & \dots \\
0 & a_{12} & a_2 & \dots \\
\vdots & \vdots & \vdots & \ddots\end{array} \right),~~
%\quad
M_2 = 
\left( \begin{array}{ccccc} 
a_1  & a_{12} & \dots \\
a_{12} & a_2 & \dots \\
\vdots & \vdots & \ddots \end{array} \right).
\]
Here $M_1$ is a submatrix of $M$ and $M_2$ is a submatrix of $M_1$ 
such that the following hold:
\begin{enumerate}
\item $M_2$ is the intersection matrix of $\lambda_{\wh{\mathfrak{t}}}$ with respect to the basis $\{v_0, \dots, v_r\}$,
\item $\det(M) = - \det(M_1) = -\det(M_2) = -y$.
\end{enumerate}
\end{proof}

%We first use Lemma \ref{lem:det} to show that labelled tree $Y_k$ presents
%Now recall the hyperbolic linking form on $\Z_{2^j} \times \Z_{2^j}$,
%
%\[ b^0_k = \left( \begin{array}{ccccc} 
%0  & 2^{-k} \\
%2^{-k} & 0 \end{array} \right). \]
%

Consider the even labelled tree
\[ 
\entrymodifiers={++[o][F-]}
\xymatrix{ *\txt{} & 2^j \ar@{-}[dr] \\ *\txt{$\mathfrak{u}_j : =$} & *\txt{} & 0 \ar@{-}[rr] & *\txt{} & -2^j \\ *\txt{} & 2^j \ar@{-}[ur] } \]
which has the intersection matrix:
\begin{equation} \label{eq:u_k} 
A_{\mathfrak{u}_j} = 
\left( \begin{array}{cccc} 2^j & 0 & 0 & 1 \\
0 & 2^j & 0 & 1 \\
0 & 0 & -2^j & 1\\
1 & 1 & 1 & 0 \end{array} \right). 
\end{equation}

\begin{Lemma} \label{lem:hyperbolic}
%%%%%%%%%%%%%
The even labelled tree $\mathfrak{u}_j$ presents the linking form $H(\Z_{2^j})$.
\end{Lemma}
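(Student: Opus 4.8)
The plan is to compute the algebraic boundary $\del(H_{\mathfrak{u}_j},\lambda_{\mathfrak{u}_j})$ directly from the matrix $A_{\mathfrak{u}_j}$ of \eqref{eq:u_k} and then to exhibit an explicit isomorphism onto $H(\Z_{2^j})$. Label the vertices of $\mathfrak{u}_j$ as $v_1,v_2,v_3,v_4$, where $v_1,v_2$ carry the labels $2^j$, the vertex $v_3$ carries $-2^j$, and $v_4$ is the central $0$-vertex adjacent to $v_1,v_2,v_3$. Write $G := \coker(\wh\lambda_{\mathfrak{u}_j})$ and $g_i := \pi(v_i^*) \in G$.

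First I would identify $G$. The relations defining $G = H^*/\wh\lambda(H)$ are $\wh\lambda(v_i) = \sum_k (A_{\mathfrak{u}_j})_{ik}\,v_k^* = 0$, that is $2^j v_1^* + v_4^* = 0$, $2^j v_2^* + v_4^* = 0$, $-2^j v_3^* + v_4^* = 0$, and $v_1^* + v_2^* + v_3^* = 0$. Substituting the fourth relation, and then the third, into the first two shows that $G$ is generated by $g_1,g_2$ subject only to $2^j g_1 = 2^j g_2 = 0$, so $G$ is a quotient of $\Z_{2^j}^2$. On the other hand, Lemma~\ref{lem:det} applied with $\mathfrak{t}$ the one-edge path on the $0$- and $(-2^j)$-vertices (so $x = \det\lambda_{\mathfrak{t}} = -1$ and $y = \det\lambda_{\wh{\mathfrak{t}}} = -2^j$) gives $\det A_{\mathfrak{u}_j} = (2^j)^2 x - 2\cdot 2^j y = 2^{2j}$, whence $|G| = 2^{2j}$ by Lemma~\ref{lem:inverse_form}~\eqref{lem:inverse_form:detA}. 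Therefore $G \cong \Z_{2^j}^2$ with basis $\{g_1,g_2\}$.

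Next I would read off $b$ and the homogeneous quadratic refinement $q$ from $A_{\mathfrak{u}_j}^{-1}$, using Lemma~\ref{lem:inverse_form}~\eqref{lem:inverse_form:lambda} and Definition~\ref{def:boundary_of_an_eqf}. Only the three entries $A^{-1}_{11}$, $A^{-1}_{22}$, $A^{-1}_{12}$ are needed; each equals $(\det A_{\mathfrak{u}_j})^{-1}$ times a small cofactor, and a short computation gives $A^{-1}_{11} = A^{-1}_{22} = 0$ and $A^{-1}_{12} = -2^{-j}$. Hence $b(g_i,g_i) = 0$ and $q(g_i) = 0$ for $i = 1,2$, while $b(g_1,g_2) = -2^{-j}\bmod\Z$. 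Thus on the basis $\{g_1,g_2\}$ the form $\del(H_{\mathfrak{u}_j},\lambda_{\mathfrak{u}_j})$ agrees with $H(\Z_{2^j})$ except that the off-diagonal entry of $b$ is $-2^{-j}$ rather than $2^{-j}$; the basis change $(g_1,g_2)\mapsto(g_1,-g_2)$ repairs this, fixing the vanishing diagonal values of $b$ and $q$ and sending $b(g_1,g_2)$ to $2^{-j}$. Therefore $\del(H_{\mathfrak{u}_j},\lambda_{\mathfrak{u}_j}) \cong H(\Z_{2^j})$, i.e.\ $\mathfrak{u}_j$ presents $H(\Z_{2^j})$.

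Everything here is elementary linear algebra, so I do not expect a genuine obstacle; the only points requiring care are the sign bookkeeping together with the $\bmod\ \Z$ reductions, and the observation that the presentation of $G$ obtained from the relations has no further hidden relations — which is forced by the equality $|G| = |\det A_{\mathfrak{u}_j}|$. In short, the ``hard part'' is purely organizational rather than conceptual.
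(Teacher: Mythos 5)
Your proof is correct and follows essentially the same route as the paper: compute $|\det A_{\mathfrak{u}_j}| = 2^{2j}$ to pin down $|G|$, extract the top-left $2\times 2$ block of $A_{\mathfrak{u}_j}^{-1}$, and identify the resulting boundary linking form. Two points distinguish your version. First, you identify $G\cong\Z_{2^j}^2$ directly from the relations $\wh\lambda(v_i)=0$, whereas the paper concludes indirectly (``$\del(\lambda_{\mathfrak{u}_j})$ contains $H(\Z_{2^j})$ as a sublinking form, and has the right order''); both are fine, but your derivation of $G$ is a little more self-contained. Second — and this is worth noting — you are more careful with the cofactor sign: the $(1,2)$-entry of $A_{\mathfrak{u}_j}^{-1}$ is $(-1)^{1+2}\det(M_{-1,-2})/\det A = -2^{-j}$, not $+2^{-j}$ as the paper's displayed submatrix asserts. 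This is harmless (as you observe, the basis change $g_2\mapsto -g_2$ fixes the diagonal and flips the off-diagonal sign), but the paper's ``It follows that $A_{\mathfrak{u}_j}^{-1}$ contains the submatrix $\bigl(\begin{smallmatrix}0 & 2^{-j}\\ 2^{-j}&0\end{smallmatrix}\bigr)$'' silently drops the sign of the cofactor, and your explicit repair step makes the argument cleaner on this point.
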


\begin{proof}
%%%%%%%%%%%%%
Let $\mathfrak{u}'_j$ be the labelled tree
\[ \mathfrak{u}'_j\,:= 
\entrymodifiers={++[o][F-]}
\xymatrix{ 0 \ar@{-}[r] & -2^j} \]
with preferred vertex labelled by $0$.  Then $\mathfrak{u}_j = (2^j, 2^j)-\mathfrak{u}'_j$.  
Now $\det(\lambda_{\mathfrak{u}'_j}) = -1$ and $\det(\lambda_{\wh{\mathfrak{u}'_j}}) = -2^j$.
It follows by 
Lemma~\ref{lem:det}~\eqref{lem:det:aa} that 
$\det(\lambda_{\mathfrak{u}_j}) = -2^{2j} + 2^{2j+1} = 2^{2j}$.
If $A_{\mathfrak{u}_j}$ is the intersection matrix of $\lambda_{\mathfrak{u}_j}$, 
Lemma~\ref{lem:det}~\eqref{lem:det:a} gives that the submatrices $M_{-1, -1}(A_{\mathfrak{u}_j})$ and 
$M_{-2, -2}(A_{\mathfrak{u}_j})$ satisfy
\[ \det(M_{-1, -1}) = \det(M_{-2, -2}) = \det(\lambda_{2^j-\mathfrak{u}'_j}) = 
-2^j + 2^j = 0 .\]
By Lemma~\ref{lem:det}~\eqref{lem:det:M}, the minor $M_{-1, -2}$ of $A_{\mathfrak{u}_j}$ has
\[ \det(M_{-1, -2}) = -\det(\lambda_{\wh{\mathfrak{u}'_j}}) = 2^j. \]
It follows that $A_{\mathfrak{u}_j}^{-1}$ contains the submatrix
\[ \left( \begin{array}{ccccc} 
0  & 2^{-j} \\
2^{-j} & 0 \end{array} \right).    \]
Let $(G, b, q) = \del(H_{\mathfrak{u}_j}, \lambda_{\mathfrak{u}_j})$.
Applying Definition~\ref{def:boundary_of_an_eqf}~\eqref{def:boundary_of_an_eqf:q_k<>2,4}, 
we see that the boundary of $(H_{\mathfrak{u}_j},\lambda_{\mathfrak{u}_j})$ contains $H(\Z_{2^j})$ as a sublinking form.  
But since $\det(A_{\mathfrak{u}_j}) = 2^{2j}$,
it follows by Lemma~\ref{lem:inverse_form}~\eqref{lem:inverse_form:detA} that $|G|=2^{2j},$ and thus $(G, b, q) \cong H(\Z_{2^j})$. 
%is all of $\del(\lambda_{Y_k})$.
That is, $\mathfrak{u}_j$ presents $H(\Z_{2^j})$.
%
%But modulo $\Z$, this linking matrix is equivalent to the linking form of $H(\Z_{2^j})$.
%Since this linking form is defined on $\Z_{2^j} \times \Z_{2^j}$ which has order
%$2^{2k} = \det(\lambda_{Y_k})$, it follows that the boundary of $\lambda_{Y_k}$ is isomorphic to $b^0_k$.
%	
\end{proof}
% subection hyperbolic_qlfs (end)
%%%%%%%%%%%%%%%%%%%%%%%%%%%%%%%%%%%%%%%%%%%%%%%%%%%%%%%%%%%%%%%%%%%%%%%%%%%%%%%%%%%%%%%%%%%%%%%%

% (fold)
\subsection{Pseudo-hyperbolic quadratic linking forms} \label{subsec:pseudo-hyperbolic_qlfs}
%%%%%%%%%%%%%%%%%%%%%%%%%%%%%%%%%%%%%%%%%%%%%%%%%%%%%%%%%%%%%%%%%%%%%%%%%%%%%%%%%%%%%%%%%%%%%%%%
In this subsection we apply Lemma \ref{lem:det} to find an even labelled tree $\mathfrak{v}_j$ which presents
the pseudo-hyperbolic linking form $F(\Z_{2^j})$.
%
%\[ b^1_k = \left( \begin{array}{ccccc} 
%2^{1-k}  & 2^{-k} \\
%2^{-k} & 2^{1-k} \end{array} \right). \]
%
We define $\epsilon_j = \pm 1$ by setting $\epsilon_j = 1$ if $j$ is odd and $\epsilon_j = -1$ if $j$ is even.
%by the formula $\epsilon_j = -1 \cdot (-1)^{\frac{3-(-1)^k}{2}}$.
We then consider the even labelled tree
\[  \mathfrak{t}_j\,:= \quad
\entrymodifiers={++[o][F-]}
\xymatrix{  0 \ar@{-}[r] & r_j \ar@{-}[r] & 0 \ar@{-}[r] & 0 \ar@{-}[r] & 2 \ar@{-}[r] & 2  }  \]
where $r_j := \frac{2(1 -\epsilon_j \cdot 2^{j-1})}{3} \in 2\Z$.  Choosing the left-most vertex of $\mathfrak{t}_j$ as
the preferred vertex $v_0$, we then form the even labelled tree 
$\mathfrak{v}_j : = (\epsilon_j \cdot 2^j, \epsilon_j \cdot 2^j)-\mathfrak{t}_j$, which can be written as follows:
\[ 
\entrymodifiers={++[o][F-]}
\xymatrix{ *\txt{} & \epsilon_j \cdot 2^j \ar@{-}[dr] \\ 
*\txt{$\mathfrak{v}_j\,:=$} & *\txt{} & 0 \ar@{-}[r] & r_j \ar@{-}[r] & 0 \ar@{-}[r] & 0 \ar@{-}[r] & 2 \ar@{-}[r] & 2
\\ *\txt{} & \epsilon_j \cdot 2^j \ar@{-}[ur]  }  \]

\begin{Lemma} \label{lem:psuedo-hyperbolic1}
%%%%%%%%%%%%%
The labelled tree $\mathfrak{v}_j = (\epsilon_j \cdot 2^j, \epsilon_j \cdot 2^j)-\mathfrak{t}_j$
presents the linking form $F(\Z_{2^j})$.
\end{Lemma}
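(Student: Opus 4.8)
The plan is to mimic the proof of Lemma~\ref{lem:hyperbolic}, reducing everything to a pair of determinant computations via Lemma~\ref{lem:det}. Following the set-up of Section~\ref{subsec:pseudo-hyperbolic_qlfs}, write $a := \epsilon_j \cdot 2^j$, so that $\mathfrak{v}_j = (a,a)-\mathfrak{t}_j$ with preferred vertex $v_0$ the left-most vertex of $\mathfrak{t}_j$, and set $x := \det(\lambda_{\mathfrak{t}_j})$ and $y := \det(\lambda_{\wh{\mathfrak{t}}_j})$, where $\wh{\mathfrak{t}}_j$ is $\mathfrak{t}_j$ with $v_0$ removed.

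First I would compute $x$ and $y$ using the standard three-term recurrence for determinants of tridiagonal matrices. The matrix of $\lambda_{\mathfrak{t}_j}$ has diagonal $(0, r_j, 0, 0, 2, 2)$ with $1$'s on the super- and sub-diagonal, and the recurrence gives $x = 3$, independent of $r_j$. Deleting $v_0$ leaves the path with diagonal $(r_j, 0, 0, 2, 2)$, whose determinant is $2 - 3 r_j$; since $3 r_j = 2 - \epsilon_j 2^j$ by the definition of $r_j$, this gives $y = \epsilon_j \cdot 2^j$.

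Next I would substitute into Lemma~\ref{lem:det}. Part~\eqref{lem:det:aa} yields $\det(\lambda_{\mathfrak{v}_j}) = a^2 x - 2 a y = 3\cdot 2^{2j} - 2 \cdot 2^{2j} = 2^{2j}$, so by Lemma~\ref{lem:inverse_form}~\eqref{lem:inverse_form:detA} the finite group $G := \coker(\wh{\lambda}_{\mathfrak{v}_j})$ has order $2^{2j}$. Part~\eqref{lem:det:a} shows that each of the two minors of $A_{\mathfrak{v}_j}$ obtained by deleting one of the pendant vertices has determinant $a x - y = \epsilon_j 2^{j+1}$, and part~\eqref{lem:det:M} shows that the mixed minor on the two pendant vertices has determinant $-y = -\epsilon_j 2^j$. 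Dividing through by $\det(\lambda_{\mathfrak{v}_j}) = 2^{2j}$ and applying Lemma~\ref{lem:inverse_form}~\eqref{lem:inverse_form:lambda}, the restriction of $\lambda^{-1}$ to the dual basis vectors $v_{-1}^*, v_{-2}^*$ of the two pendant vertices has matrix with diagonal entries $\epsilon_j 2^{1-j}$ and off-diagonal entry $\pm \epsilon_j 2^{-j}$. By Definition~\ref{def:boundary_of_an_eqf}~\eqref{def:boundary_of_an_eqf:linking_form} and~\eqref{def:boundary_of_an_eqf:q_k<>2,4}, the classes $\pi(v_{-1}^*)$ and $\pi(v_{-2}^*)$ in $G$ span a sublinking form of $\del(H_{\mathfrak{v}_j}, \lambda_{\mathfrak{v}_j})$ whose linking matrix is $\pm$ that of $b_F(\Z_{2^j})$ mod $\Z$ and whose quadratic refinement takes the value $\epsilon_j 2^{-j}$ on each generator. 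After negating one generator if necessary, and invoking the isomorphism $F(\Z_{2^j}) \cong -F(\Z_{2^j})$ (an explicit change of basis of $\Z_{2^j}^2$, needed only when $j$ is even), this sublinking form is isomorphic to $F(\Z_{2^j})$; since its order $2^{2j}$ equals $|G|$, it exhausts $\del(H_{\mathfrak{v}_j}, \lambda_{\mathfrak{v}_j})$, so $\mathfrak{v}_j$ presents $F(\Z_{2^j})$.

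I expect the sign bookkeeping to be the main obstacle: one must verify that the diagonal and off-diagonal entries of the inverse submatrix, together with the value of the quadratic refinement, assemble into the pseudo-hyperbolic form $F(\Z_{2^j})$ rather than the hyperbolic form $H(\Z_{2^j})$ --- the two share an underlying group of order $2^{2j}$ and are distinguished precisely by the diagonal of $b$ and by $q$ --- and one must keep track of the parity-dependent sign $\epsilon_j$, recording in particular the basis change realising $F(\Z_{2^j}) \cong -F(\Z_{2^j})$ for $j$ even. The determinant computations and the check that $r_j \in 2\Z$ are routine.
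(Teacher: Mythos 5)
Your proof is correct and follows the same route as the paper's. The paper organises the argument into two sub-lemmas: Lemma~\ref{lem:psuedo-hyperbolic3} records the determinant computation $\det(\lambda_{\mathfrak{t}_j}) = 3$ and $\det(\lambda_{\wh{\mathfrak{t}}_j}) = -3r_j + 2 = \epsilon_j 2^j$ (your step using the tridiagonal recurrence, which is just iterated application of Lemma~\ref{lem:det}~\eqref{lem:det:a}), and Lemma~\ref{lem:psuedo-hyperbolic2} carries out exactly the computation you describe: Lemma~\ref{lem:det}~\eqref{lem:det:aa} gives $\det = 2^{2j}$, parts~\eqref{lem:det:a} and~\eqref{lem:det:M} give the three relevant cofactors, hence the $2\times 2$ block of $A_{\mathfrak{v}_j}^{-1}$ indexed by the pendant vertices is $\epsilon_j \bigl(\begin{smallmatrix} 2^{1-j} & 2^{-j} \\ 2^{-j} & 2^{1-j} \end{smallmatrix}\bigr)$, and the order count $|G| = 2^{2j}$ from Lemma~\ref{lem:inverse_form}~\eqref{lem:inverse_form:detA} forces the boundary to be exactly this $2$-generator sublinking form.

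Two small remarks on bookkeeping, both of which you flag appropriately. First, the cofactor signs in fact work out so that the off-diagonal entry of the inverse block is $+\epsilon_j 2^{-j}$ (not $\pm$); the only genuine sign ambiguity is the global factor $\epsilon_j$. Second, you are right that when $j$ is even (so $\epsilon_j = -1$) one is left with $-F(\Z_{2^j})$ and needs $F(\Z_{2^j}) \cong -F(\Z_{2^j})$ to finish. The paper's proof of Lemma~\ref{lem:psuedo-hyperbolic2} does not comment on this point and simply reads the sign off from the matrix $\epsilon \bigl(\begin{smallmatrix} 2^{1-j} & 2^{-j} \\ 2^{-j} & 2^{1-j} \end{smallmatrix}\bigr)$; the needed isomorphism does hold (it follows, for instance, from Wall's classification in Theorem~\ref{thm:indcomposable_qlfs}: $-F(\Z_{2^j})$ is indecomposable and homogeneous on $\Z_{2^j}^2$, and one checks it is not isomorphic to $H(\Z_{2^j})$), so your more careful treatment is a genuine, if minor, improvement in exposition rather than a deviation in method.
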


Lemma \ref{lem:psuedo-hyperbolic1} follows straight away from the following two lemmas.

\begin{Lemma} \label{lem:psuedo-hyperbolic2}
%%%%%%%%%%%%%
Let $\mathfrak{t}$ be a labelled tree with preferred vertex $v_0$ and subgraph $\wh{\mathfrak{t}}$ obtained by deleting $v_0$.
For $\epsilon = \pm 1$, if $\det(\lambda_{\mathfrak{t}}) = 3$ and 
$\det(\lambda_{\wh{\mathfrak{t}}}) = \epsilon \cdot 2^j$, then the labelled
graph $\mathfrak{v}_j : = (\epsilon \cdot 2^j, \epsilon \cdot 2^j)-\mathfrak{t}$ presents $F(\Z_{2^j})$.	
\end{Lemma}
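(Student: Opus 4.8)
The plan is to compute $\del(H_{\mathfrak{v}_j}, \lambda_{\mathfrak{v}_j})$ explicitly, following the template of the proof of Lemma~\ref{lem:hyperbolic} and letting Lemma~\ref{lem:det} do the determinant bookkeeping. Write $a := \epsilon\cdot 2^j$, so that $\mathfrak{v}_j = (a,a)-\mathfrak{t}$, and set $x := \det(\lambda_{\mathfrak{t}}) = 3$ and $y := \det(\lambda_{\wh{\mathfrak{t}}}) = \epsilon\cdot 2^j$. By Lemma~\ref{lem:det}\eqref{lem:det:aa}, $\det(\lambda_{\mathfrak{v}_j}) = a^2x - 2ay = 3\cdot 2^{2j} - 2\cdot 2^{2j} = 2^{2j}$, so Lemma~\ref{lem:inverse_form}\eqref{lem:inverse_form:detA} gives $|G| = 2^{2j}$ for $G := \coker(\wh\lambda_{\mathfrak{v}_j})$. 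Let $A$ be the intersection matrix of $\lambda_{\mathfrak{v}_j}$ and $v_{-1}, v_{-2}$ the two added vertices. Deleting $v_{-1}$ (or $v_{-2}$) from $\mathfrak{v}_j$ leaves the matrix of $\lambda_{a-\mathfrak{t}}$, so Lemma~\ref{lem:det}\eqref{lem:det:a} gives $\det M_{-1,-1}(A) = \det M_{-2,-2}(A) = ax - y = \epsilon\cdot 2^{j+1}$, while Lemma~\ref{lem:det}\eqref{lem:det:M} gives $\det M_{-1,-2}(A) = -y = -\epsilon\cdot 2^j$. Dividing by $\det A = 2^{2j}$, the $2\times 2$ block of $A^{-1}$ indexed by $v_{-1}, v_{-2}$ is $\epsilon\bigl(\begin{smallmatrix} 2^{1-j} & \pm 2^{-j}\\ \pm 2^{-j} & 2^{1-j}\end{smallmatrix}\bigr)$, the sign of the off-diagonal entry being irrelevant since replacing $v_{-1}^*$ by $-v_{-1}^*$ reverses it. Putting $g_i := \pi(v_{-i}^*) \in G$ and combining Lemma~\ref{lem:inverse_form}\eqref{lem:inverse_form:lambda} with Definition~\ref{def:boundary_of_an_eqf} (for the even form $\lambda_{\mathfrak{v}_j}$, so that $\alpha = 0$ is admissible), we get $b(g_i, g_i) = \epsilon 2^{1-j}$, $b(g_1, g_2) = \epsilon 2^{-j}$ and $q(g_i) = \epsilon 2^{1-j}/2 = \epsilon 2^{-j}$, whence $q(a g_1 + b g_2) = \epsilon(a^2 + ab + b^2)/2^j \bmod \Z$.

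It then remains to identify $G$. I would show $G \cong (\Z/2^j)^2$ with basis $\{g_1, g_2\}$ by localising at the prime $2$: since $\det(\lambda_{\mathfrak{t}}) = 3$ is a unit in $\Z_{(2)}$, the matrix of $\lambda_{\mathfrak{t}}$ is invertible over $\Z_{(2)}$, and a Schur-complement reduction shows that over $\Z_{(2)}$ the matrix $A$ is congruent to the block sum of the matrix of $\lambda_{\mathfrak{t}}$ and $\tfrac{\epsilon 2^j}{3}\bigl(\begin{smallmatrix}2 & -1\\ -1 & 2\end{smallmatrix}\bigr)$; as $G$ is a $2$-group and $\bigl(\begin{smallmatrix}2&-1\\-1&2\end{smallmatrix}\bigr)$ is $\Z_{(2)}$-invertible, the first summand contributes nothing and $G \cong \coker(2^j\cdot\mathrm{id}_{\Z^2}) \cong (\Z/2^j)^2$, generated by the images of $v_{-1}^*$ and $v_{-2}^*$. (Alternatively one argues, exactly as in Lemma~\ref{lem:hyperbolic}, that the displayed block of $A^{-1}$ carries a nonsingular sublinking form of order $2^{2j}$ on $\langle g_1, g_2\rangle$, which must therefore equal $G$.) Together with the previous paragraph this shows that $\del(H_{\mathfrak{v}_j}, \lambda_{\mathfrak{v}_j})$ is the quadratic linking form on $(\Z/2^j)^2$ with $q(a e_1 + b e_2) = \epsilon(a^2 + ab + b^2)/2^j$. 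For $\epsilon = 1$ this is literally $F(\Z_{2^j})$, so $\mathfrak{v}_j$ presents $F(\Z_{2^j})$ in this case.

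The main obstacle is the case $\epsilon = -1$, where the form obtained is $-F(\Z_{2^j})$ and one must verify $-F(\Z_{2^j}) \cong F(\Z_{2^j})$. I would do this by producing a basis $\{e_1', e_2'\}$ of $(\Z/2^j)^2$ with $q(e_i') = 2^{-j}$ and $b(e_1', e_2') = 2^{-j}$ (which by the quadratic formula forces $q = q_{F(\Z_{2^j})}$): finding a vector with $q = 2^{-j}$ amounts to solving $a^2 + ab + b^2 \equiv -1 \pmod{2^j}$, and completing the square turns this into $(2a+b)^2 \equiv -4 - 3b^2 \pmod{2^{j+2}}$, whose right-hand side is $\equiv 1 \pmod 8$ for $b$ odd and hence an odd square modulo $2^{j+2}$; a further short check produces a complementary vector. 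Alternatively one can avoid the change of basis via Wall's list (Theorem~\ref{thm:indcomposable_qlfs}): the form on $(\Z/2^j)^2$ just computed has all values of $q$ of order dividing $2^j$ (so no cyclic summand $q^\theta_{2^j}$ occurs) and no nonzero element $x$ with $q(x) = 0$ (so it is not hyperbolic), which forces it to be $F(\Z_{2^j})$. Either way Lemma~\ref{lem:psuedo-hyperbolic2} follows, and Lemma~\ref{lem:psuedo-hyperbolic1} is then the special case $\mathfrak{t} = \mathfrak{t}_j$, $\epsilon = \epsilon_j$, once the auxiliary determinants $\det(\lambda_{\mathfrak{t}_j}) = 3$ and $\det(\lambda_{\wh{\mathfrak{t}_j}}) = \epsilon_j\cdot 2^j$ are verified.
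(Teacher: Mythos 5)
Your computation follows exactly the same route as the paper's proof: Lemma~\ref{lem:det} gives the three relevant minors, $\det A_{\mathfrak{v}_j} = 2^{2j}$ together with Lemma~\ref{lem:inverse_form}~\eqref{lem:inverse_form:detA} gives $|G| = 2^{2j}$, and the $2\times 2$ block of $A^{-1}$ indexed by the added vertices is read off via Definition~\ref{def:boundary_of_an_eqf}. Your Schur-complement localisation identifying $G \cong (\Z/2^j)^2$ is a clean addition which the paper does not make explicit.

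The place where you go beyond the paper — and the place where you make a mistake — is the sign $\epsilon$. You are right that the paper's proof silently suppresses this: when $\epsilon = -1$ the $2\times 2$ block of $A^{-1}$ is the negative of the pseudo-hyperbolic matrix, so what is produced is $-F(\Z_{2^j})$, and one needs $-F(\Z_{2^j}) \cong F(\Z_{2^j})$. Your first argument for this is sound in outline: solving $a^2 + ab + b^2 \equiv -1 \pmod{2^j}$ by completing the square produces $\gamma \in \Z[\omega]$ with norm $\equiv -1 \pmod{2^j}$, and multiplication by $\gamma$ on $G \cong \Z[\omega]/2^j$ carries $q$ to $-q$, giving the isometry in one stroke (this also disposes of your vague ``complementary vector'' step: just take $\gamma$ times the original basis).

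Your alternative argument via Wall's list, however, rests on a false claim. It is not true that $F(\Z_{2^j})$ has no nonzero element $x$ with $q(x) = 0$: already in $F(\Z_4)$ the element $x = 2y + 2z \ne 0$ has $q(x) = 12/4 \equiv 0 \in \Q/\Z$, and more generally $x = 2^{\lceil j/2 \rceil}(y + z)$ is nonzero and isotropic whenever $j \geq 2$. Both $H(\Z_{2^j})$ and $F(\Z_{2^j})$ have nonzero isotropic vectors for $j \geq 2$, so your criterion does not separate them; what does is, for instance, the \emph{count} of isotropic elements (which is the same for $\pm F$ and different for $H$, so the classification argument can be salvaged, but not as stated). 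Note also that the Gauss-sum/Brown invariant fails to separate $F(\Z_{2^j})$ from $H(\Z_{2^j})$ when $j$ is even, so that shortcut is unavailable either.
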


\begin{Lemma} \label{lem:psuedo-hyperbolic3}
%%%%%%%%%%%%%
The labelled tree $\mathfrak{t}_j$ with preferred vertex the left-most vertex satisfies
\[ \det(\lambda_{\mathfrak{t}_j}) = 3 \quad \text{and} \quad \det(\lambda_{\wh{\mathfrak{t}_j}}) = \epsilon_j \cdot 2^j. \]
\end{Lemma}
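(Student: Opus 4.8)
The statement is purely a determinant computation, so the plan is to evaluate $\det(\lambda_{\mathfrak{t}_j})$ and $\det(\lambda_{\wh{\mathfrak{t}_j}})$ directly from their tridiagonal intersection matrices, as written down in \eqref{eq:AD(a)} for chains $\mathfrak{d}(\ul a)$. Recall that $\mathfrak{t}_j = \mathfrak{d}(0, r_j, 0, 0, 2, 2)$, and that deleting the left-most (preferred) vertex leaves $\wh{\mathfrak{t}_j} = \mathfrak{d}(r_j, 0, 0, 2, 2)$. For a chain $\mathfrak{d}(a_1, \dots, a_\ell)$ let $D_i$ denote the determinant of the intersection matrix of the sub-chain on the first $i$ vertices; then $D_0 = 1$, $D_1 = a_1$, and $D_i = a_i D_{i-1} - D_{i-2}$ for $i \geq 2$ (this is the cofactor expansion along the last row, i.e.\ repeated application of Lemma~\ref{lem:det}\eqref{lem:det:a}).

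First I would compute $\det(\lambda_{\wh{\mathfrak{t}_j}})$ by running the recurrence on $(a_1, \dots, a_5) = (r_j, 0, 0, 2, 2)$, which gives $D_2 = -1$, $D_3 = -r_j$, $D_4 = 1 - 2 r_j$ and finally $D_5 = 2 - 3 r_j$. Then I would substitute the defining relation $3 r_j = 2(1 - \epsilon_j 2^{j-1}) = 2 - \epsilon_j 2^j$ to conclude $\det(\lambda_{\wh{\mathfrak{t}_j}}) = 2 - (2 - \epsilon_j 2^j) = \epsilon_j \cdot 2^j$, as required.

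Next I would compute $\det(\lambda_{\mathfrak{t}_j})$ by running the same recurrence on $(0, r_j, 0, 0, 2, 2)$: the leading label $0$ forces $D_1 = 0$, so $D_2 = -1$ and the value of $r_j$ plays no further role; continuing, $D_3 = 0$, $D_4 = 1$, $D_5 = 2$ and $D_6 = 3$, so $\det(\lambda_{\mathfrak{t}_j}) = 3$. (Equivalently, Lemma~\ref{lem:det}\eqref{lem:det:a} applied at the left-most vertex, which carries label $0$, gives $\det(\lambda_{\mathfrak{t}_j}) = 0 \cdot \det(\lambda_{\wh{\mathfrak{t}_j}}) - \det(\lambda_{\mathfrak{d}(0,0,2,2)})$, and $\det(\lambda_{\mathfrak{d}(0,0,2,2)}) = -3$ by the same recurrence.) Feeding these two values into Lemma~\ref{lem:psuedo-hyperbolic2} with $\epsilon = \epsilon_j$ then yields Lemma~\ref{lem:psuedo-hyperbolic1}.

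There is essentially no obstacle here: the whole argument is the elementary tridiagonal recursion, and the only points requiring a little care are the sign bookkeeping in the recurrence and fixing the preferred vertex $v_0$ to be the left-most one, so that $\wh{\mathfrak{t}_j}$ is genuinely the chain $\mathfrak{d}(r_j, 0, 0, 2, 2)$. It is worth recording, as already noted at the definition of $\mathfrak{t}_j$, that $r_j \in 2\Z$: since $2 \equiv -1 \pmod 3$ we have $2^{j-1} \equiv (-1)^{j-1} = \epsilon_j \pmod 3$, so $3 \mid 1 - \epsilon_j 2^{j-1}$, and hence $\mathfrak{t}_j$ and $\mathfrak{v}_j$ really are even labelled trees to which the results of Section~\ref{subsec:treelike_forms} apply.
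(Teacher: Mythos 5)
Your proposal is correct and follows essentially the same route as the paper, namely repeated application of the tridiagonal recursion from Lemma~\ref{lem:det}\eqref{lem:det:a} to the chains $\mathfrak{d}(0,r_j,0,0,2,2)$ and $\mathfrak{d}(r_j,0,0,2,2)$; the paper simply states the outcome while you write out the intermediate subdeterminants explicitly (and add the useful side-check that $3\mid 1-\epsilon_j 2^{j-1}$, so $r_j$ really is an even integer).
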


\begin{proof}[Proof of Lemma \ref{lem:psuedo-hyperbolic2}]
%%%%%%%%%%%%%
%We set $x :=\det(X) = x$ and $y : = \det(\wh X) = y$.
By Lemma~\ref{lem:det}~\eqref{lem:det:aa}, 
$\det(\mathfrak{v}_j) = \epsilon^2(3 \cdot 2^{2j} - 2 \cdot 2^j \cdot 2^j) = 2^{2j}$.
By Lemma~\ref{lem:det}~\eqref{lem:det:a}, if $A_{\mathfrak{v}_j}$ is the intersection matrix of $\lambda_{\mathfrak{v}_j}$, we see that the minors $M_{-2, -2}$ and $M_{-1, -1}$ of $A_{\mathfrak{v}_j}$ satisfy
\[ \det(M_{-1, -1}) = \det(M_{-2, -2}) = \det(\lambda_{\epsilon \cdot 2^j-\mathfrak{t}_j}) = \epsilon(3 \cdot 2^j 
-2^j) = \epsilon \cdot 2^{j+1} .\]
By Lemma~\ref{lem:det}~\eqref{lem:det:M}, the minor $M_{-1, -2}$ of $A_{\mathfrak{v}_j}$ has
\[ \det(M_{-1, -2}) = - \epsilon \cdot 2^j. \]
It follows that $A_{\mathfrak{v}_j}^{-1}$ contains the submatrix
\[  \epsilon \cdot \left( \begin{array}{ccccc} 
2^{1-j}  & 2^{-j} \\
2^{-j} & 2^{1-j} \end{array} \right).    \]
Let $(G, b, q) = \del(H_{\mathfrak{v}_j}, \lambda_{\mathfrak{v}_j})$.
Applying Definition~\ref{def:boundary_of_an_eqf}~\eqref{def:boundary_of_an_eqf:q_k=2,4}, 
we see that boundary of $(H_{\mathfrak{v}_j}, \lambda_{\mathfrak{v}_j})$
contains a sublinking form isomorphic to $F(\Z_{2^j})$.
But since $\det(A_{\mathfrak{v}_j}) = 2^{2j}$, it follows Lemma~\ref{lem:inverse_form}~\eqref{lem:inverse_form:detA} 
that $|G| = 2^{2j}$ and thus $(G, b, q) \cong F(\Z_{2^j})$. 
%is all of $\del(\lambda_{Z_k})$.
That is, $\mathfrak{v}_j$ presents $F(\Z_{2^j})$.
%
%But modulo $\Z$, this linking matrix is equivalent to the linking form presented by $b^1_k$.
%Since this linking form is defined on $\Z_{2^j} \times \Z_{2^j}$ which has order
%$2^{2k} = \det(\lambda_{Z_k})$, it follows that the boundary of $\lambda_{Z_k}$ is isomorphic to $b^1_k$.
%That is, $\lambda_{Z_k}$ presents $b^1_k$.
%
\end{proof}

\begin{proof}[Proof of Lemma \ref{lem:psuedo-hyperbolic3}]
%Now consider the labelled tree:
%
%\[  \xymatrix{ X : = 2 \ar@{-}[r] & 2 \ar@{-}[r] & 0 \ar@{-}[r] & 0 \ar@{-}[r] & r \ar@{-}[r] & 0  }  \]
%
Applying Lemma \ref{lem:det}~\eqref{lem:det:a} repeatedly from the left-most vertex of $\mathfrak{t}_j$, we see that $\det(\mathfrak{t}_j) = 3$.
Choosing $v_0$ as the left-most vertex, then we have the even labelled tree:
\[  \wh{\mathfrak{t}}_j\,=\quad 
\entrymodifiers={++[o][F-]}
\xymatrix{  r_j \ar@{-}[r] & 0 \ar@{-}[r] & 0 \ar@{-}[r] & 2 \ar@{-}[r] & 2 }  \]
Again, applying Lemma \ref{lem:det}~\eqref{lem:det:a} repeatedly from the left-most vertex, we see that 
\[ \det(\wh{\mathfrak{t}}_j) = -3r_j + 2 = \epsilon_j \cdot 2^{j}. \]
%
%To apply Lemma \ref{lem:det} we need an integer solution to
%
%\[ -3r + 2 = \epsilon \cdot 2^j, \]
%
%where $\epsilon = \pm 1$.  This gives $r = \frac{2(1 -\epsilon \cdot 2^{k-1})}{3}$ which has
%an integer solution for $k$ odd and $\epsilon = 1$ and for $k$ even and $\epsilon = -1$.
%
\end{proof}

\begin{Remark} \label{rem:tomDieck}
There is a paper by tom Dieck \cite{tomDieck} which computes linking numbers for the linking forms
of the boundaries of treelike plumbings $M \in \del \cal{TP}^{4k}$.
Lemmas \ref{lem:D(a)}, \ref{lem:hyperbolic} and \ref{lem:psuedo-hyperbolic1} 
in Sections~\ref{subsec:cyclic_linking_forms}, \ref{subsec:hyperbolic_linking_forms} and
\ref{subsec:pseudo-hyperbolic_qlfs} above can also be verified by applying \cite[Satz 1]{tomDieck}.
\end{Remark}

\section{Treelike plumbings and their boundaries in other dimensions} \label{sec:other_dimensions}
%%%%%%%%%%%%%%%%%%%%%%%%%%%%%%%%%%%%%%%%%%%%%%%%%%%%%%%%%%%%%%%%%%%%%%%%%%%%%%%%%%%%%%%%
In Section \ref{sec:handlebodies} and \ref{sec:(4k-1)_manifolds} we focussed on treelike plumbings
of dimension $4k$, $k \geq 2$.  In this section we consider other dimensions.
In Section \ref{subsec:4k+1}, we look at the analogues of our results for plumbing manifolds of dimension $4k+2 \geq 6$.
In Section \ref{subsec:3-manifolds}, we briefly consider plumbing manifolds of dimension $4$.

% (fold)
\subsection{$2k$-parallelisable $(2k{-}1)$-connected $(4k{+}1)$-manifolds} \label{subsec:4k+1}
%%%%%%%%%%%%%%%%%%%%%%%%%%%%%%%%%%%%%%%%%%%%%%%%%%%%%%%%%%%%%%%%%%%%%%%%%%%%%%%%%%%%%%%%
In this subsection we work in dimension $4k{+}1$ and consider the question of which 
$2k$-parallelisable $(2k{-}1)$-connected $(4k{+}1)$-manifolds $M$ bound treelike plumbings.  
The principal difference between 
dimensions $4k{+}1$ and $4k{-}1$ is the role of the torsion linking form.  In dimensions
$4k{-}1$, we have seen that the torsion linking form does not obstruct bounding a plumbing
manifold, but in dimensions $4k{+}1$ any torsion in $H^{2k+1}(M)$ prevents $M$ from bounding
a plumbing manifold.
We begin our discussion by stating the analogue of Theorem \ref{thm:delTP} in
dimensions $4k{+}1$.

%In particular, we explain why our methods
%cannot be extended to $(2k{-}1)$-connected $(4k{+}1)$-manifolds $M$ where the torsion
%subgroup $TH_{2k}(M)$, does not vanish.

\begin{Theorem} \label{thm:delTP-4k+2}
Let $n=4k+2 \geq 6$ and let $M \in \cal{M}_{4k+1,2k-1}^{O\an{2k}}$.
\begin{enumerate}
\item \label{thm:delTP-4k+2:bordism}
$M \in \del \cal{H}^{4k+2}$ if and only if $[M] = 0 \in \Omega_{4k+1}^{O\an{2k}}$.
\item \label{thm:delTP-4k+2:homotopy_sphere}
There is a homotopy sphere $\Sigma$ such that $M \sharp \Sigma \in \del \cal{H}^{4k+2}$. 
\item \label{thm:delTP-4k+2:main}
If $M \in \del \cal{H}^{4k+2}$, then $M \in \del \cal{TP}^{4k+2}$ if and only if $H^*(M)$ is torsion free.
\end{enumerate}
\end{Theorem}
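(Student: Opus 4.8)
The plan is to follow the template of Theorem~\ref{thm:delTP}, reading $n=2k+1$ throughout, the new ingredient being that torsion in $H^{2k+1}(M)$ now obstructs bounding a plumbing, so that only manifolds with torsion-free cohomology can occur in $\del\cal{TP}^{4k+2}$. Part~\eqref{thm:delTP-4k+2:bordism} is proved exactly as Theorem~\ref{thm:delTP}~\eqref{thm:delTP:bordism}: a handlebody $W\in\cal{H}^{4k+2}$ with $\del W=M$ is $2k$-connected, hence carries a $BO\an{2k+1}$-structure by Lemma~\ref{lem:BO<n>-orientation} and so is a $BO\an{2k+1}$-nullbordism of $M$; conversely a $BO\an{2k+1}$-nullbordism of $M$ may be surgered below the middle dimension \cite[Theorem 3]{Mi} to a $2k$-connected manifold with boundary $M$, which lies in $\cal{H}^{4k+2}$ by Smale's Theorem~\ref{thm:Smale} (the hypothesis that $M$ is $(2k-1)$-connected supplies the required connectivity of the boundary). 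For part~\eqref{thm:delTP-4k+2:homotopy_sphere} one runs the Kervaire--Milnor surgery argument on a normal map $\bar\nu\colon M\to BO\an{2k+1}$: since $M$ is $2k$-parallelisable, the normal bundle of an embedded $S^{2k}\subset M^{4k+1}$ is stably trivial and hence, having fibre dimension $2k+1>2k$, actually trivial by \cite[Lemma 3.5]{Kervaire&Milnor}; so $\pi_{2k}(M)=H_{2k}(M)$ can be killed by surgery, reducing $M$ through a $BO\an{2k+1}$-bordism to a homotopy sphere $\Sigma_0$. Taking $\Sigma=-\Sigma_0$ and noting $[M]=[\Sigma_0]$ in $\Omega^{O\an{2k}}_{4k+1}$ gives $[M\sharp\Sigma]=[M]-[\Sigma_0]=0$, so part~\eqref{thm:delTP-4k+2:bordism} yields $M\sharp\Sigma\in\del\cal{H}^{4k+2}$.

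The content lies in part~\eqref{thm:delTP-4k+2:main}. For the ``only if'' implication, let $M=\del W$ with $W\in\cal{TP}^{4k+2}$ a treelike plumbing of $D^{2k+1}$-bundles over $S^{2k+1}$. Then $W\simeq\vee_r S^{2k+1}$, and its intersection form $\lambda_W$ on $H_{2k+1}(W)\cong\Z^r$ is skew-symmetric with matrix the signed adjacency matrix $A$ of the underlying tree, the diagonal being zero because the Euler number of an odd-rank bundle over an odd sphere vanishes. The homology long exact sequence of $(W,M)$ together with Lefschetz duality identifies $H_{2k}(M)\cong\coker(A)$ and $H_{2k+1}(M)\cong\ker(A)$, all other reduced homology of $M$ vanishing apart from $H_{4k+1}(M)\cong\Z$. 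It therefore suffices to establish the elementary fact that the cokernel of the signed adjacency matrix of any forest is torsion-free: arguing by induction on the number of vertices, a forest containing an edge has a leaf $v$ with unique neighbour $w$, and since the $v$-th row and column of $A$ each have a single nonzero entry $\pm1$, in position $w$, integral row and column operations using them split off a unimodular $2\times2$ block and leave the signed adjacency matrix of the forest obtained by deleting $v$ and $w$. Hence $\coker(A)$ is free, so $TH_{2k}(M)=0$; and since the only cohomology group of $M$ that can carry torsion is $H^{2k+1}(M)\cong\mathrm{Ext}(H_{2k}(M),\Z)$, we conclude that $H^*(M)$ is torsion-free.

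For the ``if'' implication, suppose $M=\del W$ with $W\in\cal{H}^{4k+2}$ and $H^*(M)$ torsion-free. Then $\coker(\wh\lambda_W)$ is torsion-free, so the skew-symmetric form $(H_{2k+1}(W),\lambda_W)$ decomposes as the orthogonal sum of its radical and a unimodular part; as every integral unimodular skew-symmetric form is hyperbolic, $(H_{2k+1}(W),\lambda_W)\cong H_-(\Z)^{\oplus s}\oplus(\Z^t,0)$, where $H_-(\Z)$ is the rank-two hyperbolic skew form. By Wall's classification of $2k$-connected $(4k+2)$-handlebodies via their extended quadratic forms over $\pi_{2k}\{SO(2k+1)\}$ \cite{Wa1} (the odd analogue of Theorem~\ref{thm:wall_2n_classification}, under which boundary connected sum corresponds to orthogonal sum of forms), $W$ is a boundary connected sum of handlebodies realizing the indecomposable summands $(H_-(\Z),\mu_i)$ and $(\Z,0,\mu'_j)$ of its extended quadratic form, so $M$ is the connected sum of the corresponding boundaries. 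Each of these lies in $\del\cal{TP}^{4k+2}$: $(\Z,0,\mu'_j)$ is realized by the one-vertex tree labelled $\mu'_j\in\pi_{2k}(SO(2k+1))$, and $(H_-(\Z),\mu_i)$ by a two-vertex tree with a single signed edge carrying the two bundles prescribed by $\mu_i$ (by the plumbing realization theorem \cite[V.2.1]{Bro}, whose proof applies verbatim when base and fibre dimensions are odd). Since $\del\cal{TP}^{4k+2}=\del\cal{TP}^{4k+2}(\{S^{2k+1}\},\{S^{2k+1}\})$ is closed under connected sum by Theorem~\ref{thm:connected_sum_for_delTP-G} (whose proof covers the case when both $p$ and $q$ are odd, Section~\ref{subsec:connected_sums}), it follows that $M\in\del\cal{TP}^{4k+2}$.

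The main obstacle is the bookkeeping in the ``if'' direction: one must invoke the correct odd-dimensional version of Wall's handlebody classification, check that it yields a boundary-connected-sum decomposition compatible with the orthogonal splitting of the extended quadratic form, and verify that the rank-one and hyperbolic summands, with arbitrary quadratic and tangential data, are genuinely realized by one- and two-vertex plumbing trees; the edge signs appearing when both base and fibre dimensions are odd require some care. The ``only if'' lemma and parts~\eqref{thm:delTP-4k+2:bordism} and~\eqref{thm:delTP-4k+2:homotopy_sphere} are routine adaptations of the $(4k{-}1)$-dimensional arguments of Section~\ref{subsec:BO<n>-bordism}.
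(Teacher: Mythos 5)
Your proof is correct, and parts~\eqref{thm:delTP-4k+2:bordism} and~\eqref{thm:delTP-4k+2:homotopy_sphere} follow the paper's template exactly. For part~\eqref{thm:delTP-4k+2:main}, however, you take a genuinely different route in both directions. In the ``only if'' direction, you argue directly from the signed adjacency matrix $A$ of the plumbing tree and show $\coker(A)$ is free by an elementary row--column reduction that peels off a leaf $v$ and its neighbour $w$ as a unimodular $2\times 2$ block. The paper instead states and proves Proposition~\ref{prop:skew-symmetric_treelike_forms}, that any skew-symmetric treelike form is isomorphic to $H_-(\Z^s)\oplus(\Z^t,0)$ (by a leaf-removal basis change à la Lemma~\ref{lem:stable_closure_of_treelike_forms}), and then deduces torsion-freeness as a corollary via Lemma~\ref{lem:tree_recognition:4k+2}. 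Your matrix argument is more elementary but proves a slightly weaker statement (freeness of the cokernel rather than the explicit normal form), which is all that is needed here. In the ``if'' direction the divergence is more pronounced: you split the extended quadratic form of $W$ into rank-one zero summands $(\Z,0,\mu'_j)$ and hyperbolic summands $(H_-(\Z),\mu_i)$, invoke the full bijectivity of Wall's classification to obtain a boundary-connected-sum decomposition of $W$ itself, realise each piece by a one- or two-vertex plumbing, and then apply the geometric connected-sum Theorem~\ref{thm:connected_sum_for_delTP-G} to conclude. The paper instead works entirely at the algebraic level: after observing the intersection form splits as a unimodular part plus a radical, it forms $W':=W\sharp(S^{2k+1}\times S^{2k+1})$, notes by Lemma~\ref{lem:stable_closure_of_skew-treelike_forms} that the extended quadratic form of $W'$ is treelike, and then applies the tree-recognition Lemma~\ref{lem:tree_recognition:4k+2} to conclude $W'\in\cal{TP}^{4k+2}$. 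Both approaches are valid; the paper's avoids the geometric connected-sum machinery of Section~\ref{subsec:connected_sums} and the full surjectivity/splitting statement from Wall's classification, paying for this with the short algebraic Lemma~\ref{lem:stable_closure_of_skew-treelike_forms}, whereas yours is a natural ``realise-and-glue'' argument that would be the obvious thing to try if one already had Theorem~\ref{thm:connected_sum_for_delTP-G} in hand. One small point worth flagging: when you realise $(H_-(\Z),\mu_i)$ by a two-vertex signed tree, you should justify why the value of the edge sign is immaterial (the two signs give isomorphic forms, e.g.\ by negating one basis vector), but this is easily fixed.
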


%\begin{Corollary} \label{cor:(4k+1)delTP}
%If $M \in \del \cal{H}^{4k+2}$, then $M \in \del \cal{TP}^{4k+2}$ if and only if $H_*(M)$ is torsion free.
%\end{Corollary}

%\begin{Lemma} \label{lem:Omega_and_H}
%Let $M$ be closed $(m-2)$-connected and $(m-1)$-parallelisable manifold.
%If $M \in \cal{M}_{{4k+1},{2k-1}}^{O\an{2k}}$,
%then $M \in \del \cal{H}^{4k+2}$ if and only if $[M] = 0 \in \Omega_{4k+1}^{O\an{2k}}$. 
%Moreover, there is a homotopy sphere $\Sigma \in \Theta_{4k+1}$ such that $M \sharp \Sigma \in \del \cal{H}^{4k+2}$.
%
%for any orientation $\bar \nu$ of $M$.
%
%\end{Lemma}

Theorem D follows immediately from Theorem \ref{thm:delTP-4k+2} and Theorem 1.1.

The proof of part Theorem \ref{thm:delTP}~\eqref{thm:delTP:bordism} and~\eqref{thm:delTP:homotopy_sphere} carries
over to the proof of Theorem~\ref{thm:delTP-4k+2}~\eqref{thm:delTP-4k+2:bordism} and~\eqref{thm:delTP-4k+2:homotopy_sphere},
except that for part~\eqref{thm:delTP:homotopy_sphere}, we do not worry about the Brumfiel invariant.  
(For all dimensions $4k+1 \neq 2^j-3$, Brumfiel \cite{Brumfiel2} has defined an invariant which splits the Kervaire-Milnor
sequence of Theorem \ref{thm:homotopy_spheres}, but we shall not go into these subtleties here.)
Hence, it remains to prove Theorem~\ref{thm:delTP-4k+2}~\eqref{thm:delTP-4k+2:main}, and for this we need
to consider $(4k+2)$-dimensional plumbing manifolds and the labelled trees which describe them.

We begin by defining the intersection form of a labelled tree in the skew-symmetric case.
We define a skew $0$-labelled tree $\mathfrak{g} = ((\V, \E), (0, \dots, 0))$ to be a tree with ordered
vertext set $\V = (v_1, \dots v_r)$ and signed directed edge set $\E = \{e_{ij}\}$. 
The set of directed edges from $v_i$ to $v_j$, $\E_{ij}$, is either empty or has one element $e_{ij}$, 
and if $|\E_{ij}| = 1$, then $|\E_{ji}| = 0$.  We define $\epsilon_{ij} \in \{ -1, 0, 1\}$ to
be the sign of $e_{ij}$ if $\E_{ij}$ is non-empty and to be zero otherwise.
%
%\[ e_{ij} \in \{-1, 0, 1\} \quad \text{and} \quad e_{ji} = -e_{ij}. \]
%
A skew $0$-labelled tree gives rise to a skew-symmetric bilinear form 
$\lambda_{\mathfrak{t}}$ on $H_{\mathfrak{t}}$, the free abelian group with basis $\V$, by defining
\begin{equation} \label{eq:skew-plumb}  
\lambda_{\mathfrak{t}}(v_i, v_j) := \epsilon_{ij}|\E_{ij}| - \epsilon_{ji}|\E_{ji}|,
\end{equation}
and extending linearly to all of $H_\mathfrak{t}$.  A skew-symmetric form $(H, \lambda)$ is called
treelike if it is isomorphic to $(H_\mathfrak{t}, \lambda_\mathfrak{t})$ for some skew $0$-labelled tree.
An extended quadratic form $(H, \lambda, \mu)$ over $\pi_{2k}\{SO(2k{+}1)\}$ is called treelike if
$(H, \lambda)$ is treelike.  We define 
\[ \cal{F}^{4k+2} :=\{(H, \lambda, \mu) \} \]
to be the
set of isomorphism classes of extended quadratic forms over $\pi_{2k}\{SO(2k{+}1)\}$ and we define
\[ \cal{TF}^{4k} \subset \cal{F}^{4k+2} \]
to be the set of isomorphism classes of treelike extended quadratic forms over $\pi_{2k}\{SO(2k{+}1)\}$.
Lemma \ref{lem:tree_recognition} and its proof carry over to dimensions $4k+2$ and so we have

\begin{Lemma} \label{lem:tree_recognition:4k+2}
If $W \in \cal{H}^{4k+2}$, then
$W \in \cal{TP}^{4k+2}$ if and only if $(H^{2k+1}(W, \del W), \lambda_W, \mu_W) \in \cal{TF}^{4k+2}$. \qed
\end{Lemma}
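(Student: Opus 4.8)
The plan is to run the skew-symmetric analogue of the proof of Lemma~\ref{lem:tree_recognition}, substituting throughout the symmetric intersection form by the skew-symmetric one, the group $\pi_{2k-1}(SO(2k))$ by $\pi_{2k}(SO(2k{+}1))$, $\Z$-labelled trees by skew $0$-labelled trees, and Wall's classification Theorem~\ref{thm:wall_2n_classification} by its odd-dimensional counterpart, also due to Wall \cite{Wa1}: a handlebody $W \in \cal{H}^{4k+2}$ is determined up to diffeomorphism by its extended quadratic form $(H^{2k+1}(W, \del W), \lambda_W, \mu_W)$ over $\pi_{2k}\{SO(2k{+}1)\}$, with boundary connected sum corresponding to orthogonal sum. (Here, unlike in the even case governed by Lemma~\ref{lem:mu_and_alpha}, there is no simplification of $\mu_W$ in terms of a stabilisation, so we must keep the full invariant $\mu_W$ and work inside $\cal{F}^{4k+2}$.) As in the even case, every treelike plumbing $W \in \cal{TP}^{4k+2}$ is $(2k)$-connected, so $\cal{TP}^{4k+2} \subset \cal{H}^{4k+2}$ and this classification applies to it.

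For the ``if'' direction, suppose $(H^{2k+1}(W, \del W), \lambda_W, \mu_W) \in \cal{TF}^{4k+2}$. I would choose a skew $0$-labelled tree $\mathfrak{t} = ((\V, \E), (0, \dots, 0))$, with ordered vertex set $\V = (v_1, \dots, v_r)$ and signed directed edges, together with an isomorphism $\theta \colon (H_\mathfrak{t}, \lambda_\mathfrak{t}) \cong (H^{2k+1}(W, \del W), \lambda_W)$. I then set $\alpha_i := \mu_W(\theta(v_i)) \in \pi_{2k}(SO(2k{+}1))$ and form the plumbing manifold $W(\mathfrak{u})$ of $D^{2k+1}$-bundles over $S^{2k+1}$ from the labelled tree $\mathfrak{u} := ((\V, \E), (\alpha_1, \dots, \alpha_r))$, where the signs and ordering of the edges of $(\V, \E)$ are taken to be exactly those of $\mathfrak{t}$. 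By construction the extended quadratic forms of $W$ and $W(\mathfrak{u})$ are isomorphic: the skew-symmetric intersection form of $W(\mathfrak{u})$ is $\lambda_\mathfrak{t}$ as prescribed by \eqref{eq:skew-plumb}, and the value of $\mu_{W(\mathfrak{u})}$ on the class of the $i$-th base sphere is its normal bundle $\alpha_i$. Wall's classification then yields a diffeomorphism $W \cong W(\mathfrak{u})$, so $W \in \cal{TP}^{4k+2}$.

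For the ``only if'' direction, suppose $W \in \cal{TP}^{4k+2}$, so $W \cong W(\mathfrak{u})$ for a plumbing arrangement $\mathfrak{u} = ((\V, \E), (\alpha_i))$ with $(\V, \E)$ a tree. By the skew-symmetric analogue of the discussion preceding Theorem~\ref{thm:plumbing_realisation} — the intersection number of two distinct base spheres is the signed count of plumbings between the bundles, and the self-intersection of the zero-section equals the Euler number of the bundle, which vanishes here since a $D^{2k+1}$-bundle over $S^{2k+1}$ has odd rank and hence $2$-torsion, so zero, Euler class in the torsion-free group $H^{2k+1}(S^{2k+1};\Z)$ — the intersection form $(H^{2k+1}(W, \del W), \lambda_W)$ is isomorphic to $(H_\mathfrak{t}, \lambda_\mathfrak{t})$ for the skew $0$-labelled tree $\mathfrak{t}$ obtained from $(\V, \E)$ with its edge signs. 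Hence it is treelike, i.e.\ $(H^{2k+1}(W, \del W), \lambda_W, \mu_W) \in \cal{TF}^{4k+2}$.

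The genuinely new point relative to the even case — and the only place where the argument is not entirely formal — is the bookkeeping forced by the fact that both fibre and base dimensions $2k{+}1$ are now odd, so that the signs and the orderings of the plumbing edges are part of the data of the plumbing graph and affect the resulting manifold (cf.\ the discussion in Section~\ref{subsec:plumbing_arrangements}). What is needed is the skew-symmetric counterpart of Theorem~\ref{thm:plumbing_realisation}: that every skew-symmetric integer matrix with zero diagonal and off-diagonal entries in $\{-1,0,1\}$ arising from a tree is realised, in its natural basis, as the intersection form of a treelike plumbing of $D^{2k+1}$-bundles over $S^{2k+1}$ — which is exactly what \eqref{eq:skew-plumb} encodes via the edge signs $\epsilon_{ij}$ — together with the converse that the intersection form of any such treelike plumbing has this shape. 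Granting this, the rest of the proof of Lemma~\ref{lem:tree_recognition} transfers verbatim.
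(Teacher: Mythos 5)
Your proposal is correct and follows essentially the same approach as the paper: the paper simply asserts that Lemma~\ref{lem:tree_recognition} and its proof carry over to dimension $4k{+}2$, while you have spelled out the details of the transfer, correctly identifying the necessary modifications (retaining the full invariant $\mu_W$ rather than its stabilisation $\alpha_W$, replacing $\Z$-labelled trees by skew $0$-labelled trees, invoking the odd-dimensional case of Wall's classification, noting the vanishing Euler class of odd-rank bundles over $S^{2k+1}$, and tracking the edge signs/orderings via \eqref{eq:skew-plumb}).
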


Now let $H_-(\Z)$ be the skew-symmetric hyperbolic form on $\Z^2$ with intersection matrix
$$\left( \begin{array}{cc}
~0&1~ \\
-1~&0~ \\
\end{array}\right)$$
and let $H_-(\Z^s) := \oplus_s H_-(\Z)$ be the $s$-fold orthongal sum of $H_-(\Z)$.  A fundamental,
if elementary fact, is that every nonsingular skew-symmetric form $(H, \lambda)$ is isomorphic to
$H_-(\Z^s)$ for some $s$; see for example \cite[Assertion p.\,55]{Mi}.  
We shall need the following two algebraic results about skew-symmetric
treelike forms.  The first follows since
Lemma \ref{lem:stable_closure_of_treelike_forms} and its proof carry over to the skew-symmetric case. 
The second is the algebraic reason why torsion cannot appear in the homology of the boundary of
treelike plumbings.

\begin{Lemma} \label{lem:stable_closure_of_skew-treelike_forms}
For any $j \geq 0$, if $(H_i, \lambda_, \alpha_i) \in \cal{TF}^{4k+2}$, then
$\bigoplus_{i=1}^j (H_i, \lambda_i, \alpha_i) \oplus (H_-(\Z), 0) \in \cal{TF}^{4k}$. \qed
\end{Lemma}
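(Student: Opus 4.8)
The plan is to carry the proof of Lemma~\ref{lem:stable_closure_of_treelike_forms} over verbatim to the skew-symmetric setting. Since being treelike is a property of the underlying bilinear form $(H, \lambda)$ alone, the tangential data $\alpha_i$ plays no role, and it suffices to exhibit a skew $0$-labelled tree representing $\bigoplus_{i=1}^j(H_i, \lambda_i) \oplus H_-(\Z)$. The case $j = 0$ is immediate, since $H_-(\Z)$ is represented by the skew $0$-labelled tree with two vertices $z_1, z_2$ and a single directed edge from $z_1$ to $z_2$ of sign $+1$: then $\lambda_{\mathfrak{t}}(z_1, z_2) = 1$ by \eqref{eq:skew-plumb}. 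So assume $j \geq 1$.

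First I would choose, for each $i$, a skew $0$-labelled tree $\mathfrak{t}_i$ with ordered vertex set $(v^i_1, \dots, v^i_{r_i})$ and $(H_{\mathfrak{t}_i}, \lambda_{\mathfrak{t}_i}) \cong (H_i, \lambda_i)$, and form the disjoint union $\bigsqcup_{i=1}^j \mathfrak{t}_i$. I would then adjoin a disjoint ``hyperbolic component'' $\mathfrak{h}$ with vertices $z_1, z_2$ and the single signed directed edge $z_1 \to z_2$ described above, so that $\bigsqcup_i \mathfrak{t}_i \sqcup \mathfrak{h}$ represents $\bigoplus_i(H_i, \lambda_i) \oplus H_-(\Z)$. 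Finally, after selecting one vertex $x_i$ of $\mathfrak{t}_i$ for each $i$, I would perform the change of basis $x_i \mapsto x_i + z_1$ (for $i = 1, \dots, j$), leaving every other basis vector fixed.

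It then remains to check that this new basis is again treelike. Writing $\tilde x_i := x_i + z_1$ and using bilinearity together with the fact that $\lambda(z_1, -)$ vanishes on $z_1$ and on every vertex of every $\mathfrak{t}_m$, one finds that the only pairings that change are $\lambda(\tilde x_i, z_2) = \lambda(z_1, z_2) = 1$; all pairings internal to a fixed $\mathfrak{t}_m$, all pairings between distinct components, the pairings $\lambda(\tilde x_i, z_1) = 0$ and $\lambda(\tilde x_i, \tilde x_m) = 0$ for $i \neq m$, and the vanishing diagonal are as before. Hence the new Gram matrix is $\lambda_{\mathfrak{t}}$ for the skew $0$-labelled tree $\mathfrak{t}$ obtained from $\bigsqcup_i \mathfrak{t}_i \sqcup \mathfrak{h}$ by adjoining, for each $i$, a single directed edge from $\tilde x_i$ to $z_2$ of sign $+1$. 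A count of vertices and edges, $|\V| = \sum_i r_i + 2$ and $|\E| = \sum_i(r_i - 1) + 1 + j = |\V| - 1$, together with connectedness (every vertex is joined through the ``hub'' $z_2$), shows that $\mathfrak{t}$ is a tree, so $\bigoplus_{i=1}^j(H_i, \lambda_i, \alpha_i) \oplus (H_-(\Z), 0) \in \cal{TF}^{4k+2}$. I anticipate no real obstacle here: the only point requiring care is the bookkeeping of directions and signs of edges in \eqref{eq:skew-plumb}, and checking that the construction produces no self-loops and no pair of vertices joined by more than one edge.
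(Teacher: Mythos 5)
Your argument is exactly what the paper intends: it observes that Lemma~\ref{lem:stable_closure_of_treelike_forms} and its proof carry over to the skew-symmetric setting, and your detailed verification of the basis change $x_i \mapsto x_i + z_1$, the resulting Gram matrix, and the vertex/edge count confirming the graph is a tree is correct. This is the same approach as the paper, just spelled out in full.
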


\begin{Proposition} \label{prop:skew-symmetric_treelike_forms}
Let $(H_\mathfrak{t}, \lambda_\mathfrak{t})$ be a skew-symmetric treelike form, then 
$(H_\mathfrak{t}, \lambda_\mathfrak{t})$ is isomorphic to the orthogonal sum $H_-(\Z^s) \oplus (\Z^t, 0)$ for some $s, t$. 
\end{Proposition}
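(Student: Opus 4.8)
The statement asserts that a skew-symmetric treelike form $(H_{\mathfrak{t}}, \lambda_{\mathfrak{t}})$ decomposes as $H_-(\Z^s) \oplus (\Z^t, 0)$. The plan is to analyse the structure of the skew-symmetric intersection matrix associated to a skew $0$-labelled tree directly, using the tree structure to perform a change of basis. The key point is that a tree has no cycles, so we can induct on the number of vertices by peeling off a leaf.

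First, I would set up the induction. If $\mathfrak{t}$ has one vertex, then $\lambda_{\mathfrak{t}} = 0$ (there are no edges, and the diagonal label is $0$), so $(H_{\mathfrak{t}}, \lambda_{\mathfrak{t}}) = (\Z, 0)$, which is the case $s = 0$, $t = 1$. For the inductive step, pick a leaf $v$ of the tree, with unique neighbour $w$ and signed edge between them, so $\lambda_{\mathfrak{t}}(v, w) = \pm 1$. There are two cases. If $w$ is itself a leaf (i.e. the tree is a single edge with possibly other components — but since we may assume $\mathfrak{t}$ is connected, this means $\mathfrak{t}$ is exactly the edge $v\!-\!w$), then $(H_{\mathfrak{t}}, \lambda_{\mathfrak{t}}) \cong H_-(\Z)$ directly. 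Otherwise $w$ has other neighbours $u_1, \dots, u_\ell$. Since $\lambda_{\mathfrak{t}}(v, w) = \pm 1$, the pair $(v, w)$ spans a hyperbolic plane $H_-(\Z)$ inside $(H_{\mathfrak{t}}, \lambda_{\mathfrak{t}})$, and this plane is nonsingular, so $H_{\mathfrak{t}} = \langle v, w \rangle \oplus \langle v, w\rangle^{\perp}$ orthogonally with respect to $\lambda_{\mathfrak{t}}$.

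The main task is then to identify $\langle v, w \rangle^{\perp}$ with the form of a smaller skew $0$-labelled tree. Concretely, I would replace each basis vector $u_i$ (the other neighbours of $w$) by $u_i' := u_i - \lambda_{\mathfrak{t}}(u_i, w)\, v$ (and keep $v$ and $w$ and all basis vectors not adjacent to $w$). One checks $\lambda_{\mathfrak{t}}(u_i', v) = \lambda_{\mathfrak{t}}(u_i', w) = 0$, so the $u_i'$ together with the remaining vertices span $\langle v, w\rangle^{\perp}$. It remains to see that the form on $\langle v, w\rangle^\perp$ in this new basis is again the intersection form of a skew $0$-labelled tree — namely the tree $\mathfrak{t}'$ obtained by deleting $v$ and $w$. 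The subtlety is that the basis change $u_i \mapsto u_i'$ can introduce new pairings $\lambda_{\mathfrak{t}}(u_i', u_j')$, since $u_i$ and $u_j$ were both adjacent to the now-removed vertex $w$; a priori this creates edges among the $u_i$ that were not in $\mathfrak{t}'$. I expect this to be the main obstacle: one must verify that these newly created pairings all vanish. This follows because $\lambda_{\mathfrak{t}}(u_i', u_j') = \lambda_{\mathfrak{t}}(u_i, u_j) - \lambda_{\mathfrak{t}}(u_i, w)\lambda_{\mathfrak{t}}(v, u_j) - \lambda_{\mathfrak{t}}(u_j, w)\lambda_{\mathfrak{t}}(u_i, v) + \lambda_{\mathfrak{t}}(u_i, w)\lambda_{\mathfrak{t}}(u_j, w)\lambda_{\mathfrak{t}}(v, v)$, and in a tree there is no edge between $u_i$ and $u_j$ (they are distinct neighbours of $w$), $v$ is a leaf adjacent only to $w$ so $\lambda_{\mathfrak{t}}(v, u_j) = \lambda_{\mathfrak{t}}(u_i, v) = 0$, and $\lambda_{\mathfrak{t}}(v, v) = 0$; hence $\lambda_{\mathfrak{t}}(u_i', u_j') = \lambda_{\mathfrak{t}}(u_i, u_j) = 0$. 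Thus $\langle v, w\rangle^\perp$ with this basis is precisely $(H_{\mathfrak{t}'}, \lambda_{\mathfrak{t}'})$ for the smaller skew $0$-labelled tree (or forest; one handles components independently), and the inductive hypothesis gives $(H_{\mathfrak{t}'}, \lambda_{\mathfrak{t}'}) \cong H_-(\Z^{s'}) \oplus (\Z^{t'}, 0)$, whence $(H_{\mathfrak{t}}, \lambda_{\mathfrak{t}}) \cong H_-(\Z^{s'+1}) \oplus (\Z^{t'}, 0)$, completing the induction.
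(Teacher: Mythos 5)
Your proof takes essentially the same leaf-peeling induction as the paper: both split off the hyperbolic plane $H_-(\Z)$ spanned by a leaf $v$ and its neighbour $w$, and identify the orthogonal complement with the treelike form of the forest obtained by deleting $v$ and $w$ (the paper invokes the basis change from the proof of Lemma~\ref{lem:stable_closure_of_treelike_forms} in reverse, whereas you carry it out and check the vanishing of cross-terms explicitly). The one small slip is the coefficient in your basis change: since the edge sign $\lambda_{\mathfrak{t}}(v,w)$ may be $-1$, you should set $u_i' := u_i - \lambda_{\mathfrak{t}}(u_i,w)\lambda_{\mathfrak{t}}(v,w)\,v$ (or first replace $v$ by $-v$) so that $\lambda_{\mathfrak{t}}(u_i',w)=0$; this does not affect the rest of the argument.
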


\begin{proof}
We shall argue by induction on the number of verticis of $\mathfrak{t}$.  
The Proposition is certainly true for the empty tree and for a tree with one vertex.
Let $\mathfrak{t}$ have vertex set $\V = (v_0, \dots, v_r)$ and let $v_l$ be a leaf of $(\V, \E)$
so that there is a single edge $e$ between $v_l$ and $v_k$ say, which is incident with $v_l$.
We let $\mathfrak{s}$ be 
subgraph of $\mathfrak{t}$ obtained by removing $v_k$, $v_l$ and all edges involving $v_k$
and $v_l$.  The $0$-labelled graph $\mathfrak{s} = \sqcup_{i = 1}^j \mathfrak{t}_i$ is a disjoint union of $0$-labelled trees $\mathfrak{t}_i$, where each tree $\mathfrak{t}_i$
is connected to $v_k$ via a single edge in $\mathfrak{t}$.  Hence, by the proof
of Lemma \ref{lem:stable_closure_of_treelike_forms}, (or rather its extension to the skew-symmetric case) 
the treelike form $(H_\mathfrak{t}, \lambda_\mathfrak{t})$ is isomorphic to the orthongal sum
\[ (H_\mathfrak{t}, \lambda_\mathfrak{t}) \cong \bigoplus_{i = 0}^j (H_{\mathfrak{t}_i}, \lambda_{\mathfrak{t}_i}), \]
where $\mathfrak{t}_0$ is the $0$-labelled tree consisting of $v_k$, $v_l$ and a single edge $e$ between them.
Now $(H_{\mathfrak{t}_0}, \lambda_{\mathfrak{t}_0}) \cong H_-(\Z)$, and
by induction, for $i > 0$ each $(H_{\mathfrak{t}_i}, \lambda_{\mathfrak{t}_i})$ is isomorphic to the orthogonal sum of a nonsingular form
and a zero form.  It follows that $(H_\mathfrak{t}, \lambda_\mathfrak{t})$ is isomorphic to the orthogonal sum
of a nonsingular form and a zero form.
%[Proof of Proposition \ref{prop:skew-symmetric_treelike_forms}]
%Notice that the statement and proof of
%Lemma~\ref{lem:det}~\eqref{lem:det:a} can be extended to $0$-labelled directed graphs.  
%It follows by inductively applying 
%Lemma~\ref{lem:det}~\eqref{lem:det:a} for $0$-labelled directed trees that
%that the determinant of $(H_\mathfrak{t}, \lambda_\mathfrak{t})$ is either
%$0$, $1$ or $-1$.  Since we assume that $(H_\mathfrak{t}, \lambda_\mathfrak{t})$ is
%nondegenerate, if follows that its determinant is $\pm 1$.  Now the classification
%of unimodular skew-symmetric bilinear forms over $\Z$ states that there is an
%isometry $(H_\mathfrak{t}, \lambda_\mathfrak{t}) \cong H_-(\Z^s)$ for some $s$.	
\end{proof}

\begin{proof}[Proof of Theorem \ref{thm:delTP-4k+2}~\eqref{thm:delTP-4k+2:main}]
Suppose that $M = \del W$ where $W$ is a treelike plumbing.  
From the long exact sequence of the pair $(W, M)$ and the fact that $W \simeq \vee S^{2k+1}$, 
we obtain the short exact sequence
\begin{equation} \label{eq:M_and_W}
0 \to H^{2k}(M) \to H^{2k+1}(W, M) \to H^{2k+1}(W) \to H^{2k+1}(M) \to 0 .
\end{equation}
By Lemma \ref{lem:tree_recognition:4k+2},
the extended intersection form of $W$ is treelike and by Proposition \ref{prop:skew-symmetric_treelike_forms},
the intersection form of $W$ is the sum of a zero form and a nonsingular form.  Since the homomorphism
$H^{2k+1}(W, M) \to H^{2k+1}(W)$ is the adjoint of the intersection form of $W$, it follows
that $H^*(M)$ is torsion free.

Now suppose that $M = \del W$, $W \in \cal{H}^{4k+2}$ and that $H^*(M)$ is torsion free.
Again we use the exact sequence \eqref{eq:M_and_W} which still applies.
Since $H^{2k}(M) \cong H^{2k+1}(M)$ is torsion free, we conclude that the intersection
form of $W$, $(H^{2k+1}(W, \del W), \lambda_W)$ splits as an orthogonal sum $(H, \lambda) \oplus (F, 0)$
where $(H, \lambda)$ is nonsingular and so $(H, \lambda) \cong H_-(\Z^s)$ for some $s$.
If we now take the connected sum $W' : = W \sharp (S^{2k+1} \times S^{2k+1})$, we have $M = \del W'$
and that the intersection form of $W'$ splits as $(H, \lambda) \oplus (F, 0) \oplus H_-(\Z)$.
Since $(H, \lambda) \cong H_-(\Z^s)$ is treelike, $(F, 0)$ is a sum of zero forms $(\Z, 0)$ and $(\Z, 0)$ is treelike, 
by Lemma \ref{lem:stable_closure_of_skew-treelike_forms} we see that $(H^{2k+1}(W'), \lambda_{W'}, \mu_{W'})$ is treelike.
Hence by Lemma \ref{lem:tree_recognition:4k+2}, $W' \in \cal{TP}^{4k+2}$ and so $M \in \del \cal{TP}^{4k+2}$.
\end{proof}

%%%% skew-symmetric linking forms %%%%%
\begin{comment}
Skew-symmetric linking forms were classified by Wall, and have a simple description. 
By decomposing the finite abelian group $G$ into cyclic summands, we can express any 
linking form as a sum of forms of the following types:
$$b_{-1}:\Z_2 \times \Z_2 \rightarrow \Q/\Z$$ represented by the matrix $(1/2);$ 
$$b_0(\Z_k):(\Z_k \oplus \Z_k) \times (\Z_k \oplus \Z_k) \rightarrow \Q/\Z$$ represented by 
the matrix $$\left( \begin{array}{cc} 
0 & 1/k\\
-1/k & 0\\
\end{array} \right);$$
and finally
$$b_j(\Z_{2^j}):(\Z_{2^j} \oplus \Z_{2^j}) \times (\Z_{2^j} \oplus \Z_{2^j}) \rightarrow \Q/\Z$$ 
represented by the matrix $$\left( \begin{array}{cc} 
1/2 & 1/2^j\\
-1/2^j & 0\\
\end{array} \right).$$
\par
\end{comment}
% subsection 4k+1 (end)
%%%%%%%%%%%%%%%%%%%%%%%%%%%%%%%%%%%%%%%%%%%%%%%%%%%%%%%%%%%%%%%%%%%%%%%%%%%%%%%%%%%%%%%%

% (fold)
\subsection{Tree-manifolds and $4$-dimensional treelike plumbings} \label{subsec:3-manifolds}
%%%%%%%%%%%%%%%%%%%%%%%%%%%%%%%%%%%%%%%%%%%%%%%%%%%%%%%%%%%%%%%%%%%%%%%%%%%%%%%%%%%%%%%%
In dimensions $3$ and $4$ treelike plumbings and their boundaries have been intensively studied.
Plumbings of $D^2$-bundles over surfaces also arise in the study of complex manifolds and 
complex singularities in complex dimension two: see for example \cite{Hirzebruch}.
In the topological setting, von Randow \cite{vonRandow} used the term ``Baummannigfaltigkeit'', literally ``tree-manifold''
to describe $3$-manifolds $M$ obtained by plumbing $D^2$-bundles over $2$-spheres according to trees.
In the terminology of this paper, $\del \cal{TP}^4$ is precisely the set of oriented diffeomorphism
classes of tree-manifolds in von Randow's sense.  Tree-manifolds are in turn a special case of {\em graph manifolds}
where $D^2$-bundles over general surfaces are allowed, the same bundle can be plumbed to itself,
and the pluming graph need not be a tree.  Graph manifolds were the subject of early work of Waldhausen \cite{Waldhausen}
and a comprehensive calculus of graph manifolds was given in \cite{Neumann2}.

%We shall not attempt to even summarise the work on plumbings on $D^2$-bundles over surfaces.  
The motivation for studying plumbing manifolds in dimension four has so far largely arisen from questions in topology,
complex surfaces and complex singularities, and these are themes which are rather different from our motivation.
In particular, the methods we use for constructing metrics of positive Ricci curvature on the boundaries of plumbings,
i.e.~Theorem 1.1, do not apply to $3$-dimensional manifolds.
We shall mention just two highlights of previous work on tree-manifolds. 
The Poincar\'{e} conjecture was proven rather early for tree-manifolds: \cite{vonRandow, Scharf}.  Generalising this work,
Neumann and Weintraub \cite[Theorem p.\,71]{N-W} proved every $W \in \cal{TP}^4$ with boundary $\del W \cong S^3$ is diffeomorphic to one of the following connected sums: $\sharp_s(S^2 \times S^2)$ or $\sharp_s(\C P^2 \sharp (- \C P^2))$.

So far as we can tell, the question of which linking forms arise as the linking forms of tree-manifolds may not have been completely
answered in the literature up until now.  On the other hand, \cite[Theorem 6.1]{K-K} states that every torsion linking form $(G, b)$
is isomorphic to the linking form of {\em some} $3$-manifold. From our earlier results we deduce

\begin{Proposition} \label{prop:3-manifolds} 
Every isomorphism class of linking form $(G, b)$ is realised as the linking form
a tree-manifold $M \in \del \cal{TP}^4$.
\end{Proposition}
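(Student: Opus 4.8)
The plan is to reduce Proposition \ref{prop:3-manifolds} to the already-established realisation results for even treelike symmetric bilinear forms in dimension $4k$, and thereby to tree-manifolds in dimension $4$. Recall from Section \ref{subsec:realising_extended_quadratic_linking_forms} (more precisely Theorem \ref{thm:realisation_of_eqlfs}, Lemmas \ref{lem:cyclic}, \ref{lem:hyperbolic} and \ref{lem:psuedo-hyperbolic1}) that every homogeneous quadratic linking form $(G, b, q)$ on a finite abelian group — and in particular every linking form $(G,b)$, since $(G,b)$ always admits a homogeneous quadratic refinement — is presented by an \emph{even} $\Z$-labelled tree $\mathfrak{t}$: that is, the boundary $\del(H_\mathfrak{t}, \lambda_\mathfrak{t})$ of the associated even symmetric bilinear form realises $(G,b,q)$, hence $(G,b)$. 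The key observation is that this realisation is purely combinatorial: it produces an explicit $\Z$-labelled tree whose labels are integers, and the algebraic boundary construction of Definition \ref{def:boundary_of_an_eqf}\eqref{def:boundary_of_an_eqf:linking_form} computes the linking form directly from the inverse of the (integral) intersection matrix $A_\mathfrak{t}$.

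First I would translate this $\Z$-labelled tree into a $4$-dimensional plumbing manifold. Given the $\Z$-labelled tree $\mathfrak{t} = ((\V, \E), \ul a)$ realising $(G,b)$, take the oriented $D^2$-bundles over $S^2$ with Euler numbers $a_i$ (these exist for every integer, as $\pi_1(SO(2)) \cong \Z$ is detected precisely by the Euler number), and form the treelike plumbing manifold $W \in \cal{TP}^4$ with plumbing graph $\mathfrak{t}$, as in Definition \ref{def:treelike_plumbing_manifolds}. By the discussion preceding Theorem \ref{thm:plumbing_realisation}, the intersection form of $W$ with respect to the basis of $H_2(W)$ given by the base spheres is exactly the matrix $A_\mathfrak{t}$. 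Its boundary $M := \del W$ is a closed oriented $3$-manifold, and since $\del W$ is built from the sphere bundles $\del E_i \to S^2$ (with $H^2(M)$ the cokernel of the adjoint $\wh\lambda_{\mathfrak t}$), the standard identification of the linking form of $\del W$ with the algebraic boundary of $(H_2(W), \lambda_W)$ — precisely the content of the boundary construction, compare Example \ref{exa:linking_form} and Section \ref{subsec:rational_homotopy_spheres} — shows that $(H^2(M), b_M)$ is isomorphic to $\del(H_\mathfrak{t}, \lambda_\mathfrak{t})$, which is $(G, b)$. (One must be careful about the sign of the linking form, exactly as in Remark \ref{rem:eqlf_definition}; since an isomorphism of linking forms $(G,b) \cong (G,-b)$ need not exist, one may need to replace $\mathfrak{t}$ by $-\mathfrak{t}$, i.e.\ negate all Euler numbers, to land on $(G,b)$ rather than $(G,-b)$. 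Alternatively, invoke the remark that the classes of forms produced are closed under the relevant sign change, or simply observe that our list of indecomposables in Theorem \ref{thm:indcomposable_qlfs} already ranges over $\theta$ of both signs.)

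The one genuine subtlety — and I expect this to be the main obstacle — is verifying that the passage from the $4k$-dimensional plumbing story to the $4$-dimensional one is legitimate, since most of Section \ref{sec:(4k-1)_manifolds} assumes $k \geq 2$ in order to apply Wall's handlebody classification and the connectedness of the boundary via van Kampen. Here $j = 2$ and $\del W$ is a $3$-manifold, which need not be simply connected, so Wall's Theorem \ref{thm:wall_2n_classification} and Theorem \ref{thm:classification_of_QHS} are unavailable. However, we do not need them: the argument above only uses the \emph{forward} direction — that the explicit treelike plumbing $W(\mathfrak{t})$ has intersection matrix $A_\mathfrak{t}$ and boundary linking form $\del(H_\mathfrak{t},\lambda_\mathfrak{t})$ — and this is a direct homological computation valid for $j = 2$ (indeed for all $j \ge 2$; the boundary-of-plumbing formula $\del(E_1 \square E_2)$ from Section \ref{subsec:plumbing_arrangements} is dimension-independent). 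So the proof I would write is: (i) cite Theorem \ref{thm:realisation_of_eqlfs} (or directly Lemmas \ref{lem:cyclic}, \ref{lem:hyperbolic}, \ref{lem:psuedo-hyperbolic1} together with Lemma \ref{lem:stable_closure_of_treelike_forms} and the classification of linking forms) to obtain an even $\Z$-labelled tree $\mathfrak{t}$ with $\del(H_\mathfrak{t}, \lambda_\mathfrak{t}) \cong (G, b, q)$ for some quadratic refinement $q$ of $b$; (ii) realise $\mathfrak{t}$ as a $D^2$-bundle plumbing $W = W(\mathfrak{t}) \in \cal{TP}^4$; (iii) compute directly that the linking form of $M = \del W$ is the algebraic boundary of the intersection form of $W$, hence $\cong (G,b)$, adjusting by an orientation reversal if needed to fix the sign; (iv) conclude $M \in \del\cal{TP}^4$ realises $(G,b)$. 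I would also remark, as a sanity check consistent with Remark \ref{rem:tomDieck}, that tom Dieck's formula \cite[Satz 1]{tomDieck} gives an independent verification of the linking numbers of $\del W(\mathfrak t)$.
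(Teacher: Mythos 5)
Your proposal is correct and takes essentially the same route as the paper: invoke Theorem \ref{thm:realisation_of_eqlfs} to get an even treelike form presenting $(G,b)$, realise it as the intersection form of a plumbing $W \in \cal{TP}^4$ via the discussion before Theorem \ref{Browder}, and read off that $\del W$ has the required linking form, handling the sign by passing to $-W$. Your extra remarks (that Wall's classification theorems are not needed and are unavailable in this dimension, and that the computation of the boundary linking form from the intersection form is a dimension-independent homological fact) correctly identify and dispose of the only potential concern, and are consistent with the paper's brief argument.
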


\begin{proof}
It follows from Theorem \ref{thm:realisation_of_eqlfs}
that every linking form $(G, b)$ arises as the boundary of an even symmetric treelike bilinear form $(H, \lambda)$.
Applying the discussion before Theorem \ref{thm:plumbing_realisation}, $(H, \lambda)$ is realised
as the intersection form of $-W$ for some $W \in \cal{TP}^4$, and so the linking form of $\del W$ is isomorphic to $(G, b)$.
\end{proof}

% subsection 3-manifolds (end)
%%%%%%%%%%%%%%%%%%%%%%%%%%%%%%%%%%%%%%%%%%%%%%%%%%%%%%%%%%%%%%%%%%%%%%%%%%%%%%%%%%%%%%%%

% section treelike plumbings in other dimensions (end)
%%%%%%%%%%%%%%%%%%%%%%%%%%%%%%%%%%%%%%%%%%%%%%%%%%%%%%%%%%%%%%%%%%%%%%%%%%%%%%%%%%%%%%%%
%%%%%%%%%%%%%%%%%%%%%%%%%%%%%%%%%%%%%%%%%%%%%%%%%%%%%%%%%%%%%%%%%%%%%%%%%%%%%%%%%%%%%%%%

% (fold)
\section{Symmetric forms are stably treelike} \label{sec:symmetric_forms_are_stably_treelike}

We conclude this paper with a purely algebraic implication of our results from 
Section \ref{sec:realising_extended_quadratic_linking_forms}.
We consider even symmetric bilinear forms $(H, \lambda)$: for the general case,
see Remark \ref{rem:symmetric_case} below.
Recall that an even labelled tree $\mathfrak{t} = ((\V, \E), \ul{a})$ 
defines an even symmetric form $(H_\mathfrak{t}, \lambda_\mathfrak{t})$ and that an even symmetric
form $(H, \lambda)$ is treelike if it is isomorphic to some treelike form $(H_\mathfrak{t}, \lambda_\mathfrak{t})$.
In Section \ref{subsec:non-treelike_forms} we showed that there are infinite families of even symmetric
forms which are not treelike and remarked that determing if a given form $(H, \lambda)$ is treelike seems
to be a hard problem.  In this direction, Newman \cite{Newman} has show that every symmetric bilinear form
is ``nearly treelike'' in the sense that it can presented as the intersection matrix of a $\Z$-labelled connected graph
$(\V, \E)$ which fails to be a tree only because there is a pair of verticies which may have artibrarily many edges
between them.

One theme in the study of symmetric forms is to consider them stably; i.e.~to consider their properties under the addition of copies of the 
the standard hyperbolic form $H_+(\Z)$, defined in \eqref{eq:hyperbolic_form}.
In this section we show that even forms are stably treelike.
Let $H_+(\Z^s) = \oplus_s H_+(\Z)$ denote the $s$-fold orthogonal sum of the standard hyperbolic form.
We shall say that two even forms $(H_0, \lambda_0)$ and $(H_1, \lambda_1)$ are stably equivalent
if there are $s, t \geq 0$ and an isomorphism
\[ (H_0, \lambda_0) \oplus H_+(\Z^s) \cong (H_1, \lambda_1) \oplus H_+(\Z^t). \]

\begin{Theorem} \label{thm:treelike-forms} 
For every even symmetric bilinear form $(H, \lambda)$ there is an $s \geq 0$ such that
$(H, \lambda) \oplus H_+(\Z^s)$ is treelike.
\end{Theorem}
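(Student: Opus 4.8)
The plan is to combine two facts proven earlier in the paper: first, Wall's theorem that every even symmetric bilinear form $(H,\lambda)$ arises as the boundary (in the algebraic sense of Definition~\ref{def:boundary_of_an_eqf}) of... wait — that is not quite the right handle. Let me instead use the chain of results that directly concern treelike forms. By Theorem~\ref{thm:realisation_of_eqlfs}, every extended quadratic linking form, in particular every homogeneous quadratic linking form $(G,b,q)$ on a finite abelian group, is presented by a \emph{nondegenerate treelike} extended quadratic form; in the case $k\equiv 3 \bmod 4$ this says precisely that every homogeneous quadratic linking form is the algebraic boundary of an even treelike symmetric bilinear form. On the other hand, Wall \cite[Theorem p.\,156]{Wa5} (as quoted in the proof of Theorem~\ref{thm:boundaries_of_eqfs}) shows that two nondegenerate even symmetric bilinear forms are stably isomorphic (stably in the sense of adding nonsingular even forms) if and only if they have isomorphic algebraic boundaries. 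So the proposed proof runs as follows.

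First I would reduce to the nondegenerate case: given an arbitrary even symmetric form $(H,\lambda)$, write $H = H' \oplus R$ where $R = \mathrm{rad}(\lambda)$ is the radical (the kernel of $\widehat\lambda$) and $(H',\lambda')$ is nondegenerate. The zero form $(R,0)$ on a rank-$t$ free group is itself treelike — it is $\lambda_{\mathfrak t}$ for the tree with $t$ isolated vertices all labelled $0$ (an even labelled tree with no edges) — and by Lemma~\ref{lem:stable_closure_of_treelike_forms}, adding one copy of $H_+(\Z)$ to a sum of treelike forms gives a treelike form. Hence it suffices to treat $(H',\lambda')$, and more precisely it suffices to show that every \emph{nondegenerate} even symmetric form becomes treelike after adding some $H_+(\Z^s)$; then a final application of Lemma~\ref{lem:stable_closure_of_treelike_forms} handles the orthogonal sum with the treelike zero form $(R,0)$.

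Second, for a nondegenerate even $(H,\lambda)$, let $(G,b,q) = \del(H,\lambda)$ be its algebraic boundary, a homogeneous quadratic linking form (homogeneous because $\lambda$ is even). By Theorem~\ref{thm:realisation_of_eqlfs} applied with $k \equiv 3 \bmod 4$ — equivalently by Lemmas~\ref{lem:cyclic}, \ref{lem:hyperbolic} and \ref{lem:psuedo-hyperbolic1} together with Theorem~\ref{thm:indcomposable_qlfs} and Lemma~\ref{lem:stable_closure_of_treelike_forms} — there is a \emph{treelike} even symmetric form $(H_0,\lambda_0)$ with $\del(H_0,\lambda_0) \cong (G,b,q) \cong \del(H,\lambda)$. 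Now invoke Wall's stable classification, Theorem~\ref{thm:boundaries_of_eqfs} (whose $k\equiv 3$ case is Wall \cite[Theorem p.\,156]{Wa5}): since $(H,\lambda)$ and $(H_0,\lambda_0)$ have isomorphic boundaries, they are stably isomorphic, i.e.\ there are nonsingular even forms $(H_2,\lambda_2)$, $(H_3,\lambda_3)$ with
\[ (H,\lambda) \oplus (H_2,\lambda_2) \cong (H_0,\lambda_0) \oplus (H_3,\lambda_3). \]
It is classical that every nonsingular even symmetric bilinear form is stably equivalent to a sum of copies of $H_+(\Z)$ (this is the indefinite-stabilisation statement from the classification of even unimodular forms: $(H_2,\lambda_2)\oplus H_+(\Z^{a}) \cong H_+(\Z^{a'}) \oplus (\text{one of } \pm E_8^{\oplus})$ and $E_8 \oplus H_+(\Z) \cong H_+(\Z) \oplus (\text{something treelike}?)$ — care is needed here). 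So adding enough hyperbolics to both sides and using that $(H_0,\lambda_0)$ is treelike, Lemma~\ref{lem:stable_closure_of_treelike_forms} gives that $(H,\lambda)\oplus H_+(\Z^s)$ is treelike for suitable $s$, completing the proof after re-attaching the radical summand.

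The main obstacle I anticipate is precisely the last clause: controlling the nonsingular even forms $(H_2,\lambda_2), (H_3,\lambda_3)$ that Wall's stable classification produces and showing that after adding hyperbolics they absorb into treelike forms. Nonsingular even forms are not classified by their boundary (which is trivial), so one cannot invoke Theorem~\ref{thm:realisation_of_eqlfs} for them; instead one must argue directly that $E_8$ (and hence, by the Hasse–Minkowski classification of indefinite even unimodular lattices, every nonsingular even form after one stabilisation) is stably treelike. For $E_8$ this should follow from its standard Dynkin-diagram presentation: the $E_8$ Cartan matrix is already the intersection matrix $\lambda_{\mathfrak t}$ of the $\Z$-labelled tree that is the $E_8$ graph with every vertex labelled $-2$ (or $2$), so $E_8$ is literally treelike, and then Lemma~\ref{lem:stable_closure_of_treelike_forms} handles $-E_8$ after one hyperbolic stabilisation since $E_8 \oplus (-E_8)$ is even unimodular indefinite of rank $16$, hence $\cong H_+(\Z^8)$; more carefully one observes $(-E_8)\oplus H_+(\Z^8)\cong E_8 \oplus H_+(\Z^8)$ is a sum of the treelike forms $E_8$ and $H_+(\Z)$'s. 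Assembling these stably-treelike building blocks with Lemma~\ref{lem:stable_closure_of_treelike_forms} finishes the argument; the bookkeeping of how many hyperbolic summands $s$ are needed is routine and need not be made explicit.
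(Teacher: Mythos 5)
Your overall strategy — reduce to the nondegenerate case, realise the boundary linking form by an even treelike form, then stabilise to match — is sound, but the route you take through Wall's Theorem~\ref{thm:boundaries_of_eqfs} leaves a genuine gap, and a step in your attempted repair is simply false. Wall's theorem gives you an isomorphism $(H', \lambda') \oplus (H_2, \lambda_2) \cong (H_0, \lambda_0) \oplus (H_3, \lambda_3)$ where $(H_2, \lambda_2)$ and $(H_3, \lambda_3)$ are merely \emph{nonsingular even forms of unknown signature}, and you then need to replace them by hyperbolics. Your claim that every nonsingular even form is stably equivalent to a sum of copies of $H_+(\Z)$ is false: signature is a stable invariant, and a nonsingular even form has signature divisible by $8$ but not necessarily zero. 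The patch you then write down is also false: $(-E_8) \oplus H_+(\Z^8)$ has signature $-8$ while $E_8 \oplus H_+(\Z^8)$ has signature $+8$, so they are not isomorphic. (What \emph{is} true, and what you observed earlier in the same sentence, is that $E_8 \oplus (-E_8)$ is indefinite even unimodular of signature zero, hence $\cong H_+(\Z^8)$; and that both $E_8$ and $-E_8$ are themselves literally treelike, being the $E_8$-tree with all vertices labelled $2$, respectively $-2$ — so no detour through $E_8 \oplus (-E_8)$ is needed to see that.)

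The paper's proof handles the signature obstruction directly rather than trying to make the nonsingular stabilisers vanish. It invokes Milgram's theorem to show that $\sigma(\lambda')$ and $\sigma(\lambda_{\mathfrak t})$ agree modulo $8$ because the two forms have isomorphic boundaries, then adjusts the treelike side by summing with copies of $E_8$ or $-E_8$ (both treelike) until the signatures agree exactly, and then applies Nikulin's theorem \cite[Corollary 1.13.4]{Nikulin}, which says that two nondegenerate even forms with the same signature and isomorphic boundary linking forms are stably equivalent in the hyperbolic sense — isomorphic after adding $H_+(\Z^r)$ and $H_+(\Z^s)$. This bypasses the uncontrolled nonsingular stabilisers entirely. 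Your route via Wall plus the classification of indefinite even unimodular lattices could in principle be pushed through — one would sum $\pm E_8$'s and a hyperbolic to kill the signature of $(H_2, \lambda_2)$, absorb the result into hyperbolics by indefinite classification, and then check that the other side remains a sum of treelike forms so that Lemma~\ref{lem:stable_closure_of_treelike_forms} applies — but as written the argument omits this signature bookkeeping and asserts an isomorphism that does not hold, so it is not yet a proof.
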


%The proof of Theorem \ref{thm:treelike-forms} is somewhat easier, if we observe
%that Lemma \ref{lem:stable_closure_of_treelike_forms} and its proof have the following
%generalisation.

%\begin{Lemma} \label{lem:stable_closure_of_treelike_forms}
%For $i = 1, \dots, j$, let $(H_i, \lambda_, \alpha_i) \in \cal{TF}^{4k}$.  Then 
%$\bigoplus_{i=1}^j (H_i, \lambda_i, \alpha_i) \oplus (H_+(\Z), 0) \in \cal{TF}^{4k}$.  $\qed$
%\end{Lemma}

\begin{proof}
%[Proof of Theorem \ref{thm:treelike-forms}]
Firstly we write $(H, \lambda) = (H_0, \lambda_0) \oplus (F, 0)$ where $(F, 0)$ is the zero form
and $(H, \lambda_0)$ is nondegenerate.
By a theorem of Nikulin \cite[Corollary 1.13.4]{Nikulin}, two nondegenerate even forms $(H_0, \lambda_0)$
and $(H_1, \lambda_1)$ are stably equivalent if and only if they have the same signature and
isomorphic boundary quadratic linking forms; $\del(H_0, \lambda_0) \cong \del(H_1, \lambda_1)$.
Moreover, by a theorem of Milgram, \cite[Theorem, Appendix 4]{M-H}, the signature of $(H_i, \lambda_i)$
is determined mod $8$ by $\del(H_i, \lambda_i)$.  

We start with any nondegenerate even form $(H_0, \lambda_0)$.
By Theorem \ref{thm:indcomposable_qlfs} and Lemmas~\ref{lem:stable_closure_of_treelike_forms}, \ref{lem:cyclic}, \ref{lem:hyperbolic} and \ref{lem:psuedo-hyperbolic1}, every quadratic linking form is realised as the boundary of an even treelike form.  Hence there is a treelike form $(H_\mathfrak{t}, \lambda_\mathfrak{t})$ such that
$\del(H, \lambda) \cong \del(H_\mathfrak{t}, \lambda_\mathfrak{t})$.  By Milgram's theorem the signatures of
$(H_0, \lambda_0)$ and $(H_\mathfrak{t}, \lambda_\mathfrak{t})$ agree modulo $8$.  
Let $E_8$ denote and $\wh E_8$ denote the symmetric forms with the following matricies:
%
%\[ E_8 = \left( \begin{array}{cccccccc} 
%2~ & 1~ & 0~ & 0~ & 0~ & 0~ & 0~ & 0~\\
%1 & 2 & 1 & 0 & 0 & 0 & 0 & 0\\
%0 & 1 & 2 & 1 & 0 & 0 & 0 & 0\\
%0 & 0 & 1 & 2 & 1 & 0 & 0 & 0\\
%0 & 0 & 0 & 1 & 2 & 1 & 0 & 1\\
%0 & 0 & 0 & 0 & 1 & 2 & 1 & 0\\
%0 & 0 & 0 & 0 & 0 & 1 & 2 & 0\\
%0 & 0 & 0 & 0 & 1 & 0 & 0 & 2\\
% \end{array}   \right)  
%~~
% \wh E_8 = \left( \begin{array}{cccccccc} 
%-2~ & 1~ & 0~ & 0~ & 0~ & 0~ & 0~ & 0~\\
%1 & -2 & 1 & 0 & 0 & 0 & 0 & 0\\
%0 & 1 & -2 & 1 & 0 & 0 & 0 & 0\\
%0 & 0 & 1 & -2 & 1 & 0 & 0 & 0\\
%0 & 0 & 0 & 1 & -2 & 1 & 0 & 1\\
%0 & 0 & 0 & 0 & 1 & -2 & 1 & 0\\
%0 & 0 & 0 & 0 & 0 & 1 & -2 & 0\\
%0 & 0 & 0 & 0 & 1 & 0 & 0 & -2\\
% \end{array}   \right)    \]
%
\[ E_8 = \left( \begin{array}{cccccccc} 
2 & 1 & 0 & 0 & 0 & 0 & 0 & 0\\
1 & 2 & 1 & 0 & 0 & 0 & 0 & 0\\
0 & 1 & 2 & 1 & 0 & 0 & 0 & 0\\
0 & 0 & 1 & 2 & 1 & 0 & 0 & 0\\
0 & 0 & 0 & 1 & 2 & 1 & 0 & 1\\
0 & 0 & 0 & 0 & 1 & 2 & 1 & 0\\
0 & 0 & 0 & 0 & 0 & 1 & 2 & 0\\
0 & 0 & 0 & 0 & 1 & 0 & 0 & 2\\
 \end{array}   \right) \]
and
\[
 \wh E_8 = \left( \begin{array}{cccccccc} 
-2 & 1  & 0  & 0  & 0  & 0  & 0  & 0 \\
1 & -2 & 1 & 0 & 0 & 0 & 0 & 0\\
0 & 1 & -2 & 1 & 0 & 0 & 0 & 0\\
0 & 0 & 1 & -2 & 1 & 0 & 0 & 0\\
0 & 0 & 0 & 1 & -2 & 1 & 0 & 1\\
0 & 0 & 0 & 0 & 1 & -2 & 1 & 0\\
0 & 0 & 0 & 0 & 0 & 1 & -2 & 0\\
0 & 0 & 0 & 0 & 1 & 0 & 0 & -2\\
 \end{array}   \right)    \]
It is well known that $E_8$ is nonsingular and has signature $8$.  Repeated application
of Lemma \ref{lem:det}~\eqref{lem:det:a} shows that $\wh E_8$ is nonsingular, and since 
$\wh E_8$ is negative definite, it follows that $\wh E_8$ has signature $-8$.
Notice that both $E_8$ and $\wh E_8$ are even treelike forms.
For an integer $t$,
let $\oplus_t \wt E_8$ denote the $|t|$-fold orthongal sum of $E_8$, if $t \geq 0$ 
and the $|t|$-fold orthongal sum of $\wh E_8$ if $t < 0$, so that $\oplus_t \wt E_8$ has signature
$8t$.
Applying Nikulin's result, \cite[Corollary 1.13.4]{Nikulin}, we can find non-negative integers 
$r, s, t$ with $s \geq 1$, and an isomorphism
\[ (H_0, \lambda_0) \oplus (F, 0) \oplus H_+(\Z^r) \cong 
(H_\mathfrak{t}, \lambda_\mathfrak{t}) \oplus (F, 0)\oplus H_+(\Z^s) \oplus \oplus_t \wt E_8. \]
Now by Lemma~\ref{lem:stable_closure_of_treelike_forms},
$(H_\mathfrak{t}, \lambda_\mathfrak{t}) \oplus (F, 0) \oplus H_+(\Z^s) \oplus \oplus_t \wt E_8$ is treelike,
which proves the theorem.
\end{proof}

\begin{Remark} \label{rem:removing_hyperbolics}
At present, we do not know precisely how many hyperbolics need to be added to 
a general even form until it becomes treelike.  Note that by
\cite[Corollary 1.13.4]{Nikulin}, if $(H, \lambda)$
is nondegenerate, with $H$ having rank $r$ and $\lambda$ having signature $\sigma$,
then $(H, \lambda) \oplus H_+(\Z)$ is the unique even symmetric form with rank $r + 2$,
signature $\sigma$ and boundary quadratic linking form isomorphic to $\del(H, \lambda)$.
\end{Remark}

\begin{Remark} \label{rem:symmetric_case}
We point out that Theorem \ref{thm:treelike-forms} has a symmetric analogue in which the
form $H_+(\Z^s)$ is replaced by the the $s$-fold sum of the form $\an{1} \oplus \an{-1}$
where $\an{\epsilon}$ denotes the symmetric form $(\Z, \epsilon)$, $\epsilon = \pm 1$.  The proof is similar, 
except that one now uses the boundary linking form and the fact that $\an{\epsilon}$ is treelike.
\end{Remark}

% section treelike forms (end)
%%%%%%%%%%%%%%%%%%%%%%%%%%%%%%%%%%%%%%%%%%%%%%%%%%%%%%%%%%%%%%%%%%%%%%%%%%%%%%%%%%%%%%%%
%%%%%%%%%%%%%%%%%%%%%%%%%%%%%%%%%%%%%%%%%%%%%%%%%%%%%%%%%%%%%%%%%%%%%%%%%%%%%%%%%%%%%%%%

\vskip 1cm
\noindent
\emph{Diarmuid Crowley}\\
  {\small
  \begin{tabular}{@{\qquad}l}
    Institute of Mathematics\\
    University of Aberdeen\\
	Aberdeen AB24 3UE, United Kingdom\\
    \textsf{dcrowley@abdn.ac.uk}\\
	%\textsf{http://www.dcrowley.net}\\
  \end{tabular}

%\emph{Diarmuid Crowley}\\
%  {\small
%  \begin{tabular}{@{\qquad}l}
%    Max Planck Institute for Mathematics\\
%    Vivatsgasse 7, 53111 Bonn, Germany\\
%    \textsf{diarmuidc23@gmail.com}\\
	%\textsf{http://www.dcrowley.net}\\
% \end{tabular}

\bigskip

\noindent
\emph{David Wraith}\\
  {\small
  \begin{tabular}{@{\qquad}l}
   Department of Mathematics\\
   National University of Ireland Maynooth\\
   Maynooth, Co.~Kildare, Ireland \\
    \textsf{david.wraith@maths.nuim.ie}\\
    %\textsf{web}
  \end{tabular}


\begin{thebibliography}{999}
%%%%%%%%%%%%%%%%%%%%%%%%%%%%%%%%%%%%%%%%%%%%%%%%
\bibitem[A-H-V]{A-H-V} J.~P.~Alexander, G.~C.~Hamrick and J.~W.~Vick, {\em Linking forms and maps of odd prime order}, 
Trans.~Amer.~Math.~Soc.~{\bf 221} (1976), 169--185. 

\bibitem[Bar]{Bar} D.~Barden, {\em Simply connected five-manifolds}, Ann. of Math. (2) {\bf 82} (1965), 365--385. 

\bibitem[Bau]{Bau1} H.~J.~Baues, {\em On the group of homotopy equivalences of a manifold}, Trans. Amer. Math. Soc. {\bf 348} (1996), 4737--4773.

%\bibitem[Bau2]{Bau2} H.~J.~Baues, {\em Quadratic functors and metastable homotopy}, J. Pure $\&$ App. Alg. {\bf 91} (1994) 49-107.

\bibitem[Be]{Be} V. Berestovskii, {\em Homogeneous Riemannian manifolds of positive Ricci curvature}, Math Notes {\bf 58} (1995), 905--909.

\bibitem[B-G1]{BG1} C. P. Boyer and K. Galicki, {\em Rational homotopy 5-spheres with positive Ricci curvature}, Math. Res. Lett. {\bf 9} (2002), 521--528.

\bibitem[B-G2]{BG2} C. P. Boyer and K. Galicki, {\em Erratum and addendum for: Rational homotopy 5-spheres with positive Ricci curvature}, Math. Res. Lett. {\bf 13} (2006), 463--465.

\bibitem[B-G3]{BG3} C. P. Boyer and K. Galicki, {\em  Highly connected manifolds with positive Ricci curvature}, Geom. Topol. {\bf 10} (2006), 2219--2235. 

\bibitem[B-G-N]{BGN} C.P. Boyer, K. Galicki, and M. Nakamaye, {\em Sasakian geometry, homotopy spheres and positive Ricci curvature}, Topology {\bf 42} (2003), 981--1002.

\bibitem[B-tD]{B-tD} T. Br\"{o}cker and T. tom Dieck, {\em Kobordismentheorie}, Springer-Verlag, Berlin, 1970

\bibitem[Bro]{Bro} W.~Browder, {\em Surgery on simply-connected manifolds},  Ergebnisse der Mathematik und ihrer Grenzgebiete, Band 65. Springer-Verlag, New York-Heidelberg, 1972.

\bibitem[Bru1]{Brumfiel} G.~Brumfiel, {\em On the homotopy groups of ${\rm BPL}$ and ${\rm PL/O}$}, Ann. of Math. (2) {\bf 88} (1968), 291--311.

\bibitem[Bru2]{Brumfiel2} G.~Brumfiel, {\em The homotopy groups of $BPL$ and $PL/O$. III}, Michigan Math. J. 17 (1970), 217--224. 

\bibitem[B-W]{BW} S. Bechtluft-Sachs and D. J. Wraith {\em On the curvature of $G$-manifolds with finitely many non-principal orbits}, Geometriae Dedicata {\bf 162} no. 1 (2013), 109--128.

\bibitem[C1]{C1} D.~Crowley {\em The classification of highly connected manifolds in dimensions $7$ and $15$}, Phd Thesis, Indiana University, 2001.

\bibitem[Ca]{Ca} R. Carr, {\em Construction of manifolds of positive scalar curvature}, Trans. Amer. Math. Soc. {\bf 307} no. 1 (1988), 63-74.

\bibitem[C-E]{C-E} D.~Crowley and C.~Escher, {\em A classification of $S^3$-bundles over $S^4$}, Diff. Geom. Appl. {\bf 18} (2003), 363--380.

\bibitem[D-K]{D-K} J.F. Davis and P. Kirk, {\em Lecture notes in algebraic topology}, Graduate Studies in Mathematics, 35. American Mathematical Society, Providence, RI, 2001. 

%\bibitem[EK] {EK} J. Eells and N. Kuiper, {\em An invariant for certain smooth manifolds}, Annali di Math. {\bf 60} (1962) 93-110.

%\bibitem[G]{G} V. Giambalvo, {\em On $\langle 8 \rangle$-cobordism}, Illinois J.~Math.~{\bf 15} (1971), 533--541.

%\bibitem[GM]{GM} D. Gromoll and W. Meyer, {\em An exotic sphere with
%	nonnegative sectional curvature}, Ann. of Math. {\bf 100}
%	(1974) 401-406.

%\bibitem[GZ]{GZ} K. Grove and W. Ziller, {\em Curvature and symmetry of Milnor spheres}, Ann. of Math. (2) {\bf 152}  no. 1 (2000) 331-367.

\bibitem[G-L]{GL} M. Gromov and H.B. Lawson, {\em The classification of manifolds of positive scalar curvature}, Ann. Math. {\bf 111} (1980), 423--434.

\bibitem[G-Z]{GZ} K. Grove and W. Ziller, {\em Cohomogeneity one manifolds with positive Ricci curvature}, Invent. Math. {\bf 149} (2002), 619--646.

\bibitem[Ha]{H} A. Haefliger, {\em Plongements différentiables de variétés dans vari\'{e}t\'{e}s}, Comment.~Math.~Helv.~{\bf 36} (1961), 47--82.

%\bibitem[Ha]{Ha} A.~Hatcher, {\em A proof of the Smale Conjecture, $Diff(S^3) \simeq O(4)$}, Ann. of Math. {\bf 117} (1983) 553-607.

\bibitem[Hir]{Hirzebruch} F. Hirzebruch, {\em \"{U}ber Singularit\"{a}ten komplexer Fl\"{a}chen}, (German)
Rend.~Mat.~e Appl.~{\bf 25} (1966), 213--232. 

%\bibitem[Hi]{Hi} F.~ Hirzebruch, {\em Topological methods in algebraic geometry}, Third edition, Springer-Verlag New York, (1966).

%\bibitem[HM]{HM} M.~W.~Hirsch and B.~Mazur, {\em Smoothings of piecewise linear manifolds}, Annals of Mathematics Studies, No. 80. Princeton University Press, Princeton, N. J.; University of Tokyo Press, Tokyo, (1974).

\bibitem[Hit]{Hi} N. Hitchin, {\em Harmonic Spinors}, Adv. in Math. {\bf 14} (1974), 1--55.

%\bibitem[Hu]{Hu} D.~Husemoller, {\em Fibre bundles}, Third edition. Graduate Texts in Mathematics, 20. Springer-Verlag, New York, (1994).

%\bibitem[JW0] {JW0} I. M. James and J. H. C. Whitehead, {\em Note on Fiber Spaces}, Proc. London
%Math. Soc. {\bf 3} (1954) 129-137.

%\bibitem[JW1] {JW1} I. M. James and J. H. C. Whitehead, {\em The homotopy theory of spheres bundles
%over spheres (I)}, Proc. London Math. Soc. {\bf 3} (1954) 196-218.

%\bibitem[JW]{JW} I. M. James and J. H. C. Whitehead, {\em The homotopy theory of spheres bundles over spheres (II)}, Proc. London Math. Soc. {\bf 5} (1955) 148-166.

%\bibitem[Ka1] {Ka1} P.~J.~Kahn, {\em Self-equivalences of $n-1)$-connected $2n$-manifolds},
%Bull. Amer. Math. Soc. {\bf 72} (1966) 562-566.

%\bibitem[Ka2] {Ka2} P.~J.~Kahn, {\em Self equivalences of $(n-1)$-connected $2n$-manifolds},
%Math. Ann. {\bf 180} (1969) 26-47.

\bibitem[J-W]{JW} M. Joachim and D. J. Wraith, {\em Exotic spheres and curvature},  Bulletin of the American Math. Soc., {\bf 45} no. 4 (2008), 595--616.  

\bibitem[K-K]{K-K} A. Kawauchi and S. Kojima, {\em Algebraic classification of linking pairings of $3$-manifolds}, Math.~Ann.~{\bf 253} (1980), 29--42.

\bibitem[Ke]{Kervaire}  M.~A.~Kervaire, {\em A note on obstructions and characteristic classes}, Amer. J. Math. {\bf 81} (1959), 773--784.

\bibitem[K-M]{Kervaire&Milnor} M.~A.~Kervaire and J.~W.~Milnor, {\em Groups of homotopy spheres I}, Ann.\,\,of\,\,Math.\,\,77 (1963), 504--537.

%\bibitem[KiSi]{KiSi} R.C. Kirby and L.C. Siebenmann, {\em Foundational essays on topological manifolds, smoothings, and triangulations} Ann. of Math. Studies, No. 88. Princeton University Press, (1977).

%\bibitem[KiSh]{KiSh} 
%N.~Kitchloo and K.~Shankar, {\em On complexes equivalent to
%$S^3$-bundles over $S^4$}, International Math. Res. Notices, accepted, to appear.

%\bibitem[Kr1]{Kr1} M.~Kreck, {\em An extension of the results of Browder, Novikov and Wall about
%surgery on compact manifolds}, preprints Mainz (198?).

%\bibitem[Kr1]{Kr1} M.~Kreck, {\em Bordism of diffeomorphisms and related topics}, (with an appendix by Neal W. Stoltzfus) Lecture Notes in Mathematics, 1069. Springer-Verlag, Berlin, (1984).

\bibitem[Kr]{Kreck} M.~Kreck, {\em Surgery and Duality}, Ann.~of Math.~{\bf 149} (1999), 707--754.

%\bibitem[Kr4]{Kr4} M.~Kreck, {\em A guide to the classification of manifolds}, in Surveys on
%surgery theory, Vol. 1, Ann. of Math. Stud., 145, Princeton Univ. Press, Princeton, NJ, (2000)
%121-134.

%\bibitem[KS] {KS} M. Kreck and S. Stolz, {\em Some nondiffeomorphic homeomorphic homogeneous $7$-manifolds with positive sectional curvature}, J. Diff. Geom. {\bf 33} (1991) 465-486.


%\bibitem[L]{L} J.~P.~Levine, {\em Lectures on groups of homotopy spheres}, Algebraic and geometric topology (New Brunswick, N.J., 1983), 62-95, Lecture Notes in Math., 1126, Springer, Berlin, (1985). 

\bibitem[L1]{L1} J. Lohkamp, {\em Metrics of negative Ricci curvature}, Ann.~of Math.~{\bf 140} (1994), 655--683.

\bibitem[L2]{L2} J. Lohkamp, {\em Negative bending of open manifolds}, J.~Diff.~Geom.~{\bf 40} (1994), 461--474.

\bibitem[L3]{L3} J. Lohkamp, {\em Curvature h-principles}, Ann.~of Math.~{\bf 142} (1995), 457--498.

\bibitem[Mi1]{Mi1} J. Milnor, {\em Differentiable manifolds which are homotopy spheres}, mimeographed notes, Princeton, 1958.

\bibitem[Mi2]{Mi} J. Milnor, {\em A procedure for killing homotopy groups of differentiable manifolds}, Proc. Sympos. Pure Math., Vol. III, American Mathematical Society, Providence, R.I., (1961), 39--55. 

%\bibitem[M]{Milnor-spin} J. Milnor, {\em Spin structures on manifolds}, Enseignement Math.~{\bf 9} (1963) 198--203.

%\bibitem[Mi] {Mi} J. Milnor, {\em On manifolds homeomorphic to the $7$-sphere}, Ann. of Math. {\bf 64} (1956) 399-405.

\bibitem[M-H]{M-H} J. Milnor and D. Husemoller, {\em Symmetric bilinear forms}, Ergebnisse der Mathematik und ihrer Grenzgebiete, Band 73. Springer-Verlag, New York-Heidelberg, 1973.

%\bibitem[MS] {MS} J. Milnor and Stasheff, {\em Characteristic classes},  Ann. of Math. Studies, No. 76. Princeton University Press, Princeton, N. J., University of Tokyo Press, Tokyo, (1974).

\bibitem[My]{My} S.B. Myers, {\em Riemannian manifolds with positive mean curvature}, Duke Math. J. {\bf 8} (1941), 401--404.

\bibitem[Na]{Na} J. Nash, {\em Positive Ricci curvature on fiber bundles}, J. Differential Geometry {\bf 14} (1979), 241--254.

\bibitem[Ne]{Neumann2}  W. D. Neumann, {\em A calculus for plumbing applied to the topology of complex surface singularities and degenerating complex curves},
Trans.~Amer.~Math.~Soc.~{\bf 268} (1981), 99--344. 

\bibitem[N-W]{N-W} W. D. Neumann and S. H. Weintraub, {\em Four-manifolds constructed via plumbing}, Math.~Ann.~{\bf 238} (1978), 71--78. 

\bibitem[Ne]{Newman} M. Newman, {\em Tridiagonal matrices} Linear Algebra Appl.~{\bf 201} (1994), 51--55. 

\bibitem[Ni]{Nikulin} V.~V.~Nikulin, {\em Integer symmetric bilinear forms and some of their geometric applications}, Math. USSR Izves. {\bf 14} (1980), 103--167.

\bibitem[vR]{vonRandow} R. von Randow, {\em Zur Topologie von dreidimensionalen Baummannigfaltigkeiten}, (German) Bonn.~Math.~Schr. No.~14 1962.

\bibitem[R]{Ranicki} A.~A.~Ranicki, {\em Algebraic Poincar\'{e} cobordism}, Topology, geometry, and algebra: interactions and new directions (Stanford, CA, 1999), Contemp.~Math., 279,  Amer.~Math.~Soc., Providence, RI, (2001), 213--255.

\bibitem[Ro]{Ro} J. Rosenberg, {\em Manifolds of positive scalar curvature: a progress report}, Surveys in Differential Geometry XI (J. Cheeger and K. Grove eds.), International Press (2007), 259--294. 

%\bibitem[Ri] {Ri} A. Rigas, {\em Some bundles of non-negative 
%curvature}, Math. Ann. {\bf 232} (1978) 187-193.  

\bibitem[Sc]{Scharf} A, Scharf, {\em Zur Faserung von Graphenmannigfaltigkeiten}, (German) Math.~Ann.~{\bf 215} (1975), 35--45. 

\bibitem[Sc-Y]{ScY} R. Schoen and S.T. Yau, {\em On the structure of manifolds of positive scalar curvature}, Manuscr. Math. {\bf 28} (1979), 159--183.

\bibitem[Se]{Se} J. P. Serre, {\em Homologie singuli\`ere des espaces fibr\'es. Applications} Ann.~of Math.~{\bf 54} (1951), 425--505.

\bibitem[Sm]{Sm} S. Smale, {\em On the structure of $5$-manifolds}, Ann.~of Math.~{\bf 74} (1962), 38--46.

%\bibitem[St]{St} S. Stolz {\em A note on the $bP$-component on $(4n-1)$-dimensional homotopy spheres}, Proc. Amer. Math. Soc. {\bf 99} no. 3 (1987) 581-584.

\bibitem[Sto]{Stong} R. E. Stong, {\em Notes on Cobordism Theory},  Mathematical Notes, Princeton University Press, University of Tokyo Press, 1968.

\bibitem[S-W]{SW} C. Searle and F. Wilhelm, {\em How to lift positive Ricci curvature}, arXiv:1311.1809.
 
\bibitem[S-Y]{SY} J. P. Sha and D. G. Yang, {\em Positive Ricci curvature on the connected sums of $S^n \times S^m$}, J. Differential Geom. {\bf 33} (1990), 127--138.

\bibitem[Stz1]{Stz0} S. Stolz, {\em A conjecture concerning positive Ricci curvature and the Witten genus}, Math.~Ann.~{\bf 304} (1996), 785--800.

\bibitem[Stz2]{Stz} S. Stolz, {\em Simply connected manifolds with positive scalar curvature}, Ann.~of Math.~{\bf 136} (1992), 511--540.

\bibitem[tD]{tomDieck} T. tom Dieck, {\em Verschlingung in Baum-Mannigfaltigkeiten}, (German) [Linking of tree manifolds]
Geom.~Dedicata {\bf 34} (1990), 57--65. 

\bibitem[Wald]{Waldhausen} F. Waldhausen, {\em Eine Klasse von 3-dimensionalen Mannigfaltigkeiten. I, II}, (German)
Invent.~Math.~{\bf 3} (1967), 308--333; ibid. {\bf 4} (1967) 87--117.

\bibitem[Wall1]{Wa1} C.~T.~C.~Wall, {\em Classification of $(n-1)$-connected $2n$-manifolds.}, Ann. of Math., {\bf 75} (1962), 163--189.

\bibitem[Wall2]{Wa2} C. T. C. Wall {\em Quadratic forms on finite groups and related topics}, Topology, {\bf 2} (1963), 281--298.

\bibitem[Wall3]{Wa3} C.~T.~C.~Wall, {\em Killing the middle homotopy group of odd dimensional manifolds.}, Trans. Amer. Math. Soc., {\bf 103} (1962), 421--433.

%\bibitem[Wa1]{Wa1} C. T. C. Wall {\em Classification problems in differential topology - VI}, Topology, {\bf 5} (1966) 73-94.

\bibitem[Wall4]{Wa4} C. T. C. Wall, {\em Classification problems in differential topology - VI}, Topology, {\bf 6} (1967), 273--296.

\bibitem[Wall5]{Wa5} C. T. C. Wall {\em Quadratic forms on finite groups and related topics II}, Bull. London Math. Soc. {\bf 4}, (1972), 156--160.

\bibitem[Wei]{Wei} G. Wei, {\em Manifolds with a lower Ricci curvature bound}, Surveys in Differential Geometry XI (J. Cheeger and K. Grove eds.), International Press (2007), 203--229.  

\bibitem[Wh]{Wh} G.~W.~Whitehead, {\em Elements of homotopy theory}, Graduate Texts in Mathematics, Springer-Verlag 1978.

\bibitem[Wi1]{Wi1} D.~L.~Wilkens, {\em Closed $(s-1)$-connected $(2s+1)$-manifolds}, Doctoral Thesis University of Liverpool 1971.

\bibitem[Wi2]{Wi2} D. Wilkens,  {\em Closed $(s-1)$-connected $(2s+1)$-manifolds, $s = 3,7$}, Bull. London Math. Soc. {\bf 4} (1972), 27--31.  

%\bibitem[Wi3]{Wi3} D.~L.~Wilkens, {\em On the inertia group of certain manifolds}, J. London Math. Soc. (2) {\bf 9} (1974/5) 537-548.

\bibitem[Wr1]{Wr1} D.~Wraith, {\em Exotic spheres with positive Ricci curvature}, J.~Differential Geom.~{\bf 45} (1997), 638--649.

\bibitem[Wr2]{Wr2} D.~Wraith, {\em Surgery on Ricci positive manifolds}, J. reine angew.~Math.~{\bf 501} (1998), 99--113.

\bibitem[Wr3]{Wr3} D. Wraith, {\em New connected sums with positive Ricci curvature}, Ann.~Glob.~Anal.~Geom.~{\bf 32} (2007), 343--360. 

\bibitem[Z]{Z} W. Ziller, {\em Examples of Riemannian manifolds with non-negative sectional curvature}. Surveys in Differential Geometry XI (J. Cheeger and K. Grove eds.), International Press (2007), 63--102. 
\end{thebibliography}
\end{document}